\theoremstyle{plain}
\newtheorem{theorem}{Theorem}[section] 
\newtheorem{proposition}[theorem]{Proposition}
\newtheorem{assumption}[theorem]{Assumption}
\newtheorem{lemma}[theorem]{Lemma}
\newtheorem{construction}[theorem]{Construction}
\theoremstyle{definition}
\newtheorem{remark}[theorem]{Remark}
\newtheorem{example}[theorem]{Example}   
\newtheorem{definition}[theorem]{Definition}
\newcommand{\R}{\ensuremath{\mathbb R}} 
\newcommand{\dd}{\ensuremath{{\hspace{0.01em}\mathrm d}}}
\newcommand{\V}{\ensuremath{\mathcal V}}
\newcommand{\T}{\ensuremath{{\mathbb T}}} 
\newcommand{\bbGamma}{{\mathpalette\makebbGamma\relax}}
\newcommand{\makebbGamma}[2]{%
	\raisebox{\depth}{\scalebox{1}[-1]{$\mathsurround=0pt#1\mathbb{L}$}}%
}
\definecolor{cadmiumgreen}{rgb}{0.0, 0.42, 0.24}
\definecolor{tangelo}{rgb}{0.98, 0.3, 0.0}
\definecolor{shamrockgreen}{rgb}{0.0, 0.62, 0.38}
\definecolor{darkolivegreen}{rgb}{0.33, 0.42, 0.18}
\definecolor{deepmagenta}{rgb}{0.8, 0.0, 0.8}
\definecolor{burgundy}{rgb}{0.5, 0.0, 0.13}
\definecolor{darkbyzantium}{rgb}{0.36, 0.22, 0.33}
\newcommand\blfootnote[1]{%
	\begingroup
	\renewcommand\thefootnote{}\footnote{#1}%
	\addtocounter{footnote}{-1}%
	\endgroup
}
\begin{document}

\title{Pathwise central limit theorem and moderate deviations via rough paths for SPDEs with multiplicative noise.} 

 \author{Emanuela Gussetti}
  \affil{\small Bielefeld University, Germany \vspace{-1em} }

\maketitle

\blfootnote{\textit{Mathematics Subject Classification (2023) ---} 
	60H15, 
	60L50, 
	60L90.\\ 
	\textit{Keywords and phrases ---} Stochastic PDEs, Pathwise central limit theorem, Rough paths, Moderate deviation principle, Large deviations, Landau-Lifschitz-Gilbert equation.\\
	\textit{Mail}: emanuela.gussetti@uni-bielefeld.de.
}

\smallskip

\begin{abstract}
We put forward a general framework for the study of a pathwise central limit theorem (CLT) and a moderate deviation principle (MDP) for stochastic partial differential equations perturbed with a small multiplicative linear noise by means of the theory of rough paths. The CLT can be interpreted as the convergence to a pathwise derivative of the It\^o-Lyons map. The result follows by applying a  pathwise Malliavin-like calculus for rough paths and from compactness methods. The convergence in the CLT is quantified by an optimal speed of convergence.
From the exponential equivalence principle and the knowledge of the speed of convergence, we can derive easily a MDP. In particular, we do not apply the weak convergence approach usually employed in this framework. We derive a pathwise CLT and a MDP for the stochastic Landau-Lifschitz-Gilbert equation in one dimension, for the heat equation and for a stochastic reaction-diffusion equation. As a further application, we derive a pathwise convergence to the CLT limit and a corresponding MDP for equations driven by linear It\^o noise.
\end{abstract}


\section{Introduction}
The objective of this work is to establish a central limit theorem (CLT) and the corresponding moderate deviations principle (MDP) for the deviations of the unique solution $u^\epsilon$ to a stochastic partial differential equation perturbed by a small Stratonovich noise from the unique solution $u$ to the deterministic partial differential equation 
\begin{align*}
u^\epsilon_{t}=u^\epsilon_0+\int_{0}^{t}b(u^\epsilon_r)\dd r+\int_{0}^{t} u^\epsilon_r\circ\dd \sqrt{\epsilon} W_r\,,\quad\quad\quad \delta u_{s,t}=\int_{s}^{t}b(u_r)\dd r\,,
\end{align*}
where $W$ is a Brownian motion and the drift $b$ is possibly nonlinear.
 By central limit theorem we mean the limit for $\epsilon$ approaching $0$ of the sequence
\begin{align*}
X^\epsilon:=\frac{u^\epsilon-u}{\sqrt{\epsilon}}\, 
\end{align*}
in a proper norm. In the literature, it is usual to observe a \textit{linearisation} effect in the CLT, namely that the limit $X$ of the sequence $(X^\epsilon)_{\epsilon}$ for vanishing $\epsilon$ is a solution to the equation
\begin{align*}
	X_t=X_0+\int_{0}^{t} b'(u_r)X_r \dd r+ \int_{0}^{t} u_r \circ\dd W_r\,,
\end{align*}
where $b'(u)$ is the Frechét derivative and the stochastic integral is intended in the Stratonovich sense.
Thus, the limit $X$ is the solution to a linear equation with additive noise. This is in contrast to the original equation for $u^\epsilon$, where a multiplicative noise appears and $b$ can be nonlinear. In the classical Stratonovich calculus, the sequence $(X^\epsilon)_{\epsilon}$ converges to $X$ in $\mathcal{L}^2(\Omega)$ and the boundedness of the second moment of the initial condition is required. Some references in the literature for the convergence of stochastic PDEs to the CLT in expectation are the works \cite{wang_zhai_zhang,wang_zhang, Xiong_Zhang}. 

To explain this linearisation effect, consider a map $\Phi$ that associates to the noise the solution to the SPDE, i.e.  $\Phi: W\mapsto \mathrm{Solution \, to\,  SPDE}$. We can rewrite formally each $X^\epsilon$ as
\begin{align*}
X^\epsilon= \frac{u^\epsilon-u}{\sqrt{\epsilon}}=\frac{\Phi({\sqrt{\epsilon}}W)-\Phi(0)}{\sqrt{\epsilon}}\,.
\end{align*}
This leads to the intuition that the limit $X$ of the sequence $(X^\epsilon)_{\epsilon}$ is the derivative, in some sense, of the map $\Phi$ in \enquote{0} in the direction \enquote{$\sqrt{\epsilon}W$}. But the map $\Phi$ is not in general continuous with respect to the uniform topology on the space of continuous functions. We can nevertheless recover the continuity of the map $\Phi$ if we interpret the SPDE above in the rough path framework. 
By interpreting the Stratonovich integral by means of rough paths theory, we consider the pathwise formulation of the equation
\begin{align*}
\delta u^\epsilon_{s,t}=\int_{s}^{t}b(u^\epsilon_r)\dd r+\sqrt{\epsilon}W_{s,t}u^\epsilon_s+\epsilon \mathbb{W}_{s,t} u^\epsilon_s+u^{\epsilon,\natural}_{s,t}\,,
\end{align*}
where $\mathbf{W}\equiv (W,\mathbb{W})$ is the corresponding rough path and $u^{\epsilon,\natural}$ is a remainder term of order $o(|t-s|)$.
Under existence and uniqueness of the solutions $u^\epsilon$ and $u$, there exists a map $\Phi$ from the space of rough paths to the space of continuous functions so that $u^\epsilon=\Phi(\tau_{\sqrt{\epsilon}}\mathbf{W})$, $u^0=\Phi(\mathbf{0})$. Here $\mathbf{W}, \mathbf{0}$ are respectively rough paths lifts of the Brownian motion with Stratonovich lift and of the null path. By $\tau_{\sqrt{\epsilon}}$ we denote a scaling of the rough path, defined rigorously in \eqref{eq:def_space_dependence_rp}.

The map $\Phi$ is the so called It\^o-Lyons map and it is continuous with respect to the noise in the $p$-variation norm on the space of rough paths. By employing a pathwise Malliavin calculus, we can identify rigorously the limit $X$ as a directional derivative of $\Phi$ centred in $\mathbf{0}$ and in the direction $\mathbf{W}$. This identification of the limit $X$ with the derivative of  a map explains the linearization effect observed in the limit equation. 

We prove rigorously the previous considerations for the heat equation, a reaction-diffusion equation and the Landau-Lifschitz-Gilbert equation (LLG). We denote the corresponding drifts generically by $b_i(u)=\Delta u+\gamma_i(u)$ for $i=1,2,3$ and state precise definitions below. We give first the general statement of the theorem and after we comment on the conditions to apply it to the three different SPDEs.
\begin{theorem}\label{teo:CLT_intro}
	(Pathwise central limit theorem). Let $u^0\in H^1$ and let $\mathbf{W}$ be Stratonovich lift of Brownian motion (in the sense of bounded rough drivers). Let $u$ be the unique solution to the equation
	\begin{align*}
	\delta u_{s,t}=\int_{s}^{t}[\Delta u_r+\gamma_i(u_r)]\dd r +W_{s,t} u_s+\mathbb{W}_{s,t} u_s+u^{\natural}_{s,t}
	\end{align*}
	with initial condition $u^0$ in the space $L^\infty(H^1)\cap L^2(H^2)\cap \mathcal{V}^p(L^2)$.
	Denote by $\Phi_i(\mathbf{W})=u$ the associated It\^o-Lyons map.  Then the sequence $(X^{\epsilon})_\epsilon$, defined as
	\begin{align*}
	X^{\epsilon}=\frac{\Phi_i(\tau_{\sqrt{\epsilon}}\mathbf{W})-\Phi_i(\mathbf{0})}{\sqrt{\epsilon}}\,,
	\end{align*}
	converges strongly in $L^\infty(H^1)\cap L^2(H^2)$ and $\mathbb{P}$-a.s. for $\epsilon \rightarrow 0$ to a limit $X$.  In particular $X$ is the pathwise directional derivative of the It\^o-Lyons map $\Phi_i$ in $\mathbf{0}$ in the direction $\mathbf{W}$, where $X$ is the unique solution to 
	\begin{equation*}
	\begin{aligned}
	\delta X_{s,t}=\int_{s}^{t}[\Delta X_r +\gamma_i'(u_r)X_r]\dd r+W_{s,t}u_s+X^\natural_{s,t}\,,
	\end{aligned}
	\end{equation*}
	where $\gamma_i'$ is the Frechét derivative of $\gamma_i$.
	Moreover, the convergence occurs with speed $\sqrt{\epsilon}$, i.e.
	\begin{align*}
	\sup_{0\leq t\leq T}\|X^{\epsilon}_t-D\Phi_i[\mathbf{0}](\mathbf{W})_t\|^2_{H^1}+\int_{0}^{T}\|X^{\epsilon}_r-D\Phi_i[\mathbf{0}](\mathbf{W})_r\|^2_{H^2}\dd r\lesssim_{\mathbf{W}} \sqrt{\epsilon}\,.
	\end{align*}
\end{theorem}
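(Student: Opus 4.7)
The plan is to reduce the statement to a quantitative stability estimate for the rough evolution equation satisfied by $X^\epsilon$ itself. Subtracting the rough equation for $u$ from the one for $u^\epsilon$ and dividing by $\sqrt\epsilon$ produces
\[
\delta X^\epsilon_{s,t} = \int_s^t \Bigl[ \Delta X^\epsilon_r + \tfrac{1}{\sqrt\epsilon}\bigl(\gamma_i(u^\epsilon_r)-\gamma_i(u_r)\bigr)\Bigr]\dd r + W_{s,t} u^\epsilon_s + \sqrt{\epsilon}\,\mathbb{W}_{s,t} u^\epsilon_s + \tfrac{1}{\sqrt\epsilon} u^{\epsilon,\natural}_{s,t}.
\]
I would then split $W_{s,t}u^\epsilon_s = W_{s,t}u_s + \sqrt\epsilon\,W_{s,t}X^\epsilon_s$ and perform a first-order Taylor expansion of $\gamma_i$, so that the nonlinear part becomes $\gamma_i'(u_r)X^\epsilon_r$ plus a remainder of size $\sqrt\epsilon$ times a quadratic expression in $X^\epsilon$. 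This exhibits the equation for $X^\epsilon$ as an $O(\sqrt\epsilon)$ perturbation of the candidate linear limit equation for $X$.

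Next, I would establish uniform-in-$\epsilon$ bounds for $X^\epsilon$ in $L^\infty(H^1)\cap L^2(H^2)\cap \V^p(L^2)$ by viewing it as the solution of a rough evolution equation driven by the rescaled rough path $\tau_{\sqrt\epsilon}\mathbf{W}$, whose norm is controlled uniformly as $\epsilon\to 0$. The standard rough-energy estimate, obtained by testing with $X^\epsilon$ and $-\Delta X^\epsilon$ and combining Lipschitzness of $\gamma_i$ with the parabolic absorption given by $\Delta$, yields the desired uniform bound together with a quantitative control on the remainder $u^{\epsilon,\natural}/\sqrt\epsilon$; the scaling $\tau_{\sqrt\epsilon}\mathbf{W}=(\sqrt\epsilon W,\epsilon\mathbb{W})$ is crucial so that the factor $1/\sqrt\epsilon$ in front of $u^{\epsilon,\natural}$ does not blow up, since $u^{\epsilon,\natural}$ carries a cubic scaling from the iterated integral.

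With uniform bounds in hand, I would introduce $Y^\epsilon := X^\epsilon-X$ and write the rough PDE satisfied by $Y^\epsilon$. It has the same principal linear structure as the equation for $X$ but with an inhomogeneous forcing which, by the previous two steps, is of order $\sqrt\epsilon$ in the appropriate variation norms: the extra rough driver $\sqrt\epsilon\,\mathbb{W}u^\epsilon$, the multiplicative correction $\sqrt\epsilon\,W X^\epsilon$, and the Taylor remainder of $\gamma_i$ are all $O(\sqrt\epsilon)$. Applying the a priori estimate for linear rough PDEs to $Y^\epsilon$ with this $O(\sqrt\epsilon)$ forcing produces
\[
\sup_{0\leq t\leq T}\|Y^\epsilon_t\|_{H^1}^2 + \int_0^T \|Y^\epsilon_r\|_{H^2}^2\,\dd r \lesssim_{\mathbf{W}} \sqrt{\epsilon},
\]
which simultaneously yields the $\mathbb{P}$-a.s.\ strong convergence $X^\epsilon\to X$ and the claimed rate. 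The identification of $X$ with the pathwise directional derivative $D\Phi_i[\mathbf{0}](\mathbf{W})$ follows by construction, since $X$ is obtained by linearising the It\^o--Lyons map at $\mathbf{0}$ in the direction $\mathbf{W}$; uniqueness of the limit linear rough PDE closes the argument.

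The principal difficulty is the third step: controlling the Taylor remainder of $\gamma_i$ and the $\natural$-remainder uniformly in $\epsilon$ in rough-path seminorms. For the heat and reaction--diffusion equations the nonlinearity is tame and this is essentially book-keeping, but for the Landau--Lifschitz--Gilbert equation $\gamma_i$ contains gradient terms and care is needed to absorb the quadratic Taylor correction into the parabolic contribution $\int\|\nabla X^\epsilon\|^2$ without losing powers of $\epsilon$, while the uniform-in-$\epsilon$ bound in $L^2(H^2)$ from the second step is exactly what makes this absorption work.
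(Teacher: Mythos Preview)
Your overall strategy---write the equation for $Z^\epsilon=X^\epsilon-X$ and show $\|Z^\epsilon\|_{L^\infty(H^1)\cap L^2(H^2)}\lesssim\sqrt\epsilon$---is exactly the paper's route. Two points deserve attention.

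First, the uniform bound on $(X^\epsilon)_\epsilon$ in step~2 is obtained in the paper more directly: the local Lipschitz continuity of the It\^o--Lyons map $\Phi_i$ (proved separately) gives $\|X^\epsilon\|\le \mathrm{Lip}(\Phi_i)\,\rho(\tau_{\sqrt\epsilon}\mathbf W,\mathbf 0)/\sqrt\epsilon\lesssim\|\mathbf W\|$ immediately. Your proposal to argue via a direct energy estimate on $X^\epsilon$ is viable but note that $X^\epsilon$ is \emph{not} driven by $\tau_{\sqrt\epsilon}\mathbf W$: its noise is $W_{s,t}u_s+\sqrt\epsilon\,W_{s,t}X^\epsilon_s+\sqrt\epsilon\,\mathbb W_{s,t}u^\epsilon_s$, a mixture of additive and multiplicative pieces.

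Second, and more importantly, your step~3 reads as if there were an off-the-shelf ``a~priori estimate for linear rough PDEs with $O(\sqrt\epsilon)$ forcing'' to invoke. There is not, and this is where the paper's real work lies. In the rough setting one cannot simply test with $Z^\epsilon$ or $-\Delta Z^\epsilon$ as in classical parabolic theory; one must use the product formula to derive a rough equation for $\partial_x^k Z^\epsilon\otimes\partial_x^k Z^\epsilon$, which produces noise terms $I_{s,t},\mathbb I_{s,t},\tilde{\mathbb I}_{s,t}$ and a new remainder $(\partial_x^k Z^\epsilon)^{\otimes 2,\natural}$. The bulk of the paper's proof is the algebraic verification that $\delta(\partial_x^k Z^\epsilon)^{\otimes 2,\natural}_{s,r,t}$ can be written as a sum of terms of $p/3$-variation or better, each carrying a factor $\sqrt\epsilon$: this requires adding and subtracting carefully chosen compensators $T_1,T_2$ and exploiting Chen's relation so that all $p/2$-variation terms cancel. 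Only then does the sewing lemma yield the remainder bound and the rough Gronwall lemma close the estimate. Your description calls this ``book-keeping'' for the heat and reaction--diffusion cases, but the rough-driver cancellations are the technical core even there; the Taylor remainder of $\gamma_i$ is by comparison the easy part.
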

More specifically, we consider the following three SPDEs:
\begin{itemize}
	\item the \textit{heat equation} on $\mathbb{T}^d$, with $d=1,2,3$: we set $\gamma_1\equiv 0$, $u^0\in H^1(\mathbb{T}^d;\mathbb{R}^n)$ and the rough driver lift of Brownian motion with a smoother space component $\mathbf{W}$ as in Definition~\ref{def:rough_driver} (see Section~\ref{sec:CLT_SPDEs}). 
	\item a \textit{reaction-diffusion equation} on $\mathbb{T}^d$, with $d=1,2,3$: we set $\gamma_2(r)=r(1-|r|^2)$ for $r\in \mathbb{R}^n$: $u^0\in H^1(\mathbb{T}^d;\mathbb{R}^n)$ and the rough driver lift of Brownian motion with a smoother space component $\mathbf{W}$ as in Definition~\ref{def:rough_driver} (see Section~\ref{section:reaction_diffusion}). 
	The drift $\Delta X_r+\gamma'_2(u_r)X_r$ in the case of the reaction-diffusion is given by 
	\begin{align*}
	\Delta X_r+X_r-X_r|u_r|^2-2u_rX_r\cdot u_r\,.
	\end{align*}
	\item the \textit{Landau-Lifschitz-Gilbert equation} on $\mathbb{T}^1$: we set $\gamma_3(f):=f\times \partial_x^2 f +f|\partial_x f|^2$ for $f\in H^2$, $u^0\in H^1(\mathbb{T}^d;\mathbb{S}^2)$ and the rough driver lift of Brownian motion with a smoother space component $\mathbf{W}$ as in Section~\ref{sec:LLG}.  The drift $\partial_{x}^{2} X_r+\gamma'_3(u_r)X_r$ in the case of the LLG is given by 
	\begin{align*}
	\partial_{x}^{2} X_r +\partial_x (u_r \times \partial _{x}X_r)+
	\partial_x (X_r\times \partial_{x}u_r)
	+2u_r\partial_x u_r\cdot \partial_x X_r+|\partial_x u_r|^2 X_r\,.
	\end{align*}
\end{itemize}
The approach is therefore suitable for semi-linear equations, like the stochastic reaction-diffusion equation, and for quasi-linear equations, like the stochastic LLG equation. Note that the result in Theorem~\ref{teo:CLT_intro} holds also for initial conditions in $L^2(\mathbb{T}^d,\mathbb{R}^n)$ for the heat equation and the reaction-diffusion equation, with strong convergence to the CLT in $L^\infty(L^2)\cap L^2(H^1)$: we set the initial condition in $H^1(\mathbb{T}^d,\mathbb{R}^n)$ to show the higher order regularity. 

 We remark that the CLT for the LLG equation is new and the pathwise notion of solution plays a fundamental role in establishing this convergence.  
It does not seem to be an easy task to achieve a CLT via the classical Stratonovich calculus. Whereas, via rough paths theory, the problem can be solved. The motivation is that Stratonovich and It\^o's calculus need moment estimates to bound the stochastic integrals via the Burkholder-Davis-Gundy inequality. On the other side, due to the non linearities in the drift of the LLG, it does not seem to possible to close the Gronwall's estimate by adding the expectation to the equation. This problem is not seen by rough paths calculus, where no estimate in expectation is required (see Remark 4.5 in \cite{LLG_inv_measure} and \cite{LLG1D} for an explanation on the role of rough paths calculus in the study of the stochastic LLG equation).

 Besides giving a rigorous interpretation of the linearisation effect, the rough path approach to the CLT leads to the following advantages:
\begin{itemize}
\item[1.]The convergence of $(X^\epsilon)_{\epsilon}$ to the derivative $X$ occurs pathwise, with the optimal speed of convergence $\sqrt{\epsilon}$. \textit{The same pathwise result holds when the heat equation or the reaction-diffusion equation considered above are driven by a multiplicative linear noise, with an integral interpreted in the It\^o sense}. This improves the results present in the literature, for a noise of this form. For example, a solution to the heat equation with rescaled noise, can be interpreted as the unique solution $y^\epsilon$ to
\begin{align*}
\delta y^\epsilon_{s,t}&=\int_{s}^{t} \Delta y^\epsilon_r\dd r+\int_{s}^{t}y^\epsilon_r\sqrt{\epsilon}\dd W_r
=\int_{s}^{t} \Delta y^\epsilon_r\dd r+\frac{\epsilon}{2}\int_{s}^{t}y^\epsilon_r\dd r+\int_{s}^{t}y^\epsilon_r\sqrt{\epsilon}\circ\dd W_r\,,
\end{align*}
where we observe that the scaling of the It\^o-Stratonovich correction is $\epsilon$. By interpreting the Stratonovich integral in the sense of rough paths, we can replicate the proof of Theorem~\ref{teo:CLT_intro} and achieve the convergence of $((y^\epsilon-\Phi_1(\mathbf{0}))/\sqrt{\epsilon})_{\epsilon}$ to the same limit $X$ in $L^\infty(H^1)\cap L^2 (H^2)$ and $\mathbb{P}$-a.s (see the discussion in Remark~\ref{remark:ito_CLT}). 

The pathwise convergence in the CLT does not seem to be easily reachable in the framework of It\^o calculus. This is due to the necessity of estimating the stochastic integrals by means of the Burkholder-Davis-Gundy inequality.
\item[2.] The process $W$ does not need to be a Brownian motion, nor a Gaussian. It is therefore possible to obtain the so called \textit{non central} limit theorems, namely limit theorems with non-Gaussian limit. 
\item[3.] At a pathwise level, no moments of the initial condition are needed (observe that $X_0=0$). At the level of the CLT, the random initial condition does not even need to be measurable.
\item[4.] The noise is fixed at the beginning and it stays in the limit: indeed no Girsanov transform is involved.
\end{itemize}

\paragraph{A Moderate Deviations Principle.} Besides the CLT, one can look at large and moderate deviations from the deterministic solution. This corresponds to the study of the the speed of convergence to $0$ of the limit for $\epsilon\rightarrow 0$ of
\begin{align*}
Y_i^\epsilon:= \frac{\Phi_i(\tau_{\sqrt{\epsilon}}\mathbf{W})-\Phi_i(\mathbf{0})}{\sqrt{\epsilon}\lambda(\epsilon)}\,,
\end{align*}
where $\lambda(\epsilon)>0$ and $\lambda(\epsilon)\sqrt{\epsilon}\rightarrow 0$ and $\lambda(\epsilon)\rightarrow +\infty$ as $\epsilon\rightarrow 0$. For $\lambda(\epsilon)=1/\sqrt{\epsilon}$ we are in the large deviation principle (LDP) regime: the limit above converges pathwise to $0$ and, by means of the continuity of the It\^o-Lyons map, it is possible to establish a speed of convergence.

 In the large deviations we look at deviations of the stochastic solution from the deterministic one and we investigate bounds for the occurrence of rare events. We refer to M.~Ledoux, Z.~Qian, T.~Zhang \cite{Ledoux_Qian_Zhang_p_var} for the LDP via rough paths.
For $\lambda(\epsilon)\in (0,1/\sqrt{\epsilon})$, we are in the moderate deviations principle (MDP) regime: we look at \enquote{less rare} events in comparison to the LDP regime, but still rare in comparison to the average behaviour expressed in the CLT. Intuitively, we are looking at the deviations (or fluctuations) from the CLT behaviour. 
We restrict ourselves to the study of the MDP for SPDEs driven by a Brownian motion. Indeed a LDP for the Brownian motion is known and we can pass this property to the solution to the SPDEs by means of the continuity of the It\^o-Lyons map and the contraction principle. The following MDP result for the SPDEs mentioned above holds.
\begin{theorem}\label{th:MDP_intro} \textit(Moderate deviations principle).
	Let $ \mathbf{W}\equiv(W,\mathbb{W})$ be the Stratonovich lift of Brownian motion (in the sense of rough drivers). The sequence $((\Phi_i(\tau_{\sqrt{\epsilon}}\mathbf{W})-\Phi_i(\mathbf{0}))/\sqrt{\epsilon} \lambda(\epsilon))_\epsilon$ satisfies a large deviation principle on $L^\infty(H^1)\cap L^2(H^2)\cap \mathcal{V}^p(L^2)$ with speed $\lambda(\epsilon)^2$ and with good rate function  
	\begin{align*}
	\tilde{I}(f)=\inf\bigg(\frac{1}{2}\int_{0}^{T}|h(s)|^2 \dd s:h\in \mathcal{H}, \; D\Phi_i[\mathbf
	0](\mathbf{h})=f\bigg)\,, \quad \inf \emptyset:=+\infty\,,
	\end{align*}
	where $\mathcal{H}$ is the space of Cameron-Martin paths with respect to $W$ and $X^{h,i}=D\Phi_i[\mathbf
	0](\mathbf{h})$ is the unique solution to the skeleton equation
	\begin{align*}
	\delta X^{h,i}_{s,t}=\int_{s}^{t} [\Delta X^{h,i}_r +\gamma_i'(\Phi_i(\mathbf{0})_r)X^{h,i}_r]\dd r+h_{s,t}\Phi_i(\mathbf{0})_s+X^{h,i,\natural,}_{s,t}\,.
	\end{align*}
\end{theorem}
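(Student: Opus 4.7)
The plan is to combine three ingredients already flagged in the introduction: a Schilder-type LDP for the rescaled Brownian rough path, the continuity/linearity of the derivative $D\Phi_i[\mathbf{0}]$ (to invoke the contraction principle), and the optimal pathwise rate $\sqrt{\epsilon}$ from Theorem~\ref{teo:CLT_intro} (to secure exponential equivalence). The weak convergence / Boué--Dupuis route is avoided, as emphasised by the author.

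First, by the rough-path analogue of Schilder's theorem \cite{Ledoux_Qian_Zhang_p_var}, the rescaled Stratonovich lifts $\tau_{1/\lambda(\epsilon)}\mathbf{W}$ satisfy a large deviation principle in the $p$-variation rough path topology with speed $\lambda(\epsilon)^2$ and good rate function $I_0(\mathbf{h}) = \tfrac12 \int_0^T |h(s)|^2 \dd s$ when $\mathbf{h}$ is the canonical lift of a Cameron--Martin path $h \in \mathcal{H}$, and $+\infty$ otherwise. The derivative $D\Phi_i[\mathbf{0}]$ identified in Theorem~\ref{teo:CLT_intro} is a continuous linear map from the space of geometric rough paths into $L^\infty(H^1)\cap L^2(H^2)\cap \mathcal{V}^p(L^2)$ whose image depends only on the first level of its argument; in particular $D\Phi_i[\mathbf{0}](\tau_c \mathbf{W}) = c\,D\Phi_i[\mathbf{0}](\mathbf{W})$. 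The contraction principle therefore transfers the Schilder LDP to the sequence $D\Phi_i[\mathbf{0}](\tau_{1/\lambda(\epsilon)}\mathbf{W}) = \lambda(\epsilon)^{-1} D\Phi_i[\mathbf{0}](\mathbf{W})$, with the same speed and precisely the rate function $\tilde I$ of the statement.

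It remains to show that $Y_i^\epsilon$ and $D\Phi_i[\mathbf{0}](\tau_{1/\lambda(\epsilon)}\mathbf{W})$ are exponentially equivalent at speed $\lambda(\epsilon)^2$ in the norm of $L^\infty(H^1)\cap L^2(H^2)\cap \mathcal{V}^p(L^2)$, after which the classical transfer result (Dembo--Zeitouni, Thm.~4.2.13) concludes. Writing
$$Y_i^\epsilon - D\Phi_i[\mathbf{0}](\tau_{1/\lambda(\epsilon)}\mathbf{W}) = \lambda(\epsilon)^{-1}\bigl(X^\epsilon - D\Phi_i[\mathbf{0}](\mathbf{W})\bigr),$$
the quantitative CLT bound supplies a random constant $C(\mathbf{W})$, polynomial in the rough path norm $\|\mathbf{W}\|_{p\text{-var}}$, with $\|X^\epsilon - D\Phi_i[\mathbf{0}](\mathbf{W})\| \lesssim C(\mathbf{W})\,\epsilon^{1/4}$. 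Fernique integrability of the Brownian rough path yields stretched-exponential tails for $C(\mathbf{W})$, and since $\sqrt{\epsilon}\,\lambda(\epsilon)\to 0$ and $\lambda(\epsilon)\to\infty$, a Markov-type estimate gives
$$\limsup_{\epsilon \to 0}\,\frac{1}{\lambda(\epsilon)^2}\log\mathbb{P}\bigl(\|Y_i^\epsilon - D\Phi_i[\mathbf{0}](\tau_{1/\lambda(\epsilon)}\mathbf{W})\|>\delta\bigr) = -\infty$$
for every $\delta>0$.

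The main obstacle is this last step: one must track precisely how the CLT constant depends on $\|\mathbf{W}\|_{p\text{-var}}$ and verify that its Fernique-Gaussian tails dominate $\lambda(\epsilon)^2$ on the log-probability scale uniformly over the whole moderate window $\sqrt{\epsilon}\ll 1/\lambda(\epsilon)\ll 1$. It is here that the \emph{optimal} rate $\sqrt{\epsilon}$ in Theorem~\ref{teo:CLT_intro} is essential, since any slower pathwise rate would shrink the range of admissible scalings $\lambda(\epsilon)$ for which the MDP can be obtained by this exponential-equivalence route.
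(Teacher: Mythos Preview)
Your proposal is correct and follows the same three-step scheme the paper uses (Section~\ref{sec:MDP}, Theorem~\ref{th:MDP_abstract} and Lemmas~\ref{lemma:LDP_X_abst}--\ref{lemma:exp_equiv}): Schilder for $\tau_{1/\lambda(\epsilon)}\mathbf W$, contraction through the continuous map $\mathbf W\mapsto D\Phi_i[\mathbf 0](\mathbf W)$, then exponential equivalence via the pathwise CLT rate combined with Fernique integrability of the Brownian rough path and Dembo--Zeitouni~4.2.13. The concern you raise in the last paragraph is exactly what the paper addresses by assuming the factorised bound $\|X^\epsilon-X\|_{\mathcal Y}\le C\,\omega_{\mathbf Z}^{1/p}\sqrt{\epsilon}$ with $\mathbb E[\exp(C^2)]<\infty$ (hypothesis~1 of Theorem~\ref{th:MDP_abstract}), which makes the exponential Markov step in Lemma~\ref{lemma:exp_equiv} work across the entire moderate window.
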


The usual approach to LDPs and MDPs for SPDEs is the, so-called, \enquote{weak convergence approach} introduced and studied in P.~Dupuis, R.~S.~Ellis \cite{Dupuis_Ellis}, A.~Budhiraja, P.~Dupuis \cite{budhiraja2000variational}, M.~Boué,  P.~Dupuis \cite{boue_dupuis}. 
Instead, we use a strategy of proving the MDP via the exponential equivalence for SPDEs, as introduced by R.~Wang, T.~Zhang \cite{wang_zhang}, combined with the contraction principle.  
We show that $Y_i^\epsilon$ is exponentially equivalent to $(D\Phi_i[\mathbf{0}](\mathbf{W})/\lambda(\epsilon))_{\epsilon}$. Then, from the contraction principle $(D\Phi_i[\mathbf{0}](\mathbf{W})/\lambda(\epsilon))_{\epsilon}$ satisfies a LDP with a specific rate function, it follows that also $(Y_i^\epsilon)_\epsilon$ satisfies a MDP. 
Note that the random initial condition needs to be measurable for the MDP and have some finite moments. This follows from the fact that the pathwise estimates used in the MDP are dependent on the initial condition (see Lemma~\ref{lemma:exp_equiv}). A MDP for the heat equation and for the reaction-diffusion equation is already known: the proof presented here is different, since it does not rely on the weak convergence approach and it exploits the contraction principle. The MDP for the LLG is new.

We explore now more technical aspects.

\textbf{On the notion of derivative.} The differentiability of the It\^o-Lyons map is a very well studied topic, in particular in the framework of rough differential equations. In this work we rely on a notion of Gateaux-like derivative introduced by Z.~Qian and J.~Tudor \cite{Qian_Tudor}. The authors construct a derivative which extends the notion of Malliavin calculus on the whole space of weakly geometric rough paths and widens the possible directions of derivation beyond the Cameron-Martin directions. Their notion of derivative has a geometric interpretation and they construct a tangent space, where the derivative lies. We follow their considerations regarding the notion of increment and the definition of derivative. We choose a restriction of this notion of derivative to geometric rough paths because we also want to allow differentiability in direction that are neither of complementary Young regularity nor in the subspace of controlled rough paths. We also discuss some further types of directions beyond the Cameron-Martin paths associated with Brownian motion, already present in the literature (see the literature review below). In particular, we need to consider random rough paths which are measurable with respect to the underlying probability space.

\textbf{CLT as convergence to a derivative not centered in $\mathbf{0}$, i.e. $D\Phi[\mathbf{H}](\mathbf{W})$.} Classically, it is usual to look at the deviations of an SPDE from the deterministic solution to the associated PDE. This corresponds to the convergence of the sequence $(X^\epsilon)_{\epsilon}$ to a pathwise derivative in $\mathbf{0}$ in the direction $\mathbf{W}$, denoted by $D\Phi[\mathbf{0}](\mathbf{W})$. Let consider the easy case of a random geometric rough path $\omega\mapsto (B(\omega),\mathbb{B}(\omega))$ and the joint lift with a random path $H$ of complementary Young's regularity with respect to the Brownian motion. In this case, we can study the deviation of the unique solution $u^{\epsilon,\mathbf{H}}$ to 
\begin{align*}
\delta u^{\epsilon,\mathbf{H}}_{s,t}=\int_{s}^{t}b(u^{\epsilon,\mathbf{H}}_r)\dd r+\int_{s}^{t} u^{\epsilon,\mathbf{H}}_r\dd \{\mathbf{H}+\tau_{\sqrt{\epsilon}}\mathbf{W}\}\, .
\end{align*}
from the pathwise unique solution $u^\mathbf{H}$ to the equation
\begin{align}\label{eq:u_H_intro}
\delta u^{\mathbf{H}}_{s,t}=\int_{s}^{t}b(u^{\mathbf{H}}_r)\dd r+\int_{s}^{t} u^{\mathbf{H}}_r\dd \mathbf{H}_r\, .
\end{align}
We can consider also a random rough path $\omega\mapsto\mathbf{H}(\omega)$, where each path $H(\omega)$ is of complementary Young's regularity. In this case, the integral in \eqref{eq:u_H_intro} is a Young's integral and the CLT corresponds to studying the deviations from an SPDE (in place than the deviations from the deterministic solution). 
Note that we are looking at the deviations of a stochastic equation from another stochastic equation. Under well posedness hypothesis, the CLT in this case coincides with the convergence to the pathwise directional derivative $D\Phi[\mathbf{H}](\mathbf{W})$.
The directional derivative $z^\mathbf{H}=D\Phi[\mathbf{H}](\mathbf{W})$ is then the solution to an SDE with both additive and multiplicative noise, given by
\begin{align*}
\delta z^{\mathbf{H}}_{s,t}&=\int_{s}^{t}b'(u^{\mathbf{H}}_r)z^{\mathbf{H}}_r \dd r+W_{s,t}u^{\mathbf{H}}_s+H_{s,t}z^{\mathbf{H}}_{s}+([HW]+[WH])_{s,t}u^{\mathbf{H}}_s+\mathbb{H}_{s,t}z^{\mathbf{H}}_s+(z^{\mathbf{H}})^\natural_{s,r,t}\,,
\end{align*}
where $[HW]$ is the Young's integral of $H$ against $W$ (resp. $[WH]$ is the the Young's integral of $W$ against $H$).
 It is also possible to consider $W, H$ to be two independent Brownian motions and consider the joint lift of the two with respect to the Stratonovich integration theory.
The problem of perturbing a given rough differential equation by another rough path is already present in the literature concerning non linear filtering, also in the rough paths framework (see e.g. J. Diehl, H. Oberhauser, S. Riedel \cite{Diehl_Ober_Riedel}, J.~Diehl, P.~Friz, W.~Stannat \cite{Diehl_Friz_Stannat}). Those equations from non linear filtering can represent a motivation for looking into $D\Phi[\mathbf{H}](\mathbf{W})$ deviations of a rough differential equation driven by $\mathbf{H}$ in the direction $\mathbf{W}$.

\subsubsection{Overview of the literature.}
\textit{CLTs and MDPs.} Some references in the literature for the convergence of stochastic PDEs to the CLT in expectation are the works \cite{wang_zhai_zhang,wang_zhang, Xiong_Zhang}. 
Recently, L.~Galeati and D.~Luo studied a CLT and a LDP in the framework of regularisation via transport noise \cite{Galeati_Dejun}: also in this case, the authors obtain the convergence to the CLT in $\mathcal{L}^p(\Omega)$. We also refer to this paper for a review on the CLTs present in the literature concerning transport noise. The MDPs for stochastic partial differential equations have applications in the field of importance sampling: it is often difficult to simulate numerically events in the large deviations regime: this is due to the non linearity of the rate function. On the contrary, the rate function for MDPs depends on a linear equation: one simulates less rare events in comparison to the LDPs regime, but with potential computational advantages. We refer the interested reader to the works of I.~Gasteratos, M.~Salinis, K.~Spiliopoulos \cite{ioannis_paper_2,ioannis_paper_1}.

\textit{Pathwise Malliavin Calculus.} The differentiability of the It\^o-Lyons map is not a new topic in rough path theory: different types of pathwise Malliavin-like calculi have been introduced. We mention Chapter 11, Chapter 20 in P.~Friz, N.~B.~Victoir \cite{FrizVictoir},  Chapter 11 in P.~Friz, M.~Hairer \cite{FrizHairer} for an introduction. 
We refer to work of  L.~Coutin and A.~Lejay \cite{coutin_lejai_omega_lemma} and the references therein, for further extensions. We also mention the work of G.~Cannizzaro, P.~K.~Friz, P.~Gassiat \cite{cannizzaro_friz_gassiat} in the context of singular SPDEs: the authors introduce a Malliavin calculus in the framework of regularity structures. Y.~Inahama \cite{Inahama_Malliavin} studied the Malliavin differentiability of rough differential equations driven by Gaussian rough paths. In P.~Friz, T.~Nilssen, W.~Stannat \cite{FrizNilssenStannat}, the authors study the stability of semi-linear rough PDEs by means of a pathwise Malliavin calculus.
A different approach to pathwise Malliavin calculus is discussed in Z.~Qian, J.~Tudor \cite{Qian_Tudor}: the authors build a differential structure on the space of weakly geometric rough paths. In contrast to the other works, they highlight the possibility of considering more directions than the Cameron-Martin directions. 

The pathwise Malliavin calculus is normally employed to establish a H\"ormader's theorem \cite{cass_friz_2010,cass_friz_2011,cass_hairer_litterer_tindel,friz_gess_jain_monrad}: the application of Malliavin calculus to the CLT appears to be new in the rough path framework. In the literature, there are results of CLT for stochastic integrals via Malliavin calculus: see for instance D.~Harnett, D.~Nualart  \cite{HarnettNualartCLT}.
In contrast to the present work, the Malliavin calculus is used as a tool, whereas here the pathwise derivative is itself the object of interest in the CLT.

\textit{Beyond the complementary Young regularity paths.} We are interested in lifting two different stochastic processes jointly to the space of rough paths. At the mere path level, we can always lift two weakly geometric rough paths to a joint weakly geometric rough path. This is the content of the extension theorem in T.~Lyons, N.~B.~Victoir \cite{FrizLyonsExtension}. See also the work of L.~Broux, L.~Zambotti \cite{Broux_Zambotti} for a recent approach to the extension theorem and other applications.  But, in order to get measurability of the joint lift, the joint rough path, as a random object, needs to be measurable. For this reason we are interested in measurable joint lifts: the first example is the lift of a random rough path (a rough path valued random variable) jointly with a path of complementary Young regularity: in this case, the lift is unique and measurable (\cite{FrizHairer,FrizVictoir}). J.~Diehl, H.~Oberhauser, S.~Riedel \cite{Diehl_Ober_Riedel} introduce a joint lift between a Brownian motion and a deterministic geometric rough path, by using Stratonovich calculus.
In this last paper, the authors discuss the relation between the joint lift that they can construct and the work of T.~Lyons \cite{lyons1991non_existence} of non existence of path integrals for Brownian motion (with different constructions).
 They also show how to jointly lift a multidimensional Brownian motion with a multidimensional semi-martingale to a geometric rough path (under the assumption that the two are independent). Those lifts to random rough paths are measurable. In P.~Friz, M.~Hairer \cite{FrizHairer}, the authors use the same construction for an It\^o lift on the second iterated integrals. In J.~Diehl, P.~Friz, W.~Stannat \cite{Diehl_Friz_Stannat}, the authors study an SDE driven by an It\^o multiplicative noise and an additional rough path: they apply the problem to non linear filtering. I.~Chevyrev, P.~Friz \cite{chevyrev_friz} show the existence of a joint lift for general semi-martingales (also allowing for jumps). P.~Friz, P.~Zorin-Kranich \cite{FrizKranichranich} prove the existence of a joint lift for semi-martingales, by means of It\^o calculus. The construction relies on the stochastic sewing Lemma in K. L\^e \cite{Le_stoch_sewing}. We mention also the construction in P.~Friz, A.~Hocquet, K. L\^e \cite{Friz_Hocquet_Le}. 

\textit{Applications of the continuity of It\^o-Lyons map.} The main advantage of interpreting stochastic integration via rough paths is the continuity of the so called It\^o-Lyons map: the map which associates to the rough path the integral is continuous. This fact has been employed in different directions: if there exists a unique solution to a rough differential equation driven by a linear multiplicative noise, then a direct consequence is the so-called Wong-Zakai convergence, a support theorem for the solutions and a large deviation principle (in the sense of deviations of the perturbed equation from the deterministic solution). See M.~Ledoux, Z.~Qian, T.~Zhang \cite{Ledoux_Qian_Zhang_p_var}, Chapter 19 in P.~Friz, N.~B.~Victoir \cite{FrizVictoir} and the references therein for a general theory.  

\textit{Energy solutions for SPDEs via rough paths.} We mention \cite{deya2016priori,flandoli_hofmanova_luo_nilssen,LLG1D,hocquet2018ito,hocquet2018quasilinear,hofmanova_leahy_nilssen_AAP,tornstein_embedding} for applications of rough paths theory to the study of energy solutions to SPDEs and their applications. 

\textit{Stochastic Landau-Lifschitz-Gilbert equation on a one dimensional domain.} For existence results of solutions to the stochastic Landau-Lifschitz-Gilbert equation in one dimension and large deviations principles for the LLG we mention the works of Z.~Brzeźniak, B.~Goldys, T.~Jegeraj \cite{brzezniak_LDP} and E.~Gussetti, A.~Hocquet \cite{LLG1D}. For the Wong-Zakai convergence of the solutions to the LLG and the related difficulties we mention the first paper on the topic of  Z.~Brzeźniak, U.~Manna, D.~Mukherjee \cite{brzezniak2019wong} and two subsequent works that address the problem via rough paths theory, from E.~Gussetti, A.~Hocquet  \cite{LLG1D} and K.~Fahim, E.~Hausenblas, D.~Mukherjee \cite{fahim_hausenblas}. For other applications of rough paths theory to the study of the LLG, we mention E.~Gussetti \cite{LLG_inv_measure}, A.~Hocquet, A.~Neamţu \cite{hocquet_neamtu}. We mention  M. Neklyudov, A.~Prohl \cite{neklyudov2013role} for the study of a finite dimensional ensemble of particles and invariant measures. For many problems related to the stochastic LLG, we mention the monograph \cite{Banas_book}.

\subsubsection{Organization of the paper.}In Section~\ref{sec:def_and_notations}, we introduce some basic definitions. In Section~\ref{sec:compatible_direction}, we define the random compatible directions, we give some examples and we introduce the pathwise directional derivative. 
In Section~\ref{sec:CLT_theoric}, we study the CLT for the heat equation, the reaction-diffusion equation and for the stochastic Landau-Lifschitz-Gilbert equation on a one dimensional torus. In Section~\ref{sec:MDP}, we prove a moderate deviation principle for the SPDEs considered before. In Appendix \ref{section:appendix_RP}, we collect some useful results on rough paths theory. In Appendix \ref{Appendix_B_new}, we establish well posedness and continuity of the It\^o-Lyons map for initial condition in $H^1$ for the heat equation. Finally, in Appendix \ref{Appendix_B} we show the well posedness for the limit equation $X=D\Phi[\mathbf{g}](\mathbf{W})$ and we prove the continuity of the map $\mathbf{W}\mapsto X$.

\subsubsection{Acknowledgements.}The author is warmly grateful to Professor M.~Hofmanov\'a for her many advices and constant help while running this project. 
The author is also thankful to F.~Bechtold  and U.~Pappalettera for many interesting discussions and to L.~Galeati for many references on the CLTs. The research was funded by the Deutsche Forschungsgemeinschaft (DFG, German Research Foundation) – SFB 1283/2 2021 – 317210226.

\section{Notations}\label{sec:def_and_notations}
We denote by $\mathbb{N}$ the space of natural numbers $1,2,\dots$, by $\mathbb{N}_0:=\mathbb{N}\cup\{0\}$.
Throughout the paper, a finite but arbitrary time horizon $T>0$ is fixed. 
We denote by $\mathbb{S}^2$ the unit sphere in $\R ^3$ and by $\T^d=\R^d/\mathbb Z^d$ the unit torus.
By $a\cdot b$, we denote the inner product in $\R^n$ for any $n\in\mathbb N$ and $a,b\in \R^n$, and by $|\cdot|$ the norm inherited from it. We will not distinguish between the different dimensions, it will be clear from the context. If $n=3$ we recall the definition
$a\times b:=(a_2b_3-a_3b_2,a_3b_1-a_1b_3,a_1b_2-a_2b_1)$. The space of bounded linear operators from a Banach space $E$ to a Banach space $F$ is denoted by $\mathcal L(E,F)$ and we denote $\mathcal{L}(E):=\mathcal{L}(E,E)$. By $\mathcal{L}_a(\mathbb{R}^3)$ we denote the space of antisymmetric matrices from $\mathbb{R}^3$ to $\mathbb{R}^3$.
For $a\equiv(a_1,\cdots,a_d),b\equiv(b_1,\cdots, b_d)\in\mathbb{R}^d$, we denote $a\otimes b=(a_ib_j)_{i,j}\in \mathcal{L}(\mathbb{R}^d)$ and $a\odot b:=a\otimes b+b\otimes a$. When $E=\R^n,$ we denote by $\mathbf 1_{n\times n}$ the identity map.

\paragraph{Paths and controls.}
Let $I$ be a subinterval of $[0,T]$ and let $I^2:=I\times I$. The shorthand `$\forall s \leq t \in I$' will be used instead of `$\forall (s,t)\in I^2$ such that $s \leq t.$'
Given a one-index map $g$ defined on $I$, we denote by $\delta g_{s,t}:=g_t-g_s$ for $s \leq t \in I$.
If $G$ is defined on $I^2$, we denote by $\delta G_{s,r,t}:=G_{s,t}-G_{s,r}-G_{r,t}$ for $s \leq r\leq t\in I$. 
We call \textit{increment} any two-index map which is given by $\delta g_{s,t}$ for some $g=g_t.$
As it is easy to observe, increments are exactly those 2-index elements $G_{s,t}$ for which $\delta G_{s,r,t}\equiv 0$.

We say that a continuous map $\omega\colon \{s\leq t \in I\}\rightarrow [0,+\infty)$ is a \textit{control on $I$} if $\omega$ is continuous, $\omega(t,t)=0$ for any $t\in I$ and if it is superadditive, i.e.\ for all $s \leq r \leq t$
\begin{align*}
\omega(s,r)+\omega(r,t)\leq \omega (s,t)\,.
\end{align*}
Given a control $\omega$ on $I:=[a,b]$, we also denote $\omega(I):=\omega(a,b)$.

Let $(E,\|\cdot\|_E)$ be a Banach space and $p>0$, we denote by $\mathcal{V}^p_{2}(I;E)$ the set of two-index maps $G\colon\{s\leq t \in I\}\rightarrow E$ that are continuous in both components such that $G_{t,t}=0$ for all $t\in I$ and there exists a control $\omega$ on $I$ so that
\begin{equation} \label{condition_control_1}
\|G_{s,t}\|_E\leq \omega(s,t)^{1/p}
\end{equation}
for all $s\leq t \in I$. 
The space $\mathcal{V}^p_{2}(I;E)$ is equivalently defined as the space of two-index maps of finite $p$-variation, namely $G_{s,t}$ belongs to $\mathcal{V}^p_{2}(I;E)$ if and only if 
\[
\|G\|_{\mathcal{V}_{2}^{p}(I;E)}:=\sup_{\pi}\Big (\sum\nolimits_{[u,v]\in\pi}\|G_{u,v}\|_E^p\Big)^\frac{1}{p}<+\infty\,,
\]
where the supremum is taken over the set of partitions $\pi=\{[t_0,t_1],\dots [t_{n-1},t_{n}]\}$ of $I$.
Moreover, the above semi-norm $\|\cdot\|_{\mathcal{V}^p_{2}(I;E)}$ is in fact equal to the infimum of $\omega(I)^{1/p}$ over the set of controls $\omega$ such that \eqref{condition_control_1} holds (see \cite{hocquet2017energy} or \cite[Paragraph 8.1.1]{FrizVictoir}).	
In a similar fashion we define the space $\mathcal{V}^p(I;E)$ of all continuous paths $g\colon I\to E$ such that $\delta g\in\mathcal{V}^p_{2}(I;E)$. It is endowed with the norm $\|g\|_{\V^p(I;E)}=\sup_{t\in I}\|g_t\|_{E}+\|\delta g\|_{\mathcal{V}_{2}^{p}(I;E)}$.

We will sometimes need to work with local versions of the previous spaces. For that purpose, we define $\mathcal{V}_{2,\mathrm{loc}}^{p}(I;E)$ as the space of two-index maps $G\colon\{s\leq t \in I\}\rightarrow E$ such that there exists a finite covering $(I_k)_{k\in K}$ of $I$, $K\subset \mathbb{N}$, so that $G\in \mathcal V_{2}^{p}(I_k;E)$ for all $k\in K$. 
We define the linear space 
\begin{align*}
\mathcal{V}^{1-}_{2,\mathrm{loc}}(I;E):=\bigcup_{0<p<1} \mathcal{V}_{2,\mathrm{loc}}^{p}(I;E)\,.
\end{align*}
We denote by $\mathcal{V}^p(E)=\mathcal{V}^p([0,T];E),$ 
and by \( \mathcal{V}_2^p(E)=\mathcal{V}_2^p([0,T];E) \). We denote by $C([0,T];E)$ the space of continuous functions defined on $[0,T]$ with values in $E$. 
\paragraph{Sobolev spaces.}
For $n,d\in\mathbb{N} $ we consider the usual Lebesgue spaces $L^p:=L^p(\T^d,\R^n)$, for $p\in[1,+\infty]$ endowed with the norm $\|\cdot\|_{L^p}$ and the classical Sobolev spaces $W^{k,q}:=W^{k,q}(\T^d,\R ^n)$ for integer $q\in [1,+\infty]$ and $k\in\mathbb{N}$ endowed with the norm $\|\cdot\|_{W^{k,q}}$.
We also denote by $H^k:=W^{k,2}(\T^d,\R ^n)$. 
Again, our abbreviations do not distinguish between different (finite) dimensions for the target space.
Sometimes it will be necessary to consider functions taking values in the unit sphere $\mathbb{S}^2\subset\R^3$: for that purpose, we adopt the notation
\begin{equation*}
H^k(\T^d ;\mathbb{S}^2):=H^k(\T^d;\R^3)\cap \{f:\T^d \rightarrow \R^3 \, \textrm{s.t.}\, |f(x)|=1\text{ a.e.}\}\,,
\end{equation*}
for $k\in \mathbb{N}$. 
Finally, we will denote by $L^p(W^{k,q}):=L^p([0,T];W^{k,q}(\T^d ;\R ^n))$. 
\paragraph{Rough paths and rough drivers.}
We introduce the notion of geometric rough path and of controlled rough path. We refer the reader to the monographs \cite{FrizHairer}, \cite{FrizVictoir} for an introduction to rough paths theory. 
\begin{definition}\label{defi-rough-path}
	Fix a time $T>0$ and let  $2\leqslant p<3$. Then we call a \textit{continuous $p$-variation rough path} on $[0,T]$ any pair 
	\begin{equation}\label{p-var-rp}
	\mathbf{X}=(X,\mathbb{X}) \in \mathcal{V}^p_2 (\mathbb{R}^d) \times \mathcal{V}^{p/2}_2 (\mathbb{R}^{d\times d}) 
	\end{equation}
	that satisfies \textit{Chen's relation} 
	\begin{equation}\label{chen-rela}
	\delta \mathbb{X}_{s,r,t}=X_{s,r}\otimes X_{r,t} \, \qquad s<r<t\in [0,T] \,,
	\end{equation}
	 A continuous $p$-variational rough path $\mathbf{X}$ is said to be a \textit{p-geometric rough path} if  for all $s<t\in [0,T]$
    \begin{align*}
    \mathrm{Sym}(\mathbb{X}_{s,t})=\frac{1}{2}X_{s,t}\otimes X_{s,t}\,,
    \end{align*}
    where $ \mathrm{Sym}(\mathbb{X}_{s,t}):=(\mathbb{X}_{s,t}+\mathbb{X}_{s,t}^T)/2$ is the symmetric part of the matrix $\mathbb{X}_{s,t}$.
\end{definition}
Given a rough path $\mathbf{X}\equiv(X,\mathbb{X})$, we refer to $X$ as \textit{first iterated integral} and to $\mathbb{X}$ as \textit{second iterated integral}. Given a path $X\in \mathcal{V}^p(\mathbb{R}^d)$, we say that a rough path $\mathbf{X}\equiv(X,\mathbb{X})$ is the \textit{lift} of $X$ or that $X$ \textit{can be lifted} to a rough path $\mathbf{X}$.
We denote by $\mathcal{RP}^p(\mathbb{R}^d)$ the space of continuous geometric $p$-variation rough paths.
Throughout the paper, we consider rough paths of $p$-variation with $2\leq p<3$ (unless stated otherwise).
\begin{remark}
	Recall that Chen's relation corresponds to some good additivity properties of an integation theory, whereas the geometricity property corresponds to the notion of integration by part.
\end{remark}
\begin{remark}
If $\mathbf{X}$ is a $p$-geometric rough path, then it can be obtained as the limit, for the $p$-variation topology involved in \eqref{p-var-rp}, of a sequence of smooth rough paths $(\mathbf{X}^\epsilon)_{\epsilon}$, that is with $\mathbf{X}^\epsilon=(X^{\epsilon},\mathbb{X}^{\epsilon})$ explicitly defined as
\begin{equation*}
X^{\epsilon}_{s,t}:=\delta x^{\epsilon}_{s,t} \, \qquad \mathbb{X}^{\epsilon}_{s,t}:=\int_s^t \delta x^{\epsilon}_{s,r} \, \otimes dx^{\epsilon}_r \,
\end{equation*}
for some smooth path $x^\epsilon:[0,T] \to \mathbb{R}^d$ (see \cite{FrizHairer}). 
\end{remark}
We define for all $\epsilon>0$ the \textit{dilatation operator} as 
$\tau_{\epsilon}\mathbf{X}:=(\epsilon X,\epsilon^2\mathbb{X})$. We extend Definition~\ref{defi-rough-path} by introducing the space dependence: let $g\in L^\infty(\mathbb{T}^d,\mathbb{R})$ and $\mathbf{X}\equiv(X,\mathbb{X})\in \mathcal{RP}^p(\mathbb{R}^d)$. For a.e. $x\in\mathbb{T}^d$, we define
\begin{align}\label{eq:def_space_dependence_rp}
\mathbf{W}(x)\equiv(W(x),\mathbb{W}(x)):= \tau_{g(x)}\mathbf{X}\equiv(g(x)X,g(x)^2\mathbb{X})\in \mathcal{V}^p_2(\mathbb{R}^d)\times \mathcal{V}^{p/2}_2(\mathbb{R}^{d\times d})\,.
\end{align}
We refer to $\mathbf{W}$ as \textit{rough driver}. Indeed this procedure to include the space dependence into the rough path can be generalised by introducing the formalism of the unbounded rough driver (first introduced in \cite{bailleul2017unbounded}). In this work, we restrict the attention to rough paths with spatial component introduced as in \eqref{eq:def_space_dependence_rp}.  We define the space of rough drivers with spatial component in $L^\infty(\mathbb{T}^d;\mathbb{R})$ by
\begin{align*}
\mathcal{RD}^p(\mathbb{R}^d,L^\infty) :=\{\mathbf{W}=\tau_{g(x)}\mathbf{X}: g\in L^\infty(\mathbb{T}^d;\mathbb{R}),\quad \mathbf{X}\in \mathcal{RP}^p(\mathbb{R}^d)\}\,.
\end{align*}
We endow the space $\mathcal{RD}^p(\mathbb{R}^d,L^\infty)$ with the metric 
\begin{align*}
\rho(\mathbf{G},\mathbf{H}):=\|G-H\|_{\mathcal{V}^p_2(L^\infty)}+\|\mathbb{G}-\mathbb{H}\|_{\mathcal{V}^{p/2}_2(L^\infty)}\,,
\end{align*}
for all $\mathbf{G}\equiv(G,\mathbb{G})$ and $\mathbf{H}\equiv(H,\mathbb{H})$ in $\mathcal{RD}^p(\mathbb{R}^d,L^\infty)$.
\section{Random compatible directions and pathwise directional derivatives.} \label{sec:compatible_direction}
We define what we mean by \textit{increment} of a map $\Phi:\mathcal{RP}^p(\mathbb{R}^d)\rightarrow E$, where $E$ is a real vectorial space. 
Given two $p$-geometric rough paths $\mathbf{X}\equiv(X,\mathbb{X})$ and $\mathbf{Y}\equiv(Y,\mathbb{Y})$, for all $\epsilon>0$  the \textit{increment} of a map $\Phi:\mathcal{RP}^p(\mathbb{R}^d)\rightarrow E$ in $\mathbf{X}$ in the direction $\mathbf{Y}$ is the difference
\begin{align}\label{eq:increment}
\Phi(\{\mathbf{X}+\tau_\epsilon\mathbf{Y}\})-\Phi(\mathbf{X})\,,
\end{align}
where we have not yet defined the operation $\{\,\cdot\,+\, \cdot \, \}$. We need to define the sum \enquote{$\{\mathbf{X}+\tau_\epsilon\mathbf{Y}\}$} such that it remains a geometric rough path.
This problem is already addressed in Z.~Qian, J.~Tudor \cite{Qian_Tudor} in the broader framework of weak geometric rough paths: we follow their considerations regarding the definition of addition of two rough paths (see also \cite{FrizVictoir}), which we recall for the reader's convenience. We mention \cite{bellingeri_friz_paycha_preiss} for a different notion of sum of rough paths (for smooth paths). 

\paragraph{Sum of rough paths.}
Let $\mathbf{V},\mathbf{W}\in\mathcal{RP}^p(\mathbb{R}^d)$. 
Given two $p$-geometric rough paths $\mathbf{V}\equiv(V,\mathbb{V})$ and $\mathbf{W}\equiv(W,\mathbb{W})$, we can define the sum $\mathbf{V}+\mathbf{W}$ element-wise by
\begin{align}\label{eq:operation_plus_false}
\mathbf{V}+\mathbf{W}:=(V+W,\mathbb{V}+\mathbb{W})\,.
\end{align}
But then the sum of the two paths $\mathbf{V}+\mathbf{W}$ does not satisfy Chen's relation \eqref{chen-rela}, thus it is not a rough path in the sense of Definition~\ref{defi-rough-path}. 
 As a consequence the space $\mathcal{RP}^p(\mathbb{R}^d)$ is only a metric space and not a vectorial space. 
To get some intuition on what to expect as second iterated integral, given two smooth paths $X,Y$, we compute
\begin{align*}
\int_{s}^{t}(X+Y)_{s,r}\otimes d(X+Y)_r= \int_{s}^{t}X_{s,r}\otimes dX_r+\int_{s}^{t} X_{s,r}\otimes dY_r+\int_{s}^{t} Y_{s,r}\otimes dX_{r}+\int_{s}^{t}Y_{s,r}\otimes dY_r\,,
\end{align*}
where the integrals are all interpreted as Young's integrals. We see that, by defining $\mathbb{X}:=\int_{s}^{t}X_{s,r}\otimes dX_r$ and $\mathbb{Y}:=\int_{s}^{t}Y_{s,r}\otimes dY_r$, the second iterated integral in \eqref{eq:operation_plus_false} does not take into consideration the integrals
\begin{align*}
[XY]:=\int_{s}^{t} X_{s,r}\otimes dY_r\,, \quad [YX]:=\int_{s}^{t} Y_{s,r}\otimes dX_{s,r}\,,
\end{align*}
which we refer to as \textit{crossed integrals} or \textit{mixed integrals}, that appear naturally from the bilinearity of the integral map. As a consequence, we can define the sum of $(X,\mathbb{X})$ and $(Y,\mathbb{Y})$ as
\begin{align}\label{eq:operation_plus}
\{\mathbf{X}+\mathbf{Y}\}&:=(X+Y,\mathbb{X}+\mathbb{Y}+[XY]+[YX])\,,
\end{align}
which has the advantage of satisfying Chen's relation \eqref{chen-rela}. Of course, there is no information in $(X,\mathbb{X})$ and $(Y,\mathbb{Y})$ that includes the mixed integrals: in this example we have assumed to have in both cases Young's integration and therefore we know how to interpret the mixed integrals. 
For all $\epsilon>0$ the increment has the form
\begin{equation}\label{eq:increment_intro}
\begin{aligned}
\{\mathbf{X}+\tau_\epsilon\mathbf{Y}\}-\mathbf{X}&=(X+\epsilon Y-X,\mathbb{X}+\epsilon^2\mathbb{Y}+\epsilon[XY]+\epsilon[YX]-\mathbb{X})\\
&=(\epsilon Y,\epsilon^2\mathbb{Y}+\epsilon[XY]+\epsilon[YX])\,,
\end{aligned}
\end{equation}
where the minus operation is the element-wise difference.
We remark that the element-wise difference of rough paths is no more a rough path, since it fails to satisfy Chen's relation \eqref{chen-rela}. Thus also the increment in \eqref{eq:increment_intro} is not a rough path. Nevertheless, we will see this kind of structure appearing in the applications.

\paragraph{Random rough paths.}
We consider the rough paths appearing as a random variable on a probability space $(\Omega,\mathcal{F},\mathbb{P})$.
\begin{definition}
Let $(\Omega,\mathcal{F},\mathbb{P})$ be a probability space and $W$ be a stochastic process so that $W(\omega)\in \mathcal{V}^p(\mathbb{R}^d)$ $\mathbb{P}$-a.s. Assume that there exists a random variable $\omega \mapsto \mathbf{W}(\omega)$ so that $\mathbf{W}(\omega)\in \mathcal{RP}^p(\mathbb{R}^d)$  $\mathbb{P}$-a.s. Then the stochastic process $\mathbf{W}$ is a \textit{random geometric $p$-rough path}.
\end{definition}
We state the previous definition for more regular paths.
\begin{definition}
	 Let $(\Omega,\mathcal{F},\mathbb{P})$ be a probability space, $q\in [1,2)$ and $h$ be a stochastic process so that $h(\omega)\in \mathcal{V}^q(\mathbb{R}^d)$ $\mathbb{P}$-a.s. We call the map $\omega\rightarrow h(\omega)$ \textit{random $q$-rough path}.
\end{definition}
\paragraph{Deterministic and random compatible directions.}
\begin{definition}
	Let $(\Omega,\mathcal{F},\mathbb{P})$ be a probability space, $2<q\leq p$ and $\omega\mapsto \mathbf{V}(\omega)\equiv (V(\omega),\mathbb{V}(\omega))$, $\omega\mapsto \mathbf{W}(\omega)\equiv (W(\omega),\mathbb{W}(\omega))$ be two random geometric $p$-rough paths on $(\Omega,\mathcal{F},\mathbb{P})$ with values in $\mathbb{R}^d$. We say that $(\mathbf{V},\mathbf{W})$ are \textit{random $q$-compatible directions} on $(\Omega,\mathcal{F},\mathbb{P})$ if there exists a measurable subset $\Omega_{\mathbf{V},\mathbf{W}}\subset \Omega$ of full measure and a random geometric $q$-rough path $\omega\mapsto \mathbf{Z}\equiv(Z(\omega), \mathbb{Z}(\omega))$ such that $Z(\omega)=(V(\omega),W(\omega))$ and
		\begin{equation*}
			\mathbb{Z}(\omega)= \begin{pmatrix}
			\mathbb{V}_{s,t}(\omega) & \mathbb{L}_{s,t} (\omega)\\
			\mathbb{M}_{s,t}(\omega)& \mathbb{W}_{s,t}(\omega)
			\end{pmatrix}\,,
		\end{equation*}
	for all $\omega\in \Omega_{\mathbf{V},\mathbf{W}}$, where $\mathbb{M}(\omega),\mathbb{L}(\omega)$ are two index maps with values in the space of $d\times d$ matrices.
\end{definition}

\begin{definition}
	Let $2<q\leq p$ and $\mathbf{V}\equiv (V,\mathbb{V})$, $ \mathbf{W}\equiv (W,\mathbb{W})$ be two geometric $p$-rough paths with values in $\mathbb{R}^d$. We say that $(\mathbf{V},\mathbf{W})$ are \textit{deterministic $q$-compatible directions} if there exists a geometric $q$-rough path $\mathbf{Z}\equiv(Z, \mathbb{Z})$ such that $Z=(V,W)$ and
	\begin{equation*}
	\mathbb{Z}= \begin{pmatrix}
	\mathbb{V}_{s,t}& \mathbb{L}_{s,t} \\
	\mathbb{M}_{s,t}& \mathbb{W}_{s,t}
	\end{pmatrix}\,,
	\end{equation*}
	where $\mathbb{M},\mathbb{L}$ are two index maps with values in the space of $d\times d$ matrices.
\end{definition}

\begin{remark}
	\textbf{Existence and non uniqueness of a joint lift.} Given two $p$-geometric rough paths, it is always possible to construct a joint weakly geometric  $q$-rough path $\mathbf{Z}$, for some $2<q<p$. The existence is true in the more general framework of weakly geometric rough paths, as a consequence of the extension theorem of T.~Lyons, N.~B.~Victoir \cite{FrizLyonsExtension} (this construction is used in Z.~Qian, J.~Tudor \cite{Qian_Tudor}). In general, the lift is not unique: the matrices appearing on the diagonal of $\mathbb{Z}$ do not necessarily correspond to $\mathbb{V},\mathbb{W}$ and also the choice of the integrals $\mathbb{M},\mathbb{L}$ is not univocal.
\end{remark}
\begin{remark} \textbf{ $(0,\mathbf{W})$ is a random compatible direction for every rough path $\mathbf{W}$.}
	Assume that $\omega\mapsto\mathbf{W}(\omega)\equiv(W(\omega),\mathbb{W}(\omega))$ is a random $p$-geometric rough path. In this case, the joint lift $\omega\mapsto \mathbb{Z}(\omega)$ can be defined $\mathbb{P}$-a.s. by
	\begin{equation*}
	\begin{aligned}
	Z_{s,t}(\omega)\equiv(0,W_{s,t}(\omega)),\quad \mathbb{Z}_{s,t}(\omega)\equiv \begin{pmatrix}
	0 & 0 \\
	0& \mathbb{W}_{s,t}(\omega)
	\end{pmatrix}\,,
	\end{aligned}
	\end{equation*}
	and $(0,\mathbf{W})$ are random $p$-compatible directions. In particular the set of full measure depends only on the construction of the random $p$-geometric rough path $\mathbf{W}$.
\end{remark}

In the remaining part of this section, we furnish examples of random $q$-compatible directions, for which we can describe the second iterated integral of the joint lift $\mathbf{Z}$ in dependence on the original paths.
\subsection{Some examples of measurable compatible directions.}
\subsubsection{On complementary Young regularity.}
The increment in \eqref{eq:increment} is well defined, provided we can give a meaning to the crossed integrals: in the initial construction \eqref{eq:operation_plus}, every integral is intended as a Young's integral (indeed the paths are assumed to be smooth). However, given two rough paths $\mathbf{V},\mathbf{W}\in \mathcal{RP}^p(\mathbb{R}^d)$, it is not clear how to define $[VW]$ and $[WV]$.
The easiest way to obtain a good definition of $\{\cdot+\cdot\}$ is to consider $\mathbf{V},W$ of \textit{complementary Young regularity}.We refer to \cite{lejai_victorie}.
\begin{construction}\label{constr:Constr_p_q}
	Let $(\Omega,\mathcal{F},\mathbb{P})$ be a probability space, $p\in (2,3)$ and $q\geq 1$ so that $1/p+1/q>1$. Let $\omega\rightarrow \mathbf{V}(\omega)\equiv(V(\omega),\mathbb{V}(\omega))$ be an $\mathbb{R}^d$-valued random geometric $p$-rough path on  $(\Omega,\mathcal{F},\mathbb{P})$. Denote by $\Omega_{\mathbf{V}}$ the set of full measure such that $\mathbf{V}(\omega)$ is a geometric $p$-rough path for all $\omega\in \Omega_{\mathbf{V}}$.
	Let $h$ be an $\mathbb{R}^d$-valued random geometric $q$-rough path on $(\Omega,\mathcal{F},\mathbb{P})$, so that for every $\omega\in \Omega_{\mathbf{V}}$,  $h(\omega)\in \mathcal{V}^q(\mathbb{R}^d)$.
	
	Denote the Young integral by $\mathcal{I}^{Y}(\cdot,\cdot)$ defined as in \eqref{eq:def_Young_integral} and denote by
	\begin{align*}
	[Vh](\omega):=\mathcal{I}^{Y}(V(\omega),h(\omega)) \,, \quad [hV]:= \mathcal{I}^{Y}(h(\omega),V(\omega))\,,\quad [hh](\omega):=\mathcal{I}^{Y}(h(\omega),h(\omega)) \,.
	\end{align*}
	Then for all $\omega \in \Omega_{\mathbf{V}}$ there exists the joint lift
	$\omega\mapsto \mathbf{Z}(\omega)=(Z(\omega),\mathbb{Z}(\omega))$, where 
	\begin{equation*}
	\begin{aligned}
	Z_{s,t}(\omega)\equiv(V_{s,t}(\omega),h_{s,t}(\omega)),\quad \mathbb{Z}_{s,t}(\omega)\equiv \begin{pmatrix}
	\mathbb{V}_{s,t}(\omega) & [Vh]_{s,t} (\omega)\\
	[hV]_{s,t}(\omega)& [hh]_{s,t}(\omega)
	\end{pmatrix}\,.
	\end{aligned}
	\end{equation*}
		In particular $\omega\mapsto\mathbf{Z}(\omega)$ is a random geometric $p$-rough path on $(\Omega,\mathcal{F},\mathbb{P})$ and $(\mathbf{V},h)$ are random $p$-compatible directions.
\end{construction}
In addition to having an explicit representation of $\mathbb{Z}$ in terms of $V,h$, the following continuity property holds (in the notations of Construction \ref{constr:Constr_p_q}): for all $\omega \in \Omega_{\mathbf{V}}$, 
\begin{align*}
	\mathcal{V}^q(\mathbb{R}^d)&\longrightarrow \mathcal{RP}^p(\mathbb{R}^d)\\
	h(\omega )&\mapsto \mathbf{Z}(\omega)
\end{align*}
is locally Lipschitz continuous. 

\subsubsection{Compatible directions of Brownian motions beyond the Cameron-Martin directions. }
We look at other directions beyond stochastic processes whose paths are of complementary Young regularity. We are interested in compatible directions of the Brownian motion: we can fix the Stratonovich integration theory and look at semi-martingales.
The problem of considering the joint lift of two semi-martingales to a rough path has been addressed widely in the literature. We refer the reader to L.~Coutin, A.~Lejay \cite{coutin_lejai_semiMTG}, I.~Chevyrev, P.~K.~Friz \cite{chevyrev_friz} (for semi-martingales with jumps) and we recall some properties of the joint lift for continuous semi-martingales in the following construction (see also  \cite{Diehl_Ober_Riedel}). 
\begin{construction}\label{constr:constr_at_level} (\textbf{A construction for Brownian motion with respect to Stratonovich integration}) \\
	Let $\mathcal{M}([0,T];\mathbb{R}^d)$ be the set of $\mathbb{R}^d$-valued continuous $(\mathcal{F}_t)_{t\in [0,T]}$-semimartingales on $(\Omega, \mathcal{F},(\mathcal{F}_t)_t,\mathbb{P})$. For all $N,M\in \mathcal{M}$ we denote by $\mathcal{I}^\mathrm{Str}(N,M)$ the Stratonovich integral of $N$ against $M$. 
	Let $B,M\in\mathcal{M}([0,T];\mathbb{R}^d)$, where $B$ is an $(\mathcal{F}_t)_t$-Brownian motion. Assume further that $B$ and $M$ are independent.
	\begin{itemize}
			\item $\mathbf{B}\equiv(B,\mathbb{B})$ is a random $p$-geometric rough path, where the (random) second iterated integral $\mathbb{B}:=  \mathcal{I}^{\mathrm{Str}}(B,B)$.
	
			\item $\mathbf{M}\equiv(M,\mathbb{M})$ is a random $p$-geometric rough path, where $\mathbb{M}:=  \mathcal{I}^{\mathrm{Str}}(M,M)$.
	
			\item for $\mathbb{P}$-a.e. $\omega\in\Omega_{B,M}$, the couple $\mathbf{Z}(\omega)=(Z(\omega),\mathbb{Z}(\omega))$ defined by
			\begin{equation}\label{eq:Z,boldZ}
			\begin{aligned}
			Z_{s,t}(\omega)\equiv(B_{s,t}(\omega),M_{s,t}(\omega)),\quad \mathbb{Z}_{s,t}(\omega)\equiv \begin{pmatrix}
			\mathbb{B}_{s,t}(\omega) & [BM]_{s,t} (\omega)\\
			[MB]_{s,t}(\omega)& \mathbb{M}_{s,t}(\omega)
			\end{pmatrix}\,,
			\end{aligned}
			\end{equation}
			is a continuous geometric $q$-rough path for $q\leq p$,
			where 
			\begin{align*}
			 [BM]:=\mathcal{I}^{\mathrm{Str}}(B,M) \,, \quad [MB]:= \mathcal{I}^{\mathrm{Str}}(M,B)\,,
			\end{align*}
			\item $Z$ and $\mathbb{Z}$ are stochastic processes adapted to $(\mathcal{F}_t)_t$.
	\end{itemize}
\end{construction}
With the previous Construction \ref{constr:constr_at_level}, it is possible to construct joint lifts of Brownian motion and other semi-martingales, which allow to differentiate in other directions beyond the complementary Young regularity paths. On the other hand, no continuity property of the joint lift $\mathbf{Z}$ in terms of $M,B$ can be established in general.
\begin{remark}
\textit{On controlled rough paths.} For the reader already acquainted with rough paths theory, by considering random controlled rough paths, it is possible to establish continuity properties of the integrator with respect to the integrand pathwise or with respect to the $\mathcal{L}^q(\Omega)$- norm (see e.g. \cite{Friz_Hocquet_Le}, \cite{FrizKranichranich}). 
\end{remark}

We present another example of joint lift of paths, usually seen in filtering problems.
\begin{example} \textit{Brownian motion and a process which is a deterministic rough path.}
	In \cite{Diehl_Ober_Riedel}, the authors introduce a random geometric rough path which is the joint lift of a Brownian motion $B$ and a constant stochastic process $\eta$, such that for all $\omega \in \Omega $ the image $\eta(\omega)$ can be lifted to a continuous geometric $p$-rough path. In this construction is that the joint lift has the form \eqref{eq:Z,boldZ} with respect to the Stratonovich integration. This case follows under Construction \ref{constr:constr_at_level} only if the second iterated integral of $\mathbf{\eta}$ can be constructed as the limit of the Stratonovich sums and the convergence occurs in probability: this is the case, indeed the Stratonovich integration and the It\^o integration coincide from the independence of the two processes. The joint limit is a random $q$-geometric rough path (with $2<q\leq p$), but this has been improved as showed in \cite{FrizKranichranich}: the authors show that the lift is a random $p$-geometric rough path.
	This example includes also the case of deterministic Cameron-Martin directions.
\end{example}
The examples discussed in this section are only a small and not exhaustive blink on the possibilities to build compatible directions that involve Brownian motion. 
\subsection{Pathwise directional derivative.}\label{sec:path_directional_derivative} 
The aim of this section is to introduce a notion of derivative for a map $\Phi:\mathcal{RP}^p(\mathbb{R}^d)\rightarrow \mathcal{Y}$, where $\mathcal{Y}$ is  Banach space. The space $\mathcal{RP}^p(\mathbb{R}^d)$, however, is not a Banach space (since it is not a vectorial space) and we can not employ the classical Fréchet derivative or Gateaux derivative for this purpose. The problem has already been addressed in \cite{Qian_Tudor}, where the authors furnish a geometric perspective on the differentiation for the function $\Phi$. We partly follow the considerations in \cite{Qian_Tudor}. 
Let $(\mathbf{X},\mathbf{Y})$ be a random $p$-compatible directions with joint lift $\mathbf{Z}$. Then we can define $\{\mathbf{X}+\mathbf{Y}\}$ as random $q$-rough path for some $2<q\leq p$.  
\paragraph{Pathwise directional derivative.} 
In Definition~\ref{def:def_derivative_quian_tudor} we define a notion of directional derivative:
\begin{definition}\label{def:def_derivative_quian_tudor}(\textit{Pathwise directional derivative})
	Let $p\in[2,3)$ and $(\mathbf{X},\mathbf{Y})$ be deterministic compatible directions with joint lift $\mathbf{Z}$. We say that a map $\Phi:\mathcal{RD}^p(\mathbb{R}^d)\rightarrow \mathcal{Y}$ is \textit{pathwise differentiable} in $\mathbf{X}$ in the direction $\mathbf{Y}$ if for $\epsilon\rightarrow 0$, the limit 
	\begin{align*}
	D\Phi[\mathbf{X}](\mathbf{Y}):=\lim_{\epsilon\rightarrow 0} \frac{ \Phi(\{\mathbf{X}+\tau_{\epsilon} \mathbf{Y}\})-\Phi(\mathbf{X})}{\epsilon}\, 
	\end{align*}
	exists in $\mathcal{Y}$.	
\end{definition}
\begin{remark}
	In comparison to Malliavin's calculus, where the differentiation directions are the ones belonging to the Cameron-Martin space, we can consider less regular directions (\cite{Qian_Tudor}). For directions of complementary Young regularity, this definition of derivative coincides with the notion of $\mathcal{H}$-differentiability of the flow.
\end{remark}

As an example, we compute the pathwise directional derivative of  both of the iterated integrals of a rough path $(W,\mathbb{W})$. 
\begin{example}
	Let $(\mathbf{G},\mathbf{H})$ be deterministic $p$-compatible directions with joint lift $\mathbf{Z}$. Consider the projection map $\pi_1:\mathbf{W}\equiv(W,\mathbb{W})\mapsto W$, where $W\in \mathcal{V}^p_2(\mathbb{R}^d)$. Then
	it is immediate that $D \pi_1[\mathbf{H}](\mathbf{G}) =G$.
	Consider now $\pi_2:\mathbf{W}\mapsto \mathbb{W}$, where $\mathbb{W}\in \mathcal{V}^{p/2}_2(\mathbb{R}^{d\times d})$. The pathwise directional derivative of $\pi_2$ in $\mathbf{H}$ in the direction $\mathbf{G}$ has the form, for all $s\leq t\in [0,T]$ 
	\begin{align*}
	D\pi_2[\mathbf{H}](\mathbf{G})_{s,t}=\lim_{\epsilon\rightarrow 0}\frac{ [HH]_{s,t}+\epsilon[GH]_{s,t}+\epsilon[HG]_{s,t}+\epsilon^2[GG]_{s,t}-[HH]_{s,t}}{\epsilon}=[GH]_{s,t}+[HG]_{s,t}\,.
	\end{align*}
\end{example}

\section{Central limit theorem.}\label{sec:CLT_theoric}
Fix $\epsilon>0$ and let $(\mathbf{H},\mathbf{G})$ be compatible directions with joint lift $\mathbf{Z}$. Let $\mathcal{X}$ be a Banach space and $\Phi:\mathcal{RP}^p(\mathbb{R}^d)\rightarrow C([0,T];\mathcal{X})$ be the continuous It\^o-Lyons map associated to the unique solution to a SPDE. We interpret the CLT as the convergence for $\epsilon \rightarrow 0$ of the sequence of random variables $(X^\epsilon)_{\epsilon>0}$, given by
\begin{equation}\label{eq:Y_epsilon}
X^\epsilon
:=\frac{\Phi(\{\mathbf{H}+\tau_{\sqrt{\epsilon}}\mathbf{G}\})-\Phi(\mathbf{H})}{\sqrt{\epsilon}}\,,
\end{equation}
to the pathwise directional derivative $D\Phi[\mathbf{H}](\mathbf{G})$.

We apply these considerations to the stochastic heat equation in $d=1,2,3$, to stochastic reaction-diffusion equation in $d=1,2,3$ and to the stochastic Landau-Lifschitz-Gilbert equation on $\mathbb{T}$. The procedure is general and can be applied to other equations as well.

\subsection{Central limit theorem for the heat  equations.}\label{sec:CLT_SPDEs}
We look at the CLT for the heat equation. 
We preliminarily introduce some Hilbert spaces. We consider the physical dimensions $d=1,2,3$, $n\in \mathbb{N}$ and denote by $H^1:=H^1(\mathbb{T}^d;\mathbb{R}^n)$, $H^2=H^2(\mathbb{T}^d;\mathbb{R}^n)$ and $E:=W^{2,\infty}(\mathbb{T}^d;\mathbb{R}^n)$. 
Consider the drift $b:H^2\rightarrow L^2$ defined by $b(v):=\Delta v$.
We want to study existence and uniqueness of a solution $u$ to equation
\begin{equation}\label{eq:heat_equation}
\delta u_{s,t}=\int_{s}^{t} b(u_r)\dd r+G_{s,t}u_s+\mathbb{G}_{s,t}u_s+u^\natural_{s,t}\,,
\end{equation}
where $u_0=u^0\in H$ and $\mathbf{G}\equiv (G,\mathbb{G})$ is a random geometric $p$-rough driver as in the following Definition~\ref{def:rough_driver}.
\begin{definition}\label{def:rough_driver}
Let $p\in [2,3)$. A pair of two-index maps $(G,\mathbb{G})$ is  called a continuous $p$-rough driver if 
\begin{align*}
\mathbf{G}\equiv (G,\mathbb{G}) \in \mathcal{V}_2^{p}(\mathcal{L}(E))\times \mathcal{V}_2^{p/2}(\mathcal{L}(E))\,,
\end{align*}
and the Chen's relation holds, i.e. for all $s\leq r\leq t\in [0,T]$
\begin{align*}
\delta G_{s,r,t}=0\,,\quad\quad \delta \mathbb{G}_{s,r,t}= G_{r,t}G_{s,r} \,.
\end{align*}
If it also holds that for all $s\leq t\in [0,T]$
\begin{align*}
\mathrm{Sym}(\mathbb{G}_{s,t})=\frac{1}{2}G_{s,t}G_{s,t}\,,
\end{align*}
then $\mathbf{G}$ is a geometric $p$-rough driver.
\end{definition}
In this Section~\ref{sec:CLT_SPDEs} we consider the space of rough drivers with spatial component
\begin{align*}
\mathcal{RD}^p(\mathbb{R}^d,W^{k,\infty}) :=\{\mathbf{W}=\tau_{g(x)}\mathbf{X}: g\in W^{k,\infty}(\mathbb{T}^d;\mathbb{R}),\quad \mathbf{X}\in \mathcal{V}_2^{p}(\mathcal{L}(\mathbb{R}^d))\times \mathcal{V}_2^{p/2}(\mathcal{L}(\mathbb{R}^d))\}\,,
\end{align*} 
for a fixed $k\in \mathbb{N}_0$.
 The definition of random geometric $p$-rough driver follows analogously to the definition of random geometric $p$-rough path. Notice that the definition of increment with respect to the time component of the noise can be defined analogously.
\subsubsection{Wong-Zakai convergence, existence and uniqueness of a strong solution in $L^\infty(H^1)\cap L^2(H^2)\cap \mathcal{V}^p(L^2)$.}
In Appendix~\ref{Appendix_B_new}, we show the well posedness of the heat equation, where a solution is intended in the sense of Definition~\ref{def:sol_rough}.
\begin{definition}\label{def:sol_rough}
	Let $\mathbf{G}\equiv (G,\mathbb{G})\in \mathcal{RD}^p(\mathbb{R}^n;E) $ and  $p\in [2,3)$. We say that $u\in L^\infty(H^1)\cap L^2(H^2)\cap \mathcal{V}^p(L^2)$ is a solution to \eqref{eq:heat_equation} with initial condition $u^0\in H^1$ if there exists a two index map $u^\natural\in \mathcal{V}^{p/3}_2(L^2)$ defined implicitly by the equality in $L^2$
	\begin{equation}\label{eq:equazione_heat_def_1}
	\delta u_{s,t}=\int_{s}^{t}b(u_r)\dd r+G_{s,t}u_s+\mathbb{G}_{s,t}u_s+u^\natural_{s,t}\,,
	\end{equation}
	with $u_0=u^0$ and so that the energy inequality holds
	\begin{align*}
	\|u\|^2_{L^\infty(H^1)}+\|u\|^2_{L^2(H^2)}\lesssim_{\mathbf{G}}\|u^0\|^{2}_{H^1}\,.
	\end{align*}
\end{definition}
We denote by $u=\pi_1(u^0,\mathbf{G})$ a solution to \eqref{eq:equazione_heat_def_1} in the sense of Definition~\ref{def:sol_rough} started in $u^0$ and driven by $\mathbf{G}$. The results in Appendix~\ref{Appendix_B_new}
are summarised in Proposition~\ref{pro:lip_semilinear_1}.
\begin{proposition}\label{pro:lip_semilinear_1}
	Let $\mathbf{G}\in \mathcal{RD}^p(\mathbb{R}^n;E)$ and $u^0\in H^1$. There exists a unique solution $u=\pi_1(u^0,\mathbf{G})$ to \eqref{eq:heat_equation} in the sense of Definition~\ref{def:sol_rough}. Moreover, the It\^o-Lyons map associated to the unique solution $u$ to \eqref{eq:heat_equation} 
	\begin{align*}
	\Phi_1:\quad\mathcal{RD}^p(\mathbb{R}^n;E)&\longrightarrow  L^\infty(H^1)\cap L^2(H^2) \cap \mathcal{V}^p(L^2)\\
	\mathbf{G}&\longmapsto u=\pi_1(u^0;\mathbf{G})\, 
	\end{align*}
	is locally Lipschitz continuous, in the sense that for all $\mathbf{G}, \mathbf{H}\in \mathcal{RD}^p(\mathbb{R}^n;E)$ there exists a constant $C\equiv C(\|u^0\|_{H},\mathbf{G},\mathbf{H},T)>0$ so that 
	\begin{align*}
	\|\Phi_1(\mathbf{G})-\Phi_1(\mathbf{H})\|_{L^\infty(H^1)\cap L^2(H^2) \cap \mathcal{V}^p(L^2)}\lesssim C\,\rho(\mathbf{G},\mathbf{H})\,.
	\end{align*}
\end{proposition}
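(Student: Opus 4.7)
The plan is to establish an a priori $L^\infty(H^1)\cap L^2(H^2)$-energy estimate for any candidate solution of \eqref{eq:heat_equation}, then use a Wong--Zakai approximation by smooth rough drivers to produce solutions for generic $\mathbf{G}\in\mathcal{RD}^p(\mathbb{R}^n;E)$, and finally deduce both uniqueness and local Lipschitz continuity of $\Phi_1$ from the same energy estimate applied to the difference of two solutions. The central analytical tool is a rough Gronwall/sewing argument applied to the increment $\delta(\|u\|_{H^1}^2)_{s,t}$, in which the parabolic dissipation $\int_s^t\|u_r\|_{H^2}^2\,\dd r$ absorbs the noise-generated commutator errors.

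\textbf{A priori estimate.} I would compute the squared-norm increment
\begin{align*}
\|u_t\|_{H^1}^2 - \|u_s\|_{H^1}^2 \;=\; 2\,\langle u_s,\,\delta u_{s,t}\rangle_{H^1} + \|\delta u_{s,t}\|_{H^1}^2,
\end{align*}
and plug in the expansion given by \eqref{eq:equazione_heat_def_1}, iterating once inside $\|\delta u_{s,t}\|_{H^1}^2$ to expose the rough driver contributions up to order $3/p$. After using the geometricity constraint $\mathrm{Sym}(\mathbb{G}_{s,t})=\tfrac{1}{2}G_{s,t}G_{s,t}$ to cancel the leading second-order noise terms, one is left with commutator expressions between the multiplication operator by $g\in W^{2,\infty}$ and the gradient/Laplacian, whose operator norms on $L^2$ depend only on $\|g\|_{W^{2,\infty}}$. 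The sewing lemma then identifies the remainder $u^\natural$ as an element of $\mathcal{V}^{p/3}_2(L^2)$ with norm controlled by $\|u\|_{L^\infty(H^1)}+\|u\|_{L^2(H^2)}$, so that the rough Gronwall lemma closes the bound $\|u\|_{L^\infty(H^1)}^2+\|u\|_{L^2(H^2)}^2\lesssim_{\mathbf{G}}\|u^0\|_{H^1}^2$, the dissipative gain absorbing the commutator contributions.

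\textbf{Existence, uniqueness, and Lipschitz continuity.} Existence follows by approximating $\mathbf{G}$ in the metric $\rho$ by smooth rough drivers $\mathbf{G}^n$ associated to mollified paths, solving the classical PDE for each $n$, and using the uniform a priori estimate together with the remainder bound to extract a limit through weak compactness in $L^\infty(H^1)\cap L^2(H^2)$ and strong compactness in $\mathcal{V}^p(L^2)$, identifying the limit as a solution by passing to the limit in \eqref{eq:equazione_heat_def_1}. For uniqueness and Lipschitz continuity one considers $v:=\Phi_1(\mathbf{G})-\Phi_1(\mathbf{H})$, which solves a rough PDE driven by $\mathbf{G}$ with zero initial datum and an inhomogeneous source involving $(\mathbf{G}-\mathbf{H})\,\Phi_1(\mathbf{H})$; the same energy estimate, adapted to this inhomogeneous equation, yields
\begin{align*}
\|v\|_{L^\infty(H^1)\cap L^2(H^2)\cap\mathcal{V}^p(L^2)} \;\lesssim\; C\,\rho(\mathbf{G},\mathbf{H}),
\end{align*}
with $C=C(\|u^0\|_{H^1},\mathbf{G},\mathbf{H},T)$. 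Uniqueness corresponds to the special case $\mathbf{G}=\mathbf{H}$.

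\textbf{Main obstacle.} The delicate point is the joint bookkeeping required to obtain $u^\natural\in\mathcal{V}^{p/3}_2(L^2)$ while simultaneously closing the $H^1$-energy estimate. Since the parabolic drift $\int_s^t\Delta u_r\,\dd r$ is Lipschitz in $t$ whereas the rough driver pieces $G_{s,t}$ and $\mathbb{G}_{s,t}$ have orders $\omega^{1/p}$ and $\omega^{2/p}$, a second iteration of the equation is needed to push the natural order of the remainder from $2/p$ up to $3/p$; at that point one must control several second-order commutators of multiplication by $g\in W^{2,\infty}$ with $\nabla$ and $\Delta$. These must be absorbed using the parabolic gain $\int_s^t\|u_r\|_{H^2}^2\,\dd r$, which fixes the minimal spatial regularity of $g$ and is the ultimate reason for the choice $g\in W^{2,\infty}$ in the definition of $\mathcal{RD}^p(\mathbb{R}^n;E)$.
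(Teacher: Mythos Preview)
Your overall strategy matches the paper's: a priori energy estimates via rough Gronwall, existence by smooth Wong--Zakai approximation and compactness, then uniqueness and local Lipschitz continuity from the same energy argument applied to the difference of two solutions driven by different rough drivers. The architecture is right.

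Where your write-up diverges from the paper is in the mechanism for obtaining the energy inequality. You propose the bare polarization identity
\[
\|u_t\|_{H^1}^2-\|u_s\|_{H^1}^2=2\langle u_s,\delta u_{s,t}\rangle_{H^1}+\|\delta u_{s,t}\|_{H^1}^2
\]
and then ``iterate once inside $\|\delta u_{s,t}\|_{H^1}^2$''. The paper instead invokes the \emph{product formula} (Proposition~\ref{pro:product}) to write a genuine rough PDE for $\partial_x^k z\otimes\partial_x^k z$ (for $k=0,1$), and then tests against $\mathbf 1\in W^{1,\infty}$. The difference is not merely cosmetic: your direct route runs into a regularity mismatch, since $\langle\nabla u_s,\nabla\!\int_s^t\Delta u_r\,\dd r\rangle_{L^2}$ formally asks for $u\in L^2(H^3)$, which is not available; the tensor-product formulation circumvents this by keeping the drift paired against a $W^{1,\infty}$ test function and integrating by parts only once. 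The product formula also packages the second-level expansion cleanly, so that the crucial step becomes an algebraic cancellation of all $p/2$-variation contributions in $\delta(\partial_x^k z^{\otimes 2,\natural})_{s,r,t}$. That cancellation, incidentally, is driven by \emph{Chen's relation} $\delta\mathbb G_{s,r,t}=G_{r,t}G_{s,r}$ rather than by the geometricity identity you cite; geometricity enters earlier, as the hypothesis under which the product formula itself is valid.

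In short: your plan is correct, but the paper's execution replaces your ``square and iterate'' heuristic by the product-formula machinery, which is what makes the remainder estimate rigorous at the stated regularity.
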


\subsubsection{Central limit theorem for the heat equations.}
We are in the setting of the previous section.
Note that $b'(y)z=b(z)$ for all $y,z\in H^1$, where the derivative is intended in the Frechét sense. We introduce now a definition of solution needed for the CLT.
\begin{definition}\label{def:sol_additive}
	Let $(\mathbf{G},\mathbf{W})$ be random $p$-compatible directions with joint lift $\mathbf{Z}$ (where the directions are rough drivers). Let $u^0\in H^1$ and $u\in L^\infty(H^1)\cap L^2(H^2) \cap\mathcal{V}^p(L^2)$ be the unique solution in the sense of Definition~\ref{def:sol_rough} to
	\begin{align*}
	\delta u_{s,t}=\int_{s}^{t}b(u_r)\dd r+G_{s,t}u_s+\mathbb{G}_{s,t}u_s+u^{\natural}_{s,t}
	\end{align*}
	in $L^2$ and such that $u_0=u^0$.
	We say that $X\in L^\infty(H^1)\cap L^2(H^2)\cap \mathcal{V}^p(L^2)$ is a solution to \eqref{equazione_r_1} with initial condition $X_0=0$ if there exists a two index map $X^\natural\in \mathcal{V}^{p/3}_2(L^2)$ defined implicitly by the equality in $L^2$
	\begin{equation}\label{equazione_r_1}
	\delta X_{s,t}=\int_{s}^{t}b'(u_r)X_r\dd r+G_{s,t}X_s+\mathbb{G}_{s,t}X_s+([WG]+[GW])_{s,t}u_s+W_{s,t}u_s+X^\natural_{s,t}\,.
	\end{equation}
\end{definition}
Consider now the sequence $(X^\epsilon)_{\epsilon>0}$ in $\mathcal{Y}:=L^\infty(H^1)\cap L^2(H^2)\cap\mathcal{V}^p(L^2)$, defined by
\begin{align*}\label{eq:x_epsilon}
X^\epsilon
:=\frac{u^\epsilon-u}{\sqrt{\epsilon}}\,,
\end{align*}
where $u^\epsilon=\pi_1(u^0,\{\mathbf{G}+\tau_{\sqrt{\epsilon}}\mathbf{W}\})$ and $u=\pi_1(u^0,\mathbf{G})$.
Recall that $X^\epsilon$ fulfils
\begin{align*}
\delta X^\epsilon_{s,t}&=\int_{s}^{t} \frac{b(u^\epsilon_r)-b(u_r)}{\sqrt{\epsilon}} \dd r+\frac{(G_{s,t}+\sqrt{\epsilon}W_{s,t})u^\epsilon_s-G_{s,t}u_s}{\sqrt{\epsilon}}\\
&\quad+\frac{(\epsilon\mathbb{W}_{s,t}+\sqrt{\epsilon}[WG]_{s,t}+\sqrt{\epsilon}[GW]_{s,t}+\mathbb{G}_{s,t})u^\epsilon_s-\mathbb{G}u_s}{\sqrt{\epsilon}}+ X^{\epsilon,\natural}_{s,t}\,.
\end{align*}
We state now the main result of the section. The proof follows from techniques analogous to Proposition~\ref{pro:lip_semilinear}.
\begin{theorem}\label{teo:CLT}
	(Central limit theorem for strong solutions to the heat equation). Let $u^0\in H^1$ and let $(\mathbf{G},\mathbf{W})$ be compatible directions with joint lift $\mathbf{Z}$ (where the directions are rough drivers). Let $u=\pi_1(u^0,\mathbf{G})$ be the unique solution in the sense of Definition~\ref{def:sol_rough} to
	\begin{align*}
	\delta u_{s,t}=\int_{s}^{t}b(u_r)\dd r +G_{s,t} u_s+\mathbb{G}_{s,t} u_s+u^{\natural,\mathbf{G}}_{s,t}
	\end{align*}
	in the space $L^\infty(H^1)\cap L^2(H^2)\cap \mathcal{V}^p(L^2)$, where $b$ follows the assumptions of this section.
	Denote by $\Phi_1(\mathbf{G})=\pi_1(u^0,\mathbf{G})$.  Then the sequence $(X^\epsilon)_\epsilon$, defined as
	\begin{align*}
	X^\epsilon=\frac{\Phi_1(\{\mathbf{G}+\tau_{\sqrt{\epsilon}}\mathbf{W}\})-\Phi_1(\mathbf{G})}{\sqrt{\epsilon}}\,,
	\end{align*}
	converges strongly in $L^\infty(H^1)\cap L^2(H^2)$ for $\epsilon \rightarrow 0$ to a limit $X$.  In particular
	\begin{align*}
	X=D\Phi_1[\mathbf{G}](\mathbf{W})\,,
	\end{align*}
	i.e. $X$ is the pathwise directional derivative of the It\^o-Lyons map $\Phi_1$ in $\mathbf{G}$ in the direction $\mathbf{W}$. Moreover $X$ is the unique solution to \eqref{equazione_r_1}  
	in the sense of Definition~\ref{def:sol_additive}.
	In addition, the convergence occurs with optimal speed $\sqrt{\epsilon}$, i.e.
	\begin{align*}
	\sup_{0\leq t\leq T}\|X^\epsilon_t-D\Phi_1[\mathbf{G}](\mathbf{W})_t\|^2_{H^1}+\int_{0}^{T}\|X^\epsilon_r-D\Phi_1[\mathbf{G}](\mathbf{W})_r\|^2_{H^2}\dd r\lesssim \sqrt{\epsilon}\omega_{\mathbf{Z}}^{1/p}\,,
	\end{align*}
	where $\omega_{\mathbf{Z}}:=\|\mathbf{Z}\|^p_{\mathcal{RD}^p(W^{2,\infty})}$ 
\end{theorem}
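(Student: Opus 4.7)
My strategy is to exploit the linearity of $b(v)=\Delta v$ to rewrite $X^\epsilon$ itself as the solution of a rough-driver equation driven by $\mathbf{G}$ with an $\epsilon$-dependent additive forcing, and then compare it with the unique solution $X$ of the linearised equation \eqref{equazione_r_1} using the energy/sewing techniques already developed in Proposition~\ref{pro:lip_semilinear_1} and in Appendix~\ref{Appendix_B}. The first ingredient is the pathwise well-posedness of \eqref{equazione_r_1} in $\mathcal{Y}:=L^\infty(H^1)\cap L^2(H^2)\cap\mathcal{V}^p(L^2)$ together with the local Lipschitz continuity of $\mathbf{W}\mapsto X=D\Phi_1[\mathbf{G}](\mathbf{W})$, which I would take from Appendix~\ref{Appendix_B}.

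\medskip

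Subtracting the equations satisfied by $u^\epsilon=\pi_1(u^0,\{\mathbf{G}+\tau_{\sqrt{\epsilon}}\mathbf{W}\})$ and $u=\pi_1(u^0,\mathbf{G})$ and dividing by $\sqrt{\epsilon}$, the linearity of $b$ together with the explicit form of the iterated integrals of $\{\mathbf{G}+\tau_{\sqrt{\epsilon}}\mathbf{W}\}$ gives the closed equation
\begin{equation*}
\delta X^\epsilon_{s,t}=\int_{s}^{t}\Delta X^\epsilon_r\dd r+G_{s,t}X^\epsilon_s+\mathbb{G}_{s,t}X^\epsilon_s+W_{s,t}u^\epsilon_s+([WG]+[GW])_{s,t}u^\epsilon_s+\sqrt{\epsilon}\,\mathbb{W}_{s,t}u^\epsilon_s+X^{\epsilon,\natural}_{s,t},
\end{equation*}
which is exactly \eqref{equazione_r_1} with $u^\epsilon$ in place of $u$ and an extra additive piece $\sqrt{\epsilon}\,\mathbb{W} u^\epsilon$ of size $\sqrt{\epsilon}$.

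\medskip

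Setting $D^\epsilon:=X^\epsilon-X$ and subtracting, $D^\epsilon$ satisfies a rough-driver equation with zero initial condition, rough driver $\mathbf{G}$, and forcing
\begin{equation*}
F^\epsilon_{s,t}=W_{s,t}(u^\epsilon_s-u_s)+([WG]+[GW])_{s,t}(u^\epsilon_s-u_s)+\sqrt{\epsilon}\,\mathbb{W}_{s,t}u^\epsilon_s.
\end{equation*}
By Proposition~\ref{pro:lip_semilinear_1} applied to the Wong--Zakai pair, $\|u^\epsilon-u\|_{\mathcal{Y}}\lesssim \rho(\{\mathbf{G}+\tau_{\sqrt{\epsilon}}\mathbf{W}\},\mathbf{G})\lesssim\sqrt{\epsilon}\,\omega_{\mathbf{Z}}^{1/p}$, so the first two summands of $F^\epsilon$ have $p$-variation of order $\sqrt{\epsilon}$, while the third is trivially of the same order. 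Applying the energy estimates of Appendix~\ref{Appendix_B} to $D^\epsilon$, that is, pairing successively with $D^\epsilon$ and with $-\Delta D^\epsilon$ in $L^2$, controlling the rough terms via a sewing argument on the $p/3$-variation remainder $D^{\epsilon,\natural}$, and closing by a rough Gronwall argument, yields
\begin{equation*}
\sup_{t\in[0,T]}\|D^\epsilon_t\|_{H^1}^2+\int_0^T\|D^\epsilon_r\|_{H^2}^2\dd r\lesssim_{\mathbf{Z}}\sqrt{\epsilon}\,\omega_{\mathbf{Z}}^{1/p}.
\end{equation*}
This simultaneously proves the strong convergence $X^\epsilon\to X$ in $L^\infty(H^1)\cap L^2(H^2)$, identifies the limit with $D\Phi_1[\mathbf{G}](\mathbf{W})$, and gives the quantitative rate.

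\medskip

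The \emph{main obstacle} is ensuring that the $\sqrt{\epsilon}$-scale of the forcing $F^\epsilon$ is not degraded through the energy/sewing machinery. Concretely, the sewing bound for $D^{\epsilon,\natural}$ must remain \emph{linear} in $\|F^\epsilon\|$ and in the corresponding rough-path norms of $\mathbf{Z}$, so that rough Gronwall returns a bound proportional to $\sqrt{\epsilon}$ rather than, e.g., $\epsilon^{1/p}$. This forces a careful bookkeeping of the commutators between $\Delta$ and multiplication by the spatial coefficient of $\mathbf{G}$ and $\mathbf{W}$, which is precisely the point where the $W^{2,\infty}$-regularity of the spatial component of $\mathbf{Z}$ is used critically.
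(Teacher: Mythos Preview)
Your proposal is correct and follows essentially the same route as the paper: write the equation for $Z^\epsilon=X^\epsilon-X$, apply the product formula to $\partial_x^k Z^\epsilon\otimes\partial_x^k Z^\epsilon$, control the $p/3$-variation remainder via sewing, and close with the rough Gronwall lemma. Your identification of the equation for $X^\epsilon$ and of the forcing $F^\epsilon$ is exactly right, and your use of Proposition~\ref{pro:lip_semilinear_1} to get $\|u^\epsilon-u\|_{\mathcal Y}\lesssim\sqrt{\epsilon}\,\omega_{\mathbf Z}^{1/p}$ is the correct way to see that all three summands of $F^\epsilon$ carry the $\sqrt{\epsilon}$ factor.

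One small caveat on how you invoke Appendix~\ref{Appendix_B}: Proposition~\ref{pro:continuity_limit_equation} as stated gives Lipschitz dependence of $Y^{\mathbf W}$ on the \emph{direction} $\mathbf W$ with the coupling path $u$ fixed, whereas your $D^\epsilon$ has the direction fixed and the coupling path perturbed from $u$ to $u^\epsilon$ (plus the extra $\sqrt{\epsilon}\,\mathbb W u^\epsilon$). So you cannot literally cite that proposition; you have to rerun the same product-formula/sewing computation with this different source of smallness. This is exactly what the paper does in its proof of Theorem~\ref{teo:CLT}: it carries out the explicit cancellation of the $p/2$-variation terms in $\delta(\partial_x^k Z^\epsilon)^{\otimes2,\natural}$ to exhibit the remainder as a sum of $p/3$-variation pieces, each carrying either a factor $\sqrt{\epsilon}$ or a factor $Z^\epsilon$. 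Your ``main obstacle'' paragraph already anticipates this bookkeeping, so there is no real gap---just be aware that the work is in redoing the Appendix~\ref{Appendix_B} computation rather than quoting its output.
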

\begin{remark}\label{rem:linearity_map}
\textbf{Linear dependence of $X$ on $W$.}
Observe that the map $\mathbf{W}\equiv(W,\mathbb{W})\mapsto X$ depends only on the first iterated integral $W$. Let $(\mathbf{G},\mathbf{W}^1)$ and $(\mathbf{G},\mathbf{W}^2)$ be two compatible directions such that $\alpha [G W^1]+\beta[G W^2]=[G(\alpha W^1+\beta W^2)] $ and $\alpha[W^1G]+\beta[W^2G]=[(\alpha W^1+\beta W^2)G]$, for all $\alpha,\beta \in \mathbb{R}$. In view of the independence of the limit equation on the second interated integral, we can denote by $D\Phi_1[\mathbf{G}](W^1)$ and by $D\Phi_1[\mathbf{G}](W^2)$ the respective solutions to \eqref{equazione_r_1}. From the linearity of the drift and the bi-linearity of the mixed integrals, it follows that $\alpha D\Phi_1[\mathbf{G}](W^1)+\beta D\Phi_1[\mathbf{G}](W^2)=D\Phi_1[\mathbf{G}](\alpha W^1+\beta W^2)$ for all $\alpha,\beta\in \mathbb{R}$, namely that the map $W\mapsto D\Phi_1[\mathbf{G}](\mathbf{W}) $ is linear.  
\end{remark}
\begin{remark}
\label{remark:ito_CLT} \textbf{On the pathwise convergence of the heat equation driven by an It\^o  integral to the CLT limit.}
This remark is a direct consequence of Remark~\ref{rem:linearity_map}.
It is usual in the classical theory of CLT for SPDEs to consider equations driven by an It\^o-type multiplicative noise. The previous analysis of convergence to the CLT is pursued for a Stratonovich-type integral, but we can infer the pathwise convergence of the increments to the CLT in the case of It\^o stochastic integral. Indeed, if $W$ is a Stratonovich lift of the Brownian motion, the pathwise CLT $D\Phi[\mathbf{0}](W)$ does not depend on the second iterated integral. Hence the limit is the same for every other geometric lift of the Brownian motion as well as for the It\^o's integral (this is due to the fact that the It\^o-Stratonovich correction is quadratic in $\sqrt{\epsilon}$).
\end{remark}

\begin{proof} (\textit{of Theorem \ref{teo:CLT}})
	For the existence and uniqueness of a unique solution $X$ to the limit equation, we refer to Appendix~\ref{Appendix_B} (note that the assumptions on the drift are verified since $b'(u)X=b(X)$).
	From Proposition~\ref{pro:lip_semilinear}, the map $\Phi_1$ is locally Lipschitz continuous and the sequence $(X^\epsilon)_{\epsilon}$ is uniformly bounded in $L^\infty(H^1)\cap L^2(H^2)\cap \mathcal{V}^p(L^2)$. From Lemma~\ref{lemma:embedding_tornstein}, there exists a subsequence converging strongly in $L^2(H^1)\cap C([0,T];L^2)$ to a limit $X$. We prove the pathwise convergence in  $L^\infty(H^1)\cap L^2(H^2)$ for $\epsilon \rightarrow 0$. What follows holds for all $s\leq r\leq t\in [0,T]$. We consider the equation $Z^\epsilon:=X^\epsilon-X$ and the squared equation from the product formula in Proposition~\ref{pro:product}, 
	\begin{equation}\label{eq:X_eps_menus_X_k_square_semilinear}
		\begin{aligned}
		\delta\big(\partial^k_x Z^\epsilon\otimes \partial^k_x Z^\epsilon\big)_{s,t}
		=\mathcal{D}(k,k)_{s,t}+I_{s,t}+\mathbb{I}_{s,t}+\tilde{\mathbb{I}}_{s,t}+\big(\partial^k_x Z^\epsilon\big)^{\otimes 2,\natural}_{s,t}\,,
		\end{aligned}
	\end{equation}
	where for $k\in \{0,1\}$ we denote 
	\begin{align*}
	\mathcal{D}_{s,t}(k,k)&:=\int_s^t \partial_{x}^k\bigg(\frac{b(u^\epsilon_r)-b(u_r)}{\sqrt{\epsilon}} - b'(u_r)X_r \bigg)\odot\partial_{x}^k(X^\epsilon_r-X_r) \dd r\,,
	\end{align*}
	\begin{equation*}
	\begin{aligned}
	\textcolor{black}{I}_{s,t}&:=\sum_{n=0}^{k} \binom{k}{n} [\sqrt{\epsilon}\partial_{x}^{k-n}W_{s,t}\partial_{x}^{n} X^\epsilon_s+\partial_{x}^{k-n}G_{s,t}\partial_{x}^{n} (X^\epsilon_s-X_s)]\odot \partial_{x}^k(X^\epsilon_s-X_s)\,,
	\end{aligned}
	\end{equation*}
	\begin{equation*}
	\begin{aligned}
	\mathbb{I}_{s,t}&:=\sum_{n=0}^{k} \binom{k}{n}(\partial_{x}^{k-n}\sqrt{\epsilon}\mathbb{W}_{s,t}\partial_{x}^{n} u^\epsilon_s)\odot\partial_{x}^k(X^\epsilon_s-X_s)\\
	&\quad+\sqrt{\epsilon}\sum_{n=0}^{k} \binom{k}{n}\left[\partial_{x}^{k-n}[[GW]+[WG]]_{s,t}\partial_{x}^{n} X^\epsilon\right]\odot\partial_{x}^k(X^\epsilon_s-X_s)\\
	&\quad+\sum_{n=0}^{k} \binom{k}{n}\partial_{x}^{k-n}\mathbb{G}_{s,t}\partial_{x}^{n} Z^\epsilon_s\odot\partial_{x}^k(X^\epsilon_s-X_s)=:\mathbb{I}^1_{s,t}+\mathbb{I}^2_{s,t}+\mathbb{I}^3_{s,t}\,,
	\end{aligned}
	\end{equation*}

	\begin{equation*}
	\begin{aligned}
	\tilde{\mathbb{I}}_{s,t}:=\sum_{n=0}^{k} \binom{k}{n} [\sqrt{\epsilon}\partial_{x}^{k-n}W_{s,t}\partial_{x}^{n} &X^\epsilon_s+\partial_{x}^{k-n}G_{s,t}\partial_{x}^{n} (X^\epsilon_s-X_s)]\\
		&\otimes\sum_{m=0}^{k} \binom{k}{m}[\sqrt{\epsilon}\partial_{x}^{k-m}W_{s,t}\partial_{x}^{m} X^\epsilon_s+\partial_{x}^{k-m}G_{s,t}\partial_{x}^{m} (X^\epsilon_s-X_s)]\,.
	\end{aligned}
	\end{equation*}
	We are again interested in estimating the remainder term via the sewing Lemma~\ref{lemma_sewing}: we rewrite $\delta \big(\partial^k_x Z^\epsilon\big)^{\otimes 2,\natural}$ as a linear combination of elements of at least $p/3$-variation,
	\begin{align*}
		\delta \big(\partial^k_x Z^\epsilon\big)^{\otimes 2,\natural}_{s,r,t}=-\delta I_{s,r,t}-\delta \mathbb{I}_{s,r,t}-\delta \tilde{\mathbb{I}}_{s,r,t}\,,
	\end{align*}
	where $\delta I_{s,r,t}$ is given by
	\begin{align*}
		\delta I_{s,r,t}=\sum_{n=0}^{k} \binom{k}{n} [-\sqrt{\epsilon}\partial_{x}^{k-n}W_{r,t}\delta(\partial_{x}^{n} X^\epsilon\odot \partial_{x}^kZ^\epsilon)_{s,r}-\partial_{x}^{k-n}G_{r,t}\delta(\partial_{x}^{n} Z^\epsilon\odot \partial_{x}^kZ^\epsilon)_{s,r}]\,.
	\end{align*}
	We estimate $\delta\mathbb{I}=\delta\mathbb{I}^1+\delta \mathbb{I}^2+\delta\mathbb{I}^3$ separately for each addend. We write explicitly $\delta\mathbb{I}^1_{s,r,t}$ as
	\begin{align*}
		\delta\mathbb{I}^1_{s,r,t}&= \sum_{n=0}^{k} \binom{k}{n}\sqrt{\epsilon}\partial_{x}^{k-n}\delta\mathbb{W}_{s,r,t}\partial_{x}^{n} u^\epsilon_s\odot\partial_{x}^kZ^\epsilon_s-\sum_{n=0}^{k} \binom{k}{n}\partial_{x}^{k-n}\sqrt{\epsilon}\mathbb{W}_{r,t}\delta(\partial_{x}^{n} u^\epsilon\odot\partial_{x}^kZ^\epsilon)_{s,r}\,.
	\end{align*}
	We rewrite $\delta\mathbb{I}^2_{s,r,t}$ as
	\begin{align*}
	\delta \mathbb{I}^2_{s,r,t}&=\sqrt{\epsilon}\sum_{n=0}^{k} \binom{k}{n}\partial_{x}^{k-n}\delta([GW]+[WG])_{s,r,t}\partial_{x}^{n} X^\epsilon_s\odot\partial_{x}^kZ^\epsilon_s\\
	&\quad-\sqrt{\epsilon}\sum_{n=0}^{k} \binom{k}{n}\partial_{x}^{k-n}[[GW]+[WG]]_{r,t}\delta(\partial_{x}^{n} X^\epsilon\odot\partial_{x}^kZ^\epsilon)_{s,r}\,,
	\end{align*}
	and finally $\delta\mathbb{I}^3_{s,r,t}$ becomes
	\begin{align*}
	\delta\mathbb{I}^3_{s,r,t}&=\sum_{n=0}^{k} \binom{k}{n}\partial_{x}^{k-n}\delta[\mathbb{G}]_{s,r,t}\partial_{x}^{n} Z^\epsilon_s\odot\partial_{x}^kZ^\epsilon_s-\sum_{n=0}^{k} \binom{k}{n}\partial_{x}^{k-n}[\mathbb{G}]_{r,t}\delta(\partial_{x}^{n} Z^\epsilon\odot\partial_{x}^kZ^\epsilon)_{s,r}\,.
	\end{align*}
	We rewrite now $\delta\tilde{\mathbb{I}}$ as 
	\begin{align*}
	\delta\tilde{\mathbb{I}}=\delta \sum_{n=0}^{k} \sum_{m=0}^{k} \binom{k}{n}\binom{k}{m} [\sqrt{\epsilon}\partial_{x}^{k-n}W_{s,t}\partial_{x}^{n} &X^\epsilon_s+\partial_{x}^{k-n}G_{s,t}\partial_{x}^{n} Z^\epsilon_s]\\
	&\otimes[\sqrt{\epsilon}\partial_{x}^{k-m}W_{s,t}\partial_{x}^{m} X^\epsilon_s+\partial_{x}^{k-m}G_{s,t}\partial_{x}^{m} Z^\epsilon_s]\,.
	\end{align*}
	From the linearity of $\delta(\cdot )_{s,r,t}$, we can look only at the argument of the sum and we split it into two addends
	\begin{align*}
	& [\sqrt{\epsilon}\partial_{x}^{k-n}W_{s,t}\partial_{x}^{n} X^\epsilon_s+\partial_{x}^{k-n}G_{s,t}\partial_{x}^{n} Z^\epsilon_s]\otimes \sqrt{\epsilon}\partial_{x}^{k-m}W_{s,t}\partial_{x}^{m} X^\epsilon_s \\
	 &+[\sqrt{\epsilon}\partial_{x}^{k-n}W_{s,t}\partial_{x}^{n} X^\epsilon_s+\partial_{x}^{k-n}G_{s,t}\partial_{x}^{n} Z^\epsilon_s]\otimes \partial_{x}^{k-m}G_{s,t}\partial_{x}^{m} Z^\epsilon_s =:\Gamma^1_{s,t}+\Gamma^2_{s,t} \, .
	\end{align*}
	 By recalling  the algebraic relation \eqref{eq;algebraic_relation}, we rewrite $\delta \Gamma^1_{s,r,t}$ as
	 \begin{align*}
	 \delta \Gamma^1_{s,r,t}&=\epsilon[\partial_{x}^{k-n}W_{s,r}\partial_{x}^{n} X^\epsilon_s\otimes \partial_{x}^{k-m}W_{r,t}\partial_{x}^{m}X^\epsilon_s+\partial_{x}^{k-n}W_{r,t}\partial_{x}^{n} X^\epsilon_s\otimes \partial_{x}^{k-m}W_{s,r}\partial_{x}^{m}X^\epsilon_s\\
	 &\quad\quad+\partial_{x}^{k-n}W_{r,t}\partial_{x}^{n} X^\epsilon_s\otimes \partial_{x}^{k-m}W_{r,t}\partial_{x}^{m}X^\epsilon_s-\partial_{x}^{k-n}W_{r,t}\partial_{x}^{n} X^\epsilon_r\otimes \partial_{x}^{k-m}W_{r,t}\partial_{x}^{m}X^\epsilon_r]\\
	 &\quad +\sqrt{\epsilon}[\partial_{x}^{k-n}G_{s,r}\partial_{x}^{n} Z^\epsilon_s\otimes \partial_{x}^{k-m}W_{r,t}\partial_{x}^{m} X^\epsilon_s+\partial_{x}^{k-n}G_{r,t}\partial_{x}^{n} Z^\epsilon_s\otimes \partial_{x}^{k-m}W_{s,r}\partial_{x}^{m} X^\epsilon_s\\
	 &\quad\quad+\partial_{x}^{k-n}G_{r,t}\partial_{x}^{n} Z^\epsilon_s\otimes \partial_{x}^{k-m}W_{r,t}\partial_{x}^{m} X^\epsilon_s-\partial_{x}^{k-n}G_{r,t}\partial_{x}^{n} Z^\epsilon_r\otimes \partial_{x}^{k-m}W_{r,t}\partial_{x}^{m} X^\epsilon_r ]\\
	 &=:\Gamma^{1,1}_{s,r,t}+\Gamma^{1,2}_{s,r,t}+\Gamma^{1,3}_{s,r,t}+\Gamma^{1,4}_{s,r,t}\,.
	 \end{align*}
	Again by means of \eqref{eq;algebraic_relation}, we rewrite $\delta \Gamma^2_{s,r,t}$ as 
	\begin{align*}
		\delta \Gamma^2_{s,r,t}&=\sqrt{\epsilon}[\partial_{x}^{k-n}W_{s,r}\partial_{x}^{n} X^\epsilon_s\otimes \partial_{x}^{k-m}G_{r,t}\partial_{x}^{m} Z^\epsilon_s+\partial_{x}^{k-n}W_{r,t}\partial_{x}^{n} X^\epsilon_s\otimes \partial_{x}^{k-m}G_{s,r}\partial_{x}^{m} Z^\epsilon_s\\
		&\quad\quad+\partial_{x}^{k-n}W_{r,t}\partial_{x}^{n} X^\epsilon_s\otimes \partial_{x}^{k-m}G_{r,t}\partial_{x}^{m} Z^\epsilon_s-\partial_{x}^{k-n}W_{r,t}\partial_{x}^{n} X^\epsilon_r\otimes \partial_{x}^{k-m}G_{r,t}\partial_{x}^{m} Z^\epsilon_r]\\
		&\quad +\partial_{x}^{k-n}G_{s,r}\partial_{x}^{n} Z^\epsilon_s\otimes \partial_{x}^{k-m}G_{r,t}\partial_{x}^{m} Z^\epsilon_s+\partial_{x}^{k-n}G_{r,t}\partial_{x}^{n} Z^\epsilon_s\otimes \partial_{x}^{k-m}G_{s,r}\partial_{x}^{m} Z^\epsilon_s\\
		&\quad\quad+\partial_{x}^{k-n}G_{r,t}\partial_{x}^{n} Z^\epsilon_s\otimes \partial_{x}^{k-m}G_{r,t}\partial_{x}^{m} Z^\epsilon_s-\partial_{x}^{k-n}G_{r,t}\partial_{x}^{n} Z^\epsilon_r\otimes \partial_{x}^{k-m}G_{r,t}\partial_{x}^{m} Z^\epsilon_r\\
		&=:\Gamma^{2,1}_{s,r,t}+\Gamma^{2,2}_{s,r,t}+\Gamma^{2,3}_{s,r,t}+\Gamma^{2,4}_{s,r,t}\,.
	\end{align*}
	Notice that the following equality holds
	\begin{align}\label{eq:compensazione_a_croce}
	\Gamma^{1,1}_{s,r,t}+\Gamma^{1,3}_{s,r,t}+\Gamma^{2,1}_{s,r,t}+\Gamma^{2,3}_{s,r,t}=\partial_x^k[\sqrt{\epsilon}W_{r,t} X^\epsilon_s+G_{r,t}Z^\epsilon_s]\odot \partial_{x}^k[\sqrt{\epsilon}W_{s,r}X^\epsilon_s+G_{s,r}Z^\epsilon_s]\,.
	\end{align}
	We need to compensate the following terms of at most $p/2$-variation, 
	\begin{align*}
		&\sqrt{\epsilon}\sum_{n=0}^{k} \binom{k}{n}\partial_{x}^{k-n}\delta([GW]+[WG])_{s,r,t}\partial_{x}^{n} X^\epsilon_s\odot\partial_{x}^kZ^\epsilon_s+\sum_{n=0}^{k} \binom{k}{n}\partial_{x}^{k-n}\delta[\mathbb{G}]_{s,r,t}\partial_{x}^{n} Z^\epsilon_s\odot\partial_{x}^kZ^\epsilon_s\\
		&+\sqrt{\epsilon}\partial_{x}^{k}[\delta\mathbb{W}_{s,r,t} u^\epsilon_s]\odot\partial_{x}^kZ^\epsilon_s+\partial_x^k[\sqrt{\epsilon}W_{r,t} X^\epsilon_s+G_{r,t}Z^\epsilon_s]\odot \partial_{x}^k[\sqrt{\epsilon}W_{s,r}X^\epsilon_s+G_{s,r}Z^\epsilon_s]\,.
	\end{align*}
	 We add and subtract from $\delta(\partial_{x}^{n} X^\epsilon\odot \partial_{x}^kZ^\epsilon)_{s,r}$ the elements of at most $p$-variation, which we denote by $T_1$, in such a way that $\delta(\partial_{x}^{n} X^\epsilon\odot \partial_{x}^kZ^\epsilon)_{s,r}-T_1$ is of $p/2$-variation. We explicit $T_1$, 
	\begin{align*}
		T_1&:=\partial_{x}^{n} X^\epsilon_s\odot\partial_{x}^k[\sqrt{\epsilon}W_{s,r}X^\epsilon_s+G_{s,r}Z^\epsilon_s]+\partial_{x}^{n} [W_{s,r}u^\epsilon_s+G_{s,r}X^\epsilon_s]\odot \partial_{x}^kZ^\epsilon_s\,.
	\end{align*}
	Analogously, we subtract from $\delta(\partial_{x}^{n} Z^\epsilon\odot \partial_{x}^kZ^\epsilon)_{s,r}$ the terms of only $p/2$-variation, 
	\begin{align*}
		T_2:=\partial_{x}^nZ^\epsilon\odot \partial_{x}^k[\sqrt{\epsilon}W_{s,r}X^\epsilon_s+G_{s,r}Z^\epsilon_s]+\partial_{x}^n[\sqrt{\epsilon}W_{s,r}X^\epsilon_s+G_{s,r}Z^\epsilon_s]\odot\partial_{x}^kZ^\epsilon\,.
	\end{align*}
	We write the compensation explicitly, 
	\begin{align*}
	\sum_{n=0}^{k} \binom{k}{n} [-\sqrt{\epsilon}\partial_{x}^{k-n}W_{r,t}T_1-\partial_{x}^{k-n}G_{r,t}T_2]&=-\sqrt{\epsilon}\partial_{x}^{k}[W_{r,t} X^\epsilon_s]\odot\partial_{x}^k[\sqrt{\epsilon}W_{s,r}X^\epsilon_s+G_{s,r}Z^\epsilon_s]\\
	&-\sqrt{\epsilon}\sum_{n=0}^{k} \binom{k}{n}\partial_{x}^{k-n}W_{r,t}\partial_{x}^{n} [W_{s,r}u^\epsilon_s+G_{s,r}X^\epsilon_s]\odot \partial_{x}^kZ^\epsilon_s\\
	&-\partial_{x}^{k}[G_{r,t}Z^\epsilon]\odot \partial_{x}^k[\sqrt{\epsilon}W_{s,r}X^\epsilon_s+G_{s,r}Z^\epsilon_s]\\
	&-\sum_{n=0}^{k} \binom{k}{n}\partial_{x}^{k-n}G_{r,t}\partial_{x}^n[\sqrt{\epsilon}W_{s,r}X^\epsilon_s+G_{s,r}Z^\epsilon_s]\odot\partial_{x}^kZ^\epsilon\,.
	\end{align*}
	Notice that the sum of the first and the third sum in the above expression coincides with
	\begin{align*}
		-\partial_x^k[\sqrt{\epsilon}W_{r,t} X^\epsilon_s+G_{r,t}Z^\epsilon]\odot \partial_{x}^k[\sqrt{\epsilon}W_{s,r}X^\epsilon_s+G_{s,r}Z^\epsilon_s]\,,
	\end{align*}
	which erases with \eqref{eq:compensazione_a_croce}. The sum of the second and the fourth sum can be rewritten as
	\begin{align*}
		&-\sqrt{\epsilon}  \partial_{x}^{k}[W_{r,t}W_{s,r}u^\epsilon_s+W_{r,t}G_{s,r}X^\epsilon_s]\odot \partial_{x}^kZ^\epsilon_s-\partial_{x}^{k}[\sqrt{\epsilon}G_{r,t}W_{s,r}X^\epsilon_s+G_{r,t}G_{s,r}Z^\epsilon_s]\odot\partial_{x}^kZ^\epsilon\\
		&=-\sqrt{\epsilon}  \partial_{x}^{k}[\delta \mathbb{W}_{s,r,t}u^\epsilon_s+\delta[GW]_{s,r,t}X^\epsilon_s]\odot \partial_{x}^kZ^\epsilon_s -\partial_{x}^{k}[\sqrt{\epsilon}\delta[WG]_{s,r,t}X^\epsilon_s+\delta \mathbb{G}_{s,r,t}Z^\epsilon_s]\odot\partial_{x}^kZ^\epsilon\,,
	\end{align*}
	which cancels with the terms of at most $p/2$-variation appearing in $\delta \mathbb{I}^1_{s,r,t},\delta \mathbb{I}^2_{s,r,t},\delta \mathbb{I}^3_{s,r,t}$. 
	Therefore we can rewrite $\delta \big(\partial^k_x Z^\epsilon\big)^{\otimes 2,\natural}_{s,r,t}$ as a sum of terms of at least $p/3$-variation. Thus, we need to bound the drifts of  $\delta (Z^\epsilon\otimes Z^\epsilon)$, $\delta( X^\epsilon\otimes X^\epsilon)$, $\delta (Z^\epsilon\otimes X^\epsilon)$. We show the estimate of the drift in $\delta(\partial_{x} Z^\epsilon\otimes \partial_{x} Z^\epsilon)$, the other terms behave similarly. For all $\phi\in W^{1,\infty}$, it holds
	\begin{align*}
	\left|\Biggl\langle \partial_x\left[b'(u_r)X_r-\frac{b(u^\epsilon_r)-b(u_r)}{\sqrt{\epsilon}} \right]\odot \partial_x Z^\epsilon_r,\phi\Biggr\rangle\right|&\lesssim \|\phi\|_{L^\infty}\|\partial^{2}_x Z^\epsilon_r \|_{L^2}^2\\
	&\quad+\|\partial_x\phi\|_{L^\infty} \| Z^\epsilon_r \|_{L^2}\|\partial_x Z^\epsilon_r \|_{L^2}\,.
	\end{align*}
	From the rough standard machinery as in Proposition~\ref{pro:lip_semilinear} (by using the sewing Lemma~\ref{lemma_sewing} and the conditions on the drift), the remainder estimate holds
		\begin{align*}
	\|\big(\partial^k_x Z^\epsilon\big)^{\otimes 2,\natural}_{s,t}\|_{(W^{2,\infty})^*}\lesssim \omega^{1/p}(s,t) \int_{s}^{t}\|\partial^2_x Z^\epsilon_r\|^2_{L^2}\dd r +\omega^{3/p}(s,t)\|Z^\epsilon_r\|^2_{L^\infty([s,t];H^1)}
	+\sqrt{\epsilon}\omega^{3/p}(s,t)\,,
	\end{align*}
	where $\omega$ is  a control depending on $\mathbf{Z},u^\epsilon,u,X^\epsilon,X, Z^\epsilon$. By testing \eqref{eq:X_eps_menus_X_k_square_semilinear} by $\mathbf{1}$ and from the rough Gronwall's Lemma~\ref{lem:gronwall}, we conclude that $(X^\epsilon)_{\epsilon}$ converges strongly to $X$ in $L^\infty(H^1)\cap L^2(H^2)$ with speed of convergence $\sqrt{
	\epsilon}$.
\end{proof}

\subsection{Further examples.}
We discuss how to extend the Wong-Zakai convergence and the CLT to a stochastic reaction-diffusion equation on $\mathbb{T}^d$, for $d=1,2,3$, and how to achieve a CLT for the stochastic Landau-Lifschitz-Gilbert equation on a one dimensional torus. 
\subsubsection{A stochastic reaction-diffusion equation.}\label{section:reaction_diffusion}
We introduce the stochastic reaction-diffusion equation with multiplicative linear noise
\begin{equation}\label{eq:stoch_react_diff}
\delta u_{s,t}=\int_{s}^{t}[\Delta u_r+u_r(1-|u_r|^2)]\dd r+G_{s,t}u_s+\mathbb{G}_{s,t}u_s+u^\natural_{s,t}\,,
\end{equation}
where $u_0=u^0\in H^1(\mathbb{T}^d,\mathbb{R}^n)$, for $d=1,2,3$.  We give the notion of solution to \eqref{eq:stoch_react_diff}.
\begin{definition}\label{def:sol_stoch_reaction_diff}
	Let $\mathbf{G}\equiv (G,\mathbb{G})\in \mathcal{RD}^p(\mathbb{R}^n;W^{1,\infty}) $ and  $p\in [2,3)$. We say that $u\in L^\infty(H^1)\cap L^2(H^2)\cap \mathcal{V}^p(L^2)$ is a solution with initial condition $u^0\in H^1$ if there exists a two index map $u^\natural\in \mathcal{V}^{p/3}_2(L^2)$ defined implicitly by the equality \eqref{eq:stoch_react_diff} in $L^2$, with $u_0=u^0$ as an equality in $L^2$ and the energy inequality holds, for some $\epsilon>0$
	\begin{align*}
	\|u\|^2_{L^\infty(H^1)}+\|u\|^2_{L^2(H^2)}\lesssim_{\mathbf{G},u^0}\|u^0\|^{2}_{L^2}+ \|u^0\|^{\epsilon}_{L^2}\|\nabla u^0\|^2_{L^2}\,.
	\end{align*}
\end{definition}
By denoting $\gamma_2(u):=u(1-|u|^2)$ and by $b_2(u):=\Delta u+ \gamma_2(u)$, we establish existence and uniqueness of strong solutions to the stochastic reaction-diffusion equation \eqref{eq:stoch_react_diff}. We denote by $u=\pi_2(u^0,\mathbf{G})$ the solution to equation \eqref{eq:stoch_react_diff} in the sense of Definition~\ref{def:sol_stoch_reaction_diff} started in $u^0\in H^1$ and driven by a rough driver $\mathbf{G}$. 
\paragraph{A priori estimates and existence.} Let $(G^n,\mathbb{G}^n)$ be a smooth approximation of the geometric rough river $(G,\mathbb{G})$. From a perturbative argument on the classical reaction-diffusion equation, there exists a unique solution $u^n$ to the approximate problem
\begin{align*}
\delta u^n_{s,t}=\int_{s}^{t}[\Delta u^n_r+u^n(1-|u^n|^2)]\dd r+G^n_{s,t}u^n_s+\mathbb{G}^n_{s,t}u^n_s+u^{n,\natural}_{s,t}\,,
\end{align*}
with $u^n_0=u^0$. We show that $(u^n)_n$ is uniformly bounded in $L^\infty(H^1)\cap L^2(H^2)\cap L^4(L^4)\cap \mathcal{V}^p(L^2)$ and the following uniform bound holds
\begin{align}\label{eq:RD_estimate_a_priori}
\sup_{0\leq t\leq T}\|u^n_t\|_{H^1}^2+\int_{0}^{T} \left[ \| u^n_r\|^{2}_{H^2}+ \|u^n\|^4_{L^4}+\int_{\mathbb{T}^d} |u^n_r|^2|\nabla u^n_r|^2\dd x \right]\dd r \leq C(u^0,\mathbf{G},T)\,,
\end{align}
where $C\equiv C(u^0,\mathbf{G})$ is a constant depending polynomially on the $H^1$-norm of the initial condition and on the $p$-variation norm of the noise $\mathbf{G}$. The proof follows from an application of Proposition~\ref{pro:drift_heat_remainder} to the drift of the reaction-diffusion equation: we compute explicitly the norms appearing in the estimate of the remainder. The estimate of the Laplacian coincides with Proposition~\ref{pro:drift_heat_remainder}. We look at the reaction term: for all $\phi\in W^{1,\infty}$ 
\begin{align*}
\int_{0}^{T}|\langle(1-|u_r|^2) u_r \otimes u_r,\phi \rangle |\dd r\lesssim \|\phi\|_{L^\infty} \int_{0}^{T} [\|u_r\|^2_{L^2}+\|u_r\|_{L^4}^4]\dd r\,.
\end{align*}
In conclusion, from the rough Gronwall's Lemma~\ref{lem:gronwall}, Proposition~\ref{pro:remainder_drift_abstract} and the previous considerations on the drift, the following estimate holds
\begin{align}\label{eq:energy_0_level_RD}
\sup_{0\leq t\leq T}\|u^n_t\|_{L^2}^2+\int_{0}^{T} \left[\|\nabla u^n_r\|^{2}_{L^2}+\|u^n\|^4_{L^4}\right]\dd r \lesssim_{\tau}  \exp\left(\frac{\bar{\omega}}{\tau}\right)[|u^{n,0}|^2+\omega_{\mathbf{G}}^{1/p}(0,T)]\,,
\end{align}
We turn now to the reaction term appearing in $\delta (\partial_x u\odot \partial_x u)$. For all $\phi\in W^{1,\infty}$ 
\begin{align*}
\int_{0}^{T}|\langle\partial_x[(1-|u_r|^2) u_r] \otimes\partial_x  u_r,\phi \rangle |\dd r \lesssim \|\phi\|_{L^\infty} \int_{0}^{T}\left [\|\nabla u_r\|^2_{L^2}+\int_{\mathbb{T}^d}|u_r|^2|\nabla u_r|^2\dd x \right]\dd r\,.
\end{align*}
We apply again the rough Gronwall's Lemma~\ref{lem:gronwall}, Proposition~\ref{pro:remainder_drift_abstract}, \eqref{eq:energy_0_level_RD} and the previous considerations on the drift estimates, obtaining
\begin{align*}
\sup_{0\leq t\leq T}\|\nabla u^n_t\|^2_{L^2}+\int_{0}^{T}\left[\|\Delta u^n_r\|^2_{L^2}+\int_{0}^{T}\int_{\mathbb{T}^d}|u^n|^2|\nabla u^n|^2\dd x\right]\dd r\lesssim \exp \left(\omega\right)[\|\nabla u^{n,0}\|^2_{L^2}+\omega^{1/p}_{\mathbf{G}}(0,T)]\,.
\end{align*}
The sequence $(u^n)_n$ is uniformly bounded in $L^\infty(H^1)\cap L^2(H^2)\cap L^4(L^4)\cap \mathcal{V}^p(L^2)$ and \eqref{eq:RD_estimate_a_priori} holds. By a standard procedure, it is possible to identify the limit in the non linearity and show that there exists a limit $u$ which is a solution to \eqref{eq:stoch_react_diff}. Moreover, by the lower semi-continuity of the norm, each solution $u$ satisfies the a priori estimate \eqref{eq:RD_estimate_a_priori}. 
\paragraph{Local Lipschitz continuity of the It\^o-Lyons map and uniqueness.}
By following the computations in Proposition~\ref{pro:lip_semilinear}, the uniqueness of $u$ and the Lipschitz continuity of the It\^o-Lyons map follow. Indeed, we can look only at the drift, since the general structure is the same. Notice again that the behaviour of the Laplacian coincides with the linear equation, thus we concentrate on the reaction term. Given two solutions $u=\pi_2(u^0,\mathbf{G}),v=\pi_2(u^0,\mathbf{W})$ to the reaction-diffusion equation, we can analyse the drift of $\delta (u-v)\odot (u-v)$ tested by $\mathbf{1}$,
\begin{align*}
&\quad-\langle[(1-|u_r|^2) u_r-(1-|v_r|^2) v_r] \odot (u_r-v_r),\mathbf{1}\rangle \\
& = -\langle[(1-|u_r|^2) (u_r-v_r)+[(1-|u_r|^2-1+|v_r|^2) ]v_r] \odot (u_r-v_r),\mathbf{1}\rangle\\
&= -\langle(1-|u_r|^2) (u_r-v_r)\odot (u_r-v_r),\mathbf{1}\rangle-\langle (|v_r|^2-|u_r|^2) v_r\odot (u_r-v_r),\mathbf{1}\rangle\\
&= -\langle(1-|u_r|^2) (u_r-v_r)\odot (u_r-v_r),\mathbf{1}\rangle-\langle (|v_r|^2-|u_r|^2) v_r\odot (u_r-v_r),\mathbf{1}\rangle_{L^2}\\
&= -\langle(1-|u_r|^2) (u_r-v_r), (u_r-v_r) \rangle-\langle (|v_r|^2-|u_r|^2) v_r,(u_r-v_r)\rangle_{L^2}\,.
\end{align*}
In the notations of Proposition~\ref{pro:lip_semilinear}, $z:=u-v$ and $\mathbf{Z}:=\mathbf{G}-\mathbf{W}$. Since $u,v$ are solutions, they fulfil the bound \eqref{eq:RD_estimate_a_priori}: $\|u\|_{L^\infty(H^1)\cap L^2(H^2)}+\|v\|_{L^\infty(H^1)\cap L^2(H^2)}\lesssim C(u^0,\mathbf{G},\mathbf{W})$, 
\begin{align*}
&\quad\int_{0}^{t}\langle[(1-|u_r|^2) u_r-(1-|v_r|^2) v_r] \odot z_r,\mathbf{1}\rangle \dd r \\
&\leq \int_{0}^{t}\left[-\int_{\mathbb{T}^d} |u_r|^2|z_r|^2 \dd x+\|z_r\|^2_{L^2}+\|z_r\|_{L^2}^2[\|v_r\|^2_{L^\infty}+\|u_r\|^2_{L^\infty}]\right] \dd r\,.
\end{align*}
Recall that from Agmon's inequality in two dimensions and three dimensions, it holds that for every $f\in H^2$,
\begin{align}\label{eq:agmon_inequality}
\|f\|_{L^\infty}\lesssim \|f\|_{H^1}^{1/2}\|f\|_{H^2}^{1/2}\,,
\end{align}
We compute the upper bound 
\begin{align*}
&-\int_{0}^{t}\int_{\mathbb{T}^d} |u_r|^2|z_r|^2 \dd x \dd r +\int_{0}^{t}\left[ \|z_r\|^2_{L^2}+\|z_r\|_{L^2}^2[\|v_r\|^2_{L^\infty}+\|u_r\|^2_{L^\infty}]\right] \dd r\\
&\lesssim -\int_{0}^{t}\int_{\mathbb{T}^d} |u_r|^2|z_r|^2 \dd x \dd r+\left[1+\|v_r\|^2_{L^2(L^\infty)}+\|u_r\|^2_{L^2(L^\infty)}\right] \|z\|^2_{L^\infty(L^2)} \,,
\end{align*}
where we use the Agmon's inequality \eqref{eq:agmon_inequality} to obtain that 
\begin{align}\label{eq:stima_fatta _con Agmon}
\|v_r\|^2_{L^2(L^\infty)}\leq \int_{0}^{t}\|v_r\|_{H^1}\|u_r\|_{H^2}\dd r\leq  \int_{0}^{t}[\|v_r\|_{H^1}^2+\|u_r\|_{H^2}^2]\dd r\lesssim C(u^0,\mathbf{G},\mathbf{W})\,.
\end{align}
Hence, the drift of the reaction-diffusion equation has the form
\begin{align*}
\int_{s}^{t} \left[\|\nabla [u_r-v_r]\|^2_{L^2}+\int_{\mathbb{T}^d}|u_r|^2|u_r-v_r|^2\dd x \right]\dd r \leq C(u^0,\mathbf{G},\mathbf{W}) (t-s)\sup_{s\leq r\leq t}\|u_r-v_r\|^2_{L^2}\,.
\end{align*}
By observing \eqref{eq:remainder_WZ}, we see that we need to estimate the drifts coming from $\delta (z\odot z), \delta (z\odot u) $: for all $\phi\in W^{1,\infty}$ and for all $s\leq t\in [0,T]$
\begin{align*}
\int_{s}^{t}|\langle z_r\odot z_r-(u_r|u_r|^2-v_r|v_r|^2)\odot z_r,\phi \rangle |\dd r\lesssim C(u^0,\mathbf{G},\mathbf{W}) \|\phi\|_{L^\infty}(t-s)[\|z\|^2_{L^\infty(L^2)}+\|z\|^2_{L^\infty(L^2)} ]\,.
\end{align*}
The other drift can be estimates similarly: in contrast to the estimate of $\delta (z\odot z) $, the estimate of the drift of $\delta (z\odot u) $ does not need to present a \enquote{$z$} term on the right hand side. Indeed, in the original estimate \eqref{eq:remainder_WZ} the elements of the form  $\delta (z\odot u) $ are already multiplied by $\mathbf{Z}$. If we set $\mathbf{G}=\mathbf{W}$, from the previous considerations and the rough Gronwall's Lemma we can conclude that there exists a unique solution $u$ to \eqref{eq:stoch_react_diff}. By considering different noises instead, we conclude that the It\^o-Lyons map associated to the equation is locally Lipschitz continuous as a map from the space of rough drivers $\mathcal{RD}^p(\mathbb{R}^d;W^{1,\infty})$ to $L^\infty(L^2)\cap L^2(H^1)$. We prove the improved regularity, by looking at the estimate for the remainder of the derivative of the equation. The drifts appearing in the remainder estimate in \eqref{eq:remainder_WZ} come from $\delta (z\odot z), \delta (z\odot u),\delta (\partial_x z\odot z), \delta (z\odot \partial_x u),\delta (\partial_xz\odot u),\delta (\partial_xz\odot \partial_xz) $. We compute explicitly the remainder associated to $\delta (\partial_xz\odot \partial_xz)$ (besides the Laplacian, since the estimate is analogous as in the heat equation): for all $\phi \in W^{1,\infty}$ and for all $s\leq t\in [0,T]$ 
\begin{align*}
	&\quad\int_{s}^{t}|\langle \partial_x z_r\odot \partial_x z_r-\partial_x(u_r|u_r|^2-v_r|v_r|^2)\odot \partial_x z_r,\phi \rangle |\dd r\\
	&\leq \|\phi\|_{L^\infty}(t-s)[\|\nabla z\|^2_{L^\infty(L^2)}+[\|v\|^2_{L^2(L^\infty)}+\|u\|^2_{L^2(L^\infty)}]\|z\|^2_{L^\infty(H^1)}\\
	&\quad +[\|\partial_x u\|^2_{L^\infty(L^2)}+\|\partial_x v\|^2_{L^\infty(L^2)}]\|z_r\|^2_{L^2(H^2)} ]\,,
\end{align*}
where we can complete the estimate with \eqref{eq:stima_fatta _con Agmon}.
By applying the rough Gronwall's Lemma~\ref{lem:gronwall} and the previous considerations, the It\^o-Lyons map $\Phi_2$ associated to the reaction-diffusion equation is locally Lipschitz continuous as a map from the space of rough drivers to $L^\infty(H^1)\cap L^2(H^2)$.

\paragraph{Existence and uniqueness of a solution to the limit equation $X$.}
Denote by $\bar{u}=\Phi_2(\mathbf{0})$ the unique solution to the deterministic stochastic reaction-diffusion equation (the result follows also from the previous existence and uniqueness proof for the stochastic equation with null noise). The Frechét derivative of the drift has the form 
\begin{align*}
b_2'(\bar{u})X=\Delta X+X-X|\bar{u}|^2-2\bar{u}X\cdot \bar{u}\,.
\end{align*}
Analogously to Theorem~\ref{teo:CLT}, we can infer a CLT for the stochastic reaction-diffusion equation \eqref{eq:stoch_react_diff}. We first discuss the existence and uniqueness of the limit equation: we verify Assumption \ref{assumption:drift_linear_b_prime}. It follows that for all $\phi \in W^{1,\infty}$, for all $f,h\in L^\infty(H^1)\cap L^2(H^2)$
\begin{align*}
\int_{s}^{t}|\langle [b_2'(\bar{u})h_r]\otimes  f_r,\phi\rangle|\dd r &\leq \|\nabla h\|_{L^2(L^2)}[\|\nabla f\|_{L^2(L^2)}+\| f\|_{L^\infty(L^2)}]\|\phi\|_{W^{1,\infty}}\\
&\quad +(1+\|\bar{u}\|^2_{L^2(L^\infty)})\|h\|_{L^\infty(L^2)}\|f\|_{L^\infty(L^2)}\|\phi\|_{W^{1,\infty}}\,,
\end{align*}
and analogously that
\begin{align*}
\int_{s}^{t}|\langle\nabla  [b_2'(\bar{u})h_r]\otimes  \nabla f_r,\phi\rangle|\dd r &\leq \|\Delta h_r\|_{L^2(L^2)} \|\nabla f_r\|_{L^2(H^1)} \|\phi\|_{W^{1,\infty}} \\
&\quad + \|\nabla h_r\|_{L^\infty(L^2)} \|\nabla f_r\|_{L^\infty(L^2)} \|\phi\|_{W^{1,\infty}}(t-s)\\
&\quad + \|\bar{u}\|^2_{L^2(L^\infty )}\|\nabla h_r\|_{L^\infty(L^2)} \|\nabla f_r\|_{L^\infty(L^2)} \|\phi\|_{W^{1,\infty}}\\
&\quad + \|\bar{u}\|_{L^\infty(L^2 )}\|\bar{u}\|_{L^2(L^\infty )}\|h\|_{L^\infty(L^2 )}\|f\|_{L^2(L^\infty )}\|\phi\|_{W^{1,\infty}} \,.
\end{align*}
It also holds that
\begin{align*}
	\int_{0}^{t}\langle b_2'(\bar{u}_r) f_r,f_r \rangle\dd r =-\int_{0}^{t}\|\nabla f_r\|^2_{L^2}\dd r+\int_{0}^{t}\| f_r\|^2_{L^2}\dd r-\int_{0}^{t}\| \bar{u}\cdot f_r\|^2_{L^2}\dd r-\int_{0}^{t}\||\bar{u}_r|| f_r|\|^2_{L^2}\dd r\,,
\end{align*}
as well as
\begin{align*}
\int_{0}^{t}\langle \nabla [b_2'(\bar{u}_r) f_r],\nabla f_r \rangle\dd r &\leq -\int_{0}^{t}\|\Delta  f_r\|^2_{L^2}\dd r+[1+\|\bar{u}\|_{L^\infty(H^1)}]\int_{0}^{t}\|  f_r\|^2_{H^1}\dd r- \int_{0}^{t}\| \bar{u}_r\cdot \nabla f_r\|^2_{L^2}\dd r\,.
\end{align*}
In conclusion, Assumption \ref{assumption:drift_linear_b_prime} holds and existence, uniqueness, pathwise Wong-Zakai, a large deviation principle and a support theorem hold for the solution $X$ to the limit equation.

\paragraph{CLT for the reaction-diffusion equation and $X=D\Phi_2[\mathbf{0}](\mathbf{W})$.}
We prove the CLT for the reaction-diffusion equation. Denote by $u^\epsilon=\Phi_2(\tau_{\sqrt{\epsilon}}\mathbf{W} )$ and by $X^\epsilon:=(u^\epsilon-\bar{u})/\sqrt{\epsilon}$.
We follow Theorem~\ref{teo:CLT} and, since there are changes only in the non linear drift terms, we write only that those elements. From Agmon's inequality \eqref{eq:agmon_inequality}, we obtain that the following elements are uniformly bounded by a constant $C>0$ depending polynomially on the initial condition $u^0$, on the noises and exponentially on time
\begin{align*}
\|\bar{u}\|^2_{L^2(L^\infty)}, \|u^\epsilon\|^2_{L^2(L^\infty)}, \| X\|_{L^2(L^\infty)}, \| X^\epsilon \|_{L^2(L^\infty)}\leq C\,.
\end{align*}
The above estimate would be finer, but it still holds in one dimension (use Agmon's inequality \eqref{eq:agmon_inequality}).  In the main body of the proof, we need to estimate the drift of $\delta (X^\epsilon- X)^{\otimes 2}$
\begin{align*}
&\quad\int_{s}^{t}\Biggl\langle \left[ \frac{ u^\epsilon|u^\epsilon|-\bar{u}|\bar{u}|^2}{\sqrt{\epsilon}} -X|\bar{u}|^2-2\bar{u}X\cdot \bar{u}\right]\otimes (X^\epsilon -X), \phi\Biggr\rangle\dd r \\
&=\int_{s}^{t}\Biggl\langle \left[ \frac{ (u^\epsilon-\bar{u})|u^\epsilon|^2-\bar{u}[|u^\epsilon|^2-|\bar{u}|^2]}{\sqrt{\epsilon}} -X|\bar{u}|^2-2\bar{u}X\cdot \bar{u}\right]\otimes (X^\epsilon -X), \phi\Biggr\rangle\dd r \\
&=\int_{s}^{t}\Biggl\langle \left[  (X^\epsilon-X)|u^\epsilon|^2+X[|u^\epsilon|^2-|\bar{u}|^2]-\frac{\bar{u}[|u^\epsilon|^2-|\bar{u}|^2]}{\sqrt{\epsilon}} -2\bar{u}X\cdot \bar{u}\right]\otimes (X^\epsilon -X), \phi\Biggr\rangle\dd r \,.
\end{align*}
We observe that the following equality holds 
\begin{align*}
&\quad-\frac{\bar{u}[|u^\epsilon|^2-|\bar{u}|^2]}{\sqrt{\epsilon}} -2\bar{u}X\cdot \bar{u}\\
&=  -\frac{\bar{u}[u^\epsilon-\bar{u}]\cdot[u^\epsilon+\bar{u}]}{\sqrt{\epsilon}} -2\bar{u}X\cdot \bar{u}\\
&=-\frac{\bar{u}[|u^\epsilon|-|\bar{u}|]|u^\epsilon|}{\sqrt{\epsilon}}-\frac{\bar{u}[|u^\epsilon|-|\bar{u}|]|\bar{u}|}{\sqrt{\epsilon}} -2\bar{u}X\cdot \bar{u}\\
&=-\frac{\bar{u}[|u^\epsilon|-|\bar{u}|][|u^\epsilon|-|\bar{u}|]}{\sqrt{\epsilon}}-\frac{\bar{u}[|u^\epsilon|-|\bar{u}|]|\bar{u}|}{\sqrt{\epsilon}}-\frac{\bar{u}[|u^\epsilon|-|\bar{u}|]|\bar{u}|}{\sqrt{\epsilon}} -2\bar{u}X\cdot \bar{u}\\
&=-\frac{\bar{u}[|u^\epsilon|-|\bar{u}|][|u^\epsilon|-|\bar{u}|]}{\sqrt{\epsilon}}-
2\bar{u}\left[\frac{[|u^\epsilon|-|\bar{u}|]} {\sqrt{\epsilon}}-X\right]\cdot \bar{u}\,.
\end{align*}
Hence, we can estimate  
\begin{align*}
&\quad \int_{s}^{t}\Biggl\langle \left[ \frac{ u^\epsilon|u^\epsilon|-\bar{u}|\bar{u}|^2}{\sqrt{\epsilon}} -X|\bar{u}|^2-2\bar{u}X\cdot \bar{u}\right]\otimes (X^\epsilon -X), \phi\Biggr\rangle\dd r \\
&\lesssim \|u^\epsilon\|^2_{L^2(L^\infty)}\|X^\epsilon-X\|_{L^\infty(L^2)}^2\|\phi\|_{L^\infty} \\
&\quad +\sqrt{\epsilon}\|X\|_{L^2(L^2)}\|X^\epsilon\|_{L^2(L^\infty)} \|X^\epsilon-X\|_{L^\infty(L^2)}\|\phi\|_{L^\infty}\\
&\quad + \sqrt{\epsilon}\|X^\epsilon\|_{L^2(L^\infty)}\|X^\epsilon\|_{L^\infty(L^2)}\|\bar{u}\|_{L^2(L^\infty)}\|X^\epsilon-X\|_{L^\infty(L^2)}\|\phi\|_{L^\infty}\\
&\quad + \|\bar{u}\|_{L^2(L^\infty)}^2\|X^\epsilon-X\|_{L^2(L^2)}\|\phi\|_{L^\infty}\\
&\lesssim \|\phi\|_{L^\infty} [\epsilon+\|X^\epsilon-X\|_{L^\infty(L^2)}^2] \,.
\end{align*}
Set $Z^\epsilon:=X^\epsilon-X$. We compute the drift for the equation for the derivative:
\begin{align*}
&\quad \int_{s}^{t}\Biggl\langle \nabla \left[ \frac{ u^\epsilon|u^\epsilon|-\bar{u}|\bar{u}|^2}{\sqrt{\epsilon}} -X|\bar{u}|^2-2\bar{u}X\cdot \bar{u}\right]\otimes \nabla (X^\epsilon -X), \phi\Biggr\rangle\dd r \\
&\lesssim \|u^\epsilon\|^2_{L^2(L^\infty)}\|\nabla Z^\epsilon\|_{L^\infty(L^2)}^2\|\phi\|_{L^\infty} +\|\nabla Z^\epsilon\|_{L^\infty(L^2)}\|Z^\epsilon\|_{L^\infty(L^2)}\|u^\epsilon\cdot \nabla u^\epsilon\|_{L^1(L^1)} \|\phi\|_{L^\infty}\\
&\quad+\sqrt{\epsilon} \|\nabla X\|_{L^\infty(L^2)}\|X^\epsilon\|_{L^2(L^\infty)}\|u^\epsilon+\bar{u}\|_{L^2(L^\infty)}\|\nabla Z^\epsilon\|_{L^\infty(L^2)}\|\phi\|_{L^\infty}\\ &\quad+\sqrt{\epsilon} \| X\|_{L^2(L^\infty)}\|\nabla X^\epsilon\|_{L^2(L^2)}\|u^\epsilon+\bar{u}\|_{L^2(L^\infty)}\|\nabla Z^\epsilon\|_{L^\infty(L^2)}\|\phi\|_{L^\infty} \\
&\quad+\sqrt{\epsilon} \| X\|_{L^2(L^\infty)}\| X^\epsilon\|_{L^2(L^\infty)}\|\nabla u^\epsilon+\nabla \bar{u}\|_{L^\infty(L^2)}\|\nabla Z^\epsilon\|_{L^\infty(L^2)}\|\phi\|_{L^\infty} \\
&\quad + \sqrt{\epsilon} \| \nabla \bar{u}\|_{L^\infty(L^2)}\|X^\epsilon\|_{L^2(L^\infty)}\|X^\epsilon\|_{L^2(L^\infty)}\|\nabla Z^\epsilon\|_{L^\infty(L^2)}\|\phi\|_{L^\infty} \\
&\quad + 2 \sqrt{\epsilon} \|  \bar{u}\|_{L^2(L^\infty)}\|\nabla X^\epsilon\|_{L^\infty(L^2)}\|X^\epsilon\|_{L^2(L^\infty)}\|\nabla Z^\epsilon\|_{L^\infty(L^2)}\|\phi\|_{L^\infty} \\
&\quad +2\|\nabla \bar{u}\|_{L^\infty(L^2)}\|Z^\epsilon\|_{L^2(L^\infty)}\|\nabla Z^\epsilon\|_{L^\infty(L^2)} \| \bar{u}\|_{L^\infty(L^\infty)} \|\phi\|_{L^\infty} \\
&\quad +\|\bar{u}\|^2_{L^2(L^\infty)}\|\nabla Z^\epsilon\|^2_{L^\infty(L^2)} \|\phi\|_{L^\infty} \\
&\lesssim \|\phi\|_{L^\infty}[\epsilon+ \|Z^\epsilon\|^2_{L^\infty(H^1)}]\,.
\end{align*}
The above estimates of the drift allow to close the estimate of the remainder, via the rough standard machinery. We test the equation by $\mathbf{1}$ and from Gronwall's Lemma~\ref{lem:gronwall},
\begin{align*}
\sup_{0\leq t\leq T}\|Z^\epsilon_t\|^2_{H^1}+\int_{0}^{T}\|\nabla Z^\epsilon_r\|^2_{H^1}\dd r\lesssim_{\mathbf{W}} \epsilon +\omega^{1/p}\sqrt{\epsilon}\,.
\end{align*}
In conclusion, the convergence of the sequence $(X^\epsilon)_{\epsilon}$ to the limit $X$ occurs in $L^\infty(H^1)\cap L^2(H^2)\cap \mathcal{V}^p(L^2)$ $\mathbb{P}$-a.s. with speed $\sqrt{\epsilon}$. As already observed in Remark~\ref{remark:ito_CLT}, \textit{also the solution to the reaction-diffusion equation driven a multiplicative liner noise interpreted in the It\^o's sense converges pathwise to the same limit equation.}

\subsection{On the CLT in the optimal space for the stochastic Landau-Lifschitz-Gilbert equation on $\mathbb{T}$.} \label{sec:LLG}
We study the CLT for the stochastic Landau-Lifschitz-Gilbert equation on the one dimensional torus, which we recall
\begin{align}\label{eq:rLLG}
\delta u_{s,t}=\int_{s}^{t}b_3(u_r)\dd r+W_{s,t}u_s+\mathbb{W}_{s,t}u_s+u^\natural_{s,t}\,,
\end{align}
where $b_3(u):=\partial_x^2 u+\gamma_3(u)$ and $\gamma_3(u):=u\times\partial^2_x u+u|\partial_x u|^2$ (the equation is quasi-linear). Note that the equation is often studied in its equivalent form, where the drift $b_3(u)=u\times\partial^2_x u-u\times(u\times\partial^2_x u)$.
The rough path $\mathbf{W}\equiv(W,\mathbb{W})$ is constructed in such a way that the solution does not escape the sphere $\mathbb{S}^2$ of $\mathbb{R}^3$, namely for a given $\mathbf{H}\equiv(H,\mathbb{H})\in \mathcal{RD}^p(\mathbb{R}^3;H^{k+1})$, with $k\in \mathbb{N}$, we define 
\begin{equation*}
W:= \begin{pmatrix}
0 & H^3 & -H^2\\
-H^3 & 0 & H^1\\
H^2 & -H^1 & 0
\end{pmatrix}
\in \V^{p}(0,T;H^{k+1}(\T ;\mathcal L_a(\R^3)))\,,
\end{equation*}
\begin{equation*}
\label{murky_formula}
\mathbb{W}:=
\begin{pmatrix}
-\mathbb{H}^{3,3} -\mathbb{H}^{2,2}&
\mathbb{H}^{1,2} &
\mathbb{H}^{1,3}\\
\mathbb{H}^{2,1} &
-\mathbb{H}^{3,3} -\mathbb{H}^{1,1}&
\mathbb{H}^{2,3}\\
\mathbb{H}^{3,1} &
\mathbb{H}^{3,2} &
-\mathbb{H}^{2,2} -\mathbb{H}^{1,1}\\
\end{pmatrix}
\in \V_2^{p/2}(0,T;H^{k+1}(\mathbb{T} ;\mathcal L(\R^3)))\,.
\end{equation*}
We denote the space of such \enquote{spherical} rough drivers by $\mathcal{RD}^p_S(\mathbb{R}^3;H^{k+1})$.
We refer the reader to the construction in  \cite{LLG1D} for more details (see also \cite{LLG_inv_measure}).
Let $u^0\in H^k(\mathbb{T},\mathbb{S}^2)$, then as proved in \cite{LLG1D} there exists a unique solution $u=\pi_3(u^0,\mathbf{W})\in L^\infty(H^{k})\cap L^2(H^{k+1})\cap\mathcal{V}^p(H^{k-1})$ to \eqref{eq:rLLG} such that the It\^o-Lyons map $\Phi_3:\mathcal{RD}_S^p(\mathbb{R}^3;H^{k+1})\rightarrow L^\infty(H^{k})\cap L^2(H^{k+1})$ is locally Lipschitz continuous. If the rough driver is identically $0$, then we recover existence and uniqueness of $\bar{u}=\pi_3(u^0,\mathbf{0})$.
From the same theorem, for every $\epsilon>0$ there exists a unique solution $u^\epsilon=\pi_3(u^0;\tau_{\sqrt{\epsilon}}\mathbf{W})$. We aim to prove a central limit theorem for the equation \eqref{eq:rLLG}, namely that the sequence $(X^\epsilon)_\epsilon$, defined by $X^\epsilon:=(u^\epsilon-\bar{u})/\sqrt{\epsilon}$,
converges to a limit $X$ in some space, where $X$ is the unique pathwise solution to
\begin{align}\label{eq:LLG_X_CLT}
\delta X_{s,t}=\int_{s}^{t}b_3'(\bar{u}_r)X_r \dd r+W_{s,t}\bar{u}_s+X^\natural_{s,t}\,,
\end{align}
where $b_3'(\bar{u}_r)X_r$ is given by 
\begin{align*}
\partial_{x}^{2} X_r +\partial_x (\bar{u}_r \times \partial _{x}X_r)+
\partial_x (X_r\times \partial_{x}\bar{u}_r)
+2\bar{u}_r \partial_x \bar{u}_r\cdot \partial_x X_r+ |\partial_x \bar{u}_r|^2 X_r\, 
\end{align*}
and $X_0=0$. Observe that from the spherical constrain $|\bar{u}_t|=1$ for a.e. $t\in [0,T]$, $\mathbb{P}$-a.s. Moreover, $\|\partial_x \bar{u}\|_{L^\infty}\lesssim \|\bar{u}\|_{H^2}$ from the one-dimensional embedding, hence $b_3'(\bar{u}_r)\cdot$ is a linear operator with coefficients in $L^\infty(L^\infty)$. The integral appearing in \eqref{eq:LLG_X_CLT} is of one-variation: from a perturbation argument of the classical theory, there exists a pathwise unique solution to \eqref{eq:LLG_X_CLT} with initial condition $X_0=0$. One can also follow Appendix~\ref{Appendix_B} for a complete proof of well posedness of $Y$ and continuity with respect to $\mathbf{G}$ (note that the drift $b_3'(\bar{u})$ fulfils the Assumption \ref{assumption:drift_linear_b_prime} in Appendix~\ref{Appendix_B}).

By employing the methodology already introduced in \cite{LLG1D} for the Wong-Zakai convergence, we can establish a pathwise CLT in $L^\infty(H^k)\cap L^2(H^{k+1})\cap \mathcal{V}^p(H^{k-1})$ with speed of convergence $\sqrt{\epsilon}$. First we introduce some notations.
Consider the equation for the difference $X^\epsilon-X$, given by
\begin{align}\label{eq:Xeps-X}
\delta (X^\epsilon-X)_{s,t}= \int_{s}^{t} \bigg(\frac{b_3(u^\epsilon_r)-b_3(\bar{u}_r)}{\sqrt{\epsilon}}- b_3'(\bar{u}_r)X_r \bigg)\dd r+W_{s,t}(u^\epsilon_s-\bar{u}_s)+\sqrt{\epsilon}\mathbb{W}_{s,t}u^\epsilon_s+(X^\epsilon-X)^\natural_{s,t}\,.
\end{align}
Since we want to show the convergence in $L^\infty(H^k)\cap L^2(H^{k+1})$, we look at the equation for the $k$-th derivative
\begin{align*}
\delta\partial^k_x (X^\epsilon-X)_{s,t}&= \int_{s}^{t} \partial^k_x\bigg(\frac{b_3(u^\epsilon_r)-b_3(\bar{u}_r)}{\sqrt{\epsilon}}- b_3'(\bar{u}_r)X_r \bigg)\dd r\\
&\quad+\partial^k_x\big(W_{s,t}(u^\epsilon_s-\bar{u}_s)+\sqrt{\epsilon}\mathbb{W}_{s,t}u^\epsilon_s\big)+\partial_x^k(X^\epsilon-X)^\natural_{s,t}\,.
\end{align*}
 We consider then the equation for $\partial^k_x (X^\epsilon-X)\otimes \partial^k_x (X^\epsilon-X) $, by means of the product formula in Proposition~\ref{pro:product},
\begin{align}\label{eq:X_eps_menus_X_k_square}
\delta\big(\partial^k_x (X^\epsilon-X)\otimes \partial^k_x (X^\epsilon-X)\big)_{s,t}=\int_{s}^{t}\mathcal{D}(k,k)\dd r+I_{s,t}+\mathbb{I}_{s,t}+\tilde{\mathbb{I}}_{s,t}+\big(\partial^k_x (X^\epsilon-X)\big)^{\otimes 2,\natural}_{s,t}\,,
\end{align}
where we define
\begin{align*}
\int_{s}^{t}\mathcal{D}(k,k)\dd r&:=\int_s^t \left(\partial_{x}^k\bigg(\frac{b_3(u^\epsilon_r)-b_3(\bar{u}_r)}{\sqrt{\epsilon}}- b_3'(\bar{u}_r)X_r \bigg)\odot\partial_{x}^k(X^\epsilon_r-X_r) \right)\dd r\,,
\end{align*}
\begin{equation*}
\begin{aligned}
\textcolor{black}{I}_{s,t}&:=\sum_{n=0}^{k} \binom{k}{n} (\partial_{x}^{k-n}W_{s,t}\partial_{x}^{n} u^\epsilon_s-\partial_{x}^{k-n}W_{s,t}\partial_{x}^{n} \bar{u}_s)\odot \partial_{x}^k(X^\epsilon_s-X_s)\,,
\end{aligned}
\end{equation*}
\begin{equation*}
\begin{aligned}
\mathbb{I}_{s,t}&:=\sum_{n=0}^{k} \binom{k}{n}(\partial_{x}^{k-n}\sqrt{\epsilon}\mathbb{W}_{s,t}\partial_{x}^{n} u^\epsilon_s)\odot\partial_{x}^k(X^\epsilon_s-X_s)\,,
\end{aligned}
\end{equation*}
\begin{equation*}
\begin{aligned}
\tilde{\mathbb{I}}_{s,t}&:=\textcolor{black}{\sum_{n=0}^{k} \binom{k}{n} (\partial_{x}^{k-n}W_{s,t}\partial_{x}^{n} u^\epsilon_s-\partial_{x}^{k-n}W_{s,t}\partial_{x}^{n} \bar{u}_s)\otimes\sum_{m=0}^{k} \binom{k}{m}(\partial_{x}^{k-m}W_{s,t}\partial_{x}^{m} u^\epsilon_s-\partial_{x}^{k-m}W_{s,t}\partial_{x}^{m} \bar{u}_s)}\,.
\end{aligned}
\end{equation*}
Recall that $u^\epsilon-\bar{u}=\sqrt{\epsilon}X^\epsilon$, hence the terms $I,\mathbb{I},\tilde{\mathbb{I}}$ are multiplied by $\sqrt{\epsilon}$. Moreover, from the local Lipschitz continuity of the It\^o-Lyons map, $X^\epsilon$ is uniformly bounded in $\epsilon$: besides the drift and the remainder, each term has already speed $\sqrt{\epsilon}$. Before the proof of the CLT for the LLG, we estimate the drift term $\mathcal{D}(k,k)$ in Proposition~\ref{pro:drift_LLG}.
\begin{proposition}\label{pro:drift_LLG}
	The drift in the main $L^2$-estimate can be estimated as follows 
	\begin{align*}
	\int_{s}^{t}\int_{\mathbb{T}}\langle \mathcal{D}(k,k), \mathbf{1}\rangle\dd x\dd r\lesssim_{\mathbf{G}} \epsilon g\left(\|X^\epsilon\|_{L^\infty(H^k)\cap L^2(H^{k+1})}\right)+\|X^\epsilon-X\|^2_{L^\infty(H^k)\cap L^2(H^{k+1})}\,,
	\end{align*}
	where $g$ is a polynomial function.
\end{proposition}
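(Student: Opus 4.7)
The plan is to perform a Taylor expansion of the nonlinearity $b_3$ around $\bar{u}$ along the direction $u^\epsilon-\bar{u}=\sqrt{\epsilon}X^\epsilon$, identify the piece which is linear in $Z^\epsilon := X^\epsilon-X$ (and which is precisely $b_3'(\bar{u})Z^\epsilon$), and control the two higher-order remainders via Young's inequality, trading the explicit $\sqrt{\epsilon}$ factor for an $\epsilon$ factor in front of a polynomial of norms of $X^\epsilon$. Concretely, using the bilinearity of $u\mapsto u\times\partial_x^2u$ and the trilinearity of $u\mapsto u|\partial_x u|^2$, one obtains a decomposition
\[
\frac{b_3(u^\epsilon)-b_3(\bar{u})}{\sqrt{\epsilon}}-b_3'(\bar{u})X \;=\; b_3'(\bar{u})Z^\epsilon \;+\; \sqrt{\epsilon}\,R_1(X^\epsilon,\bar{u})\;+\;\epsilon\,R_2(X^\epsilon),
\]
where $R_1$ is quadratic in $X^\epsilon$ (and linear in $\bar{u}$), $R_2$ is cubic in $X^\epsilon$, and both involve at most second spatial derivatives. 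Applying $\partial_x^k$ by the Leibniz rule and pairing with $\partial_x^k Z^\epsilon$ via $\odot$ reduces the proposition to three groups of integrated space-time terms.

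For the linear piece $\int_s^t\int_{\mathbb T}\partial_x^k[b_3'(\bar{u})Z^\epsilon]\odot\partial_x^k Z^\epsilon\,\mathrm dx\,\mathrm dr$, integration by parts in space turns the Laplacian contribution into $-2\int_s^t\|\partial_x^{k+1}Z^\epsilon\|_{L^2}^2\,\mathrm dr\leq 0$, which is trivially bounded by the RHS. The dangerous top-order term is the quasi-linear one coming from $\bar{u}\times\partial_x^{k+2}Z^\epsilon$: here the cyclic identity $(\bar u\times w)\cdot w=0$ applied to $w=\partial_x^{k+1}Z^\epsilon$ kills exactly the critical pairing produced after integration by parts, leaving only sub-leading cross terms involving $\partial_x^j\bar{u}$ (with $j\le k+2$, controlled in $L^\infty(L^\infty)$ by Sobolev embedding $H^1(\mathbb T)\hookrightarrow L^\infty$ and the spherical constraint $|\bar u|=1$) multiplied by $\partial_x^{j'}Z^\epsilon$ for $j'\leq k+1$. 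A Cauchy–Schwarz–Young argument reduces these to $C_{\mathbf G}\|Z^\epsilon\|^2_{L^\infty(H^k)\cap L^2(H^{k+1})}$. Zero-order terms like $|\partial_x\bar{u}|^2 Z^\epsilon$ and $2\bar{u}\partial_x\bar{u}\cdot\partial_x Z^\epsilon$ are handled identically.

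For the higher-order remainders, each summand of $\sqrt{\epsilon}R_1$ is a product of two factors of $X^\epsilon$ (and possibly derivatives of $\bar{u}$), paired with $\partial_x^k Z^\epsilon$. Using the a priori uniform bound on $\|X^\epsilon\|_{L^\infty(H^k)\cap L^2(H^{k+1})}$ inherited from the local Lipschitz continuity of $\Phi_3$ proved in \cite{LLG1D} (which gives $\|u^\epsilon-\bar{u}\|\lesssim\sqrt{\epsilon}$), together with the one-dimensional embedding $H^1\hookrightarrow L^\infty$ to handle pointwise norms on products of $X^\epsilon$ and $\partial_x X^\epsilon$, Young's inequality $\sqrt{\epsilon}|AB|\leq \epsilon A^2+\tfrac14 B^2$ with $A$ containing the $X^\epsilon$-factors and $B=|\partial_x^k Z^\epsilon|$ produces exactly the required $\epsilon\,g(\|X^\epsilon\|_{L^\infty(H^k)\cap L^2(H^{k+1})})$ contribution plus an absorbable $\|Z^\epsilon\|^2_{L^2(H^k)}$ piece. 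The cubic $\epsilon R_2$ piece is treated identically after factoring out an additional $\sqrt\epsilon$. \textbf{The main obstacle} is the quasi-linear term $\bar{u}\times\partial_x^{k+2}Z^\epsilon$, where naively one would need one derivative more than the $L^2(H^{k+1})$ regularity available; the cross-product cyclic antisymmetry, combined with the spherical structure of the noise lift $\mathbf W$, is precisely what makes the argument close at the optimal regularity, in line with the standard mechanism underlying the well-posedness of the deterministic LLG equation.
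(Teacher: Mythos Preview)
Your overall strategy—Taylor-expand $b_3$ around $\bar u$, split off the linear piece $b_3'(\bar u)Z^\epsilon$, and control the quadratic/cubic remainders via Young's inequality—is exactly the route the paper takes, though the paper organises the computation by physical term ($D_1$ Laplacian, $D_2$ cross-product, $D_3$ cubic term) rather than by order in $\sqrt\epsilon$. The cross-product cancellation $(\bar u\times w)\cdot w=0$ with $w=\partial_x^{k+1}Z^\epsilon$ that you invoke for the linear part is also what the paper uses.

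There is, however, a genuine gap in your treatment of the higher-order remainders. You take $B=|\partial_x^k Z^\epsilon|$ in Young's inequality and claim the resulting $A$-part is a polynomial of $\|X^\epsilon\|_{L^\infty(H^k)\cap L^2(H^{k+1})}$. But the cross-product contribution to $R_1$ is $X^\epsilon\times\partial_x^2 X^\epsilon$, and after applying $\partial_x^k$ by Leibniz the top-order term is $X^\epsilon\times\partial_x^{k+2}X^\epsilon$; this is \emph{not} controlled by $\|X^\epsilon\|_{L^2(H^{k+1})}$. The paper avoids this by first writing every piece of $D_2$ (including the remainder) in divergence form, using $a\times\partial_x^2 a=\partial_x(a\times\partial_x a)$, and then integrating by parts once so that the pairing is against $\partial_x^{k+1}Z^\epsilon$ rather than $\partial_x^k Z^\epsilon$. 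After that single IBP the $X^\epsilon$-factors carry at most $k+1$ derivatives and your Young's inequality closes with the absorbable piece being $\|Z^\epsilon\|^2_{L^2(H^{k+1})}$, not $\|Z^\epsilon\|^2_{L^2(H^k)}$. The same IBP is used by the paper for $D_3$.

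Two smaller slips: the assertion that $\partial_x^j\bar u\in L^\infty(L^\infty)$ for all $j\le k+2$ is false (one only has $\bar u\in L^\infty(H^k)\cap L^2(H^{k+1})$, so $L^\infty(L^\infty)$ control holds only up to $j\le k-1$; the paper uses mixed $H^k$--$H^{k+1}$ product estimates instead); and the ``spherical structure of the noise lift $\mathbf W$'' plays no role whatsoever in the drift estimate, which involves only $b_3$ and $\bar u$.
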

\begin{proof}
	We divide the drift into four parts
	\begin{align*}
	\int_{s}^{t}\int_{\mathbb{T}}\mathcal{D}(k,k) \dd x \dd r=\int_{s}^{t}\langle\partial_x^k( D_{1,r}+ D_{2,r}+D_{3,r}), \partial_x^k(X^\epsilon_r-X_r)\rangle \dd r\,.
	\end{align*}
	where
	\begin{align*}
		&D_{1,r}:= \partial_x^2 (X^\epsilon_r-X_r)\,, \; D_{2,r}:=\frac{u^\epsilon_r \times \partial_x^2 u^\epsilon_r -\bar{u}_r\times \partial_x^2\bar{u}_r}{\sqrt{\epsilon}}-\partial_x (\bar{u}_r \times \partial _{x}X_r)-
		\partial_x (X_r\times \partial_{x}\bar{u}_r)\,,\\
		&D_{3,r}:= \frac{u^\epsilon_r|\partial_x u^\epsilon_r|^2-\bar{u}_r|\partial_x \bar{u}_r|^2}{\sqrt{\epsilon}}-2\bar{u}_r \partial_x \bar{u}_r\cdot \partial_x X_r- |\partial_x \bar{u}_r|^2 X_r\,.
	\end{align*}
	We analyse each element of the drift separately. From $\partial_x^k D_1$, we obtain a term with negative sign
	\begin{align*}
	\int_{s}^{t}\langle \partial_x^k D_{1,r}, \partial_x^k(X^\epsilon_r-X_r)\rangle \dd r=-\int_{s}^{t}\|\partial_x^{k+1}(X^\epsilon_r-X_r)\|_{L^2}^2 \dd r\,,
	\end{align*}
	which is the regularising term of the equation. From algebraic manipulations, 
	\begin{align*}
	\int_{s}^{t}\langle \partial_x^k D_{2,r}, \partial_x^k(X^\epsilon_r-X_r)\rangle \dd r&=-\int_{s}^{t}\langle \sqrt{\epsilon}\partial^{k}_x(X^\epsilon\times\partial_x  X^\epsilon)+\partial^{k}_x((X^\epsilon-X)\times\partial_x  \bar{u}),\partial_x^{k+1}(X^\epsilon_r-X_r)\rangle \dd r\\
	&\quad-\int_{s}^{t}\partial^{k}_x(\bar{u}\times\partial_x (X^\epsilon-X)), \partial_x^{k+1}(X^\epsilon_r-X_r)\rangle \dd r\,.
	\end{align*}
	We can then estimate, with computations analogous to the proof of Proposition 4.3 in \cite{LLG1D}, $\partial_x^k D_2$ by
	\begin{align*}
	\langle \partial_x^k D_2, \partial_x^k(X^\epsilon-X)\rangle&\leq \sqrt{\epsilon}[\|X^\epsilon\|_{H^k}\| X^\epsilon\|_{H^2}+\|   X^\epsilon\|_{H^{k+1}}\|X^\epsilon\|_{H^1}]\|X^\epsilon-X\|_{H^{k+1}}\\
	&\quad+[\|X^\epsilon-X\|_{H^k}\| \bar{u}\|_{H^2}+\| \bar{u}\|_{H^{k+1}}\|X^\epsilon-X\|_{H^1}]\|X^\epsilon-X\|_{H^{k+1}}\\
	&\quad+[\|  \bar{u}\|_{H^{k}}\|X^\epsilon-X\|_{H^2}+\|X^\epsilon-X\|_{H^{k+1}}\| \bar{u}\|_{H^1}]\|X^\epsilon-X\|_{H^{k+1}}\,.
	\end{align*}
	In conclusion, we estimate the term $\partial_x^k D_2$ by
	\begin{align*}
	\int_{s}^{t}\langle \partial_x^k D_{2,r}, \partial_x^k(X^\epsilon_r-X_r)\rangle \dd r&\lesssim\epsilon \|X^\epsilon\|^2_{L^\infty(H^k)}\|X^\epsilon\|^2_{L^2(H^{k+1})}+\|X^\epsilon-X\|^2_{L^2(H^{k+1})}\\
	&\, +\|X^\epsilon-X\|^2_{L^\infty(H^k)}\|\bar{u}\|_{L^2(H^{k+1})}^2+\|\bar{u}\|^2_{L^\infty(H^k)}\|X^\epsilon-X\|^2_{L^2(H^{k+1})}\,.
	\end{align*}
	We address now $\partial_x^k D_3$ :
	\begin{align*}
	\int_{s}^{t}\langle \partial_x^k D_{3,r},& \partial_x^k(X^\epsilon-X)\rangle \dd r\\
	&=-\int_{s}^{t}\langle \partial_x^{k-1}[(X^\epsilon-X)|\partial_x \bar{u}|^2+\bar{u}\partial_x (X^\epsilon-X)\cdot\partial_x \bar{u}], \partial_x^{k+1}(X^\epsilon-X)\rangle \dd r\\
	&\quad -\int_{s}^{t}\langle \partial_x^{k-1}[\sqrt{\epsilon}X^\epsilon\partial_x u^\epsilon\cdot\partial_x X^\epsilon+\sqrt{\epsilon}\bar{u}|\partial_x X^\epsilon|^2], \partial_x^{k+1}(X^\epsilon-X)\rangle \dd r\\
	&\quad -\int_{s}^{t}\langle \partial_x^{k-1}[\bar{u}\partial_x \bar{u}\cdot\partial_x (X^\epsilon-X)], \partial_x^{k+1}(X^\epsilon-X)\rangle \dd r\,.
	\end{align*}
	For $f,h,\ell\in H^{k+1}$, we observe that
	\begin{align*}
	\|\partial_x^{k-1}[f(\partial_x \ell,\partial_x h)]\|_{L^2}&\lesssim \|f\partial_x \ell\cdot\|_{L^\infty}\|\partial_x h\|_{H^{k-1}}+\|f\partial_x \ell\cdot\|_{H^{k-1}}\|\partial_x h\|_{L^\infty}\\
	&\lesssim \|f\|_{H^1}\|\ell\|_{H^2}\|h\|_{H^{k}}+[\|f\|_{H^{k-1}}\|\ell\|_{H^2}+\|f\|_{H^1}\|\ell\|_{H^k}]\|h\|_{H^2}\,.
	\end{align*}
	Thus, by applying it to each element of $\partial_x^k D_3$, it follows that for a.e. $r\in [0,T]$
	\begin{align*}
	\langle \partial_x^k D_{3,r}, \partial_x^k(X^\epsilon-X)\rangle
	&\lesssim \|X^\epsilon-X\|^2_{H^{k+1}}+\|X^\epsilon-X\|_{H^1}^2\| \bar{u}\|_{H^2}^2\|\bar{u}\|_{H^k}^2\\
	&\quad+\|X^\epsilon-X\|_{H^{k-1}}^2\| \bar{u}\|_{H^2}^2\|\bar{u}\|_{H^2}^2+\|X^\epsilon-X\|_{H^{1}}^2\|
	\bar{u}\|_{H^k}^2\|\bar{u}\|_{H^2}^2\\
	&\quad+\|\bar{u}\|_{H^1}^2\|X^\epsilon-X\|_{H^2}^2\|\bar{u}\|_{H^k}^2\\
	&\quad+\|\bar{u}\|_{H^{k-1}}^2\|X^\epsilon-X\|_{H^2}^2\|
	\bar{u}\|_{H^2}^2+\|\bar{u}\|_{H^{1}}^2\|X^\epsilon-X\|_{H^k}^2\|\bar{u}\|_{H^2}^2\\
	&\quad+\epsilon\|X^\epsilon\|_{H^1}^2\|u^\epsilon\|_{H^2}^2\|X^\epsilon\|_{H^k}^2\\
	&\quad+\epsilon\|X^\epsilon\|_{H^{k-1}}^2\|u^\epsilon\|_{H^2}^2\|
	X^\epsilon\|_{H^2}^2+\epsilon\|X^\epsilon\|_{H^{1}}^2\|u^\epsilon\|_{H^k}^2\|X^\epsilon\|_{H^2}^2\\
	&\quad+\epsilon\|\bar{u}\|_{H^1}^2\|X^\epsilon\|_{H^2}^2\|X^\epsilon\|_{H^k}^2\\
	&\quad+\epsilon\|\bar{u}\|_{H^{k-1}}^2\|X^\epsilon\|_{H^2}^2\|
	X^\epsilon\|_{H^2}^2+\epsilon\|\bar{u}\|_{H^{1}}^2\|X^\epsilon\|_{H^k}^2\|X^\epsilon\|_{H^2}^2\\
	&\quad+\|\bar{u}\|_{H^1}^2\|\bar{u}\|_{H^2}^2\|X^\epsilon-X\|_{H^k}^2\\
	&\quad+\|\bar{u}\|_{H^{k-1}}^2\|\bar{u}\|_{H^2}^2\|
	X^\epsilon-X\|_{H^2}^2+\|\bar{u}\|_{H^{1}}^2\|\bar{u}\|_{H^k}^2\|X^\epsilon-X\|_{H^2}^2.
	\end{align*}
	Thus, by first applying the above inequality and then from H\"older's inequality in time, we conclude that
	\begin{align*}
	\int_{s}^{t} \langle \partial_x^k D_{3,r}, \partial_x^k(X^\epsilon_r-X_r)\rangle \dd r
	&\lesssim_{u^0} \|X^\epsilon-X\|^2_{L^2(s,t;H^{k+1})}+\|X^\epsilon-X\|^2_{L^\infty(s,t;H^k)}\\
	&\quad+\epsilon g(X^\epsilon, X,u^\epsilon, \bar{u})\,,
	\end{align*}
	where $g(X^\epsilon, X,u^\epsilon, \bar{u})$ is a polynomial function uniformly  bounded in $\epsilon$. The constant depends on some positive powers of the initial condition, since $\|\bar{u}\|_{L^\infty(H^k)\cap L^2(H^{k+1})}\leq \|u^0\|_{H^k}$.
\end{proof}

We state a pathwise CLT for the stochastic Landau-Lifschitz-Gilbert equation.
\begin{theorem}\label{th:local_lip_cont_der_LLG}
	Let $\mathbf{W}\in \mathcal{RD}^p_S(\mathbb{R}^3;H^{k+1})$ be a random $p$-geometric rough driver. The sequence $(X^\epsilon)_\epsilon$ converges $\mathbb{P}$-a.s. in $L^\infty(H^k)\cap L^2(H^{k+1})\cap \mathcal{V}^p(H^{k-1})$ to a process $X$ with speed of convergence $\sqrt{\epsilon}$. The limit process $X$ is the pathwise directional derivative of $\Phi_3$ in $\mathbf{0}$ in the direction $\mathbf{W}$, i.e. $X=D\Phi_3[\mathbf{0}](\mathbf{W})$ and it solves the equation \eqref{eq:LLG_X_CLT}. 
\end{theorem}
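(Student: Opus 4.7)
The plan is to mirror closely the argument used for the heat and reaction--diffusion equations in Theorem~\ref{teo:CLT} and Section~\ref{section:reaction_diffusion}, with the crucial simplification that the noise is driven only by $\mathbf{W}$ (i.e.\ the base rough driver is $\mathbf{0}$), so no mixed integrals $[GW]$, $[WG]$ appear. Existence and uniqueness of $X$ solving \eqref{eq:LLG_X_CLT} are already granted by the discussion preceding the theorem, hence what remains is the convergence and the identification with $D\Phi_3[\mathbf{0}](\mathbf{W})$.

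First, from the local Lipschitz continuity of the It\^o--Lyons map $\Phi_3$ proved in \cite{LLG1D}, the sequence $(u^\epsilon)_\epsilon=(\pi_3(u^0,\tau_{\sqrt{\epsilon}}\mathbf{W}))_\epsilon$ converges to $\bar u=\pi_3(u^0,\mathbf{0})$ in $L^\infty(H^k)\cap L^2(H^{k+1})\cap \mathcal{V}^p(H^{k-1})$ with speed $\sqrt{\epsilon}$. Consequently $(X^\epsilon)_\epsilon$ is uniformly bounded in the same space, and by compactness one can extract a weakly-$*$ (respectively weakly) convergent subsequence. The goal is to upgrade this to strong convergence of the full sequence with quantitative rate $\sqrt{\epsilon}$.

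Next I would start from the squared equation~\eqref{eq:X_eps_menus_X_k_square} for $\delta(\partial_x^k(X^\epsilon-X))^{\otimes 2}$ and rewrite the (formal) remainder $(\partial_x^k(X^\epsilon-X))^{\otimes 2,\natural}$ using Chen's relation and the algebraic identities used in the proof of Theorem~\ref{teo:CLT}. Since the perturbed driver is $\tau_{\sqrt{\epsilon}}\mathbf{W}$, the terms $I_{s,t}$, $\mathbb{I}_{s,t}$, $\tilde{\mathbb{I}}_{s,t}$ collected after \eqref{eq:X_eps_menus_X_k_square} all carry a prefactor $\sqrt{\epsilon}$ (using $u^\epsilon-\bar u=\sqrt{\epsilon}X^\epsilon$ and uniform boundedness of $X^\epsilon$). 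The crossed cancellations that in Theorem~\ref{teo:CLT} involved $\delta[GW]$, $\delta[WG]$ and $\delta\mathbb{G}$ are now absent and the only surviving rough compensation is the self-Chen cancellation coming from $\sqrt{\epsilon}\,\delta\mathbb{W}_{s,r,t}u^\epsilon_s$ against the quadratic $\tilde{\mathbb{I}}$-term. Applying the sewing Lemma~\ref{lemma_sewing} then yields a remainder bound of the form
\begin{align*}
\|(\partial_x^k(X^\epsilon-X))^{\otimes 2,\natural}_{s,t}\|_{(W^{2,\infty})^*}
\lesssim \omega^{1/p}(s,t)\int_s^t\|\partial_x^{k+1}(X^\epsilon_r-X_r)\|^2_{L^2}\dd r
+\omega^{3/p}(s,t)\bigl(\sqrt{\epsilon}+\|X^\epsilon-X\|^2_{L^\infty([s,t];H^k)}\bigr),
\end{align*}
where $\omega$ depends on $\mathbf{W}$, $u^\epsilon$, $\bar u$, $X^\epsilon$ and $X$ but is uniformly bounded thanks to the Wong--Zakai estimates in \cite{LLG1D}.

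The main obstacle, and the only genuinely LLG-specific step, is handling the drift $\mathcal{D}(k,k)$, which is precisely what Proposition~\ref{pro:drift_LLG} takes care of: it produces the good dissipative term $-\|\partial_x^{k+1}(X^\epsilon-X)\|_{L^2}^2$ after time integration, an error of order $\epsilon$ times a polynomial of $\|X^\epsilon\|_{L^\infty(H^k)\cap L^2(H^{k+1})}$ (which is uniformly bounded), and an absorbable $\|X^\epsilon-X\|^2_{L^\infty(H^k)\cap L^2(H^{k+1})}$ contribution. Testing \eqref{eq:X_eps_menus_X_k_square} against $\mathbf{1}$, combining with the remainder estimate above and applying the rough Gronwall Lemma~\ref{lem:gronwall} yields
\begin{align*}
\sup_{0\le t\le T}\|\partial_x^k(X^\epsilon_t-X_t)\|^2_{L^2}+\int_0^T\|\partial_x^{k+1}(X^\epsilon_r-X_r)\|^2_{L^2}\dd r\lesssim_{\mathbf{W}}\sqrt{\epsilon}\,\omega^{1/p},
\end{align*}
which gives the claimed $L^\infty(H^k)\cap L^2(H^{k+1})$ convergence with speed $\sqrt{\epsilon}$. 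The $\mathcal{V}^p(H^{k-1})$ control is then obtained directly from equation~\eqref{eq:Xeps-X} by standard $p$-variation estimates on each term. Finally, the identification $X=D\Phi_3[\mathbf{0}](\mathbf{W})$ is immediate from Definition~\ref{def:def_derivative_quian_tudor} since $\{\mathbf{0}+\tau_{\sqrt{\epsilon}}\mathbf{W}\}=\tau_{\sqrt{\epsilon}}\mathbf{W}$ and the limit $X$ is the pathwise unique solution of the linearised equation \eqref{eq:LLG_X_CLT}.
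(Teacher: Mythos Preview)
Your proposal is correct and follows essentially the same route as the paper: squared difference equation~\eqref{eq:X_eps_menus_X_k_square}, sewing-lemma control of the remainder, Proposition~\ref{pro:drift_LLG} for the drift, rough Gronwall, and finally the $\mathcal{V}^p(H^{k-1})$ bound read off from \eqref{eq:Xeps-X}.

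One point worth sharpening: the remainder bound you wrote is taken over from the heat-equation case (base driver $\mathbf{G}\neq\mathbf{0}$) and therefore contains the terms $\omega^{1/p}\int_s^t\|\partial_x^{k+1}(X^\epsilon_r-X_r)\|^2_{L^2}\dd r$ and $\omega^{3/p}\|X^\epsilon-X\|^2_{L^\infty([s,t];H^k)}$ without a $\sqrt{\epsilon}$ prefactor. In the LLG setting the base driver is $\mathbf{0}$, so every noise term in the $Z^\epsilon=X^\epsilon-X$ equation already carries $\sqrt{\epsilon}$, and after the Chen-type compensations the paper obtains
\[
\langle(\partial_x^k Z^\epsilon)^{\otimes 2,\natural}_{s,t},\mathbf{1}\rangle
\lesssim \sqrt{\epsilon}\,\omega_{\mathbf{W};H^{k+1}}^{1/p}(s,t)\bigl[\omega_{\mathbf{W};H^1}^{2/p}+\gamma\bigr](s,t),
\]
i.e.\ the \emph{entire} remainder is of order $\sqrt{\epsilon}$. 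Your weaker bound is still a valid upper estimate and suffices for Gronwall, but it obscures this structural simplification; in particular, the only place where absorbable $\|Z^\epsilon\|^2_{L^2(H^{k+1})}$ terms genuinely arise is the drift estimate of Proposition~\ref{pro:drift_LLG}, not the remainder.
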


\begin{proof} We start by looking at the solution to equation \eqref{eq:Xeps-X}: we show pathwise convergence of $X^\epsilon$ to $X$ in $\mathcal{Y}:=L^\infty(H^k)\cap L^2(H^{k+1})$.
	We test equation \eqref{eq:X_eps_menus_X_k_square} by $\mathbf{1}$ and we study separately the elements appearing in
	\begin{align*}
	\langle\delta(\partial^k_x (X^\epsilon-X)\otimes \partial^k_x (X^\epsilon-X))_{s,t},\mathbf{1}\rangle =\langle\mathcal{D}(k,k)_{s,t}+I_{s,t}+\mathbb{I}_{s,t}+\tilde{\mathbb{I}}_{s,t},\mathbf{1}\rangle +\langle\big(\partial^k_x (X^\epsilon-X)\big)^{\otimes 2,\natural}_{s,t},\mathbf{1}\rangle\,.
	\end{align*}
	Since $\mathbf{W}\in \mathcal{RD}_S^p(H^{k+1})$, $X^\epsilon, u^\epsilon\in L^\infty(H^k)$, the noise terms can be estimated by
	\begin{equation}\label{eq:noise_terms}
	\begin{aligned}
	\int_{\mathbb{T}}\langle I_{s,t}+\mathbb{I}_{s,t}+\tilde{\mathbb{I}}_{s,t}, \mathbf{1}\rangle\dd x\lesssim_k &\sqrt{\epsilon} \omega^{1/p}_{ \mathbf{W}; H^{k+1}}(s,t)[\|X^\epsilon\|^2_{L^\infty(H^k)}+\|u^\epsilon\|^2_{L^\infty(H^k)}]\\
	&+\omega^{1/p}_{ \mathbf{W}; H^{k+1}}(s,t)\|X^\epsilon-X\|^2_{L^\infty([s,t];H^k)}\,.
	\end{aligned}
	\end{equation}
	We follow similar computations as in Theorem 5.1 \cite{LLG1D}. Define $Z^\epsilon:=X^\epsilon-X$. We write down explicitly $\delta \partial^k_x Z^{\epsilon,\otimes 2,\natural}_{s,r,t}$ as
	\begin{align*}
	\delta \partial^k_x Z^{\epsilon,\otimes 2,\natural}_{s,r,t}=-\delta I_{s,r,t}-\delta \mathbb{I}_{s,r,t}-\delta\tilde{\mathbb{I}}_{s,r,t}\,,
	\end{align*}
	where 
	\begin{align*}
	-\delta I_{s,r,t}=\sqrt{\epsilon}\sum_{n=0}^{k} \binom{k}{n} \partial_{x}^{k-n}W_{r,t}\delta(\partial_{x}^{n} X^\epsilon\odot \partial_{x}^k Z^\epsilon)_{s,r}\,,
	\end{align*}
	\begin{align*}
	-\delta \mathbb{I}_{s,r,t}=\sqrt{\epsilon}\sum_{n=0}^{k} \binom{k}{n}\partial_{x}^{k-n}\mathbb{W}_{r,t}\delta(\partial_{x}^{n} u^\epsilon\odot\partial_{x}^kZ^\epsilon)_{s,r}-\sqrt{\epsilon}\sum_{n=0}^{k} \binom{k}{n}\partial_{x}^{k-n}\delta\mathbb{W}_{s,r,t}\partial_{x}^{n} u^\epsilon_s\odot\partial_{x}^kZ^\epsilon_s\,,
	\end{align*}
	We compute $\delta \tilde{\mathbb{I}}_{s,r,t}$ in some steps. From the linearity of $\delta(\cdot)_{s,r,t}$, 
	\begin{align}\label{eq:delta_I_tilde}
	-\delta \tilde{\mathbb{I}}_{s,r,t}&= -\epsilon\delta\left(\sum_{n=0}^{k} \binom{k}{n} \partial_{x}^{k-n}W_{s,t}\partial_{x}^{n} X^\epsilon_s\otimes\sum_{m=0}^{k} \binom{k}{m}\partial_{x}^{k-m}W_{s,t}\partial_{x}^{m} X^\epsilon_s\right)_{s,r,t}\nonumber\\
	&= -\epsilon\sum_{n=0}^{k} \sum_{m=0}^{k} \binom{k}{n} \binom{k}{m} \delta\left(\partial_{x}^{k-n}W_{s,t}\partial_{x}^{n} X^\epsilon_s\otimes\partial_{x}^{k-m}W_{s,t}\partial_{x}^{m} X^\epsilon_s\right)_{s,r,t}\,.
	\end{align}
	We rewrite the main term in the sum \eqref{eq:delta_I_tilde}. 
	Introduce the shorthand notations $A:=\partial_{x}^{k-n}W,\, B:=\partial_{x}^{k-m}W,\, a:=\partial_{x}^{n} X^\epsilon_s, b:=\partial_{x}^{m} X^\epsilon_s$ and we apply \eqref{eq;algebraic_relation}.
	We aim to apply the sewing Lemma~\ref{lemma_sewing}, thus we need to represent the remainder as a linear combination of terms of $p/3$-variation or more regular. Thus we need to compensate the following terms of only $p/2$-variation 
	\begin{equation}\label{eq;terms_p_2_var}
	\begin{aligned}
	&-\sqrt{\epsilon}\sum_{n=0}^{k} \binom{k}{n}\partial_{x}^{k-n}\delta\mathbb{W}_{s,r,t}\partial_{x}^{n} u^\epsilon_s\odot\partial_{x}^kZ^\epsilon_s \\
	&-\epsilon\sum_{n=0}^{k} \sum_{m=0}^{k}\binom{k}{n}\binom{k}{m} 
	[\partial_{x}^{k-n}W_{s,r}\partial_{x}^{n} X^\epsilon_s\otimes \partial_{x}^{k-m}W_{r,t}\partial_{x}^{m} X^\epsilon_s+\partial_{x}^{k-n}W_{r,t}\partial_{x}^{n} X^\epsilon_s\otimes \partial_{x}^{k-m}W_{s,r}\partial_{x}^{m} X^\epsilon_s]\\
	&= -\sqrt{\epsilon} \partial_x^n [\delta \mathbb{W}_{s,r,t} u^\epsilon_s\odot \partial_{x}^k Z^\epsilon_s ] -\epsilon \partial_x^k[W_{r,t}X^\epsilon_s]\odot \partial_x^k[W_{s,r}X^\epsilon_s]\,,\,.
	\end{aligned}
	\end{equation}
	By adding and subtracting from $\delta(\partial_{x}^{n} X^\epsilon\odot \partial_{x}^k Z^\epsilon)_{s,r}$ the terms of at most $p$-variation, given by
	\begin{align*}
	T_1:=\sum_{\ell=0}^{n}\binom{n}{\ell} \partial_x^{n-\ell}W_{s,r}\partial_x^\ell u^\epsilon_s \odot \partial_{x}^k Z^\epsilon_s+\sqrt{\epsilon}\sum_{j=0}^{k}\binom{k}{j}\partial_{x}^{n} X^\epsilon_s\odot\partial_x^{k-j}W_{s,r}\partial_x^j X^\epsilon_s  \,.
	\end{align*}
	The terms in \eqref{eq;terms_p_2_var} cancel with
	\begin{align*}
	\sqrt{\epsilon}\sum_{n=0}^{k} \binom{k}{n} \partial_{x}^{k-n}W_{r,t}T_1&=\sqrt{\epsilon}\sum_{n=0}^{k} \binom{k}{n} \partial_{x}^{k-n}W_{r,t}\sum_{\ell=0}^{n}\binom{n}{\ell} \partial_x^{n-\ell}W_{s,r}\partial_x^\ell u^\epsilon_s \odot \partial_{x}^k Z^\epsilon_s\\
	&\quad+\epsilon\sum_{n=0}^{k} \binom{k}{n} \partial_{x}^{k-n}W_{r,t}\sum_{j=0}^{k}\binom{k}{j}\partial_{x}^{n} X^\epsilon_s \odot \partial_x^{k-j}W_{s,r}\partial_x^j X^\epsilon_s\\
	&= \sqrt{\epsilon} \partial_x^n [\delta \mathbb{W}_{s,r,t} u^\epsilon_s\odot \partial_{x}^k Z^\epsilon_s ] +\epsilon \partial_x^k[W_{r,t}X^\epsilon_s]\odot \partial_x^k[W_{s,r}X^\epsilon_s]\,,
	\end{align*}
	indeed the first sum is erased by the first sum in \eqref{eq;terms_p_2_var}. The second sum cancels with the second sum in \eqref{eq;terms_p_2_var}.
	In conclusion, the remainder can be written as a linear combination of elements of at least $p/3$-variation,
	\begin{align*}
	\delta \partial^k_x Z^{\epsilon,\otimes 2,\natural}_{s,r,t}&=\sqrt{\epsilon}\sum_{n=0}^{k} \binom{k}{n} \partial_{x}^{k-n}W_{r,t}[\delta(\partial_{x}^{n} X^\epsilon\odot \partial_{x}^k Z^\epsilon)_{s,r}-T_1]\\
	&\quad+\sqrt{\epsilon}\sum_{n=0}^{k} \binom{k}{n}\partial_{x}^{k-n}\mathbb{W}_{r,t}\delta(\partial_{x}^{n} u^\epsilon\odot\partial_{x}^kZ^\epsilon)_{s,r}\\
	&\quad+\epsilon[\partial_{x}^{k}[W_{r,t} X^\epsilon_s]\otimes\partial_x^k[ W_{r,t}X^\epsilon_s]-\partial_{x}^{k}[W_{r,t} X^\epsilon_r]\otimes\partial_x^k[ W_{r,t}X^\epsilon_r]]\,.
	\end{align*}
	Observe that each component of the remainder is multiplied by $\sqrt{\epsilon}$. We estimate $\delta \partial^k_x Z^{\epsilon,\otimes 2,\natural}_{s,r,t}$ in $(W^{1,\infty})^*$ via the rough standard machinery (we use the Sewing Lemma~\ref{lemma_sewing}), we obtain the following estimation of the remainder 
	\begin{align}\label{eq:remi_noise}
	\langle\big(\partial^k_x (X^\epsilon-X)\big)^{\otimes 2,\natural}_{s,t},\mathbf{1}\rangle\lesssim_k \sqrt{\epsilon}\omega^{1/p}_{ \mathbf{W}; H^{k+1}}(s,t)[\omega^{2/p}_{ \mathbf{W}; H^1}+\gamma](s,t)\,,
	\end{align}
	where $\gamma$ is bounded uniformly in $\epsilon\in (0,1]$ and its a linear combination of maps of $p/2$-variation defined by
	\begin{align*}
	\gamma(s,t):=\omega_{\mathbf{W};H^{k+1}}^{1/p}(s,t)\omega_{X^\epsilon;H^k}^{1/p}(s,t)+\omega_{\mathbf{W}}^{2/p}\left[(1+\epsilon)\|X^\epsilon\|^2_{L^\infty(H^{k})}+\|u^\epsilon\|^2_{L^\infty(H^{k})}+\|u\|^2_{L^\infty(H^{k})}\right]\,.
	\end{align*}
	From  Proposition~\ref{pro:drift_LLG}, the contributions in \eqref{eq:noise_terms} and \eqref{eq:remi_noise}, we derive that for a universal constant $C>0$ (which depends on $k$ and on some powers of the initial condition $\|u^0\|_{H^k}$) 
	\begin{align*}
	\delta(\|X^\epsilon-X\|^2_{H^k})_{s,t}+\int_{s}^{t}\|X_r^\epsilon-X_r&\|^2_{H^{k+1}}\dd r \leq C\gamma(s,t)\|X^\epsilon-X\|^2_{L^2(s,t;H^{k+1})}+C(t-s)\|X^\epsilon-X\|^2_{L^\infty(s,t;H^k)}\\
	&+ 	\;	\sqrt{\epsilon} C\omega^{1/p}_{ \mathbf{W}; H^{k+1}}(s,t)[\|X^\epsilon\|^2_{L^\infty(H^k)}+\|u^\epsilon\|^2_{L^\infty(H^k)}+\omega^{2/p}_{ \mathbf{W}; H^1}+\omega_{\tilde{\mathcal{D}}}]\,.
	\end{align*}
	Note that $u^\epsilon, X^\epsilon$ are uniformly bounded in $\epsilon$, thus by choosing a partition small enough such that $\gamma(s,t) C<1$ and by the rough Gronwall Lemma~\ref{lem:gronwall}, we conclude that
	\begin{align*}
	\|X^\epsilon-X\|^2_{L^\infty(H^k)}+\int_{0}^{T}\|X_r^\epsilon-X_r\|^2_{H^{k+1}}\dd r&\lesssim_{k,u^0} 		\sqrt{\epsilon} \omega^{1/p}_{ \mathbf{W}; H^{k+1}}(0,T)
	+\sqrt{\epsilon}\omega^{1/p}_{ \mathbf{W}; H^{k+1}}(0,T)\,,
	\end{align*}
	thus $X^\epsilon$ converges to $X$ in $L^\infty(H^k)\cap L^2(H^{k+1})$ with speed of convergence $\sqrt{\epsilon}$, which is optimal. 
	The pathwise convergence implies also convergence in probability and in law (since the driving path is measurable).
\end{proof}

\section{Moderate deviation principle}\label{sec:MDP}
	In this section we study the MDP for the sequence $(Y^\epsilon)_{\epsilon>0}$ introduced in \eqref{eq:Y_epsilon}, with the following scaling limits for the map $\epsilon\mapsto\lambda(\epsilon)$ for $\epsilon\mapsto0$:
	\begin{equation}\label{MDP:lambda_epsilon_right_section}
	\lambda(\epsilon)\rightarrow +\infty\,, \quad \sqrt{\epsilon}\lambda(\epsilon)\rightarrow 0\,.
	\end{equation}
	The MDP for the SPDEs is inherited from a LDP for the driving noise, from the exponential equivalence and from the contraction principle. Thus the driving noise itself should satisfy a LDP. We do not expect every (random) geometric $p$-rough path $\mathbf{W}$ to satisfy a MDP (as well as a LDP), indeed it is a property depending on the distribution of the stochastic process. In this work we consider only the case in which $\mathbf{W}\equiv (W,\mathbb{W})$ is, $\omega$-wise, the Stratonovich lift of a Brownian motion. In this case, the following exponential boundedness condition holds with respect to to the $p$-variation norm (see Corollary 13.15 of \cite{FrizVictoir}). For $p\in (2,3)$, there exists $\eta>0$ independent on time such that 
	\begin{align}\label{eq:exponential_bound}
		\mathbb{E}\left[\exp\left(\frac{\eta}{T^{1-\frac{2}{p}}}\|\mathbf{W}\|_{p-\textrm{var};[0,T]}^2\right)\right]<+\infty\,.
	\end{align}
      Analogously, property \eqref{eq:exponential_bound} holds for some classes of $p$-variation Gaussian processes as stated in Corollary~15.23~(ii) in \cite{FrizVictoir} (we always consider only $p\in (2,3)$): it is possible to consider also such noises, with some small modifications in the setting below.
     
     Let $\mathcal{Y}\subset C([0,T];\mathcal{X})$ be a Polish space endowed with the $\sigma$-algebra of the Borelians of $\mathcal{Y}$, denoted by $\mathcal{B}_{\mathcal{Y}}$. Let $E$ be a Banach space.
     Let $\Phi:\mathcal{RP}^p(\mathbb{R}^d;E)\rightarrow \mathcal{Y}$ be a continuous map, which admits a rough pathwise directional derivative in $\mathbf{0}$ in the direction $\mathbf{W}$.
     Define the family of stochastic processes $(X^\epsilon)_{\epsilon}$ defined on $(\Omega,\mathcal{F},\mathbb{P})$ and with values in $\mathcal{Y}$ by 
	\begin{align*}
	X^\epsilon(\omega):=\frac{\Phi(\tau_{\sqrt{\epsilon}}\mathbf{W}(\omega))-\Phi(\mathbf{0})}{\sqrt{\epsilon}}\,,
	\end{align*}
	$\mathbb{P}$-a.s. Since $\Phi$ admits a pathwise directional derivative in $\mathbf{0}$ in the direction $\mathbf{W}$, the sequence $(X^\epsilon)_{\epsilon}$ converges pathwise to $D\Phi[\mathbf{0}](\mathbf{W})$ in $\mathcal{Y}$ by definition.
We recall the definition of moderate deviation principle applied to the measures $(\mu_\epsilon)_{\epsilon}$, where $\mu_\epsilon:=\mathbb{P}\circ (X^\epsilon/\lambda(\epsilon))^{-1} $ is a probability measure on $(\mathcal{Y},\mathcal{B}_{\mathcal{Y}})$. We define the \textit{Cameron-Martin space} of the Brownian motion $\mathbf{W}$ as the subset of $W^{1,2}([0,T];\mathbb{R}^3)$ whose paths are null at $0$, i.e.
\begin{equation*}
\mathcal{H}:=\Bigl\{ \int_{0}^{\cdot}\dot{h}_t \dd t: \dot{h}\in L^2([0,T];\mathbb{R}^3), \,h_0=0 \Bigr\}\,,
\end{equation*}
endowed with the inner product $\langle g,h\rangle=\langle\dot{g},\dot{h} \rangle $.
We denote by $\mathbf{h}_{s,t}\equiv(\delta h_{s,t},\bar{h}_{s,t})$ the lift of $h\in \mathcal{H}$ to the space of rough paths, where the second iterated integral is defined as $\bar{h}_{s,t}:=\int_{s}^{t}\delta h_{s,r} \otimes \dot{h}_r\dd r$.

The MDP concerns the study of a particular scaling of fluctuations of the sequence $(X^\epsilon)_{\epsilon}$: more practically, it consists in the study of a LDP for the rescaled sequence $(X^\epsilon/\lambda(\epsilon))_{\epsilon}$. This is why we introduce the definition of LDP and we speak of LDPs in the sequel.
We say that a lower semi-continuous map $I:\mathcal{Y}\rightarrow [0,+\infty]$ is a \textit{good rate function} if for every $M\in [0,+\infty)$ the level sets $\{x\in \mathcal{Y}: I(x)\leq M\}$ are compact subsets of $\mathcal{Y}$.
We say that a family of measures $(\mu_\epsilon)_\epsilon$ satisfies a LDP on $\mathcal{Y}$ with speed $\lambda(\epsilon)$ and  good rate function $\tilde{I}: \mathcal{Y}\rightarrow [0,+\infty]$ 
if for every closed subset $F$ in $\mathcal{Y}$ 
\begin{align*}
\limsup_{\epsilon\rightarrow 0} \frac{1}{\lambda(\epsilon)^2}\log(\mu_\epsilon(F))\leq-\inf_{x\in F}\tilde{I}(x)\,,
\end{align*}
and if for every open subset $A$ in $\mathcal{Y}$
\begin{align*}
-\inf_{x\in A}\tilde{I}(x)\leq\liminf_{\epsilon\rightarrow 0} \frac{1}{\lambda(\epsilon)^2}\log(\mu_\epsilon(A))\,.
\end{align*}
We say that a sequence $(Y^\epsilon)_{\epsilon}$ satisfies a LDP on $\mathcal{Y}$ with good rate function $\tilde{I}$ if the sequence $(\mu^\epsilon)_{\epsilon}$ defined as $\mu^\epsilon:=\mathbb{P}\circ (Y^\epsilon)^{-1} $ satisfies a LDP on $\mathcal{Y}$ with good rate function $\tilde{I}$.

A classical way to show LDPs and MDPs for SPDEs is the weak convergence approach from Dupuis  and Ellis \cite{Dupuis_Ellis}. Instead, as noted in \cite{Ledoux_Qian_Zhang_p_var}, rough path calculus allows to use directly the contraction principle on the It\^o-Lyons map and avoid the weak convergence approach to establish a LDP. We use this consideration coupled with the exponential equivalence in this context. We state the main result of this section.

\begin{theorem}\label{th:MDP_abstract}
	Let $\mathcal{X}$ be a Banach space and let $\mathcal{Y}\subset C([0,T];\mathcal{X})$ be a Polish space endowed with the Borel $\sigma$-algebra $\mathcal{B}_{\mathcal{Y}}$. 
	Let $(\mathbf{W},\mathbf{g})$ be $p$-geometric compatible directions constructed as in Construction \ref{constr:Constr_p_q} and denote by $\mathbf{Z}$ their joint lift, where $\mathbf{W}\equiv(W,\mathbb{W})$ is the Stratonovich lift of a Brownian and $\mathbf{g}$ is a random $q$-geometric rough path with $q\in [1,2)$. 
	 
	Consider a continuous map $\Phi:\mathcal{RP}^p(\mathbb{R}^d;E)\rightarrow \mathcal{Y}$ so that
	\begin{enumerate}
		\item the sequence $(X^{\epsilon,\mathbf{g}})_{\epsilon}$ defined by
		\begin{align}\label{eq:X_eps_G_abstract_MDP}
		X^{\epsilon,\mathbf{g}}:=\frac{\Phi(\{\mathbf{g}+\tau_{\sqrt{\epsilon}}\mathbf{W}\})-\Phi(\mathbf{g})}{\sqrt{\epsilon}}\, 
		\end{align}
		converges $\mathbb{P}$-a.s. to $X^\mathbf{g}:=D\Phi[\mathbf{g}](\mathbf{W})$ in $\mathcal{Y}$ with speed of convergence $\sqrt{\epsilon}$, namely there exists a positive random variable independent on $\mathbf{Z}$ such that $\mathbb{E}[\exp(C^2)]<+\infty$ such that $\mathbb{P}$-a.s.
		\begin{align*}
		\|X^{\epsilon,\mathbf{g}}-X^\mathbf{g}\|_{\mathcal{Y}}\leq C\omega_{\mathbf{Z}}^{1/p} \sqrt{\epsilon}\, .
		\end{align*}
		\item the map $\mathbf{Z}\mapsto X^\mathbf{g}=D\Phi[\mathbf{g}](\mathbf{W})$ is continuous.
	\end{enumerate}
	Define the sequence of probability measures  $\tilde{\mu}_\epsilon:=\mathbb{P}\circ (X^{\mathbf{g},\epsilon}/\lambda(\epsilon))^{-1}$ on $(\mathcal{Y},\mathcal{B}_\mathcal{Y})$. 
	Then the sequence $(\tilde{\mu}_\epsilon)_{\epsilon}$ satisfies a LDP on $\mathcal{Y}$ with speed of convergence $\lambda(\epsilon)$ and good rate function $\tilde{I}: \mathcal{Y}\rightarrow [0,+\infty]$ given by
	\begin{align*}
	\tilde{I}(f)=\inf\bigg(\frac{1}{2}\int_{0}^{T}|h_s|^2 \dd s:h\in \mathcal{H}, \; D\Phi[\mathbf{g}](\mathbf{h})=f\bigg)\,,
	\end{align*}
	with the convention $\inf\emptyset=+\infty$.
\end{theorem}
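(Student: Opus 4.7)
The strategy I would follow is the one already announced in the introduction: combine a Schilder-type LDP for the rescaled Brownian rough path with the contraction principle (granted by hypothesis 2), and then transfer the LDP from the auxiliary sequence to $(X^{\epsilon,\mathbf{g}}/\lambda(\epsilon))_{\epsilon}$ by exponential equivalence. More precisely, introduce
\begin{align*}
Y^{\epsilon}:=\frac{D\Phi[\mathbf{g}](\mathbf{W})}{\lambda(\epsilon)},
\end{align*}
and observe that, since $D\Phi[\mathbf{g}](\mathbf{W})$ depends only on the first iterated integral of $\mathbf{W}$ and linearly therein (by the same bilinearity/scaling argument as in Remark~\ref{rem:linearity_map}, which holds under hypothesis 2), one has $Y^{\epsilon}=D\Phi[\mathbf{g}]\bigl(\tau_{1/\lambda(\epsilon)}\mathbf{W}\bigr)$, once the joint lift with $\mathbf{g}$ is interpreted in the natural way.

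For the LDP of $(Y^{\epsilon})_{\epsilon}$, I would invoke Schilder's theorem in the $p$-variation rough path topology (Theorem~13.42 in \cite{FrizVictoir}, also \cite{Ledoux_Qian_Zhang_p_var}), which says that $\tau_{1/\lambda(\epsilon)}\mathbf{W}=(W/\lambda(\epsilon),\mathbb{W}/\lambda(\epsilon)^{2})$ satisfies an LDP on $\mathcal{RP}^{p}(\mathbb{R}^{d})$ at speed $\lambda(\epsilon)$ with good rate function $J(\mathbf{x})=\tfrac{1}{2}\|h\|_{\mathcal{H}}^{2}$ when $\mathbf{x}=\mathbf{h}$ is the canonical lift of $h\in\mathcal{H}$, and $+\infty$ otherwise. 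Since by hypothesis 2 the map $\mathbf{W}\mapsto D\Phi[\mathbf{g}](\mathbf{W})$ is continuous (pathwise in $\mathbf{g}$), the contraction principle yields at once an LDP for $(Y^{\epsilon})_{\epsilon}$ on $\mathcal{Y}$ at the same speed with push-forward rate function $\tilde{I}$ as stated.

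The heart of the argument is the exponential equivalence between $Y^{\epsilon}$ and $X^{\epsilon,\mathbf{g}}/\lambda(\epsilon)$. Fix $\delta>0$ and set $M_{\epsilon}:=\delta\lambda(\epsilon)/\sqrt{\epsilon}$. Hypothesis 1 and the splitting $\{C\omega_{\mathbf{Z}}^{1/p}>M_{\epsilon}\}\subset\{C>\sqrt{M_{\epsilon}}\}\cup\{\omega_{\mathbf{Z}}^{1/p}>\sqrt{M_{\epsilon}}\}$ give
\begin{align*}
\mathbb{P}\Bigl(\bigl\|X^{\epsilon,\mathbf{g}}/\lambda(\epsilon)-Y^{\epsilon}\bigr\|_{\mathcal{Y}}>\delta\Bigr)\leq\mathbb{P}\bigl(C>\sqrt{M_{\epsilon}}\bigr)+\mathbb{P}\bigl(\omega_{\mathbf{Z}}^{2/p}>M_{\epsilon}\bigr).
\end{align*}
Markov's inequality, $\mathbb{E}[e^{C^{2}}]<+\infty$, and the Fernique-type bound~\eqref{eq:exponential_bound} (combined with the domination $\omega_{\mathbf{Z}}\lesssim\|\mathbf{W}\|_{p\text{-var}}^{p}+\|\mathbf{g}\|_{q\text{-var}}^{p}$ coming from Construction~\ref{constr:Constr_p_q}) yield a bound of the form $Ke^{-\alpha M_{\epsilon}}$. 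Dividing by $\lambda(\epsilon)^{2}$ and using the scaling $\sqrt{\epsilon}\,\lambda(\epsilon)\to 0$ from \eqref{MDP:lambda_epsilon_right_section} proves that $\tfrac{1}{\lambda(\epsilon)^{2}}\log\mathbb{P}(\cdots)\to-\infty$, i.e.\ the two sequences are exponentially equivalent.

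Combining these two steps with the exponential equivalence principle (Theorem~4.2.13 in Dembo--Zeitouni) transfers the LDP from $(Y^{\epsilon})_{\epsilon}$ to $(X^{\epsilon,\mathbf{g}}/\lambda(\epsilon))_{\epsilon}$, proving the claim. The delicate point is the exponential equivalence: one must ensure that the product $C\,\omega_{\mathbf{Z}}^{1/p}$ has tails that decay strictly faster than $\exp(-L\lambda(\epsilon)^{2})$ for every $L>0$, which is precisely where the Gaussian integrability of $C$ and the Fernique estimate \eqref{eq:exponential_bound} for the Brownian rough path interact with the MDP scaling~\eqref{MDP:lambda_epsilon_right_section}; the independence of $C$ from $\mathbf{Z}$ assumed in hypothesis~1 is what allows the clean subadditive splitting above.
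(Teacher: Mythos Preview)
Your proposal is correct and follows exactly the paper's three-step scheme: Schilder for the rescaled Brownian rough path plus the contraction principle to obtain an LDP for the auxiliary sequence $D\Phi[\mathbf{g}](\mathbf{W})/\lambda(\epsilon)$, exponential equivalence with $X^{\epsilon,\mathbf{g}}/\lambda(\epsilon)$ via the $\sqrt{\epsilon}$-rate from hypothesis~1 combined with Fernique-type tail bounds, and then transfer by Dembo--Zeitouni Theorem~4.2.13. The only tactical differences are that you use the set-splitting $\{C\omega_{\mathbf{Z}}^{1/p}>M\}\subset\{C>\sqrt{M}\}\cup\{\omega_{\mathbf{Z}}^{1/p}>\sqrt{M}\}$ where the paper applies $C\omega_{\mathbf{Z}}^{1/p}\leq(C^2+\omega_{\mathbf{Z}}^{2/p})/2$ before Markov, and you make the scaling identification $D\Phi[\mathbf{g}](\mathbf{W})/\lambda(\epsilon)=D\Phi[\mathbf{g}](\tau_{1/\lambda(\epsilon)}\mathbf{W})$ explicit, whereas the paper leaves it implicit in its Lemma~\ref{lemma:LDP_X_abst}.
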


\subsubsection{Proof of Theorem~\ref{th:MDP_abstract}.}
In this subsection we prove Theorem~\ref{th:MDP_abstract}. Define the sequence of probability measures $\mu_\epsilon:=\mathbb{P}\circ (X^\mathbf{g}/\lambda(\epsilon))^{-1}$.
The proof is then divided into three steps:
\begin{enumerate}
	\item $(\mu_\epsilon)_{\epsilon}$ satisfies a LDP with speed $\lambda(\epsilon)^2$ and a good rate function $\tilde{I}:\mathcal{Y}\rightarrow [0,+\infty]$.
	\item  $(\mu_\epsilon)_{\epsilon}$ is $\lambda(\epsilon)^{-1}-$exponentially equivalent to $(\tilde{\mu}_\epsilon)_{\epsilon}$. We recall what we mean by exponential equivalence: we say that $(X^{\mathbf{g},\epsilon}/\lambda(\epsilon))_{\epsilon}$ is $\lambda(\epsilon)^{-1}-$exponentially equivalent to $(X^{\mathbf{g}}/ \lambda(\epsilon))_{\epsilon}$ if for all $\delta>0$
	\begin{align*}
	\limsup_{\epsilon\rightarrow 0} \lambda(\epsilon)^{-2}\log\mathbb{P}\bigg(\frac{\|X^{\mathbf{g},\epsilon}-X^{\mathbf{g}}\|_{\mathcal{Y}}}{\lambda(\epsilon)}>\delta\bigg)=-\infty\,.
	\end{align*}
	\item We apply the following result (see Theorem 4.2.13 in \cite{Dembo_Zeitouni}) and conclude.  
	\begin{theorem}\label{th:teo_MDP_exp_abst}
		If $(\mu_\epsilon)_\epsilon$ satisfies a LDP with speed $\lambda(\epsilon)^2$ with good rate function $\mathcal{I}(\cdot)$, which are exponentially equivalent to $(\tilde{\mu}_\epsilon)_\epsilon$, then the same LDP holds for $(\tilde{\mu}_\epsilon)_\epsilon$.
	\end{theorem}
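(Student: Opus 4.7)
The plan is to carry out the classical Dembo--Zeitouni argument that exponential equivalence transfers the large deviations upper and lower bounds, with the goodness of $\mathcal{I}$ intervening only to pass from a $\delta$-enlargement back to the original closed set. Throughout I will identify $\mu_\epsilon$ and $\tilde{\mu}_\epsilon$ with random variables $X_\epsilon, \tilde{X}_\epsilon$ on a common probability space so that the exponential equivalence reads
\begin{equation*}
\limsup_{\epsilon\to 0}\lambda(\epsilon)^{-2}\log\mathbb{P}(d_{\mathcal{Y}}(X_\epsilon,\tilde{X}_\epsilon)>\delta)=-\infty\,,\qquad \forall \delta>0\,,
\end{equation*}
and the LDP for $\mu_\epsilon$ reads as the usual upper/lower estimates for $\mathbb{P}(X_\epsilon\in\,\cdot\,)$ at speed $\lambda(\epsilon)^2$.

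For the \emph{upper bound} I fix a closed set $F\subset\mathcal{Y}$ and set $F^\delta:=\{y\in\mathcal{Y}:d_{\mathcal{Y}}(y,F)\leq\delta\}$, which is closed. The inclusion $\{\tilde{X}_\epsilon\in F\}\subset\{X_\epsilon\in F^\delta\}\cup\{d_{\mathcal{Y}}(X_\epsilon,\tilde{X}_\epsilon)>\delta\}$ gives
\begin{equation*}
\tilde{\mu}_\epsilon(F)\leq\mu_\epsilon(F^\delta)+\mathbb{P}\bigl(d_{\mathcal{Y}}(X_\epsilon,\tilde{X}_\epsilon)>\delta\bigr)\,.
\end{equation*}
Combining the principle of the largest term $\limsup\lambda(\epsilon)^{-2}\log(a_\epsilon+b_\epsilon)=\max(\limsup\lambda(\epsilon)^{-2}\log a_\epsilon,\limsup\lambda(\epsilon)^{-2}\log b_\epsilon)$ with the LDP upper bound applied to the closed set $F^\delta$ and exponential equivalence yields
\begin{equation*}
\limsup_{\epsilon\to 0}\lambda(\epsilon)^{-2}\log\tilde{\mu}_\epsilon(F)\leq\max\bigl(-\inf_{F^\delta}\mathcal{I},\,-\infty\bigr)=-\inf_{F^\delta}\mathcal{I}\,.
\end{equation*}
The main (and classical) obstacle is now to pass to the limit $\delta\downarrow 0$. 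This is exactly where \emph{goodness} of $\mathcal{I}$ is used: if a sequence $x_n\in F^{1/n}$ satisfies $\mathcal{I}(x_n)\leq M<+\infty$, then $\{\mathcal{I}\leq M\}$ is compact, so a subsequence converges to some $x^\star\in F$ (as $F$ is closed), and lower semi-continuity of $\mathcal{I}$ gives $\mathcal{I}(x^\star)\leq\liminf_n\mathcal{I}(x_n)$. This shows $\lim_{\delta\downarrow 0}\inf_{F^\delta}\mathcal{I}=\inf_F\mathcal{I}$, in both the finite and the infinite case, and completes the upper bound.

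For the \emph{lower bound} I fix an open set $A\subset\mathcal{Y}$ and a point $x\in A$ with $\mathcal{I}(x)<+\infty$ (if no such $x$ exists there is nothing to prove). Pick $\delta>0$ with the open ball $B(x,2\delta)\subset A$, and observe the reverse-type inclusion
\begin{equation*}
\{X_\epsilon\in B(x,\delta)\}\cap\{d_{\mathcal{Y}}(X_\epsilon,\tilde{X}_\epsilon)<\delta\}\subset\{\tilde{X}_\epsilon\in A\}\,,
\end{equation*}
so that $\tilde{\mu}_\epsilon(A)\geq\mu_\epsilon(B(x,\delta))-\mathbb{P}(d_{\mathcal{Y}}(X_\epsilon,\tilde{X}_\epsilon)\geq\delta)$. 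Fix an auxiliary $\eta>0$: the LDP lower bound applied to the open set $B(x,\delta)$ gives $\mu_\epsilon(B(x,\delta))\geq\exp(-(\mathcal{I}(x)+\eta)\lambda(\epsilon)^2)$ for all $\epsilon$ sufficiently small, while exponential equivalence yields, for any $M>\mathcal{I}(x)+\eta$, the estimate $\mathbb{P}(d_{\mathcal{Y}}(X_\epsilon,\tilde{X}_\epsilon)\geq\delta)\leq\exp(-M\lambda(\epsilon)^2)$ for small $\epsilon$. Choosing $M$ strictly larger than $\mathcal{I}(x)+\eta$ makes the second term negligible compared to the first, hence $\tilde{\mu}_\epsilon(A)\geq\tfrac{1}{2}\exp(-(\mathcal{I}(x)+\eta)\lambda(\epsilon)^2)$ eventually. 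Taking logarithms and $\liminf_\epsilon$, then sending $\eta\downarrow 0$ and taking the infimum over $x\in A$ produces
\begin{equation*}
\liminf_{\epsilon\to 0}\lambda(\epsilon)^{-2}\log\tilde{\mu}_\epsilon(A)\geq-\inf_{x\in A}\mathcal{I}(x)\,,
\end{equation*}
which together with the upper bound established above is the desired LDP for $(\tilde{\mu}_\epsilon)_\epsilon$ at speed $\lambda(\epsilon)^2$ with good rate function $\mathcal{I}$.
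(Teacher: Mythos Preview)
Your proof is correct and is exactly the classical argument from Dembo--Zeitouni. The paper does not actually prove this theorem: it simply quotes it as Theorem~4.2.13 of \cite{Dembo_Zeitouni} and applies it as a black box, so you have in effect reproduced the proof from the cited reference rather than an argument original to this paper.
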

	Indeed Theorem~\ref{th:teo_MDP_exp_abst} asserts that the LDP for $(\mu^\epsilon)_{\epsilon}$ passes to $(\tilde{\mu}^\epsilon)_{\epsilon}$.
\end{enumerate}

Thus from Theorem~\ref{th:teo_MDP_exp_abst} the sequence $(\tilde{\mu}_\epsilon)_\epsilon$ satisfies a LDP on $\mathcal{Y}$ with speed $\lambda(\epsilon)^{2}$ and rate function $\tilde{I}$.
In Lemma~\ref{lemma:LDP_X_abst} the first step, which is a consequence of the following contraction principle (Appendix~C, Theorem~C.6 in \cite{FrizVictoir})
\begin{theorem}\label{th:contraction_principle}
	(Contraction principle) Let $\mathcal{Z}$ and $\mathcal{Y}$ be Hausdorff topological spaces. 
	Suppose that $f:\mathcal{Z}\rightarrow\mathcal{Y}$ is a continuous measurable map. 
	If a set of probability measures $(\mu_\epsilon)_{\epsilon>0}$ on $\mathcal{Z}$ satisfies a large deviation principle on $\mathcal{Z}$ with good rate function $\mathcal{I}$, then the image measures $(\mu_\epsilon\circ f^{-1})_{\epsilon>0}$ satisfy a large deviation principle on $\mathcal{Y}$ with good rate function
	\begin{align*}
	\mathcal{J}(y):= \inf \big(\mathcal{I}(x) :\,x\in \mathcal{Z}\;\mathrm{and}\;f(x)=y\big)\,.
	\end{align*}
\end{theorem}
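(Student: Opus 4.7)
The plan is to verify three items in order: that $\mathcal{J}$ is a good rate function on $\mathcal{Y}$, that the pushforward measures $\nu_\epsilon := \mu_\epsilon \circ f^{-1}$ satisfy the LDP upper bound on closed sets, and that they satisfy the LDP lower bound on open sets.

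First, I would establish the set identity
\[
\{y \in \mathcal{Y} : \mathcal{J}(y) \leq M\} = f\bigl(\{x \in \mathcal{Z} : \mathcal{I}(x) \leq M\}\bigr)
\]
for every $M \in [0,+\infty)$. The inclusion of the right-hand side into the left is immediate from the definition of $\mathcal{J}$. For the reverse inclusion, given $y$ with $\mathcal{J}(y) \leq M$, I would pick a sequence $(x_n) \subset f^{-1}(\{y\})$ with $\mathcal{I}(x_n) \to \mathcal{J}(y)$; since eventually $\mathcal{I}(x_n) \leq M+1$ and $\{\mathcal{I} \leq M+1\}$ is compact by goodness of $\mathcal{I}$, a subsequence converges to some $x_\infty$, which satisfies $f(x_\infty) = y$ by continuity of $f$ and $\mathcal{I}(x_\infty) \leq \mathcal{J}(y) \leq M$ by lower semicontinuity of $\mathcal{I}$. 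Hence $\{\mathcal{J} \leq M\}$ is the continuous image of a compact set in a Hausdorff target, thus compact and therefore closed; this simultaneously delivers compactness of sublevel sets and lower semicontinuity of $\mathcal{J}$.

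Next, for the LDP bounds I would pull sets back through $f$. Continuity of $f$ ensures that $f^{-1}(F)$ is closed in $\mathcal{Z}$ whenever $F \subset \mathcal{Y}$ is closed and that $f^{-1}(A)$ is open whenever $A$ is open; measurability of $f$ guarantees $\nu_\epsilon(B) = \mu_\epsilon(f^{-1}(B))$ for Borel $B \subset \mathcal{Y}$. Applying the LDP hypothesis for $(\mu_\epsilon)_\epsilon$ to these preimages and using the elementary identity
\[
\inf_{x \in f^{-1}(B)} \mathcal{I}(x) = \inf_{y \in B} \inf_{x \in f^{-1}(\{y\})} \mathcal{I}(x) = \inf_{y \in B} \mathcal{J}(y)
\]
(valid for any $B \subset \mathcal{Y}$, with the convention $\inf \emptyset = +\infty$) transfers the $\limsup$ upper bound from the closed set $f^{-1}(F)$ to $F$, and the $\liminf$ lower bound from the open set $f^{-1}(A)$ to $A$.

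The argument is essentially formal bookkeeping; the only step requiring a genuine argument beyond unwinding definitions is the sublevel-set identity for $\mathcal{J}$, whose non-trivial inclusion relies on compactness of the sublevel sets of $\mathcal{I}$ together with continuity of $f$ and lower semicontinuity of $\mathcal{I}$. No serious obstacle is expected beyond this point.
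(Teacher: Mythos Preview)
Your proof is correct and is the standard argument for the contraction principle. The paper does not give its own proof of this statement; it merely quotes the result from Friz--Victoir (Appendix~C, Theorem~C.6), so there is nothing to compare against.
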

\begin{lemma}\label{lemma:LDP_X_abst}
	The sequence of probability measures $(\mu_\epsilon)_\epsilon$ satisfies a large deviation principle on $\mathcal{Y}$ with speed $\lambda(\epsilon)^{2}$ and with good rate function
	\begin{align*}
	\tilde{I}(f)=\inf\bigg(\frac{1}{2}\int_{0}^{T}|h(s)|^2 ds:h\in \mathcal{H}, \; D\Phi[\mathbf{g}](\mathbf{h})=f\bigg)\,.
	\end{align*}
\end{lemma}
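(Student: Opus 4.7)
The plan is to deduce the LDP for $(\mu_\epsilon)_\epsilon$ by combining Schilder's theorem in the $p$-variation rough-path topology with the contraction principle of Theorem~\ref{th:contraction_principle}, using the positive homogeneity of the directional derivative. First I would invoke the $p$-variation Schilder theorem of M.~Ledoux, Z.~Qian, T.~Zhang \cite{Ledoux_Qian_Zhang_p_var}, which together with the exponential integrability \eqref{eq:exponential_bound} ensures that the family $\tau_{1/\lambda(\epsilon)}\mathbf{W} = (W/\lambda(\epsilon),\,\mathbb{W}/\lambda(\epsilon)^2)$ satisfies an LDP on $\mathcal{RP}^p(\mathbb{R}^d)$ with speed $\lambda(\epsilon)^2$ and good rate function
\[
I^W(\mathbf{x}) = \tfrac{1}{2}\int_0^T|\dot h(t)|^2\,dt \quad\text{if } \mathbf{x}=\mathbf{h} \text{ with } h\in\mathcal{H}, \qquad I^W(\mathbf{x})=+\infty\text{ otherwise.}
\]

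Second, I would exploit positive homogeneity of the directional derivative. A direct change of variable $\epsilon\mapsto\epsilon/c$ in Definition~\ref{def:def_derivative_quian_tudor} yields $D\Phi[\mathbf{g}](\tau_c\mathbf{W}) = c\,D\Phi[\mathbf{g}](\mathbf{W})$ for every $c>0$; setting $c=1/\lambda(\epsilon)$ identifies
\[
\frac{X^{\mathbf{g}}}{\lambda(\epsilon)} = D\Phi[\mathbf{g}]\bigl(\tau_{1/\lambda(\epsilon)}\mathbf{W}\bigr),
\]
so that $\mu_\epsilon$ is the pushforward of the law of $\tau_{1/\lambda(\epsilon)}\mathbf{W}$ by the map $\Psi:\mathbf{V}\mapsto D\Phi[\mathbf{g}](\mathbf{V})$. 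By Construction~\ref{constr:Constr_p_q} and the complementary Young regularity of $\mathbf{g}$, the joint-lift map $\mathbf{V}\mapsto\mathbf{Z}(\mathbf{g},\mathbf{V})$ is $\mathbb{P}$-a.s.\ locally Lipschitz on $\mathcal{RP}^p(\mathbb{R}^d)$; composing with the continuous map $\mathbf{Z}\mapsto D\Phi[\mathbf{g}](\mathbf{W})$ guaranteed by hypothesis~(2) of Theorem~\ref{th:MDP_abstract} delivers continuity of $\Psi$.

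Third, I would apply the contraction principle (Theorem~\ref{th:contraction_principle}) to the continuous map $\Psi$, transferring the LDP of Step~1 to $(\mu_\epsilon)_\epsilon$ with speed $\lambda(\epsilon)^2$ and good rate function
\[
\tilde{I}(f) = \inf\bigl\{I^W(\mathbf{x}) : \mathbf{x}\in\mathcal{RP}^p(\mathbb{R}^d),\;\Psi(\mathbf{x})=f\bigr\} = \inf\Bigl\{\tfrac{1}{2}\int_0^T|\dot h(s)|^2\,ds : h\in\mathcal{H},\;D\Phi[\mathbf{g}](\mathbf{h})=f\Bigr\},
\]
which matches the stated expression for $\tilde I$.

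The main obstacle I anticipate is handling $\mathbf{g}$ as a random rough path: since $\Psi$ then depends on $\omega$ through $\mathbf{g}(\omega)$, the contraction principle literally applies only $\omega$-wise, producing a conditional LDP with an $\omega$-dependent rate function. Making the claim for the unconditional measure $\mu_\epsilon$ rigorous requires either restricting to deterministic $\mathbf{g}$ (as in Theorem~\ref{th:MDP_intro}, where $\mathbf{g}=\mathbf{0}$), or a disintegration argument leveraging measurability of the joint lift in Construction~\ref{constr:Constr_p_q} together with the fact that $\tau_{1/\lambda(\epsilon)}\mathbf{W}$ carries all the $\epsilon$-dependence; reading the lemma with $\mathbf{g}$ essentially fixed makes the above transfer of the LDP direct.
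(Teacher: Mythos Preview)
Your proposal is correct and follows essentially the same route as the paper: Schilder's theorem for $(\tau_{1/\lambda(\epsilon)}\mathbf{W})_\epsilon$ in the $p$-variation rough-path topology, continuity of $\mathbf{V}\mapsto D\Phi[\mathbf{g}](\mathbf{V})$ via the Young joint lift (Construction~\ref{constr:Constr_p_q}) composed with hypothesis~(2), and then the contraction principle. Your explicit homogeneity step $D\Phi[\mathbf{g}](\tau_c\mathbf{W})=c\,D\Phi[\mathbf{g}](\mathbf{W})$ makes precise what the paper leaves implicit (cf.\ Remark~\ref{rem:linearity_map}), and your caveat about random $\mathbf{g}$ is apt---the paper handles it by opening the proof with ``Fix $\mathbf{g}\in\mathcal{RP}^q(\mathbb{R}^d;E)$'', i.e.\ treating $\mathbf{g}$ as deterministic.
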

	\begin{proof} Fix $\mathbf{g}\in \mathcal{RP}^q(\mathbb{R^d}; E)$.
		Since the joint lift $\mathbf{Z}$ associated to $(\mathbf{W},\mathbf{g})$ is constructed via Young's integration, the map $\mathbf{h}\mapsto\mathbf{Z}$ is continuous.
		By assumption 2., the map $\mathbf{Z}\mapsto X^\mathbf{g}$ is continuous. Thus the map $\mathbf{h}\mapsto D\Phi[\mathbf{g}](\mathbf{h}) $ is continuous as a map from $\mathcal{RP}^p(\mathbb{R}^d;E)$ to $\mathcal{Y}$. 
	
	From the rough paths equivalent of Schiller's theorem for Gaussian processes,  $(\tau_{\sqrt{\epsilon}}\mathbf{W})_{\epsilon}$ satisfies a LDP  on $\mathcal{RP}^p(\mathbb{R}^d;E)$ with speed $\epsilon^{-1}$ and good rate function $\mathcal{I}$. From the assumptions on the scaling in \eqref{MDP:lambda_epsilon_right_section}, also the sequence $(\tau_{\lambda(\epsilon)^{-1}}\mathbf{W})_{\epsilon}$ satisfies a LDP  on $\mathcal{RP}^p(\mathbb{R}^d;E)$ with speed $\lambda(\epsilon)^2$ and good rate function $\mathcal{I}$.
	
	Since the map $D\Phi[\mathbf{g}](\,\cdot\,):\mathcal{RP}^p(\mathbb{R}^d;E)\longrightarrow\mathcal{Y}$ is continuous and  the sequence $(\tau_{\lambda(\epsilon)^{-1}}\mathbf{W})_{\epsilon}$ satisfies a LDP  on $\mathcal{RP}^p(\mathbb{R}^d;E)$, we can apply the contraction principle (Theorem~\ref{th:contraction_principle}) and obtain a LDP also for $(D\Phi[\mathbf{g}](\tau_{\lambda(\epsilon)^{-1}}\mathbf{W}))_{\epsilon}$.
\end{proof}
W prove the second step in the following theorem.
\begin{lemma}\label{lemma:exp_equiv}
	The sequence $(X^\mathbf{g}/\lambda(\epsilon))_{\epsilon}$ is $\lambda(\epsilon)^{-1}$-exponentially equivalent to $(X^{\mathbf{g},\epsilon}/\lambda(\epsilon))_\epsilon$ in $\mathcal{Y}$, where $X^{\mathbf{g},\epsilon}$ is defined in \eqref{eq:X_eps_G_abstract_MDP} and $X^\mathbf{g}=D\Phi[\mathbf{g}](\mathbf{W})$.
\end{lemma}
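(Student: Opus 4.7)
The plan is to reduce the exponential equivalence to a simple tail estimate enabled by the quantitative pathwise bound in assumption 1 of Theorem~\ref{th:MDP_abstract} and the Fernique-type bound \eqref{eq:exponential_bound} for the Stratonovich lift of Brownian motion. Concretely, by assumption 1 there is a random variable $C>0$, independent of $\mathbf{Z}$ and satisfying $\mathbb{E}[\exp(C^{2})]<\infty$, such that $\mathbb{P}$-a.s.
\begin{equation*}
\bigl\|X^{\mathbf{g},\epsilon}-X^{\mathbf{g}}\bigr\|_{\mathcal{Y}}\leq C\,\omega_{\mathbf{Z}}^{1/p}\sqrt{\epsilon}.
\end{equation*}
Hence for any $\delta>0$, setting $M_{\epsilon}:=\delta\lambda(\epsilon)/\sqrt{\epsilon}$, one has
\begin{equation*}
\mathbb{P}\!\left(\frac{\|X^{\mathbf{g},\epsilon}-X^{\mathbf{g}}\|_{\mathcal{Y}}}{\lambda(\epsilon)}>\delta\right)\leq\mathbb{P}\!\left(C\,\omega_{\mathbf{Z}}^{1/p}>M_{\epsilon}\right),
\end{equation*}
and the scaling \eqref{MDP:lambda_epsilon_right_section} yields $M_{\epsilon}\to+\infty$.

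Next, I would split the event into two tails via the elementary inclusion
\begin{equation*}
\{C\,\omega_{\mathbf{Z}}^{1/p}>M_{\epsilon}\}\subset\{C>\sqrt{M_{\epsilon}}\}\cup\{\omega_{\mathbf{Z}}^{1/p}>\sqrt{M_{\epsilon}}\},
\end{equation*}
which dispenses with the need to compute joint moments (although independence of $C$ and $\mathbf{Z}$ would also suffice). Chebyshev's inequality applied to $e^{C^{2}}$ controls the first event by $e^{-M_{\epsilon}}\mathbb{E}[e^{C^{2}}]$. For the second event, I would invoke \eqref{eq:exponential_bound}: since $\omega_{\mathbf{Z}}^{2/p}$ is dominated by a multiple of $\|\mathbf{W}\|_{p\text{-var};[0,T]}^{2}$ plus contributions from $\mathbf{g}$ (controlled by standard Young bounds applied to the cross-iterated integrals in Construction~\ref{constr:Constr_p_q}), there is $\eta>0$ with $\mathbb{E}[\exp(\eta\,\omega_{\mathbf{Z}}^{2/p})]<\infty$, so Chebyshev bounds the second tail by $e^{-\eta M_{\epsilon}}\mathbb{E}[\exp(\eta\,\omega_{\mathbf{Z}}^{2/p})]$.

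Combining the two bounds, for some constant $K>0$ and $\kappa:=\min(1,\eta)>0$,
\begin{equation*}
\mathbb{P}\!\left(\frac{\|X^{\mathbf{g},\epsilon}-X^{\mathbf{g}}\|_{\mathcal{Y}}}{\lambda(\epsilon)}>\delta\right)\leq K\,e^{-\kappa M_{\epsilon}}.
\end{equation*}
Taking logarithms and dividing by $\lambda(\epsilon)^{2}$ gives
\begin{equation*}
\lambda(\epsilon)^{-2}\log\mathbb{P}(\cdots)\leq\lambda(\epsilon)^{-2}\log K-\kappa\,\delta\,\frac{1}{\sqrt{\epsilon}\,\lambda(\epsilon)},
\end{equation*}
and by \eqref{MDP:lambda_epsilon_right_section} the first term tends to $0$ while $1/(\sqrt{\epsilon}\lambda(\epsilon))\to+\infty$, so the right-hand side diverges to $-\infty$. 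This is exactly the definition of $\lambda(\epsilon)^{-1}$-exponential equivalence.

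The only genuinely delicate point is justifying that the joint control $\omega_{\mathbf{Z}}^{2/p}$ inherits sufficient Gaussian-type tails; this is the potential obstacle, but it follows from \eqref{eq:exponential_bound} applied to $\mathbf{W}$ together with the Young-integral construction of the cross terms $[Vh]$, $[hV]$, $[hh]$ in Construction~\ref{constr:Constr_p_q}, since $\mathbf{g}$ enters only through deterministic (or exponentially integrable) factors that can be absorbed in $K$. The remaining steps are then an application of Chebyshev and the scaling assumption, both of which are essentially automatic.
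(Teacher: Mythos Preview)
Your proof is correct and follows essentially the same approach as the paper: both reduce the exponential equivalence to a tail estimate via the pathwise bound from assumption~1, then combine the hypothesis $\mathbb{E}[\exp(C^{2})]<\infty$ with the Fernique-type bound \eqref{eq:exponential_bound} for $\mathbf{Z}$ through an exponential Chebyshev argument. The only cosmetic difference is how the product $C\,\omega_{\mathbf{Z}}^{1/p}$ is handled: the paper uses the elementary inequality $ab\leq\tfrac12(a^{2}+b^{2})$ to pass to a sum before applying Markov, whereas you use the union-bound splitting $\{ab>M\}\subset\{a>\sqrt{M}\}\cup\{b>\sqrt{M}\}$; both lead to the same exponential decay rate governed by $M_{\epsilon}=\delta\lambda(\epsilon)/\sqrt{\epsilon}$, and your identification of the exponential integrability of $\omega_{\mathbf{Z}}^{2/p}$ via Construction~\ref{constr:Constr_p_q} matches the paper's justification verbatim.
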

\begin{proof}
	By assumption, the sequence $(X^{\mathbf{g},\epsilon})_{\epsilon}$ converges to $D\Phi[\mathbf{g}](\mathbf{W})$ in $\mathcal{Y}$ with speed of convergence $\sqrt{\epsilon}$. Hence
	\begin{align*}
	\limsup_{\epsilon\rightarrow 0} \lambda(\epsilon)^{-2}\log\mathbb{P}\bigg(\frac{\|X^{\mathbf{g},\epsilon}-X^\mathbf{g}\|_{\mathcal{Y}}}{\lambda(\epsilon)}>\delta\bigg)\leq \limsup_{\epsilon\rightarrow 0} \lambda(\epsilon)^{-2}\log\mathbb{P}\bigg(\frac{\sqrt{\epsilon}C\omega_{\mathbf{Z}}^{1/p}}{\lambda(\epsilon)}>\delta\bigg)\,.
	\end{align*}
	From the Markov's inequality, we can bound  the logarithm of the probability above by
	\begin{align}
	\log\mathbb{P}\bigg(\frac{\sqrt{\epsilon}C\omega_{\mathbf{Z}}^{1/p}}{\lambda(\epsilon)}>\delta\bigg)
	&\leq\log\mathbb{P}\bigg(\frac{\sqrt{\epsilon}[C^2+\omega_{\mathbf{Z}}^{2/p}]}{2\lambda(\epsilon)}>\delta\bigg)\nonumber\\
	&\leq\log\bigg(\exp\bigg(-\frac{\lambda(\epsilon)2\delta T^{1-\frac{p}{2}}}{\sqrt{\epsilon}\eta }\bigg)\left[\mathbb{E}\big[\frac{\eta}{T^{1-\frac{p}{2}}}\exp(\omega_{\mathbf{Z}}^{1/p})\big] +\mathbb{E}[\exp(C^2)]\right]\bigg) \nonumber\\
	&=-\frac{2\lambda(\epsilon)\delta}{\sqrt{\epsilon}} \frac{T^{1-\frac{p}{2}}}{\eta}+\log\left(\mathbb{E}\big[2\exp(\omega_{\mathbf{Z}}^{2/p})\big]+2\mathbb{E}\big[\exp(C^2) \big] \right)\,. \label{eq:proof_abs_MDP_RHS}
	\end{align}
	Since the constant $\eta>0$ is independent on $\epsilon$ and depends only on the joint lift $\mathbf{Z}$, it does not influence the limit for $\epsilon\mapsto 0$. 
	From the exponential boundedness of $\mathbf{Z}$, the right hand side of the equality in \eqref{eq:proof_abs_MDP_RHS} converges to $-\infty$ for $\epsilon$  converging to $0$. Thus it follows the $\lambda(\epsilon)^{-1}$-exponential equivalence of $(X^{\mathbf{g},\epsilon})_{\epsilon}$ and $(X^{\mathbf{g}}/\lambda(\epsilon))_{\epsilon}$.
\end{proof}
The conclusion of Theorem~\ref{th:MDP_abstract} follows by applying Theorem~\ref{th:teo_MDP_exp_abst}.
\subsection{MDP for SPDEs with multiplicative linear noise.}
We apply Theorem~\ref{th:MDP_abstract} to the SPDEs discussed in the CLT section.

\subsection{Moderate deviations from the CLT centred in $\mathbf{0}$ for SPDEs, $X=D\Phi[\mathbf{0}](\mathbf{W})$.}\label{sec:MDP_in_0_spde}
We rewrite Theorem~\ref{th:MDP_abstract} in the framework of  the heat equation considered in Section~\ref{sec:CLT_SPDEs} and for the reaction-diffusion equation considered in Section~\ref{section:reaction_diffusion}. Consider the map
\begin{align*}
	\Phi_i:\mathcal{RD}^p(\mathbb{R}^d;E)&\longrightarrow \mathcal{Y}:=C([0,T];L^2)\cap L^2(H^2)\cap L^\infty(H^1)\\
	\mathbf{W}&\longmapsto \Phi_i(\mathbf{W})=\pi_i(u^0,\mathbf{W})\,,
\end{align*}
for $i=1,2$. Recall the notations: we denote by $\pi_1(u^0,\mathbf{W})$ the unique solution to the heat equation driven by a multiplicative noise $\mathbf{W}$ started in $u^0\in H^1$ and $\Phi_1$ is the corresponding It\^o-Lyons map. We denote by $\pi_2(u^0,\mathbf{W})$ the unique solution to the stochastic reaction-diffusion equation driven by a multiplicative noise $\mathbf{W}$ started in $u^0\in H^1$ and $\Phi_2$ is the corresponding It\^o-Lyons map.
That is, for $i=1,2$, $\Phi_i(\mathbf{W})$ is the unique solution to
\begin{align*}
\delta \Phi_i(\mathbf{W})_{s,t}=\int_{s}^{t}b_i(\Phi_i(\mathbf{W})_r)\dd r+W_{s,t}\Phi_i(\mathbf{W})_s+\mathbb{W}_{s,t}\Phi_i(\mathbf{W})_s+\Phi^{i,\natural}_{s,t}\,,
\end{align*}
with $\Phi_i(\mathbf{W})_0=u^0$ and $b_i(\cdot)$ is the drift to the corresponding equations.
Consider the $p-$geometric compatible direction $(\mathbf{W},\mathbf{0})$ as in Theorem~\ref{th:MDP_abstract}, where we denote by $\mathbf{Z}$ the joint lift.
 Define $X^{\epsilon,i}:=\Phi_i(\tau_{\sqrt{\epsilon}}\mathbf{W})-\Phi_i(\mathbf{0})/\sqrt{\epsilon}$ and $X^i$ to be the unique solution to 
\begin{align*}
\delta X^i_{s,t}=\int_{s}^{t}b_i'(\Phi_i(\mathbf{0})_r)X^i_r\dd r+W_{s,t}\Phi_i(\mathbf{0})_s+X^{i,\natural}_{s,t}\,,
\end{align*}
where we recall that $b_1'(\Phi_1(\mathbf{0}))X^1=b_1(X^1)$ from the linearity of the drift and for $b_2'(\Phi_2(\mathbf{0}))X^2$ is discussed in Section~\ref{section:reaction_diffusion}.
For $i=1,2$, the map 
\begin{align*}
D\Phi_i[\mathbf{0}](\cdot):\mathcal{RD}^p(\mathbb{R}^d;E)&\longrightarrow \mathcal{Y}:=C([0,T];L^2)\cap L^2(H^2)\cap L^\infty(H^1)\\
\mathbf{W}&\longmapsto D\Phi_i[\mathbf{0}](\mathbf{W})=X^i\,,
\end{align*}
 is continuous, from Proposition~\ref{pro:continuity_limit_equation} for both of the equations (the drifts are discussed in the respective sections).
From Theorem~\ref{teo:CLT}, the sequence $(X^{\epsilon,1})_{\epsilon}$ converges to $D\Phi_1[\mathbf{0}](\mathbf{W})$ in $L^\infty(H^1)\cap L^2(H^2)\cap \mathcal{V}^p(L^2)$ with speed of convergence $\sqrt{\epsilon}$. From the considerations in Section~\ref{section:reaction_diffusion}, also the sequence $(X^{\epsilon,2})_{\epsilon}$ converges to $D\Phi_2[\mathbf{0}](\mathbf{W})$ in $L^\infty(H^1)\cap L^2(H^2)\cap \mathcal{V}^p(L^2)$ with speed of convergence $\sqrt{\epsilon}$.
Thus the hypothesis of Theorem~\ref{th:MDP_abstract} are fulfilled and the MDP follows, for $i=1,2$.
\begin{theorem}\label{th:MDP}
	Let $\mathbf{W}$ be a random $p$-geometric rough driver on a probability space $(\Omega,\mathcal{F},\mathbb{P})$ which fulfils the hypothesis of Theorem~\ref{th:MDP_abstract} and so that
    $(\lambda(\epsilon)^{-1}\mathbf{W})_{\epsilon}$ satisfy a large deviation principle with speed $\lambda(\epsilon)^{-2}$ and good rate function 
	\begin{align*}
	\mathcal{I}(h)=\frac{1}{2}\int_{0}^{T}|h(s)|^2 \dd s,\quad h\in \mathcal{H}\,.
	\end{align*}
	In the notations of this section, the sequence $((\Phi_i(\tau_{\sqrt{\epsilon}}\mathbf{W})-\Phi_i(\mathbf{0}))/\sqrt{\epsilon} \lambda(\epsilon))_\epsilon$ satisfies a large deviation principle on $L^\infty(H^1)\cap L^2(H^2)\cap \mathcal{V}^p(L^2)$ with speed $\lambda(\epsilon)^2$ and with good rate function  
	\begin{align*}
	\tilde{I}(f)=\inf\bigg(\frac{1}{2}\int_{0}^{T}|h(s)|^2 \dd s:h\in \mathcal{H}, \; X^{h,i}=f\bigg)\,, \quad \inf \emptyset:=+\infty\,,
	\end{align*}
	where $X^{h,i}$ is the unique solution to the skeleton equation
	\begin{align*}
	\delta X^{h,i}_{s,t}=\int_{s}^{t}b_i'(\Phi_i(\mathbf{0})_r)X^{h,i}_r \dd r+h_{s,t}\Phi_i(\mathbf{0})_s+X^{h,i,\natural,}_{s,t}\,.
	\end{align*}
\end{theorem}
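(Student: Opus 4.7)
The plan is to prove Theorem~\ref{th:MDP} as a direct application of the abstract Theorem~\ref{th:MDP_abstract} with the choice $\mathbf{g}=\mathbf{0}$, so that the joint lift $\mathbf{Z}$ reduces to $\mathbf{W}$ itself (with vanishing cross-integrals), and $\Phi=\Phi_i$ for $i=1,2$. Under this identification, $X^{\epsilon,\mathbf{g}}$ becomes $(\Phi_i(\tau_{\sqrt\epsilon}\mathbf{W})-\Phi_i(\mathbf{0}))/\sqrt\epsilon$ and $X^{\mathbf{g}}=D\Phi_i[\mathbf{0}](\mathbf{W})$ is the pathwise directional derivative identified in the CLT, which as a function of a rough path $\mathbf{h}$ coincides with the skeleton solution $X^{h,i}$. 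Thus the inf set appearing in the rate function of Theorem~\ref{th:MDP_abstract} matches the one in the statement.

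The verification of the two assumptions in Theorem~\ref{th:MDP_abstract} is immediate. For assumption 1, the pathwise convergence of $X^{\epsilon,\mathbf{0}}$ to $D\Phi_i[\mathbf{0}](\mathbf{W})$ with speed $\sqrt\epsilon$ is provided by Theorem~\ref{teo:CLT} for $i=1$ and by the analogous result from Section~\ref{section:reaction_diffusion} for $i=2$, both in the norm of $\mathcal{Y}:=L^\infty(H^1)\cap L^2(H^2)\cap\mathcal{V}^p(L^2)$. For assumption 2, the continuity of the map $\mathbf{W}\mapsto D\Phi_i[\mathbf{0}](\mathbf{W})=X^i$ is furnished by Proposition~\ref{pro:continuity_limit_equation} in Appendix~\ref{Appendix_B}, whose hypotheses on the linearised drift $b_i'(\Phi_i(\mathbf{0}))\cdot$ are checked in the respective sections treating each equation.

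The key technical point, and the only place where something must be checked beyond a direct citation, is that the random constant $C$ in the bound $\|X^{\epsilon,\mathbf{0}}-D\Phi_i[\mathbf{0}](\mathbf{W})\|_{\mathcal{Y}}\leq C\,\omega_{\mathbf{W}}^{1/p}\sqrt\epsilon$ admits the exponential moment $\mathbb{E}[\exp(C^2)]<+\infty$ required by Lemma~\ref{lemma:exp_equiv}. Tracking the dependencies of the constant through the rough Gronwall step at the end of the proof of Theorem~\ref{teo:CLT}, one sees that $C$ depends polynomially on $\|u^0\|_{H^1}$ and on $\|\mathbf{W}\|_{p\text{-var}}$ via the implicit constants in the sewing lemma; since $u^0$ is deterministic and the exponential bound \eqref{eq:exponential_bound} holds for the Stratonovich lift of Brownian motion in $p$-variation norm, the required integrability follows. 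The same argument applies verbatim to the reaction-diffusion case, with the additional polynomial factors in $\|u^0\|_{H^1}$ arising from Agmon's inequality absorbed in the same way.

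With these verifications, the exponential equivalence of $(X^{\epsilon,\mathbf{0}}/\lambda(\epsilon))_\epsilon$ with $(D\Phi_i[\mathbf{0}](\mathbf{W})/\lambda(\epsilon))_\epsilon$ follows from Lemma~\ref{lemma:exp_equiv}. By the assumed LDP for $(\tau_{\lambda(\epsilon)^{-1}}\mathbf{W})_\epsilon$ with rate function $\mathcal{I}(h)=\tfrac12\int_0^T|h(s)|^2\,\mathrm{d}s$ and the continuity of $\mathbf{h}\mapsto D\Phi_i[\mathbf{0}](\mathbf{h})=X^{h,i}$, the contraction principle (Theorem~\ref{th:contraction_principle}) applied as in Lemma~\ref{lemma:LDP_X_abst} yields a LDP for $(D\Phi_i[\mathbf{0}](\mathbf{W})/\lambda(\epsilon))_\epsilon$ with rate $\tilde I$. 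Transferring the LDP to $(X^{\epsilon,\mathbf{0}}/\lambda(\epsilon))_\epsilon$ via Theorem~\ref{th:teo_MDP_exp_abst} concludes the proof for both $i=1$ and $i=2$.
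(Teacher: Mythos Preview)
Your proposal is correct and follows essentially the same route as the paper: the paper's argument for Theorem~\ref{th:MDP} is precisely to verify the two hypotheses of Theorem~\ref{th:MDP_abstract} with $\mathbf{g}=\mathbf{0}$, citing Theorem~\ref{teo:CLT} (resp.\ Section~\ref{section:reaction_diffusion}) for the speed-$\sqrt{\epsilon}$ convergence and Proposition~\ref{pro:continuity_limit_equation} for the continuity of $\mathbf{W}\mapsto D\Phi_i[\mathbf{0}](\mathbf{W})$, and then invoking the abstract result. Your additional paragraph on the exponential integrability of the constant $C$ is a welcome elaboration of a point the paper leaves implicit; note, however, that the hypothesis in Theorem~\ref{th:MDP_abstract} asks for $C$ to be \emph{independent} of $\mathbf{Z}$, so the cleaner way to phrase your observation is that when $u^0$ is deterministic the residual dependence on $\|\mathbf{W}\|_{p\text{-var}}$ in the Gronwall constant can be absorbed into a higher power of $\omega_{\mathbf{Z}}$, leaving a purely deterministic prefactor---after which Lemma~\ref{lemma:exp_equiv} applies via \eqref{eq:exponential_bound} exactly as you describe.
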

\begin{remark}
	\textbf{(MDP for SPDEs driven by an It\^o integral)}: in Remark~\ref{remark:ito_CLT}, we observe that the pathwise CLT holds also for a stochastic reaction-diffusion equation driven by an It\^o integral, with the Brownian motion rescaled by $\sqrt{\epsilon}$. We show that the convergence occurs with speed $\sqrt{\epsilon}$: we can ask whether a MDP is also inherited in a similar fashion. We adopt the notations of Section~\ref{section:reaction_diffusion} and we recall them briefly. We denote by $y^{i,\epsilon}$ the unique solution to the SPDE driven by a It\^o integral, where the Brownian motion is rescaled by a $\sqrt{\epsilon}$ factor. Denote by $u^{i,\epsilon}=\Phi_i(\tau_{\sqrt{\epsilon}}\mathbf{W}) $ the corresponding SPDE driven by a Stratonovich equation and by $y^i$ the associated deterministic equation.
	If we have a speed of convergence to the CLT, we can also inherit a MDP. Indeed, 
	\begin{align*}
	&\quad\mathbb{P}\left( \left|\frac{1}{\lambda(\epsilon)}\left(\frac{y^{i,\epsilon}-y^i}{\sqrt{\epsilon}}-D\Phi_i[\mathbf{0}](\mathbf{W})\right)\right|>\delta\right)\\
	&\leq \mathbb{P}\left( \left|\frac{1}{\lambda(\epsilon)}\left(\frac{y^{i,\epsilon}-u^{i,\epsilon}}{\sqrt{\epsilon}}+\frac{u^{i,\epsilon}-y^i}{\sqrt{\epsilon}}-D\Phi_i[\mathbf{0}](\mathbf{W})\right)\right|>\delta\right)\\
	&\leq \mathbb{P}\left( \left|\frac{1}{\lambda(\epsilon)}\left(\frac{y^{i,\epsilon}-u^{i,\epsilon}}{\sqrt{\epsilon}}\right)\right|+\left|\frac{1}{\lambda(\epsilon)}\left(\frac{u^{i,\epsilon}-y^i}{\sqrt{\epsilon}}-D\Phi_i[\mathbf{0}](\mathbf{W})\right)\right|>\delta\right)\\
	& \leq \mathbb{P}\left( \left|\frac{1}{\lambda(\epsilon)}\left(\frac{y^{i,\epsilon}-u^{i,\epsilon}}{\sqrt{\epsilon}}\right)\right|>\delta\right)+\mathbb{P}\left(\left|\frac{1}{\lambda(\epsilon)}\left(\frac{u^{i,\epsilon}-y^i}{\sqrt{\epsilon}}-D\Phi_i[\mathbf{0}](\mathbf{W})\right)\right|>\delta\right)\,.
	\end{align*}
	The first probability converges to $0$, since $(y^{i,\epsilon}-u^{i,\epsilon})/\sqrt{\epsilon}\lambda(\epsilon)$ converges also pathwise to $0$.  The second converges to $-\infty$ and therefore the exponential equivalence principle applies also in this case.
\end{remark}

\subsubsection{Moderate deviations from a CLT centred in $\mathbf{g}$ for SPDEs, $X^\mathbf{g}=D\Phi[\mathbf{g}](\mathbf{W})$.}\label{sec:MDP_in_g_spde}
We consider an application of the MDP to the heat equation in Section~\ref{sec:CLT_SPDEs} and we adopt the notations of Section~\ref{sec:MDP_in_0_spde} and of Theorem~\ref{th:MDP_abstract}. We show a MDP from a central limit theorem which is not centred in the null path $\mathbf{0}$.  We state the some additional properties needed on the compatible directions. In this Section~\ref{sec:MDP_in_g_spde}, we consider a random $p$-compatible direction $(\mathbf{g},\mathbf{W})$ with joint lift $\mathbf{Z}$, which fulfils the hypothesis of Theorem~\ref{th:MDP_abstract}. We recall that, due to the construction of the second iterated integrals of the joint lift via the Young's integration, the exponential boundedness in \eqref{eq:exponential_bound} holds for the joint lift $\mathbf{Z}$ (it is inherited from the exponential bound of the rough paths lift of the Brownian motion). Moreover, the map $\mathbf{W}\mapsto \mathbf{Z}$ is locally Lipschitz continuous.

From Theorem~\ref{teo:CLT}, the sequence $(Y^{\mathbf{g},\epsilon})_{\epsilon}$ converges pathwise in $\mathcal{Y}$ for $\epsilon \rightarrow 0$ to a limit $Y^\mathbf{g}$ with speed $\sqrt{\epsilon}$, where $Y^\mathbf{g}$ is the unique solution in the sense of Definition~\ref{def:sol_additive} to equation
\begin{align}\label{eq:Dphi_Y_not_centered}
\delta Y^{\mathbf{g}}_{s,t}=\int_{s}^{t}b'(u^\mathbf{g}_r)Y^{\mathbf{g}}_r \dd r+W_{s,t}u^{\mathbf{g}}_s+g_{s,t}Y^{\mathbf{g}}_{s}+([gW]+[Wg])u^{\mathbf{g}}_s+[gg]_{s,t}Y^{\mathbf{g}}_s+Y^{\mathbf{g},\natural,}_{s,t}\,,
\end{align}
and $u^{\mathbf{g}}=\Phi(\mathbf{g})$ is the unique solution to the heat equation.
\begin{theorem}\label{th:MDP_non_central}
	Fix $(\mathbf{g},\mathbf{W})$ be  a random $p$-compatible direction with joint lift $\mathbf{Z}$, which fulfils the hypothesis of Theorem~\ref{th:MDP_abstract} and recall that the sequence
	$(\lambda(\epsilon)^{-1}\mathbf{W})_{\epsilon}$ satisfies a LDP with speed $\lambda(\epsilon)^{-2}$ and good rate function 
	\begin{align*}
	\mathcal{I}(h)=\frac{1}{2}\int_{0}^{T}|h(s)|^2 ds,\quad h\in \mathcal{H}\, ;\quad \mathcal{I}(h)=+\infty \quad \mathrm{else}.
	\end{align*}
	Under the assumptions and the notations of this Section~\ref{sec:MDP_in_g_spde}, the sequence $(X^\epsilon)_{\epsilon}$ satisfies a MDP on $\mathcal{Y}$ with speed $\lambda(\epsilon)^2$ and with good rate function $\tilde{I}$, i.e. 
	the sequence $((\Phi(\{\tau_{\sqrt{\epsilon}}\mathbf{W}+\mathbf{g}\})-\Phi(\mathbf{g}))/\sqrt{\epsilon} \lambda(\epsilon))_\epsilon$ satisfies a LDP on $\mathcal{Y}$ with speed $\lambda(\epsilon)^2$ and with good rate function  
	\begin{align*}
	\tilde{I}(f)=\inf\bigg(\frac{1}{2}\int_{0}^{T}|h(s)|^2 ds:h\in \mathcal{H}, \; Y^{h,\mathbf{g}}=f\bigg)\,, 
	\end{align*}
	where $Y^{h,\mathbf{g}}$ is the unique solution to the skeleton equation
	\begin{align*}
	\delta Y^{h,\mathbf{g}}_{s,t}=\int_{s}^{t}b'(u^\mathbf{g}_r)Y^{h,\mathbf{g}}_r \dd r+h_{s,t}u^{\mathbf{g}}_s+g_{s,t}Y^{h,\mathbf{g}}_{s}+([gh]+[hg])u^{\mathbf{g}}_s+[gg]_{s,t}Y^{h,\mathbf{g}}_s+Y^{h,\mathbf{g},\natural,}_{s,t}\,.
	\end{align*}
\end{theorem}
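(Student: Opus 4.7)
The plan is to reduce the statement to a direct application of the abstract moderate deviations result, Theorem~\ref{th:MDP_abstract}, specialised to the heat equation with the random $p$-compatible directions $(\mathbf{g},\mathbf{W})$ and joint lift $\mathbf{Z}$. Once its two hypotheses are verified, the conclusion follows by the same three-step scheme used for Theorem~\ref{th:MDP_abstract}: Lemma~\ref{lemma:LDP_X_abst} (contraction principle applied to $D\Phi[\mathbf{g}](\cdot)$ together with the rough-path Schilder theorem for $(\tau_{\lambda(\epsilon)^{-1}}\mathbf{W})_\epsilon$) produces an LDP for $(D\Phi[\mathbf{g}](\tau_{\lambda(\epsilon)^{-1}}\mathbf{W}))_{\epsilon}$ with speed $\lambda(\epsilon)^{2}$ and rate function $\tilde{I}$; Lemma~\ref{lemma:exp_equiv} establishes exponential equivalence with $(X^{\mathbf{g},\epsilon}/\lambda(\epsilon))_{\epsilon}$; and Theorem~\ref{th:teo_MDP_exp_abst} transfers the LDP to the sequence of interest. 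The form of the skeleton equation for $Y^{h,\mathbf{g}}$ is read off from \eqref{eq:Dphi_Y_not_centered} with $\mathbf{W}$ replaced by the Cameron--Martin lift $\mathbf{h}$, producing the Young mixed integrals $[gh]$ and $[hg]$ together with the term $h_{s,t}u^{\mathbf{g}}_s$.

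For the first hypothesis, the pathwise convergence $X^{\mathbf{g},\epsilon}\to D\Phi[\mathbf{g}](\mathbf{W})$ in $\mathcal{Y}$ with speed $\sqrt{\epsilon}$ is Theorem~\ref{teo:CLT}, whose final energy estimate yields
\[
\|X^{\mathbf{g},\epsilon}-D\Phi[\mathbf{g}](\mathbf{W})\|_{\mathcal{Y}}^{2}\lesssim C\,\omega_{\mathbf{Z}}^{1/p}\sqrt{\epsilon},
\]
with a random constant $C$ depending on the initial datum but independent of $\epsilon$. Since the joint lift $\mathbf{Z}$ is built from the Brownian Stratonovich lift $\mathbf{W}$ and the rough path $\mathbf{g}$ via Construction~\ref{constr:Constr_p_q} (Young integration in the complementary-regularity regime $1/p+1/q>1$), the control $\omega_{\mathbf{Z}}$ is dominated pathwise by a polynomial expression in $\|\mathbf{W}\|_{p\text{-var}}$ and $\|\mathbf{g}\|_{q\text{-var}}$. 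Exponential integrability of $\omega_{\mathbf{Z}}^{2/p}$ therefore descends from the Fernique-type bound \eqref{eq:exponential_bound}, exactly as required by the Markov argument in Lemma~\ref{lemma:exp_equiv}.

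For the second hypothesis, the continuity of $\mathbf{W}\mapsto \mathbf{Z}\mapsto D\Phi[\mathbf{g}](\mathbf{W})$ from the space of rough drivers to $\mathcal{Y}$ is a composition: the Young joint lift $\mathbf{W}\mapsto \mathbf{Z}$ is locally Lipschitz by Construction~\ref{constr:Constr_p_q}, and the solution map for the linear skeleton \eqref{eq:Dphi_Y_not_centered} is continuous by Proposition~\ref{pro:continuity_limit_equation} in Appendix~\ref{Appendix_B}, since the coefficient $b'(u^{\mathbf{g}})X=\Delta X$ trivially fulfills Assumption~\ref{assumption:drift_linear_b_prime}. The main obstacle is the propagation of exponential integrability through the non-trivial joint lift: one must keep track of the fact that the random constant $C$ in the CLT estimate is independent of $\mathbf{Z}$, so that the Markov bound in Lemma~\ref{lemma:exp_equiv} controls the relevant probability by $\exp(-c\lambda(\epsilon)/\sqrt{\epsilon})$ times a finite expectation, forcing divergence to $-\infty$ under the scaling \eqref{MDP:lambda_epsilon_right_section}. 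With both hypotheses and this integrability in place, Theorem~\ref{th:MDP_abstract} applies verbatim and delivers the claim.
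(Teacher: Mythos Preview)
Your proposal is correct and follows essentially the same approach as the paper: verify the two hypotheses of Theorem~\ref{th:MDP_abstract} by invoking Theorem~\ref{teo:CLT} for the $\sqrt{\epsilon}$ speed of convergence and Proposition~\ref{pro:continuity_limit_equation} for continuity of $\mathbf{W}\mapsto D\Phi[\mathbf{g}](\mathbf{W})$, then conclude. You actually spell out more detail than the paper does (the exponential integrability of $\omega_{\mathbf{Z}}$ via the Young lift and \eqref{eq:exponential_bound}, and the explicit three-step scheme behind Theorem~\ref{th:MDP_abstract}), but the route is the same.
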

\begin{proof}
We verify the hypothesis of Theorem~\ref{th:MDP_abstract}. Recall that $Y^{\mathbf{g}}$ is the unique solution to equation \eqref{eq:Dphi_Y_not_centered} and consider $Z^{\mathbf{g},\epsilon}:=D\Phi [\mathbf{g}](\tau_{1/\lambda(\epsilon)}\mathbf{W})$, which is the unique solution to 
\begin{equation*}
\begin{aligned}
\delta Z^{\mathbf{g},\epsilon}_{s,t}=\int_{s}^{t}b'(u^\mathbf{g}_r)Z^{\mathbf{g},\epsilon}_r \dd r&+\frac{1}{\lambda(\epsilon)}W_{s,t}u^{\mathbf{g}}_s+g_{s,t}Z^{\mathbf{g},\epsilon}_{s}\\
&+\frac{1}{\lambda(\epsilon)}([gW]+[Wg])_{s,t}u^{\mathbf{g}}_s+[gg]_{s,t}Z^{\mathbf{g},\epsilon}_s+Z^{\mathbf{g},\epsilon,\natural}_{s,t}\,,
\end{aligned}
\end{equation*}
where $u^{\mathbf{g}}=\Phi(\mathbf{g})$. From Proposition~\ref{pro:continuity_limit_equation}, the map $\mathbf{W}\mapsto D\Phi[\mathbf{g}](\mathbf{W})$ is continuous. 
From Theorem~\ref{teo:CLT}, the speed of convergence of $(X^{\epsilon,\mathbf{g}})$ to $D\Phi[\mathbf{g}](\mathbf{W})$ is $\sqrt{\epsilon}$, with respect to the norm $\mathcal{Y}$. Since all the assumptions are fulfilled, the MDP follows.
\end{proof}

\subsubsection{MDP for the stochastic Landau-Lifschitz-Gilbert equation on a one dimensional domain.}
Assume the conditions on the compatible directions in Theorem~\ref{th:MDP_abstract} and fix $k\in\mathbb{N}$. In the notations and hypothesis of Section~\ref{sec:LLG}, consider the It\^o-Lyons map $\Phi_3$ associated to the one dimensional LLG, namely
\begin{align*}
\Phi_3:\mathcal{RD}_S^p(\mathbb{R}^d;H^{k+1})&\longrightarrow L^\infty(H^1)\cap L^2(H^2)\cap \mathcal{V}^p(L^2) \subset C([0,T];H^{-1})\\
\mathbf{W}&\longmapsto \Phi_3(\mathbf{W})=\pi_3(u^0,\mathbf{W})\,,
\end{align*}
which is locally Lipschitz continuous as proved in \cite{LLG1D}. From Theorem~\ref{th:local_lip_cont_der_LLG}, the map $\Phi_3$ admits a pathwise directional derivative $D\Phi_3[\mathbf{0}](\mathbf{W})$ in $\mathbf{0}$ in the direction $\mathbf{W}$ and the sequence $(X^\epsilon)_{\epsilon}$ converges to $D\Phi_3[\mathbf{0}](\mathbf{W})$ in $L^\infty(H^k)\cap L^2(H^{k+1})\cap\mathcal{V}^p(H^{k-1})$ with speed $\sqrt{\epsilon}$. Since $D\Phi_3[\mathbf{0}](\mathbf{W})$  is the strong solution to a semilinear stochastic partial differential equation with Young additive noise (the well posedness follows from e.g. \cite{hocquet2017energy}), a modification of Proposition~\ref{pro:lip_semilinear} shows that the map $\mathbf{Z}\mapsto D\Phi_3[\mathbf{0}](\mathbf{W})$ is locally Lipschitz continuous (one can follow the proof of the continuity of the It\^o-Lyons map of the LLG for the proof of the higher order regularity). Hence we are in the context of Theorem~\ref{th:MDP_abstract} and we can state a MDP, with $\mathcal{Y}:=L^\infty(H^k)\cap L^2(H^{k+1})\cap\mathcal{V}^p(H^{k-1})$.
\begin{theorem}
	The family of probability measures $(\mu_\epsilon)_\epsilon$ on $(\mathcal{Y},\mathcal{B}_{\mathcal{Y}})$ defined by $\mu_\epsilon(\,\cdot\,)=\mathbb{P}(X^\epsilon\in \cdot)$, satisfies a LDP on $\mathcal{Y}$ with speed $\lambda(\epsilon)^2$ and with good rate function
	\begin{align*}
	\tilde{I}(f)=\inf\bigg(\frac{1}{2}\int_{0}^{T}|h_s|^2 \dd s:h\in \mathcal{H}, \; D\Phi_3[\mathbf{0}](\mathbf{h})=f\bigg)\,,
	\end{align*}
	with the convention $\inf \emptyset:=+\infty$.
\end{theorem}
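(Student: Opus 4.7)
The plan is to verify the hypotheses of Theorem~\ref{th:MDP_abstract} in this specific setting and then apply it directly. Here the base map is $\Phi_3:\mathcal{RD}^p_S(\mathbb{R}^3;H^{k+1})\to \mathcal{Y}$, the compatible directions are $(\mathbf{0},\mathbf{W})$ (so the joint lift $\mathbf{Z}$ coincides essentially with $\mathbf{W}$, and in particular no nontrivial cross integrals appear), and the sought pathwise derivative is $X:=D\Phi_3[\mathbf{0}](\mathbf{W})$, the unique solution to the linearised skeleton equation~\eqref{eq:LLG_X_CLT}.

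First I would recall from Section~\ref{sec:LLG} that $\Phi_3$ is well defined and locally Lipschitz on $\mathcal{RD}^p_S(\mathbb{R}^3;H^{k+1})$, and that well-posedness of the linearised equation~\eqref{eq:LLG_X_CLT} for $X=D\Phi_3[\mathbf{0}](\mathbf{W})$ follows from the setup of Appendix~\ref{Appendix_B} since the coefficients of $b_3'(\bar u)$ are controlled by $\bar u\in L^\infty(H^k)\cap L^2(H^{k+1})$ and in one dimension $\|\partial_x\bar u\|_{L^\infty}\lesssim\|\bar u\|_{H^2}$. The next step is to establish the continuity (in fact local Lipschitz continuity) of the map $\mathbf{W}\mapsto D\Phi_3[\mathbf{0}](\mathbf{W})$ from $\mathcal{RD}^p_S(\mathbb{R}^3;H^{k+1})$ to $\mathcal{Y}$. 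Since \eqref{eq:LLG_X_CLT} is a semi-linear rough equation with additive noise $W_{s,t}\bar u_s$, this continuity is a variant of Proposition~\ref{pro:lip_semilinear} (or a repetition of the higher-order argument used for the Lipschitz continuity of $\Phi_3$ itself in \cite{LLG1D}): we would repeat the standard squared-equation/sewing/rough-Gronwall estimates, observing that changing $\mathbf{W}$ by $\widetilde{\mathbf{W}}$ gives a linear equation in the difference whose forcing is $(W-\widetilde W)_{s,t}\bar u_s$, bounded by $\rho(\mathbf{W},\widetilde{\mathbf{W}})\,\|\bar u\|_{L^\infty(H^{k+1})}$.

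Then I invoke Theorem~\ref{th:local_lip_cont_der_LLG}, which says precisely that $X^\epsilon\to D\Phi_3[\mathbf{0}](\mathbf{W})$ in $\mathcal{Y}$ almost surely with optimal speed $\sqrt{\epsilon}$, with a pathwise bound of the form $\|X^\epsilon-X\|_{\mathcal{Y}}\le C\,\omega_{\mathbf{W}}^{1/p}\sqrt{\epsilon}$ where $C$ is deterministic and $\omega_{\mathbf{W}}$ controls the $p$-variation of $\mathbf{W}$. Combined with the Gaussian exponential integrability bound~\eqref{eq:exponential_bound} for the Stratonovich lift of Brownian motion (available here because we restrict to rough drivers coming from Brownian motion with smoother spatial component), this furnishes both assumptions of Theorem~\ref{th:MDP_abstract}: the $\mathbb{P}$-a.s.\ rate of convergence and the continuity of the limit map.

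With both hypotheses in place, an application of Theorem~\ref{th:MDP_abstract} yields the LDP for $(\mu_\epsilon)_\epsilon$ on $\mathcal{Y}$ with speed $\lambda(\epsilon)^2$ and good rate function $\tilde I$, since the Schilder-type LDP for $\tau_{1/\lambda(\epsilon)}\mathbf{W}$ in $\mathcal{RD}^p_S(\mathbb{R}^3;H^{k+1})$ (rescaled with the assumed scaling $\lambda(\epsilon)\to\infty$, $\sqrt{\epsilon}\lambda(\epsilon)\to 0$) is pushed forward by the continuous map $D\Phi_3[\mathbf{0}](\cdot)$ through the contraction principle (Theorem~\ref{th:contraction_principle}), and the exponential equivalence between $(X^\epsilon/\lambda(\epsilon))$ and $(D\Phi_3[\mathbf{0}](\mathbf{W})/\lambda(\epsilon))$ is Lemma~\ref{lemma:exp_equiv}. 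The main obstacle is the second hypothesis of Theorem~\ref{th:MDP_abstract}, namely the continuity of $\mathbf{W}\mapsto D\Phi_3[\mathbf{0}](\mathbf{W})$ in the higher regularity space $\mathcal{Y}$ involving $H^{k+1}$: because the coefficients of the linearised equation involve $\partial_x\bar u$ and $|\partial_x\bar u|^2$, one has to repeat the full $H^k$-energy/sewing argument used to obtain Theorem~\ref{th:local_lip_cont_der_LLG} for the difference $D\Phi_3[\mathbf{0}](\mathbf{W})-D\Phi_3[\mathbf{0}](\widetilde{\mathbf{W}})$, which is essentially the Lipschitz analogue of the pathwise CLT estimate.
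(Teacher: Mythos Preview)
Your proposal is correct and follows essentially the same route as the paper: verify the two hypotheses of Theorem~\ref{th:MDP_abstract} (the $\sqrt{\epsilon}$ speed of convergence from Theorem~\ref{th:local_lip_cont_der_LLG} together with the exponential integrability~\eqref{eq:exponential_bound}, and the continuity of $\mathbf{W}\mapsto D\Phi_3[\mathbf{0}](\mathbf{W})$ obtained by adapting Proposition~\ref{pro:lip_semilinear} to the higher-order $H^k$ setting as in \cite{LLG1D}), then apply the abstract result. The paper's discussion preceding the theorem is exactly this verification, and you have correctly singled out the continuity of the linearised map in $\mathcal{Y}$ as the only point requiring nontrivial additional work.
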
	

\appendix

\section{Appendix A: useful results from rough path theory.} \label{section:appendix_RP}
We refer to the monographs \cite{FrizHairer,FrizVictoir} for details on rough integration theory. We recall some basic facts.
\paragraph{Young integration} Given two paths $X\in\mathcal{V}^p(\mathbb{R}^d)$ and $Y\in\mathcal{V}^q(\mathcal{L}(\mathbb{R}^{d},\mathbb{R}^n))$, with $1/p+1/q>1$ and $p>2$, we can define the \textit{Young integral} of $Y$ against $X$ as
\begin{align}\label{eq:def_Young_integral}
\mathcal{I}^{\mathcal{Y}}(Y,X):=\lim_{|\mathcal{P}|\rightarrow 0}\sum_{[s,t]\in\mathcal{P}}Y_sX_{s,t}\,,
\end{align}
where $\mathcal{P}$ denotes a finite partition of $[0,T]$ and $|\mathcal{P}|$ the length of the longest interval in $\mathcal{P}$. The main property of this integral is that the map
\begin{align*}
(Y,X)\mapsto\mathcal{I}^{\mathcal{Y}}(X,Y)
\end{align*}
is bilinear and continuous as a map from $\mathcal{V}^p(\mathbb{R}^d)\times \mathcal{V}^q(\mathbb{R}^d)$ to $\mathcal{V}^p(\mathbb{R}^d)$ (see Proposition 6.12 in \cite{FrizVictoir}). However Young's integration theory does not allow to consider the integral of a path of a Brownian motion against itself: indeed the paths of Brownian motion are known to be merely of $2+$ variation. The classical It\^o integration theory allows to integrate a Brownian motion against itself (as a limit in probability), but it does not preserve the continuity of the map which to the noise and the integrand associated the integral.
In contrast the theory of rough paths allows to obtain an integral of a Brownian motion against itself and to preserve the continuity of the map that to the noise and the integrand associates the integral. 
\subsection{Useful results on rough paths.}
A fundamental result, which allows for the construction of the rough integral, is the classical sewing Lemma.
\begin{lemma}[Sewing lemma]\label{lemma_sewing}
	\label{lemma-lambda}Fix an interval $I$, a Banach space $E$ and a parameter
	$\zeta > 1$. Consider a map $G : I^3 \to E$ such that $ G \in \{\delta H ; \, H: I^2\to E\}$ and for every $s < u < t \in I$,
	\[ |G_{s u t} | \leqslant \omega (s, t)^{\zeta}, \]
	for some control $\omega$ on $I$. Then there exists a unique element
	$g \in \mathcal{V}_2^{1 / \zeta} (I ; E)$ such that $\delta g = G$
	and for every $s < t \in I$,
	\begin{equation*}
	| g_{s t} | \leqslant C_\zeta\omega (s, t)^{\zeta}\,,
	\end{equation*}
	for some universal constant $C_{\zeta}$.
\end{lemma}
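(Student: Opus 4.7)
The plan is to construct $g$ by first producing an auxiliary one-index path $f$ via a Riemann-sum procedure applied to a primitive $H$ with $\delta H=G$, and then setting $g:=H-\delta f$; uniqueness will follow from a telescoping argument exploiting $\zeta>1$.

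\medskip

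\textbf{Step 1 (Construction of Riemann sums).} Fix $s<t\in I$ and let $H$ be any two-index map with $\delta H=G$ (existence is part of the hypothesis). For a finite partition $\pi=\{s=t_0<\dots<t_N=t\}$ of $[s,t]$, set
\[
J_\pi:=\sum_{i=0}^{N-1} H_{t_i,t_{i+1}}.
\]
If $\pi'$ is obtained from $\pi$ by removing one interior point $t_j$ (with neighbours $t_{j-1}$ and $t_{j+1}$), then $J_\pi-J_{\pi'}=H_{t_{j-1},t_j}+H_{t_j,t_{j+1}}-H_{t_{j-1},t_{j+1}}=-G_{t_{j-1},t_j,t_{j+1}}$, so
\[
|J_\pi-J_{\pi'}|\le \omega(t_{j-1},t_{j+1})^{\zeta}.
\]
A standard pigeonhole argument based on the superadditivity of $\omega$ gives the existence, for any partition of size $N\ge 2$, of an interior point $t_j$ such that $\omega(t_{j-1},t_{j+1})\le 2\omega(s,t)/(N-1)$. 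Iteratively removing such points, one obtains the key estimate: for every partition $\pi$ of $[s,t]$,
\[
|J_\pi-H_{s,t}|\le \sum_{N\ge 2}\Big(\tfrac{2}{N-1}\Big)^{\zeta}\omega(s,t)^{\zeta}=:C_\zeta\,\omega(s,t)^{\zeta},
\]
where $C_\zeta<+\infty$ precisely because $\zeta>1$.

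\medskip

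\textbf{Step 2 (Existence of the limit).} The same estimate, applied to any two partitions $\pi,\tilde\pi$ of $[s,t]$ via their common refinement, yields $|J_\pi-J_{\tilde\pi}|\le 2C_\zeta\,\omega(s,t)^{\zeta}$; refining both sides over finer and finer partitions and using continuity of $\omega$ with $\omega(u,u)=0$ shows that the net $(J_\pi)_\pi$ is Cauchy as the mesh shrinks to zero. Denote its limit by $A_{s,t}$, and define
\[
f_t:=A_{a,t}\quad (t\in I),\qquad g_{s,t}:=H_{s,t}-\delta f_{s,t},
\]
where $a:=\min I$. By construction $A_{s,t}=f_t-f_s=\delta f_{s,t}$ thanks to the additivity of Riemann sums across a common midpoint, so $g$ is a well-defined two-index map with
\[
|g_{s,t}|=|H_{s,t}-A_{s,t}|\le C_\zeta\,\omega(s,t)^{\zeta}.
\]
This gives the announced bound and, a fortiori, $g\in\mathcal V_2^{1/\zeta}(I;E)$. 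Moreover $\delta g=\delta H-\delta\delta f=G-0=G$.

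\medskip

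\textbf{Step 3 (Uniqueness).} Suppose $g^{(1)},g^{(2)}$ both satisfy $\delta g^{(i)}=G$ with $|g^{(i)}_{s,t}|\le C\,\omega(s,t)^{\zeta}$. Then $\Delta:=g^{(1)}-g^{(2)}$ satisfies $\delta\Delta=0$, i.e.\ $\Delta_{s,t}=\Delta_{s,r}+\Delta_{r,t}$ for every $s<r<t$. Iterating along any partition $\pi$ of $[s,t]$ gives $\Delta_{s,t}=\sum_{[u,v]\in\pi}\Delta_{u,v}$, hence
\[
|\Delta_{s,t}|\le 2C\sum_{[u,v]\in\pi}\omega(u,v)^{\zeta}\le 2C\,\omega(s,t)\,\max_{[u,v]\in\pi}\omega(u,v)^{\zeta-1},
\]
where the last bound uses superadditivity. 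Since $\omega$ is continuous with $\omega(u,u)=0$ and $\zeta>1$, the right-hand side vanishes as $|\pi|\to 0$, so $\Delta\equiv 0$.

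\medskip

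\textbf{Main obstacle.} The only delicate point is the combinatorial estimate in Step~1 producing the finite constant $C_\zeta$: this is exactly where the hypothesis $\zeta>1$ is essential and where one must invoke the superadditivity of $\omega$ together with the pigeonhole selection of a ``removable'' point. Once this is in place, Steps 2 and 3 are routine.
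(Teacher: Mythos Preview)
Your argument is a correct and standard proof of the sewing lemma via the Young--Gubinelli point-removal technique; the only place that could be tightened is Step~2, where the Cauchy property requires applying the Step~1 bound on each subinterval $[u,v]\in\pi$ (giving $|J_{\pi'}-J_\pi|\le C_\zeta\sum_{[u,v]\in\pi}\omega(u,v)^\zeta\le C_\zeta\,\omega(s,t)\max_{[u,v]\in\pi}\omega(u,v)^{\zeta-1}$ for $\pi'\supset\pi$), rather than just the global bound $2C_\zeta\omega(s,t)^\zeta$ you first write down. The paper itself does not prove this lemma: it is stated in Appendix~A as a basic fact recalled from the monographs \cite{FrizHairer,FrizVictoir}, so there is no alternative approach to compare against.
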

We recall the statement of the rough Gronwall's Lemma (\cite{deya2016priori}), which can be considered as counterpart of the classical statement in the $p$-variation setting.
\begin{lemma}[Rough Gronwall lemma]
	\label{lem:gronwall}
	Let $E\colon[0,T]\to \R_+$ be a path such that there exist constants $\kappa,\ell>0,$ a super-additive map $\varphi $ and a control $\omega $ such that:
	\begin{equation*}
	\delta E_{s,t}\leq \left(\sup_{s\leq r\leq t} E_r\right)\omega (s,t)^{\kappa }+\varphi (s,t)\,,
	\end{equation*}
	for every $s\le t \in [0,T]$ under the smallness condition $\omega (s,t)\leq \ell$. Then, there exists a constant $\tau _{\kappa,\ell}>0$ such that
	\begin{equation*}
	\sup_{0\leq t\leq T}E_t\leq \exp\left(\frac{\omega (0,T)}{\tau _{\kappa,\ell}}\right)\left[E_0+\sup_{0\leq t\leq T}\left|\varphi (0,t)\right|\right].
	\end{equation*}
\end{lemma}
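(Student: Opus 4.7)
The plan is a standard partition-and-absorb argument: discretise $[0,T]$ into finitely many subintervals on which the control $\omega$ is small enough that the factor $\omega(s,t)^{\kappa}$ in front of $\sup_{s\le r\le t}E_r$ can be absorbed into the left-hand side, and then iterate the resulting one-step bound across the partition.

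First, pick a threshold $\tau\in(0,\ell]$ small enough that $\tau^{\kappa}\le 1/2$, so that $\alpha:=(1-\tau^{\kappa})^{-1}\le 2$. Using the continuity and superadditivity of $\omega$, greedily build a partition $0=t_0<t_1<\dots<t_N=T$ with $\omega(t_{i-1},t_i)\le\tau$ for every $i$ and $N\le \omega(0,T)/\tau+1$ (superadditivity yields $N\tau\le \omega(0,T)+\tau$). For any $i$ and any $t\in[t_{i-1},t_i]$, the hypothesis at $s=t_{i-1}$ gives
\[
E_t\le E_{t_{i-1}}+\Bigl(\sup_{t_{i-1}\le r\le t}E_r\Bigr)\tau^{\kappa}+\varphi(t_{i-1},t)\,.
\]
Taking the supremum over $t\in[t_{i-1},t_i]$ on the left and bounding $\varphi(t_{i-1},t)$ by $\sup_{0\le u\le T}|\varphi(0,u)|+\sup_{0\le u\le T}|\varphi(0,t_{i-1})|$ via superadditivity of $\varphi$ (or more simply by keeping track of the cumulative errors), absorption produces the one-step bound
\[
M_i:=\sup_{t_{i-1}\le t\le t_i}E_t\le \alpha\bigl(E_{t_{i-1}}+\Psi\bigr),\qquad \Psi:=\sup_{0\le u\le T}|\varphi(0,u)|\,,
\]
after noting that superadditivity gives $\sum_{j=1}^{i}\varphi(t_{j-1},t_j)\le\varphi(0,t_i)$ and hence controls the accumulation of the drift term by $\Psi$.

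Next, iterate: since $E_{t_i}\le M_i$ and $M_{i}\le \alpha M_{i-1}+\alpha\Psi$, an immediate induction yields
\[
\sup_{0\le t\le T}E_t\le \max_{1\le i\le N}M_i\le \alpha^{N}\bigl(E_0+\Psi\bigr)\sum_{j=0}^{N-1}\alpha^{-j}\lesssim \alpha^{N}\bigl(E_0+\Psi\bigr)\,.
\]
Using $N\le \omega(0,T)/\tau+1$, one obtains
\[
\alpha^{N}\le \alpha\cdot\exp\!\Bigl(\tfrac{\log\alpha}{\tau}\,\omega(0,T)\Bigr)=\alpha\cdot\exp\!\Bigl(\tfrac{\omega(0,T)}{\tau_{\kappa,\ell}}\Bigr)\qquad\text{with }\tau_{\kappa,\ell}:=\tau/\log\alpha\,,
\]
which after adjusting the multiplicative constant into the exponential (e.g.\ by shrinking $\tau_{\kappa,\ell}$ slightly) yields the desired conclusion.

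The main obstacle I expect is purely bookkeeping: tracking the accumulation of the $\varphi$ terms through the iteration so that only $\sup_{0\le t\le T}|\varphi(0,t)|$ (and not a sum growing with $N$) appears on the right-hand side. The superadditivity hypothesis on $\varphi$ is precisely what makes this work, since $\sum_{j\le i}\varphi(t_{j-1},t_j)\le\varphi(0,t_i)\le\Psi$ regardless of how fine the partition is; otherwise the argument is entirely mechanical, and the choice $\tau_{\kappa,\ell}=\tau/\log\alpha$ ensures the exponential form claimed in the statement.
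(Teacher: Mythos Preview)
The paper does not supply its own proof of this lemma: it is stated in Appendix~A as a recall from \cite{deya2016priori} and used as a black box throughout. So there is no ``paper's proof'' to compare against; your proposal is an independent verification of a result the paper merely cites.

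Your partition-and-absorb strategy is exactly the standard one and is correct in outline. One point deserves a little more care. In the iteration $M_i\le \alpha M_{i-1}+\alpha\Psi$ the geometric sum that accumulates is $\alpha\Psi\sum_{j=0}^{i-1}\alpha^{j}=\alpha\Psi\,\tfrac{\alpha^{i}-1}{\alpha-1}$, and since $\alpha-1=\tau^{\kappa}/(1-\tau^{\kappa})$ is small, this hides a factor of order $N$ in front of $\Psi$, not a harmless constant. Your ``$\lesssim\alpha^{N}(E_0+\Psi)$'' therefore needs justification. It does go through, because $N\le \omega(0,T)/\tau+1$ and any polynomial prefactor in $\omega(0,T)$ can be absorbed into a slightly worse exponential, i.e.\ $x\,e^{cx}\le C_{c,c'}e^{c'x}$ for $c'>c$; this just forces you to take $\tau_{\kappa,\ell}$ a bit smaller than $\tau/\log\alpha$. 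Alternatively, one avoids the issue altogether by telescoping differently: write $E_{t}\le E_0+\sum_{j\le i}\delta E_{t_{j-1},t_j}+\delta E_{t_i,t}$, use the hypothesis on each piece, and invoke superadditivity of $\varphi$ once at the end to bound $\sum_j\varphi(t_{j-1},t_j)\le\varphi(0,t)\le\Psi$ before absorbing. Either route closes the argument.
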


We introduce a product formula, which is the equivalent of the Stratonovich product rule in this framework: we employ Proposition 4.1 in \cite{hocquet2018ito} and the modification introduced in \cite{LLG1D}.
\begin{proposition}[Product formula]
	\label{pro:product}
	Fix a open bounded set $D\subset \mathbb{R}^d$, an integer $n\ge1$
	and let $a=(a^{i})_{i=1}^n\colon [0,T]\to L^2(D;\R^n)$, (resp. $b=(b^i)_{i=1}^n\colon [0,T]\to L^2(D;\R^n)$), be a bounded path, given as a weak solution of the system
	\begin{align*}
	\delta a_{s,t}&=\int_{s}^{t}[\partial_i f^1+f^0]\dd t+A_{s,t}a_s+\mathbb{A}_{s,t}a_s+a^{\natural}_{s,t}\,,\\
	( \mathrm{resp}\quad\delta b_{s,t}&=\int_{s}^{t} [\partial_i g^1+g^0]\dd t+B_{s,t}b_s+\mathbb{B}_{s,t}b_s+b^{\natural}_{s,t})\,,
	\end{align*}
	for $i\in \{0,\cdots d\}$ on $[0,T]\times D$, 
	for some $f^0,f^1, g^0,g^1\in L^1(L^1)$. Assume further that for $j=0,1$ the pointwise products $f^j(\cdot )\partial_i b(\cdot -x), \, \,\partial_i a(\cdot -x)g^j(\cdot ) \in L^1(L^1)$, for any $x\in \mathbb{R}^d$ with $|x|\leq 1$.
	We assume that both  
	\[
	\mathbf{A}=\left (A^{i,j}_{s,t}(x),\mathbb
	A^{i,j}_{s,t}(x)\right )_{\substack{1\leq i,j\leq n;\\ s\le t \in [0,T];x\in D }}
	\quad 
	\mathbf{B}=\left (B^{i,j}_{s,t}(x),\mathbb{B}^{i,j}_{s,t}(x)\right )_{\substack{1\leq i,j\leq n;\\ s\le t \in [0,T];x\in D }}
	\]
	are $n$-dimensional geometric rough drivers
	of finite $(p,p/2)$-variation with $p\in[2,3)$ and with coefficients in $H^{1}(D)$.
	Then the following holds:
	\begin{enumerate}[label=(\roman*)]
		\item \label{Q_shift}
		The two parameter mapping $\boldsymbol\Gamma^{\mathbf{A},\mathbf{B}}\equiv(\Gamma^{A,B},\bbGamma^{\mathbf{A},\mathbf{B}})$ defined for $s\le t \in [0,T] $ as
		\begin{equation*}
		\begin{aligned}
		&\Gamma_{s,t}^{A,B}:= A_{s,t}\otimes \mathbf 1 + \mathbf 1\otimes B_{s,t} \,,\quad 
		\\
		&\bbGamma_{s,t}^{\mathbf{A},\mathbf{B}}:=\mathbb{A}_{s,t}\otimes \mathbf 1+ A_{s,t}\otimes B_{s,t}+ \mathbf 1\otimes \mathbb{B}_{s,t}\,,
		\end{aligned}
		\end{equation*}
		where $\mathbf 1\equiv \mathbf 1_{n\times n}\in \mathcal L(\R^n)$ is the identity, is an $n^2$-dimensional rough driver,
		\item \label{prod_rv}
		The product $v^{\otimes 2}_t(x)=(a^i_t(x)b^j_t(x))_{1\leq i, j\leq n}$ is bounded as a path in $L^1(D;\R^{n\times n}).$ Moreover, it is a weak solution, in $L^1,$ of the system
		\begin{equation*}
		d (a\otimes b)= (a\otimes g +f \otimes b )d t + d \boldsymbol\Gamma^{\mathbf{A},\mathbf{B}} [a\otimes b]\,.
		\end{equation*}
	\end{enumerate}
\end{proposition}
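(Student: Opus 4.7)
The plan is to prove the two parts separately, treating (i) as an algebraic check and (ii) as a Taylor-type expansion combined with the Sewing Lemma~\ref{lemma_sewing}.

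\textbf{Part (i).} Chen's relation for $\Gamma^{A,B}$ is immediate:
\[
\delta \Gamma^{A,B}_{s,r,t} = \delta A_{s,r,t}\otimes\mathbf{1}+\mathbf{1}\otimes \delta B_{s,r,t}=0,
\]
since both $A$ and $B$ are increments. For the second level, I would compute
\[
\delta \bbGamma^{\mathbf{A},\mathbf{B}}_{s,r,t} = \delta\mathbb{A}_{s,r,t}\otimes\mathbf{1}+\delta(A\otimes B)_{s,r,t}+\mathbf{1}\otimes\delta\mathbb{B}_{s,r,t},
\]
and use the Chen relations for $\mathbf{A}$ and $\mathbf{B}$ together with the tensor identity $\delta(A\otimes B)_{s,r,t}=A_{s,r}\otimes B_{r,t}+A_{r,t}\otimes B_{s,r}$. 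Expanding $\Gamma_{r,t}\Gamma_{s,r}=(A_{r,t}\otimes\mathbf 1+\mathbf 1\otimes B_{r,t})(A_{s,r}\otimes\mathbf 1+\mathbf 1\otimes B_{s,r})$ produces exactly the four terms $A_{r,t}A_{s,r}\otimes\mathbf{1}$, $A_{r,t}\otimes B_{s,r}$, $A_{s,r}\otimes B_{r,t}$, $\mathbf{1}\otimes B_{r,t}B_{s,r}$, matching the right-hand side. The required $(p,p/2)$-variation bounds with $H^1(D)$ coefficients then follow from $\|X\otimes \mathbf 1\|=\|X\|$ and the submultiplicativity $\|A_{s,t}\otimes B_{s,t}\|_{H^1}\lesssim \|A_{s,t}\|_{H^1}\|B_{s,t}\|_{H^1}$, giving an $n^2$-dimensional rough driver in the sense of Definition~\ref{def:rough_driver}.

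\textbf{Part (ii).} Boundedness of $v:=a\otimes b$ in $L^1(D;\mathbb{R}^{n\times n})$ follows directly from Cauchy–Schwarz: $\|a_t\otimes b_t\|_{L^1}\le\|a_t\|_{L^2}\|b_t\|_{L^2}$. To derive the equation, I would start from the algebraic identity
\[
\delta v_{s,t}=(\delta a)_{s,t}\otimes b_s+a_s\otimes (\delta b)_{s,t}+(\delta a)_{s,t}\otimes(\delta b)_{s,t},
\]
substitute the rough PDEs for $a$ and $b$, and regroup according to regularity. The first-order rough contributions $A_{s,t}a_s\otimes b_s+a_s\otimes B_{s,t}b_s$ collapse to $\Gamma^{A,B}_{s,t}[v_s]$. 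The second-order rough contributions $\mathbb{A}_{s,t}a_s\otimes b_s+a_s\otimes\mathbb{B}_{s,t}b_s$, together with the leading piece $(A_{s,t}a_s)\otimes(B_{s,t}b_s)=(A_{s,t}\otimes B_{s,t})[v_s]$ extracted from the cross term, combine to $\bbGamma^{\mathbf{A},\mathbf{B}}_{s,t}[v_s]$. The drift terms $[\int_s^t(\partial_i f^1+f^0)]\otimes b_s+a_s\otimes[\int_s^t(\partial_i g^1+g^0)]$ are rewritten, using the hypothesis on $L^1(L^1)$ products, as $\int_s^t(a_r\otimes g_r+f_r\otimes b_r)\dd r$ plus an error of $p$-variation (from shifting $a_s\to a_r$, $b_s\to b_r$), which is absorbed in the remainder.

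Define $v^\natural_{s,t}$ as precisely what is left after these identifications. The key point is then that $\delta v^\natural_{s,r,t}$, computed with the help of Chen's relations for $\mathbf{A},\mathbf{B}$, $\bbGamma^{\mathbf{A},\mathbf{B}}$ and the product identity $\delta(A\otimes B)$, reorganises into a finite sum of elementary expressions each of the form $(\text{rough factor})\cdot(\text{controlled increment})$ satisfying $|\delta v^\natural_{s,r,t}|\lesssim \omega(s,t)^{3/p}$ with $3/p>1$; hence the Sewing Lemma~\ref{lemma_sewing} yields $v^\natural\in\mathcal{V}_2^{p/3}([0,T];L^1)$ as required, and the weak formulation of the product equation follows by testing against $\phi\in C^\infty(D)$.

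\textbf{Main obstacle.} The delicate step is the organisation of the cross product $(\delta a)_{s,t}\otimes(\delta b)_{s,t}$. After substituting the expansions for $\delta a$ and $\delta b$, this cross term contains nine pieces; only $(A_{s,t}a_s)\otimes(B_{s,t}b_s)$ contributes to the $A\otimes B$ part of $\bbGamma^{\mathbf{A},\mathbf{B}}$, while pieces such as $(\mathbb{A}_{s,t}a_s)\otimes(B_{s,t}b_s)$, $(A_{s,t}a_s)\otimes (\int g)$, $(a^\natural)\otimes(\delta b)$ etc.\ must be shown to have joint variation strictly better than $1$ (i.e. the sum of the two component variation orders exceeds $1$). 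Verifying this requires keeping track of the product $H^1\cdot H^1\hookrightarrow L^2$ (or, after testing, pairing with $\phi$), and using the integrability assumption on the pointwise products $f^j\,\partial_i b(\cdot-x)$ and $\partial_i a(\cdot-x)\,g^j$ to handle the mixed drift/rough contributions. Once this regularity bookkeeping is complete, the Sewing Lemma closes the estimate and yields the claimed product formula.
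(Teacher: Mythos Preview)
The paper does not actually prove this proposition: it is stated as a tool in Appendix~\ref{section:appendix_RP} and attributed to Proposition~4.1 of \cite{hocquet2018ito} together with the modification in \cite{LLG1D}. So there is no in-paper proof to compare against; I can only assess your proposal on its own merits and against the standard argument in those references.

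Your overall strategy is the correct and standard one. Part~(i) is fine as written. For Part~(ii), the decomposition $\delta v_{s,t}=\delta a_{s,t}\otimes b_s+a_s\otimes\delta b_{s,t}+\delta a_{s,t}\otimes\delta b_{s,t}$, the identification of $\Gamma^{A,B}$ and $\bbGamma^{\mathbf{A},\mathbf{B}}$, and the sewing-lemma closure are exactly how the result is obtained in \cite{hocquet2018ito}.

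There is, however, one genuine gap. You treat the identity $\delta v_{s,t}=\delta a_{s,t}\otimes b_s+a_s\otimes\delta b_{s,t}+\delta a_{s,t}\otimes\delta b_{s,t}$ and the subsequent substitutions as if $a,b$ were strong solutions. They are only weak solutions in $L^2$, and the drift contains a divergence $\partial_i f^1$; products such as $(\int_s^t\partial_i f^1)\otimes b_s$ or $a^\natural_{s,t}\otimes b_s$ are not a priori defined pointwise, nor can you simply ``test against $\phi$'' at the end, because the tensor product of two distributional equations is not automatically a distributional equation for the product. The actual proof in \cite{hocquet2018ito} proceeds by mollification: one convolves $a$ and $b$ with a spatial mollifier $\rho_\epsilon$, so that $a^\epsilon,b^\epsilon$ become strong solutions with smooth-in-space drifts, derives the product equation for $a^\epsilon\otimes b^\epsilon$ by your computation, and then passes to the limit $\epsilon\to0$. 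This is precisely why the hypothesis requires $f^j(\cdot)\partial_i b(\cdot-x),\,\partial_i a(\cdot-x)g^j(\cdot)\in L^1(L^1)$ for $|x|\le1$: these are the commutator-type terms that arise when the mollifier hits the divergence drift, and their integrability is what allows the limit to close. Your proposal invokes this hypothesis only vaguely (``using the integrability assumption \ldots\ to handle the mixed drift/rough contributions'') without saying where a regularisation enters; as written, the argument is formal. Inserting the mollification step and controlling the commutators is the missing ingredient.
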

 The following result is a small modification of the result in M.~Hofmanová, J.~M.~Leahy, T. Nilssen. \cite{tornstein_embedding} (Lemma A.2).
\begin{lemma}\label{lemma:embedding_tornstein}
	Let $B_0\subset B \subset B_1$ be Banach spaces, $B_0$ and $B_1$ reflexive with compact embedding of $B_0$ in $B$. Let $p>1$, $k\in [2,3)$ and we define
	\begin{equation*}
	X:=L^p([0,T]; B_0)\cap\{g\in C([0,T];B_1):\omega(s,t)\leq L \quad \mathrm{implies} \quad |\delta g_{s,t}|_{B_{1}}\leq \omega_{s,t}^{1/k})\},
	\end{equation*}
	endowed with the natural norm. Then the embedding of $X$ in $L^p([0,T];B)$ is compact.
\end{lemma}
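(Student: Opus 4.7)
The plan is to establish the compact embedding by verifying a vector-valued Riesz--Fr\'echet--Kolmogorov (Simon-type) criterion on $L^{p}([0,T];B)$, treating the variation bound $|\delta g_{s,t}|_{B_{1}} \le \omega(s,t)^{1/k}$ as the substitute for a time derivative bound in the classical Aubin--Lions lemma. Concretely, starting with a sequence $(g_n)$ bounded in $X$ (so $\sup_n \|g_n\|_{L^{p}(B_0)} \le M$ and $\omega(s,t)\le L$ implies $\sup_n |\delta g_{n,s,t}|_{B_1}\le \omega(s,t)^{1/k}$), I will show (a) that averages $\tfrac{1}{b-a}\int_{a}^{b} g_n\,dt$ stay in a fixed relatively compact subset of $B$, and (b) that the time translations $\tau_{h}g_n - g_n$ go to $0$ in $L^{p}([0,T-h];B)$ uniformly in $n$ as $h\to 0$.

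For (a), by Jensen's inequality and H\"older's inequality,
\[
\Big\|\frac{1}{b-a}\int_{a}^{b} g_n(t)\,dt\Big\|_{B_0}
\le \frac{1}{(b-a)^{1/p}}\,\|g_n\|_{L^{p}(B_0)}
\le \frac{M}{(b-a)^{1/p}},
\]
so the averages sit in a bounded, hence (by compactness of $B_0\hookrightarrow B$) relatively compact subset of $B$. For (b), the workhorse is Ehrling's lemma applied to $B_0\hookrightarrow B\hookrightarrow B_1$: for every $\eta>0$ there exists $C_\eta$ such that $\|v\|_B \le \eta\|v\|_{B_0} + C_\eta \|v\|_{B_1}$. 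Applied to $v=g_n(t+h)-g_n(t)$ and integrated in $t$, this yields
\[
\|\tau_h g_n - g_n\|_{L^{p}([0,T-h];B)}^{p}
\;\lesssim\; \eta^{p}\,M^{p} \;+\; C_\eta^{p}\int_{0}^{T-h} \|g_n(t+h)-g_n(t)\|_{B_1}^{p}\,dt.
\]

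The variation bound controls the last integral: by continuity of $\omega$ on the compact set $\{(t,t+h): t\in[0,T-h]\}$ and $\omega(t,t)=0$, we have $\omega(t,t+h)\le L$ for all $h$ small enough, whence $\|g_n(t+h)-g_n(t)\|_{B_1}\le \omega(t,t+h)^{1/k}$ uniformly in $n$. Since $\omega(t,t+h)\to 0$ pointwise as $h\to 0$ and is dominated by the constant $\omega(0,T)^{p/k}$, dominated convergence gives $\int_{0}^{T-h}\omega(t,t+h)^{p/k}\,dt\to 0$. Taking first $h\to 0$ (for each fixed $\eta$) and then $\eta\to 0$ in the Ehrling estimate yields the required uniform equicontinuity of translations. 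Combined with (a), Simon's compactness criterion then produces a subsequence converging in $L^{p}([0,T];B)$, which is the compact embedding claimed.

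The only mildly delicate step is the interplay between the smallness threshold $L$ for $\omega$ and the translation parameter $h$; this is handled by the uniform continuity of $\omega$ on the compact diagonal strip, which lets one choose $h$ small enough so that $\omega(t,t+h)\le L$ for all $t$, so that the pointwise variation bound is globally applicable and the uniformity in $n$ survives. No other issue is expected, since everything else follows the classical Aubin--Lions template.
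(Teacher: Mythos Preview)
The paper does not supply its own proof of this lemma: it is stated in Appendix~A as ``a small modification of the result in \cite{tornstein_embedding} (Lemma~A.2)'' and then used without further argument. Your proof via Simon's compactness criterion, with Ehrling's lemma providing the interpolation between the $B_0$-bound and the $B_1$-variation control, is correct and is exactly the expected strategy for this kind of Aubin--Lions variant. The one nonclassical wrinkle---passing from the local variation bound (valid only when $\omega(s,t)\le L$) to a global translation estimate---is handled correctly by your uniform-continuity-of-$\omega$-near-the-diagonal argument. Note incidentally that your proof does not use the reflexivity hypotheses on $B_0,B_1$; these are likely inherited from the original Aubin--Lions formulation in the cited reference rather than genuinely needed here.
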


\section{Appendix B: well posedness and continuity of the It\^o-Lyons map for the heat equation.}\label{Appendix_B_new}
This section is devoted to existence, uniqueness and local Lipschitz continuity of the It\^o-Lyons map for the heat equation \eqref{eq:heat_equation} on $\mathbb{T}^d$, with $d=1,2,3$ and with initial condition in $H^1$. The same proof can be achieved by different methods: we still provide a proof via this method since with a small adaptation of the following procedure, many other equations can be covered.
 The solution exists and is unique in the sense of Definition~\ref{def:sol_rough}.
We denote by $u=\pi_1(u^0,\mathbf{G})$ a solution to \eqref{eq:heat_equation} in the sense of Definition~\ref{def:sol_rough} started in $u^0$ and driven by $\mathbf{G}$.
If there exists a solution $u$ in the sense of Definition~\ref{def:sol_rough}, then the solution is unique and its It\^o-Lyons map is locally Lipschitz continuous.
\begin{proposition}\label{pro:lip_semilinear}
	Let $\mathbf{G}\in \mathcal{RD}^p(\mathbb{R}^n;E)$ and $u^0\in H^1$. If there exists a solution $u=\pi_1(u^0,\mathbf{G})$ to \eqref{eq:heat_equation} in the sense of Definition~\ref{def:sol_rough}, then it is unique. Moreover, the It\^o-Lyons map associated to the unique solution $u$ to \eqref{eq:heat_equation} 
	\begin{align*}
	\Phi_1:\quad\mathcal{RD}^p(\mathbb{R}^n;E)&\longrightarrow  L^\infty(H^1)\cap L^2(H^2) \cap \mathcal{V}^p(L^2)\\
	\mathbf{G}&\longmapsto u=\pi_1(u^0;\mathbf{G})\, 
	\end{align*}
	is locally Lipschitz continuous, in the sense that for all $\mathbf{G}, \mathbf{W}\in \mathcal{RD}^p(\mathbb{R}^n;E)$ there exists a constant $C\equiv C(\|u^0\|_{H},\mathbf{G},\mathbf{W},T)>0$ so that 
	\begin{align*}
	\|\Phi_1(\mathbf{G})-\Phi_1(\mathbf{W})\|_{L^\infty(H^1)\cap L^2(H^2) \cap \mathcal{V}^p(L^2)}\lesssim C\,\rho(\mathbf{G},\mathbf{W})\,.
	\end{align*}
\end{proposition}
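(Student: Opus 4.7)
My plan would be to mirror the energy-estimate strategy used in the proof of Theorem~\ref{teo:CLT}, simplified and adapted to the comparison of two solutions driven by distinct rough drivers. Since uniqueness is just the Lipschitz statement with $\mathbf{G}=\mathbf{W}$, I would prove both simultaneously. Pick $u=\pi_1(u^0,\mathbf{G})$ and $v=\pi_1(u^0,\mathbf{W})$, set $z:=u-v$ and denote by $\mathbf{Z}\equiv(Z,\mathbb{Z})$ the element-wise difference $\mathbf{G}-\mathbf{W}$. Using the linearity of the drift $b(u)=\Delta u$ and of the noise action, subtracting the two equations yields
\begin{equation*}
\delta z_{s,t}=\int_s^t \Delta z_r\,\dd r+G_{s,t}z_s+\mathbb{G}_{s,t}z_s+Z_{s,t}v_s+\mathbb{Z}_{s,t}v_s+z^\natural_{s,t},
\end{equation*}
where the driver mismatch enters as an affine forcing involving $v$. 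Note that $\mathbf{Z}$ satisfies a perturbed Chen relation of the form $\delta\mathbb{Z}_{s,r,t}=G_{r,t}Z_{s,r}+Z_{r,t}W_{s,r}$ (or an analogous splitting), which will be important in the sewing analysis.

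Next, I would apply the product formula (Proposition~\ref{pro:product}) to $\partial_x^k z\otimes\partial_x^k z$ for $k=0$ and $k=1$, obtaining an equation of the form $\delta(\partial_x^k z\otimes\partial_x^k z)_{s,t}=\mathcal D(k,k)_{s,t}+I_{s,t}+\mathbb{I}_{s,t}+\tilde{\mathbb I}_{s,t}+(\partial_x^k z)^{\otimes2,\natural}_{s,t}$, where $\mathcal D(k,k)$ collects the deterministic drift, $I$ the first-order noise terms (from both $\mathbf{G}$ applied to $z$ and $\mathbf{Z}$ applied to $v$), $\mathbb I$ the second-level noise terms, and $\tilde{\mathbb I}$ the quadratic cross term coming from $(G+Z)\otimes(G+Z)$. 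Testing against $\mathbf{1}$ and integrating by parts in $\mathcal D(k,k)$ produces the regularising contribution $-\int_s^t\|\partial_x^{k+1}z_r\|_{L^2}^2\,\dd r$, which will absorb the $L^2(H^2)$ norm of $z$ on the right-hand side.

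The main technical step—and the principal obstacle—is controlling the remainder $(\partial_x^k z)^{\otimes 2,\natural}$ via the Sewing Lemma~\ref{lemma_sewing}. One writes $\delta(\partial_x^k z)^{\otimes 2,\natural}_{s,r,t}=-\delta I_{s,r,t}-\delta\mathbb I_{s,r,t}-\delta\tilde{\mathbb I}_{s,r,t}$ and must check that everything collapses into terms of $p/3$-variation or better. The bookkeeping is exactly of the type carried out in Theorem~\ref{teo:CLT} (with $X^\epsilon\leftrightarrow v$ and the "$\mathbf{W}$-direction" played by $\mathbf{Z}$). The $p/2$-variation terms that survive at first glance are cancelled by adding and subtracting the elements $T_1,T_2$ involving $G,Z$ inside the increments $\delta(\partial_x^n v\odot\partial_x^k z)_{s,r}$ and $\delta(\partial_x^n z\odot\partial_x^k z)_{s,r}$, which is made possible precisely by the Chen relation on $\mathbf{G}$ together with the perturbed Chen relation on $\mathbf{Z}$. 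After this algebraic compensation, a straightforward estimation of the drift in $(W^{2,\infty})^\ast$ yields
\begin{equation*}
\|(\partial_x^k z)^{\otimes 2,\natural}_{s,t}\|_{(W^{2,\infty})^\ast}\lesssim \omega(s,t)^{1/p}\int_s^t\|\partial_x^{k+1}z_r\|_{L^2}^2\,\dd r+\omega(s,t)^{3/p}\bigl(\|z\|^2_{L^\infty([s,t];H^1)}+\rho(\mathbf{G},\mathbf{W})^2\bigr),
\end{equation*}
with $\omega$ a control depending polynomially on $\mathbf{G},\mathbf{W},u,v$.

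Finally, combining the energy identity with this remainder bound gives an estimate of the form $\delta(\|z\|_{H^1}^2)_{s,t}+\int_s^t\|z_r\|_{H^2}^2\,\dd r\le\gamma(s,t)\sup_{[s,t]}\|z_r\|_{H^1}^2+C\rho(\mathbf{G},\mathbf{W})^2\omega(s,t)^{1/p}$ on intervals where $\gamma(s,t)$ is small. The rough Gronwall Lemma~\ref{lem:gronwall}, applied with the super-additive forcing $\varphi(s,t):=C\rho(\mathbf{G},\mathbf{W})^2\omega(s,t)^{1/p}$ and using $z_0=0$, then yields $\|z\|_{L^\infty(H^1)}^2+\|z\|_{L^2(H^2)}^2\lesssim_{\mathbf{G},\mathbf{W},u^0}\rho(\mathbf{G},\mathbf{W})^2$. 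The $\mathcal V^p(L^2)$ bound is recovered directly from the equation for $\delta z_{s,t}$ tested against elements of $L^2$, using the bounds already obtained. Setting $\mathbf{G}=\mathbf{W}$ forces $z\equiv0$, giving uniqueness, and for general $(\mathbf{G},\mathbf{W})$ one reads off the claimed local Lipschitz estimate for $\Phi_1$.
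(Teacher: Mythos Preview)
Your proposal is correct and follows essentially the same approach as the paper's proof: product formula for $\partial_x^k z\otimes\partial_x^k z$ at levels $k=0,1$, algebraic compensation of the $p/2$-variation terms in $\delta(\partial_x^k z)^{\otimes 2,\natural}_{s,r,t}$ via auxiliary terms $T_1,T_2$, remainder estimate by the sewing lemma, and rough Gronwall to close. The only cosmetic difference is the splitting of the noise mismatch---you write $G_{s,t}z_s+Z_{s,t}v_s$ whereas the paper uses the equivalent decomposition $Z_{s,t}u_s+W_{s,t}z_s$---and the paper carries out the $k=0$ and $k=1$ levels sequentially (first obtaining the $L^\infty(L^2)\cap L^2(H^1)$ estimate, then bootstrapping to $L^\infty(H^1)\cap L^2(H^2)$) rather than presenting them in one stroke.
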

\begin{proof}
	We follow the steps of the proof in \cite{LLG1D}. We recall the proof for the reader's convenience and because it introduces techniques useful for the CLT. The algebraic part concerning the noise is also similar. We can consider the square of the difference of two equations $u=\pi_1(u^0, \mathbf{G})$ and $v=\pi_1(u^0,\mathbf{W})$. 
	We show the local Lipschitz continuity of $u$ in $L^\infty(H^1)\cap L^2(H^2)\cap \mathcal{V}^p(L^2)$ with respect to the noise. If there exists a strong solution, the local Lipschitz continuity of the It\^o-Lyons map implies uniqueness, by setting $\mathbf{G}=\mathbf{W}$. Let $z:=u-v$, from the product formula in Proposition~\ref{pro:product}, for $k=0,1$ and for all $s\leq t\in [0,T]$, it follows that
	\begin{equation}\label{eq:tested_equation}
	\begin{aligned}
	\delta  (\partial^k_x z\otimes  \partial^k_x z)_{s,t}&=\int_{s}^{t}  \partial^k_x [b(u_r)-b(v_r)]\odot \partial^k_x z_r\dd r\\
	&\quad+ \partial^k_x [G_{s,t}u_s-W_{s,t}v_s] \odot \partial^k_x z_s+ \partial^k_x [\mathbb{G}_{s,t}u_s-\mathbb{W}_{s,t}v_s]\odot \partial^k_x z_s\\
	&\quad+\partial^k_x  [G_{s,t}u_s-W_{s,t}v_s]\otimes \partial^k_x [G_{s,t}u_s-W_{s,t}v_s] + \partial^k_x z^{\otimes 2,\natural}_{s,t} \,.
	\end{aligned}
	\end{equation}
	What follows holds for all $s\leq r\leq t \in [0,T]$.
	We aim to apply the sewing Lemma~\ref{lemma_sewing}, thus we want to represent $\delta( \partial^k_x z^{\otimes 2,\natural}_{s,t} )_{s,r,t}$ as linear combination of elements of $p/3$-variation or higher. 
	We first denote by $Z:=G-W$, $\mathbb{Z}=\mathbb{G}-\mathbb{W}$ (beware that $(Z,\mathbb{Z})$ is not a rough path) and rewrite the product in the last line as
	\begin{align*}
	\partial^k_x[G_{s,t}u_s-W_{s,t}v_s]\otimes \partial^k_x[G_{s,t}u_s-W_{s,t}v_s]&=\partial^k_x [Z_{s,t}u_s+W_{s,t}z_s]\otimes \partial^k_x [Z_{s,t}u_s+W_{s,t}z_s]\,.
	\end{align*}
	We introduce an algebraic relation for dealing with the elements of the form $A_{s,t}a_s\otimes B_{s,t}b_s$, for $\mathbf{A},\mathbf{B}\in \mathcal{RD}^p(\mathbb{R}^n;E)$ and $a,b\in\mathbb{R}^n$: we use the definition of $\delta(\cdot)_{s,r,t}$ and then we add and subtract some quantities:
	\begin{equation}\label{eq;algebraic_relation}
	\begin{aligned}
	\delta\left(A_{s,t}a_s\otimes B_{s,t}b_s\right)_{s,r,t}
	&=A_{s,t}a_s\otimes B_{s,t}b_s-A_{s,r}a_s\otimes B_{s,r}b_s-A_{r,t}a_r\otimes B_{r,t}b_r\\
	&=[A_{s,r}+A_{r,t}]a_s\otimes [B_{s,r}+B_{r,t}]b_s-A_{s,r}a_s\otimes B_{s,r}b_s-A_{r,t}a_r\otimes B_{r,t}b_r\\
	&=A_{s,r}a_s\otimes B_{r,t}b_s+A_{r,t}a_s\otimes B_{s,r}b_s+A_{r,t}a_s\otimes B_{r,t}b_s-A_{r,t}a_r\otimes B_{r,t}b_r\,.
	\end{aligned}
	\end{equation}
	We apply $\delta(\cdot)_{s,r,t}$ to the two-index map $\partial^k_x z^{\otimes 2,\natural}$. This leads to the expression 
	\begin{align*}
	\delta (\partial^k_x z^{\otimes 2,\natural})_{s,r,t}&= \sum_{n=0}^{k} \binom{k}{n} \partial^{k-n}_x Z_{r,t}\delta (\partial_x^n u\odot \partial^k_x z)_{s,r}+\sum_{n=0}^{k} \binom{k}{n} \partial^{k-n}_x W_{r,t}\delta (\partial_x^n z\odot \partial^k_x z)_{s,r}  \\
	& +\sum_{n=0}^{k} \binom{k}{n}  \partial^{k-n}_x \mathbb{Z}_{r,t}\delta (\partial_x^n  u\odot \partial^k_x z)_{s,r}+\sum_{n=0}^{k} \binom{k}{n}  \partial^{k-n}_x \mathbb{W}_{r,t}\delta (\partial_x^n z \odot \partial^k_x z)_{s,r}\\
	&- \partial^{k}_x [ \delta \mathbb{Z}_{s,r,t}  u_s]\odot  \partial^k_x z_s-\partial^{k}_x  [\delta \mathbb{W}_{s,r,t} \partial^{n}_x z_s]\odot\partial^k_x z_s\\
	&-\partial_{x}^{k}[Z_{s,r}u_s+W_{s,r}z_s]\otimes\partial_x^k[ Z_{r,t}u_s+W_{r,t}z_s]-\partial_{x}^{k}[Z_{r,t}u_s+W_{r,t}z_s]\otimes\partial_x^k[ Z_{s,r}u_s+W_{s,r}z_s]\\
	&-\partial_{x}^{k}[Z_{r,t}u_s+W_{r,t}z_s]\otimes\partial_x^k[ Z_{r,t}u_s+W_{r,t}z_s]+\partial_{x}^{k}[Z_{r,t}u_r+W_{r,t}z_r]\otimes\partial_x^k[ Z_{r,t}u_r+W_{r,t}z_r]\,,
	\end{align*}
	where we used the algebraic relation \eqref{eq;algebraic_relation}.
	Thus we compensate the following terms of at most $p/2$-variation
	\begin{equation}\label{eq:line_proof_p_1}
	\begin{aligned}
	&- \partial^{k}_x [ \delta \mathbb{Z}_{s,r,t}  u_s]\odot  \partial^k_x z_s-\partial^{k}_x  [\delta \mathbb{W}_{s,r,t} \partial^{n}_x z_s]\odot\partial^k_x z_s\\
	&-\partial_{x}^{k}[Z_{s,r}u_s+W_{s,r}z_s]\otimes\partial_x^k[ Z_{r,t}u_s+W_{r,t}z_s]-\partial_{x}^{k}[Z_{r,t}u_s+W_{r,t}z_s]\otimes\partial_x^k[ Z_{s,r}u_s+W_{s,r}z_s]\,,
	\end{aligned}
	\end{equation}
	by adding and subtracting properly chosen quantities from the following terms of only $p/2$-variation,
	\begin{align}\label{eq:things_to add_and subtract}
	\sum_{n=0}^{k} \binom{k}{n} \partial^{k-n}_x Z_{r,t}\delta (\partial_x^n u\odot \partial^k_x z)_{s,r}+\sum_{n=0}^{k} \binom{k}{n} \partial^{k-n}_x W_{r,t}\delta (\partial_x^n z\odot \partial^k_x z)_{s,r}\,.
	\end{align}
	Indeed if we can subtract from $\delta (\partial_x^n u\odot \partial^k_x z)_{s,r}$ all of the elements of $p$-variation of $\delta (\partial_x^n u\odot \partial^k_x z)_{s,r}$, which we can denote by $T_1$, then  $\delta (\partial_x^n u\odot \partial^k_x z)_{s,r}-T_1$ is of $p/2$-variation. The same consideration holds for $\delta (\partial_x^n z\odot \partial^k_x z)_{s,r}$. We compute those compensations.
	The elements of only $p$-variation coming from $\delta (\partial_x^n u\odot \partial^k_x z)_{s,r}$ are
	\begin{align*}
	T_1:= \partial^n_x [G_{s,r}u_s] \odot \partial^k_x z_s+\partial^n_x z_s\odot \partial^k_x [Z_{s,r}u_s+W_{s,r}z_s]\,,
	\end{align*}
	and the ones coming from  $\delta (\partial_x^n z\odot \partial^k_x z)_{s,r}$ are
	\begin{align*}
	T_2:= \partial^n_x [Z_{s,r}u_s+W_{s,r}z_s] \odot \partial^k_x z_s+\partial^n_x z_s\odot \partial^k_x [Z_{s,r}u_s+W_{s,r}z_s]\,.
	\end{align*}
	Going back to \eqref{eq:things_to add_and subtract}, the following equality holds
	\begin{align}\label{eq:line_proof}
	&\quad\quad \sum_{n=0}^{k} \binom{k}{n} \partial^{k-n}_x Z_{r,t}\delta (\partial_x^n u\odot \partial^k_x z)_{s,r}+\sum_{n=0}^{k} \binom{k}{n} \partial^{k-n}_x W_{r,t}\delta (\partial_x^n z\odot \partial^k_x z)_{s,r}\nonumber\\
	&=\sum_{n=0}^{k} \binom{k}{n} \partial^{k-n}_x Z_{r,t}[\delta (\partial_x^n u\odot \partial^k_x z)_{s,r}-T_1]+\sum_{n=0}^{k} \binom{k}{n} \partial^{k-n}_x W_{r,t}[\delta (\partial_x^n z\odot \partial^k_x z)_{s,r}-T_2]\\
	&\quad +\sum_{n=0}^{k} \binom{k}{n} \partial^{k-n}_x Z_{r,t}T_1+\sum_{n=0}^{k} \binom{k}{n} \partial^{k-n}_x W_{r,t}T_2\,. \nonumber
	\end{align}
	The elements in \eqref{eq:line_proof} are now of $p/3$-variation and thus suitable for the remainder estimate. We write explicitly the last line 
	\begin{align*}
	&\quad\quad\sum_{n=0}^{k} \binom{k}{n} \partial^{k-n}_xZ_{r,t}T_1+ \sum_{n=0}^{k} \binom{k}{n} \partial^{k-n}_xW_{r,t}T_2\\
	&= \sum_{n=0}^{k} \binom{k}{n} [\partial^{k-n}_x Z_{r,t}\partial^n_x [G_{s,r}u_s] \odot \partial^k_x z_s+\partial^{k-n}_x Z_{r,t}\partial^n_x z_s\odot \partial^k_x [Z_{s,r}u_s+W_{s,r}z_s]]\\
	&\quad + \sum_{n=0}^{k} \binom{k}{n}  [\partial^{k-n}_xW_{r,t}\partial^n_x [Z_{s,r}u_s+W_{s,r}z_s] \odot \partial^k_x z_s+\partial^{k-n}_xW_{r,t}\partial^n_x z_s\odot \partial^k_x [Z_{s,r}u_s+W_{s,r}z_s]]\\
	&=\partial_x^k[Z_{r,t}G_{s,r}u_s] \odot \partial^k_x z_s+\partial_x^k[Z_{r,t} z_s]\odot \partial^k_x [Z_{s,r}u_s+W_{s,r}z_s]\\
	&\quad  +\partial^{k}_x[W_{r,t}[Z_{s,r}u_s+W_{s,r}z_s]] \odot \partial^k_x z_s+\partial^{k}_x[W_{r,t} z_s]\odot \partial^k_x [Z_{s,r}u_s+W_{s,r}z_s]\,,
	\end{align*}
	which compensate with the terms of $p/2$-variation in \eqref{eq:line_proof_p_1}. Indeed, by rearranging the terms and using Chen's relation,
	\begin{align*}
	&\quad \partial_x^k[Z_{r,t}G_{s,r}u_s] \odot \partial^k_x z_s + \partial^{k}_x[W_{r,t}[Z_{s,r}u_s+W_{s,r}z_s]] \odot \partial^k_x z_s\\
	&= \partial_x^k[(G_{r,t}-W_{r,t})G_{s,r}u_s] \odot \partial^k_x z_s + \partial^{k}_x[W_{r,t}[(G_{s,r}-W_{s,r})u_s+W_{s,r}z_s]] \odot \partial^k_x z_s\\
	&= \partial_x^k[(\delta \mathbb{G}_{s,r,t}-\delta \mathbb{W}_{s,r,t})u_s] \odot \partial^k_x z_s + \partial^{k}_x[W_{r,t}W_{s,r}z_s] \odot \partial^k_x z_s\\
	&= \partial^{k}_x [ \delta \mathbb{Z}_{s,r,t}  u_s]\odot  \partial^k_x z_s+\partial^{k}_x  [\delta \mathbb{W}_{s,r,t} \partial^{n}_x z_s]\odot\partial^k_x z_s\,,
	\end{align*}
	the elements in the first line of \eqref{eq:line_proof_p_1} erase.
	In conclusion, we can rewrite $\delta \partial^k_x z^{\otimes 2,\natural}$ as
	\begin{equation}\label{eq:remainder_WZ}
	\begin{aligned}
	\delta (\partial^k_x z^{\otimes 2,\natural})_{s,r,t}&= \sum_{n=0}^{k} \binom{k}{n} \partial^{k-n}_x Z_{r,t}[\delta (\partial_x^n u\odot \partial^k_x z)_{s,r}-T_1]+\sum_{n=0}^{k} \binom{k}{n} \partial^{k-n}_x W_{r,t}[\delta (\partial_x^n z\odot \partial^k_x z)_{s,r}-T_2]\\
	&  +\sum_{n=0}^{k} \binom{k}{n}  \partial^{k-n}_x \mathbb{Z}_{r,t}\delta (\partial_x^n  u\odot \partial^k_x z)_{s,r}+\sum_{n=0}^{k} \binom{k}{n}  \partial^{k-n}_x \mathbb{W}_{r,t}\delta (\partial_x^n z \odot \partial^k_x z)_{s,r}\\
	&+\partial_{x}^{k}[Z_{r,t}u_s+W_{r,t}z_s]\otimes\partial_x^k[ Z_{r,t}u_s+W_{r,t}z_s]-\partial_{x}^{k}[Z_{r,t}u_r+W_{r,t}z_r]\otimes\partial_x^k[ Z_{r,t}u_r+W_{r,t}z_r]\,,
	\end{aligned}
	\end{equation}
	where all the terms appearing are at least of $p/3$-variation. 
	Consider $k=0$. Then $\delta ( z^{\otimes 2,\natural})_{s,r,t}$ depends on $\delta (u\odot u)$, $\delta (u\odot z)_{s,r}$ and $\delta (z\odot z)_{s,r}$. In the following, denote by
	\begin{align*}
	\omega_{b(z,u)}(s,r):=\int_{s}^{r}\sup_{\phi\in H^1,\|\phi\|\leq 1}[|\langle b(z)\otimes u,\phi\rangle|+|\langle z\otimes b(u),\phi\rangle|]\,,
	\end{align*}
	which is a control associated to the drift of $\delta (u\odot z)_{s,r}$. The other controls associated to the drifts of other equations are defined analogously.	
	
	Thus, by substituting the equations into equality \eqref{eq:remainder_WZ},  the remainders $u^{\otimes 2,\natural}$, $(uz)^{\otimes 2,\natural}$ and $z^{\otimes 2,\natural}$ appear. By similar computations as above, it follows that
	\begin{align*}
	\omega_{(uz)^{\otimes 2, \natural};W^{1,\infty}}^{3/p}\lesssim \omega^{1/p}\omega_{b(u,z)}+\omega^{3/p}\sup_{s\leq r\leq t}[\|u_r\|_{L^2}^2+\|z_r\|_{L^2}^2]\,,
	\end{align*}
	and the same bound holds for $\omega_{(zu)^{\otimes 2, \natural};W^{1,\infty}}^{3/p}$. Analogously, it holds that
	\begin{align*}
	\omega_{u^{2,\natural,i,j}}^{3/p}\lesssim \omega^{1/p}\omega_{b(u^i,u^j)}+\omega^{3/p}\sup_{s\leq r\leq t}\|u_r\|_{L^2}^2\,.
	\end{align*}
	Observe also that the inequality holds
	\begin{align*}
	\omega^{1/p}_{z^iz^j}\lesssim \omega_{b(z,z)}+\omega_{\mathbf{Z}}^{1/p}\sup_{s\leq r\leq t}\|u_r\|_{L^2}^2+\omega^{1/p}\sup_{s\leq r\leq t}\|z_r\|_{L^2}+\omega^{3/p}_{z^{2,\natural,i,j}}\,.
	\end{align*}
	Hence estimate the noise, by recalling that the noises have $L^\infty$ space regularity,
	\begin{align*}
	\| \delta z^{\otimes 2,\natural}_{s,r,t}\|_{(W^{1,\infty})^*}&\lesssim \omega^{1/p}_{\mathbf{Z}}\omega^{2/p}\left[\sup_{s\leq r\leq t}[\|u_r\|_{L^2}^2+\|z_r\|_{L^2}^2]+\omega_{b(u,u)}+\omega_{b(z,u)}+\omega_{b(u,z)}\right]\\
	&\quad+\omega^{2/p}\omega_{b(z,z)}+\omega^{1/p}\omega^{3/p}_{z^{\otimes 2,\natural}}\,.
	\end{align*}
	By taking the power $p/3$, taking the sum on the partitions and by using the sub-additivity of the controls (also that the product of controls is a control), we can write
	\begin{align*}
	\omega^{3/p}_{z^{2,\natural,i,j}}&\lesssim\omega^{1/p}_{\mathbf{Z}}\omega^{2/p}\left[\sup_{s\leq r\leq t}[\|u_r\|_{L^2}^2+\|z_r\|_{L^2}^2]+\omega_{b(u,u)}+\omega_{b(z,u)}+\omega_{b(u,z)}\right]+\omega^{2/p}\omega_{b(z,z)}+\omega^{1/p}\omega^{3/p}_{z^{2,\natural,i,j}}\,.
	\end{align*}
	By choosing a suitable partition such that $\omega<1$, we can absorb $\omega_{z^{\otimes 2,\natural}}^{1/p}$ to the left hand side and obtain the following estimate,
	\begin{align*}
	\omega^{3/p}_{z^{\otimes 2,\natural,i,j}}&\lesssim\omega^{1/p}_{\mathbf{Z}}\omega^{2/p}\left[\sup_{s\leq r\leq t}[\|u_r\|_{L^2}^2+\|z_r\|_{L^2}^2]+\omega_{b(u,u)}+\omega_{b(z,u)}+\omega_{b(u,z)}\right]+\omega^{2/p}\omega_{b(z,z)}\,.
	\end{align*}
	In conclusion, the estimate of the remainder becomes
	\begin{align}\label{eq:rem_steimate_spde}
	\|z^{\otimes 2,\natural}\|_{(W^{1,\infty})^*}\lesssim \omega^{1/p}_{\mathbf{Z}}\omega^{2/p}+\omega^{3/p}\sup_{s\leq r\leq t}\|  z_r\|_{L^2}^2+\omega^{1/p}\int_{s}^{t}\|z_r\|^2_{H^1} \dd r \,,
	\end{align}
	where the time integral on the right hand side comes from estimating the drift $\omega_{b(z,z)}$.
	We consider again the equation tested by $\mathbf{1}\in W^{1,\infty}$ in \eqref{eq:tested_equation}. We write down the drift
	\begin{align*}
	\int_{s}^{t}\langle  [b(u_r)-b(v_r)]\otimes  z_r+ z_r\otimes [b(u_r)-b(v_r)], \mathbf{1}\rangle \dd r&=-2\int_{s}^{t} \| \nabla z_r\|^2_{L^2}\dd r \,.
	\end{align*}
	From this form of the drift and the remainder estimate in \eqref{eq:rem_steimate_spde},
	\begin{align*}
	\|z_t\|^2_{L^2}+2\int_{s}^{t}\|\nabla z_r\|^2_{L^2}\dd r&\lesssim \|z_s\|^2_{L^2}+\omega_{\mathbf{Z}}^{1/p}\sup_{0\leq r\leq T}(\|u_r\|^2_{L^2}+\|v_r\|^2_{L^2})\\
	&\quad+\omega^{1/p}_{\mathbf{Z}}\omega^{2/p}+\omega^{3/p}\sup_{s\leq r\leq t}\|  z_r\|_{L^2}^2+\omega^{1/p}\int_{s}^{t}\|z_r\|^2_{H^1} \dd r \,.
	\end{align*}
	We apply the rough Gronwall's Lemma~\ref{lem:gronwall} to $E_t:=\|z_t\|^2_{L^2}+\int_{0}^{t}\|\nabla z_r\|^2_{L^2}\dd r$, $\phi(0,t):=\omega_{\mathbf{Z}}^{1/p}(0,t) $ and $\bar{\omega}(0,t):=\omega(0,t)$. Then there exists a constant $\tau>0$ such that
	\begin{align*}
	\sup_{0\leq t\leq T}\|z_t\|_{L^2}^2+\int_{0}^{T} \|\nabla z_r\|^{2}_{L^2}\dd r \lesssim_{\tau}  \exp\left(\frac{\bar{\omega}}{\tau}\right)[|z_0|^2+\omega_{\mathbf{Z}}^{1/p}(0,T)]\,,
	\end{align*}
	that implies the continuity of the It\^o-Lyons map in $L^\infty(L^2)\cap L^2(H^1)$.
	The map $\mathbf{W}\mapsto u$ is locally Lipschitz continuous in $\mathcal{V}^p(H^{-1})$, indeed from the previous computations
	\begin{align*}
	\|\delta  z_{s,t}\|_{H^{-1}} \lesssim \|b(u)-b(v)\|_{L^\infty(H^{-1})}\| z\|_{L^\infty(H^1)}(t-s)+\omega_{\mathbf{Z}}^{1/p}(s,t) +\omega^{1/p}\|z\|_{L^\infty(H^1)}^2(t-s)\,,
	\end{align*}
	and from the previous bound on $\|z\|_{L^\infty(H^1)}^2$, the convergence follows.
	We show the local Lipschitz continuity of $\nabla u$ in $L^\infty(L^2)\cap L^2(H^1)$ with respect to the noise. In this step, we use the fact that the It\^o-Lyons map is locally Lipschitz continuous in $L^\infty(L^2)\cap L^2(H^1)$.
	Denote by $\nabla z:=\nabla u-\nabla v$ and we look at the squared equation via the product formula in Proposition~\ref{pro:product}
	\begin{align*}
	\delta (\nabla z\otimes  \nabla z)_{s,t}&=\int_{s}^{t} \nabla[b(u_r)-b(v_r)]\odot\nabla z_r \dd r\\
	&\quad+  \nabla [G_{s,t}u_s-W_{s,t}v_s] \odot\nabla z_s+ \nabla[\mathbb{G}_{s,t}u_s-\mathbb{W}_{s,t}v_s]\odot \nabla z_s\\
	&\quad+\nabla [G_{s,t}u_s-W_{s,t}v_s]\otimes\nabla[G_{s,t}u_s-W_{s,t}v_s]+\nabla z^{\otimes 2,\natural}_{s,t}\,.
	\end{align*}
	We aim to apply the sewing Lemma~\ref{lemma_sewing} to the three index map $\delta \nabla z^{\otimes 2,\natural}_{s,r,t}$ obtained by applying $\delta(\cdot)_{s,r,t}$ to the two index map $\nabla z^{\otimes 2,\natural}$. With the expression derived before for $\partial^k_x z^{\otimes 2,\natural}$, for each direction $x$ fixed, we can derive again a bound for the remainder:
	\begin{align*}
	\|\nabla z^{\otimes 2,\natural}\|_{(W^{1,\infty})*}\lesssim \omega^{1/p}_{\mathbf{Z}}\omega^{2/p}+\omega^{3/p}\sup_{s\leq r\leq t}\|  z_r\|_{H^1}^2+\omega^{1/p}\int_{s}^{t}\|z_r\|^2_{H^2} \dd r \,,
	\end{align*}
	where $\omega$ is a control depending on the noise, the drift and the initial condition. We test the above equation by $\mathbf{1}\in W^{1,\infty}$,
	\begin{align*}
	\delta \langle \nabla z\otimes  \nabla z, \mathbf{1}\rangle_{s,t}&=\int_{s}^{t}\langle  \nabla[b(u_r)-b(v_r)]\otimes \nabla z_r+\nabla z_r\otimes\nabla [b(u_r)-b(v_r)], \mathbf{1}\rangle \dd r\\
	&\quad+ 2\langle \nabla [G_{s,t}u_s-W_{s,t}v_s] \odot\nabla z_s, \mathbf{1}\rangle+\langle \nabla[\mathbb{G}_{s,t}u_s-\mathbb{W}_{s,t}v_s]\odot \nabla z_s, \mathbf{1}\rangle\\
	&\quad+\langle\nabla [G_{s,t}u_s-W_{s,t}v_s]\otimes\nabla[G_{s,t}u_s-W_{s,t}v_s], \mathbf{1}\rangle+\langle \nabla z^{\otimes 2,\natural}_{s,t}, \mathbf{1}\rangle\,,
	\end{align*}
	and from the rough Gronwall's Lemma~\ref{lem:gronwall}, we conclude that
	\begin{align*}
	\sup_{0\leq t\leq T}\|\nabla z_t\|^2_{L^2}+\int_{0}^{T}\|\Delta z_r\|^2_{L^2}\dd r\lesssim \exp \left(\omega\right)[\|z^0\|^2_{L^2}+\omega^{1/p}_{\mathbf{Z}}(0,T)]\,,
	\end{align*}
	which implies the local Lipschitz continuity of the It\^o-Lyons map in $L^\infty(H^1)\cap L^2(H^2)$. Analogously, we can infer the convergence also in $\mathcal{V}^p(L^2)$.
\end{proof}
\begin{remark}
	If the driving signal is Gaussian, this same continuity of the It\^o-Lyons map allows to derive a support theorem and a large deviation principle for the solution to the SPDEs.
\end{remark}
\begin{remark}\label{eq:rough_stnadard_machinery}
	We refer to the procedure in the proof of Proposition~\ref{pro:lip_semilinear} of obtaining an estimate of the remainder term independent on the remainder itself as in \eqref{eq:rem_steimate_spde} as \textit{rough standard machinery}. Indeed this procedure is repetitive and will be applied many times in the subsequent, we refer to it in this shortened way (see \cite[Sec. 3.2.2]{LLG1D} for more details).
\end{remark}

To show existence of a solution to \eqref{eq:heat_equation}, we approximate the noise by means of smooth approximations (since we are working with geometric rough paths, there exists an approximation of the rough paths with respect to the $p$-variation norm). Let $(G^n,\mathbb{G}^n)_n$ be such an approximation of $(G,\mathbb{G})$. From classical theory, there exists a unique smooth solution $u^n$ to the approximated equation
\begin{equation*}
\delta u^n_{s,t}=\int_{s}^{t}b(u^n_r)\dd r+G^n_{s,t}u^n_s+\mathbb{G}^n_{s,t}u^n_s+u^{n,\natural}_{s,t}\,,
\end{equation*}
with regularity $L^\infty(H^1)\cap L^2(H^2)\cap C^1(L^2)$. 
To show existence of a solution $u$ to \eqref{eq:heat_equation}, we want to uniformly bound $(u^n)_n$ in $L^\infty(H^1)\cap L^2(H^2)\cap \mathcal{V}^p(L^2)$. 
We compute the estimate of the remainder and highlight its dependence on the drift.
\begin{proposition}\label{pro:remainder_drift_abstract}
	The following estimate of the remainder holds:
	\begin{align*}
	\| u^{\otimes 2,\natural}_{s,t}\|_{(W^{1,\infty})^*}\lesssim_{\mathbf{G},u^0}\omega^{3/p}\sup_{s\leq r\leq t}\| u_r\|_{L^2}^2+\omega^{1/p}\omega_{b(u,u); (W^{1,\infty})^*}\,,
	\end{align*}
	\begin{align*}
	\|\partial_x u^{2,\natural}_{s,t}\|_{(W^{1,\infty})^*}&\lesssim_{\mathbf{G},\partial_x \mathbf{G},u^0,\nabla u^0}\omega^{3/p}[\sup_{s\leq r\leq t}\| u_r\|_{L^2}^2+\sup_{s\leq r\leq t}\|\partial_x u_r\|_{L^2}^2]\\
	&\quad+\omega^{1/p}[\omega_{b(u,u); (W^{1,\infty})^*}+\omega_{b(\partial_x u,u); (W^{1,\infty})^*}+\omega_{b(u,\partial_x u); (W^{1,\infty})^*}+\omega_{b(\partial_xu,\partial_xu); (W^{1,\infty})^*}]\,,
	\end{align*}
	where for all $s\leq t\in [0,T]$, $k,j\in \{0,1\}$ 
	\begin{align*}
	\omega_{b(\partial_x^ ku,\partial_x^ju); (W^{1,\infty})^*}(s,t)=\int_{s}^{t}\sup_{\phi\in W^{1,\infty}, \|\phi\|_{W^{1,\infty}}\leq 1}|\langle b(\partial_x^ k u,\partial_x^ j u),\phi \rangle|\,.
	\end{align*}
	
\end{proposition}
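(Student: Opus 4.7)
The plan is to mimic the algebraic bookkeeping already carried out in the proof of Proposition~\ref{pro:lip_semilinear}, but applied to a single solution $u$ in place of the difference $z=u-v$. First, I apply the product rule in Proposition~\ref{pro:product} to $\partial_x^k u\otimes\partial_x^k u$ for $k=0,1$, which yields
\begin{align*}
\delta(\partial_x^k u\otimes\partial_x^k u)_{s,t}
&=\int_s^t \partial_x^k b(u_r)\odot \partial_x^k u_r\,\dd r
 +\sum_{n=0}^k\binom{k}{n}\bigl[\partial_x^{k-n}G_{s,t}\partial_x^n u_s+\partial_x^{k-n}\mathbb{G}_{s,t}\partial_x^n u_s\bigr]\odot\partial_x^k u_s\\
&\quad+\sum_{n,m=0}^k\binom{k}{n}\binom{k}{m}\partial_x^{k-n}G_{s,t}\partial_x^n u_s\otimes\partial_x^{k-m}G_{s,t}\partial_x^m u_s
 +(\partial_x^k u)^{\otimes 2,\natural}_{s,t}.
\end{align*}

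Second, I would apply the three-index operator $\delta(\cdot)_{s,r,t}$ to the implicitly defined remainder $(\partial_x^k u)^{\otimes 2,\natural}$ and, using Chen's relation $\delta\mathbb{G}_{s,r,t}=G_{r,t}G_{s,r}$ and the algebraic identity \eqref{eq;algebraic_relation}, rewrite
\begin{align*}
-\delta(\partial_x^k u)^{\otimes 2,\natural}_{s,r,t}
&=\sum_{n=0}^k\binom{k}{n}\partial_x^{k-n}G_{r,t}\,\delta(\partial_x^n u\odot\partial_x^k u)_{s,r}
 +\sum_{n=0}^k\binom{k}{n}\partial_x^{k-n}\mathbb{G}_{r,t}\,\delta(\partial_x^n u\odot\partial_x^k u)_{s,r}\\
&\quad-\partial_x^k[\delta\mathbb{G}_{s,r,t}u_s]\odot\partial_x^k u_s
 -2\,\partial_x^k[G_{s,r}u_s]\otimes \partial_x^k[G_{r,t}u_s]+R^{(k)}_{s,r,t},
\end{align*}
where $R^{(k)}_{s,r,t}$ collects the terms of order at least $p/3$ coming from $\partial_x^k[G_{r,t}u_s]\otimes\partial_x^k[G_{r,t}u_s]-\partial_x^k[G_{r,t}u_r]\otimes\partial_x^k[G_{r,t}u_r]$.

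Third, I subtract from each $\delta(\partial_x^n u\odot \partial_x^k u)_{s,r}$ its $p$-variation piece $T_k:=\partial_x^n[G_{s,r}u_s]\odot\partial_x^k u_s+\partial_x^n u_s\odot\partial_x^k[G_{s,r}u_s]$, so that $\delta(\partial_x^n u\odot\partial_x^k u)_{s,r}-T_k$ is of finite $p/2$-variation controlled by $\omega_{b(\partial_x^n u,\partial_x^k u);(W^{1,\infty})^\ast}^{1/p}+\omega^{2/p}\sup_r\|\partial_x^\ell u_r\|_{L^2}^2$ for the relevant $\ell$. The contribution $\sum_n\binom{k}{n}\partial_x^{k-n}G_{r,t}\,T_k$ then cancels exactly, by Chen's relation, the two off-diagonal $p/2$-variation terms involving $\delta\mathbb{G}$ and $G_{s,r}\otimes G_{r,t}$, in the same way as in the proof of Proposition~\ref{pro:lip_semilinear}. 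This gives a representation of $\delta(\partial_x^k u)^{\otimes 2,\natural}$ as a linear combination of two-index maps of $p/3$-variation whose controls are explicit combinations of $\omega_\mathbf{G}$, the $L^2$-sup norms of $u$ and $\partial_x u$, and the drift controls $\omega_{b(\cdot,\cdot);(W^{1,\infty})^\ast}$.

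Finally, an application of the Sewing Lemma~\ref{lemma_sewing} in $(W^{1,\infty})^\ast$ yields
\begin{align*}
\|(\partial_x^k u)^{\otimes 2,\natural}_{s,t}\|_{(W^{1,\infty})^\ast}
\lesssim \omega^{3/p}\sup_{s\le r\le t}\|\partial_x^k u_r\|_{L^2}^2
+\omega^{1/p}\!\!\!\sum_{0\le n,m\le k}\!\!\omega_{b(\partial_x^n u,\partial_x^m u);(W^{1,\infty})^\ast}.
\end{align*}
Specialising to $k=0$ and $k=1$ gives exactly the two claimed bounds, where for $k=1$ the sums run over $\{0,1\}\times\{0,1\}$ and thus produce all four drift controls $\omega_{b(u,u)},\omega_{b(\partial_xu,u)},\omega_{b(u,\partial_xu)},\omega_{b(\partial_xu,\partial_xu)}$ appearing in the statement. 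The main technical obstacle is purely algebraic: ensuring that, after the compensation with $T_k$, every $p/2$-variation term arising from $\delta\mathbb{G}$ and from the tensor product of noises cancels exactly, so that the Sewing Lemma can be applied; this is the same Chen-relation cancellation already performed in Proposition~\ref{pro:lip_semilinear}, and the extension from $k=0$ to $k=1$ amounts to carefully iterating the Leibniz formula in the spatial derivative.
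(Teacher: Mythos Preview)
Your proposal is correct and follows essentially the same approach as the paper: apply the product formula to $\partial_x^k u\otimes\partial_x^k u$, use Chen's relation and the compensation trick with $T_k$ to reduce $\delta(\partial_x^k u)^{\otimes 2,\natural}$ to terms of $p/3$-variation, and conclude via the Sewing Lemma. The paper's presentation is slightly more economical in that it simply observes one can substitute $v=0$, $\mathbf{W}=\mathbf{0}$ (hence $z=u$, $\mathbf{Z}=\mathbf{G}$) directly into the already-derived remainder formula \eqref{eq:remainder_WZ} from Proposition~\ref{pro:lip_semilinear}, rather than redoing the algebra from scratch as you do.
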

\begin{proof}
	The estimate in this proposition follows by Proposition~\ref{pro:lip_semilinear}: one can compare a solution $u=\pi_1(u^0,\mathbf{G})$ with the null equation $0=v=\pi_1(0,\mathbf{0})$ and substitute $z=u-0$, $\mathbf{Z}=\mathbf{G}-\mathbf{0}$, $\mathbf{W}=\mathbf{0}$ into \eqref{eq:remainder_WZ}. This leads to the following expression of the remainder, for $k=0,1$,
	\begin{equation*}
	\begin{aligned}
	\delta (\partial^k_x u^{2,\natural})_{s,r,t}&= \sum_{n=0}^{k} \binom{k}{n} \partial^{k-n}_x G_{r,t}[\delta (\partial_x^n u\odot \partial^k_x u)_{s,r}-T_1]+\sum_{n=0}^{k} \binom{k}{n}  \partial^{k-n}_x \mathbb{G}_{r,t}\delta (\partial_x^n  u\odot \partial^k_x u)_{s,r}\\
	&+\partial_{x}^{k}[G_{r,t}u_s]\otimes\partial_x^k[ G_{r,t}u_s]-\partial_{x}^{k}[G_{r,t}u_r]\otimes\partial_x^k[ G_{r,t}u_r]\,.
	\end{aligned}
	\end{equation*}
	The drifts appearing  in the remainder estimate for $k=0$ are coming from $\delta (u\odot u)$.
	For  $k=1$, the drifts appearing are coming from the equations for
	\begin{align*}
	\delta (u\odot u)\,,\quad \delta (\partial_x u\odot u)\,,\quad \delta (u\odot \partial_x u)\,,\quad \delta (\partial_xu\odot \partial_xu)\,.
	\end{align*}
\end{proof}
We explicit the bounds for the drift in the case of the heat equation.
\begin{proposition}\label{pro:drift_heat_remainder}
	For all $s\leq t\in [0,T]$ the following bounds on the drift hold:
	\begin{align*}
	\omega_{b(u,u);(W^{1,\infty})^*}(s,t)\lesssim \int_{s}^{t}\|\nabla u_r\|^2_{L^2}\dd r+(t-s)\sup_{s\leq r\leq t}\|u_r\|^2_{L^2}\,,
	\end{align*}
	\begin{align*}
	\omega_{b(\partial_x u,\partial_x u);(W^{1,\infty})^*}(s,t)\lesssim \int_{s}^{t}\|\Delta u_r\|^2_{L^2}\dd r+(t-s)\sup_{s\leq r\leq t}\|\nabla u_r\|^2_{L^2}\,.
	\end{align*}
	The same bound holds for $\omega_{b(\partial_x u,u);(W^{1,\infty})^*},\omega_{b(u,\partial_x u);(W^{1,\infty})^*}$.
\end{proposition}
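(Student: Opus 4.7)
The statement is a direct integration-by-parts computation. I would unwind the definition first: since $b(u)=\Delta u$ and the quantity $b(\partial_x^k u,\partial_x^j u)$ arises as the drift of $\delta(\partial_x^k u\odot \partial_x^j u)$ through the product formula (Proposition~\ref{pro:product}), it is literally the symmetric bilinear expression $\Delta\partial_x^k u\odot \partial_x^j u$. Thus it suffices, for any test function $\phi\in W^{1,\infty}$ with $\|\phi\|_{W^{1,\infty}}\le 1$, to bound $|\langle \Delta\partial_x^k u \otimes \partial_x^j u,\phi\rangle|$ pointwise in time and then integrate.

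The key step is a single spatial integration by parts, shifting one derivative of the Laplacian onto the test function (which is admissible because $\phi$ carries one derivative of regularity):
\begin{equation*}
\langle \Delta u\otimes u,\phi\rangle = -\langle\nabla u\otimes\nabla u,\phi\rangle - \langle\nabla u\otimes u,\nabla\phi\rangle,
\end{equation*}
so that $\sup_\phi |\langle \Delta u\otimes u,\phi\rangle|\lesssim \|\nabla u\|_{L^2}^2+\|\nabla u\|_{L^2}\|u\|_{L^2}$. Integrating in time and applying Cauchy--Schwarz and Young's inequality to the cross term,
\begin{equation*}
\int_s^t \|\nabla u_r\|_{L^2}\|u_r\|_{L^2}\,\mathrm{d}r\le \int_s^t \|\nabla u_r\|_{L^2}^2\,\mathrm{d}r+(t-s)\sup_{s\le r\le t}\|u_r\|_{L^2}^2,
\end{equation*}
yields the first asserted bound on $\omega_{b(u,u);(W^{1,\infty})^*}(s,t)$.

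For the bound on $\omega_{b(\partial_x u,\partial_x u);(W^{1,\infty})^*}$ I would apply the same integration by parts to $\Delta\partial_x u\odot\partial_x u$, obtaining
\begin{equation*}
|\langle \Delta\partial_x u\otimes\partial_x u,\phi\rangle|\lesssim \|\phi\|_{L^\infty}\|D^2 u\|_{L^2}^2+\|\nabla\phi\|_{L^\infty}\|D^2 u\|_{L^2}\|\nabla u\|_{L^2},
\end{equation*}
and then controlling $\|D^2 u\|_{L^2}$ by $\|\Delta u\|_{L^2}$ (plus lower-order terms, which is standard on the torus via Fourier). The same Cauchy--Schwarz/Young step as before then delivers the claimed estimate, and the mixed cases $\omega_{b(\partial_x u,u);(W^{1,\infty})^*}$ and $\omega_{b(u,\partial_x u);(W^{1,\infty})^*}$ follow by an entirely analogous integration by parts (moving one derivative from the Laplacian onto $\phi$ and, where needed, redistributing an extra $\partial_x$ between the two factors).

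There is no substantive obstacle here; the only point requiring minor care is ensuring that after integration by parts one never lands with a term whose spatial regularity exceeds what $\phi\in W^{1,\infty}$ can absorb, which is why exactly one derivative is transferred at each step.
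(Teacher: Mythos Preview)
Your proposal is correct and matches the paper's proof essentially line for line: the same single integration by parts on $\langle \Delta u\odot u,\phi\rangle$ for the first bound, and the analogous computation on $\langle \partial_x \Delta u\odot \partial_x u,\phi\rangle$ (together with the torus elliptic estimate $\|D^2u\|_{L^2}\lesssim\|\Delta u\|_{L^2}$) for the second. The only cosmetic difference is that the paper absorbs the cross term directly into $\|\nabla u\|_{L^2}^2+\|u\|_{L^2}^2$ rather than writing out the Young-inequality step, but the content is identical.
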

\begin{proof}
	For every $\phi\in W^{1,\infty}$ the following equality holds for the Laplacian
	\begin{align*}
	\langle \Delta u_r\odot u_r,\phi  \rangle = -\langle \nabla u_r\odot \nabla u_r,\phi  \rangle-\langle \nabla  u_r\odot u_r,\nabla \phi  \rangle\,,
	\end{align*}
	Hence $|\langle \Delta u_r\odot u_r,\phi  \rangle|\lesssim [\|\nabla u_r\|^2_{L^2}+\|u_r\|^2_{L^2}]\|\phi\|_{W^{1,\infty}}$. The lower order terms in the drift are bounded analogously and lead to the same bound.
	We give an estimate for the drift of $\delta (\partial_xu\odot \partial_xu)$. For all $\phi\in W^{1,\infty}$, 
	\begin{align*}
	\langle\nabla[ \Delta u_r]\odot \nabla u_r,\phi  \rangle = -\langle \Delta u_r\odot \nabla^2 u_r,\phi  \rangle-\langle \Delta u_r\odot \nabla  u_r,\nabla \phi  \rangle\,,
	\end{align*}
	and thus we can bound
	\begin{align*}
	|\langle\nabla[ \Delta u_r]\odot \nabla u_r,\phi  \rangle |\lesssim \|\Delta u_r\|_{L^2} ^2\|\phi\|_{L^\infty}+\|\Delta u_r\|_{L^2} \|\nabla  u_r\|_{L^2}\|\nabla \phi\|_{L^\infty}\,.
	\end{align*}
	The drifts coming from $\delta (\partial_x u\odot u)\,,\delta (u\odot \partial_x u)$ can be estimated by similar computations.
	
\end{proof}
We state the a priori estimates.
\begin{theorem}
	The sequence $(u^n)_n$ is uniformly bounded in $L^\infty(H^1)\cap L^2(H^2)\cap\mathcal{V}^p(L^2)$.
\end{theorem}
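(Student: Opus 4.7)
The strategy is to carry out the now-standard rough a priori argument at two levels of regularity ($L^2$ and $H^1$), exploiting the fact that the smooth approximations $(G^n,\mathbb{G}^n)$ converge in the $p$-variation rough-driver metric and are therefore uniformly bounded in $\mathcal{RD}^p(\mathbb R^n;E)$; set $\omega_{\mathbf{G}}(s,t):=\sup_n\|\mathbf{G}^n\|^p_{\mathcal{RD}^p([s,t];E)}$, which is a finite control independent of $n$.

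First I would work at the $L^2$ level. Apply the product formula (Proposition~\ref{pro:product}) to $u^n\otimes u^n$, test against $\mathbf 1\in W^{1,\infty}$, and read off
\[
\delta\|u^n\|_{L^2}^2{}_{s,t} \;=\; 2\int_s^t \langle b(u^n_r),u^n_r\rangle\,\dd r + 2\langle G^n_{s,t}u^n_s,u^n_s\rangle + \langle \mathbb{G}^n_{s,t}u^n_s,u^n_s\rangle + \langle G^n_{s,t}u^n_s,G^n_{s,t}u^n_s\rangle + \langle (u^n)^{\otimes2,\natural}_{s,t},\mathbf 1\rangle.
\]
By the drift bound of Proposition~\ref{pro:drift_heat_remainder}, the drift contributes $-2\int_s^t\|\nabla u^n_r\|_{L^2}^2\dd r$ plus a lower order $\|u^n_r\|_{L^2}^2$ term. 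The noise terms are bounded by $\omega_{\mathbf{G}}^{1/p}(s,t)\sup_{[s,t]}\|u^n_r\|_{L^2}^2$. Plugging Propositions~\ref{pro:remainder_drift_abstract}--\ref{pro:drift_heat_remainder} into the remainder gives
\[
|\langle (u^n)^{\otimes 2,\natural}_{s,t},\mathbf 1\rangle|\;\lesssim\;\omega(s,t)^{1/p}\Big[\int_s^t\|\nabla u^n_r\|_{L^2}^2\dd r+(t-s)\sup_{[s,t]}\|u^n_r\|_{L^2}^2\Big]+\omega(s,t)^{3/p}\sup_{[s,t]}\|u^n_r\|_{L^2}^2,
\]
where $\omega=\omega_{\mathbf G}+\mathrm{Leb}$. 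Choosing a sufficiently fine partition so that $\omega(s,t)^{1/p}\ll 1$, the $\int\|\nabla u^n\|^2$ term in the remainder is absorbed into the regularising term of the drift. Setting $E_t:=\|u^n_t\|_{L^2}^2+\int_0^t\|\nabla u^n_r\|_{L^2}^2\dd r$, the rough Gronwall Lemma~\ref{lem:gronwall} yields
\[
\sup_{0\le t\le T}\|u^n_t\|_{L^2}^2+\int_0^T\|\nabla u^n_r\|_{L^2}^2\dd r\;\lesssim\; \exp(\omega(0,T)/\tau)\,[\|u^0\|_{L^2}^2+\omega_{\mathbf{G}}^{1/p}(0,T)],
\]
which is uniform in $n$.

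Next I would repeat the same procedure at the level of $\partial_x u^n$. Apply Proposition~\ref{pro:product} to $\partial_x u^n\otimes \partial_x u^n$ and test against $\mathbf 1$; the leading drift term contributes $-2\int_s^t\|\Delta u^n_r\|_{L^2}^2\dd r$ by Proposition~\ref{pro:drift_heat_remainder}, with the cross drifts bounded by combinations of $\|\nabla u^n\|_{L^2}^2$ and $\|\Delta u^n\|_{L^2}\|\nabla u^n\|_{L^2}$ (absorbed by Young's inequality). The noise terms are controlled by $\omega_{\mathbf G}^{1/p}(s,t)\sup_{[s,t]}\|u^n_r\|_{H^1}^2$, since the rough driver acts continuously on $H^1$ (its coefficients lie in $E=W^{2,\infty}$). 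The $H^1$-level remainder estimate from Proposition~\ref{pro:remainder_drift_abstract} gives a bound in terms of all four drifts $\omega_{b(\partial_x^k u^n,\partial_x^j u^n)}$, $k,j\in\{0,1\}$, each estimated by Proposition~\ref{pro:drift_heat_remainder}. Setting $F_t:=\|\partial_x u^n_t\|_{L^2}^2+\int_0^t\|\Delta u^n_r\|_{L^2}^2\dd r$, another application of Lemma~\ref{lem:gronwall}, combined with the $L^\infty(L^2)\cap L^2(H^1)$ bound already obtained, gives
\[
\sup_{0\le t\le T}\|\nabla u^n_t\|_{L^2}^2+\int_0^T\|\Delta u^n_r\|_{L^2}^2\dd r\;\lesssim\; \exp(\omega(0,T)/\tau)\,[\|\nabla u^0\|_{L^2}^2+\omega_{\mathbf{G}}^{1/p}(0,T)],
\]
uniformly in $n$.

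Finally, to get the uniform $\mathcal{V}^p(L^2)$ bound, I would test the equation itself in $H^{-1}$: from
\[
\|\delta u^n_{s,t}\|_{H^{-1}}\lesssim \int_s^t\|b(u^n_r)\|_{H^{-1}}\dd r + \|G^n_{s,t}u^n_s\|_{H^{-1}}+\|\mathbb{G}^n_{s,t}u^n_s\|_{H^{-1}}+\|(u^n)^\natural_{s,t}\|_{H^{-1}},
\]
and using $\|b(u^n)\|_{H^{-1}}=\|\Delta u^n\|_{H^{-1}}\lesssim \|\nabla u^n\|_{L^2}$, together with $\omega_{\mathbf G}^{1/p}$ bounds and the already-proven $L^\infty(H^1)\cap L^2(H^2)$ estimate, one controls $\|u^n\|_{\mathcal V^p(H^{-1})}$, which then gives $\|u^n\|_{\mathcal V^p(L^2)}$ by the interpolation argument of Lemma~\ref{lemma:embedding_tornstein} (using the $L^\infty(H^1)$ bound and the $\mathcal V^p(H^{-1})$ bound). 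The main obstacle is the first step: arranging the absorption of the $\int\|\nabla u^n\|_{L^2}^2$ and $\int\|\Delta u^n\|_{L^2}^2$ contributions appearing in the remainder into the negative dissipation terms of the drift, which forces one to work on partitions where $\omega$ is small enough so that Gronwall produces a constant depending only on $\omega(0,T)$ and not on $n$. Once this is done, uniform boundedness in $L^\infty(H^1)\cap L^2(H^2)\cap\mathcal V^p(L^2)$ follows directly.
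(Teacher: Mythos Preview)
Your two energy-level arguments for the $L^\infty(H^1)\cap L^2(H^2)$ bound are essentially the paper's proof: product formula on $u^n\otimes u^n$ and $\partial_x u^n\otimes\partial_x u^n$, drift bounds from Proposition~\ref{pro:drift_heat_remainder}, remainder from Proposition~\ref{pro:remainder_drift_abstract}, then rough Gronwall. That part is fine.

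The $\mathcal V^p(L^2)$ step has a gap. Lemma~\ref{lemma:embedding_tornstein} is a \emph{compactness} statement (an Aubin--Lions type embedding into $L^p([0,T];B)$), not an interpolation inequality; it does not let you upgrade a $\mathcal V^p(H^{-1})$ bound plus an $L^\infty(H^1)$ bound to a $\mathcal V^p(L^2)$ bound. If you try genuine interpolation instead, $\|\delta u^n_{s,t}\|_{L^2}\lesssim\|\delta u^n_{s,t}\|_{H^1}^{1/2}\|\delta u^n_{s,t}\|_{H^{-1}}^{1/2}$ only gives $\mathcal V^{2p}(L^2)$, which is too weak. The paper instead estimates $\|\delta u^n_{s,t}\|_{L^2}$ \emph{directly} from the equation in $L^2$:
\[
\|\delta u^n_{s,t}\|_{L^2}\le \int_s^t\|\Delta u^n_r\|_{L^2}\dd r+\|G^n_{s,t}u^n_s\|_{L^2}+\|\mathbb G^n_{s,t}u^n_s\|_{L^2}+\|u^{n,\natural}_{s,t}\|_{L^2},
\]
which is legitimate because the equation holds as an equality in $L^2$ (Definition~\ref{def:sol_rough}). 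The drift term is $\lesssim(t-s)^{1/2}\|u^n\|_{L^2(H^2)}$, the noise terms are $\lesssim\omega_{\mathbf G}^{1/p}(s,t)\|u^n\|_{L^\infty(L^2)}$, and the remainder is in $\mathcal V^{p/3}_2(L^2)$ with a uniform bound inherited from the energy estimates already proved. There is no need to pass through $H^{-1}$ at all.
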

\begin{proof}
	From estimates analogous as in Proposition~\ref{pro:lip_semilinear} (which can be applied at the level of the approximations) and from Proposition~\ref{pro:drift_heat_remainder} and the rough Gronwall's Lemma~\ref{lem:gronwall},
	\begin{align*}
	\sup_{0\leq t\leq T}\|u^n\|^2_{L^2}+\int_{0}^{t}\|\nabla u^n_r\|^2_{L^2}\dd r\lesssim\|u^{0}\|^2_{L^2}(1+\omega_{G}^{1/p})\,,
	\end{align*}
	where we used that $\|u^{n,0}\|_{L^2}\leq\|u^0\|_{L^2}$. Analogously, for the higher order estimate we obtain
	\begin{align*}
	\sup_{0\leq t\leq T}\|\nabla u^n\|^2_{L^2}+\int_{0}^{t}\|\Delta u^n_r\|^2_{L^2}\dd r\lesssim \|\nabla u^{0}\|^2_{L^2}(1+\omega_{G}^{1/p})\,.
	\end{align*}
	For the bound in $\mathcal{V}^p(L^2)$, the bound holds
	\begin{align*}
	\|\delta u^n_{s,t}\|_{L^2}\lesssim (t-s)\|\nabla u^n\|_{L^\infty(L^2)}+\omega_{\mathbf{G}}^{1/p}(s,t)+\omega^{1/p}(s,t)\,,
	\end{align*}
	for a generic control $\omega^{1/p}$.

\end{proof}
From the uniform bound of the sequence in $L^\infty(H^1)\cap L^2(H^2)\cap \mathcal{V}^p(L^2)$, we can apply the compactness argument and infer that there exists a solution to \eqref{eq:heat_equation} (use e.g. Lemma~\ref{lemma:embedding_tornstein}). 

\begin{proposition}
	Under the assumptions and in the notation of this section, there exists a unique strong solution $u=\pi_1(u^0,\mathbf{G})$ to \eqref{eq:heat_equation} in the sense of Definition~\ref{def:sol_rough}. Moreover, the map $\mathbf{G}\mapsto u$ is locally Lipschitz continuous from $\mathcal{RD}^p(W^{2,\infty})$ to $L^\infty(H^1)\cap L^2(H^2)\cap\mathcal{V}^p(L^2)$.
\end{proposition}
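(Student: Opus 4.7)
The plan is to combine the a priori estimates already derived in this section with the local Lipschitz continuity from Proposition~\ref{pro:lip_semilinear} via a compactness argument and smooth approximation of the rough driver.

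First, I would fix a smooth approximating sequence $(\mathbf{G}^n)_n \subset \mathcal{RD}^p(\mathbb{R}^n;E)$ converging to $\mathbf{G}$ in the metric $\rho$; such an approximation exists because $\mathbf{G}$ is a geometric rough driver (by the same construction used to approximate geometric rough paths by smooth ones via $\tau_{g(x)}\mathbf{X}^{\varepsilon}$). For each $n$, classical parabolic theory yields a unique smooth solution $u^n \in L^\infty(H^1)\cap L^2(H^2)\cap C^1(L^2)$ to the approximating equation, and the uniform estimates derived via Propositions~\ref{pro:remainder_drift_abstract} and~\ref{pro:drift_heat_remainder} combined with the rough Gronwall Lemma~\ref{lem:gronwall} bound $(u^n)_n$ uniformly in $L^\infty(H^1)\cap L^2(H^2)\cap \mathcal{V}^p(L^2)$. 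Applying Lemma~\ref{lemma:embedding_tornstein} with $B_0 = H^2$, $B = H^1$, $B_1 = L^2$ and the uniform $\mathcal{V}^p(L^2)$ bound, I extract a subsequence (not relabelled) converging strongly in $L^2(H^1)\cap C([0,T];L^2)$ and weakly-$*$ in $L^\infty(H^1)$, weakly in $L^2(H^2)$, to a limit $u$.

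Next, I would pass to the limit in the approximating equation tested against any $\phi \in W^{2,\infty}$. The drift term $\int_s^t \langle \Delta u^n_r, \phi\rangle \dd r$ converges by weak convergence in $L^2(H^2)$; the noise terms $\langle G^n_{s,t} u^n_s + \mathbb{G}^n_{s,t} u^n_s, \phi\rangle$ converge from $\mathbf{G}^n \to \mathbf{G}$ in $\mathcal{RD}^p(W^{2,\infty})$ and from the pointwise-in-time strong convergence of $u^n_s \to u_s$ in $L^2$ (a consequence of the strong $C([0,T]; L^2)$ convergence coming from the $\mathcal{V}^p(L^2)$ bound and Arzel\`a--Ascoli). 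The remainder $u^\natural_{s,t}$ is then defined implicitly by \eqref{eq:equazione_heat_def_1} in $L^2$, and the uniform bound on $u^{n,\natural}$ in $\mathcal{V}^{p/3}_2(L^2)$ furnished by the sewing Lemma~\ref{lemma_sewing} with controls depending only on $\mathbf{G}, u^0$ passes to the limit by lower semicontinuity of the $p/3$-variation norm. The energy inequality in Definition~\ref{def:sol_rough} follows from lower semicontinuity of the norms under the weak and weak-$*$ convergences.

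For uniqueness and local Lipschitz continuity, I would directly invoke the estimate already established in Proposition~\ref{pro:lip_semilinear}: given any two solutions $u = \pi_1(u^0, \mathbf{G})$ and $v = \pi_1(u^0, \mathbf{W})$ in the sense of Definition~\ref{def:sol_rough}, that proof yields
\begin{align*}
\|u - v\|_{L^\infty(H^1)\cap L^2(H^2)\cap \mathcal{V}^p(L^2)} \lesssim C(\|u^0\|_{H^1}, \mathbf{G}, \mathbf{W}, T)\, \rho(\mathbf{G}, \mathbf{W}),
\end{align*}
which is precisely the local Lipschitz continuity of $\Phi_1$, and setting $\mathbf{G} = \mathbf{W}$ forces $u = v$. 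The main obstacle, though mostly defused by the work already done, is verifying that the pointwise-in-$(s,t)$ limit of the approximate remainder $u^{n,\natural}_{s,t}$ —obtained as the $L^2$-difference of the other (convergent) terms in the equation — genuinely inherits the quantitative $\mathcal{V}^{p/3}_2(L^2)$ estimate uniformly in $n$; this is the step that ensures the object $u$ constructed by compactness is in fact a solution in the precise sense of Definition~\ref{def:sol_rough} rather than merely a weak limit of regularised solutions.
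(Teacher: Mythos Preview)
Your proposal is correct and follows essentially the same approach as the paper: smooth approximation of the geometric rough driver, uniform a priori bounds via Propositions~\ref{pro:remainder_drift_abstract}--\ref{pro:drift_heat_remainder} and the rough Gronwall lemma, compactness through Lemma~\ref{lemma:embedding_tornstein}, and then uniqueness and local Lipschitz continuity from Proposition~\ref{pro:lip_semilinear}. The paper in fact states this proposition as a summary of the preceding results and only writes ``apply the compactness argument'' for the existence part, so you have simply spelled out the limit-passage and remainder-identification steps that the paper leaves implicit.
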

\begin{remark}
	As a consequence of the continuity of the It\^o-Lyons map, a support theorem and a large deviation principle follow.
\end{remark}

\section{Appendix C: well posedness of the limit equation and continuity of the It\^o-Lyons map.}\label{Appendix_B}
In this section we discuss the well posedness of the linear equation 
	\begin{equation}\label{equazione_r_appendix}
\delta Y_{s,t}=\int_{s}^{t}b'(u_r)Y_r\dd r+g_{s,t}Y_s+\mathbb{g}_{s,t}Y_s+([Wg]+[gW])_{s,t}u_s+W_{s,t}u_s+Y^\natural_{s,t}\,,
\end{equation}
where $u$ is the unique solution to 
\begin{align}\label{equazione_g_appendix}
\delta u_{s,t}=\int_{s}^{t}b(u_r)\dd r+g_{s,t}u_s+\mathbb{g}_{s,t}u_s+u^\natural_{s,t}\,.
\end{align}
Assume to be in the physical dimensions $d=1,2,3$ and denote by $H:=H^1(\mathbb{T}^d;\mathbb{R}^n)$, $V=H^2(\mathbb{T}^d;\mathbb{R}^n)$ and $E:=W^{2,\infty}(\mathbb{T}^d;\mathbb{R}^n)$. We state the notion of solution.
	\begin{definition}\label{def:sol_additive_appendix}
	Let $(\mathbf{W},\mathbf{g})$ be random $p$-compatible directions with joint lift $\mathbf{Z}$ (where the directions are rough drivers). Let $u^0\in H^1$ and $u\in L^\infty(H^1)\cap L^2(H^2) \cap\mathcal{V}^p(L^2)$ be the unique solution in the sense of Definition~\ref{def:sol_rough} to \eqref{equazione_g_appendix} and such that $u_0=u^0$.
	We say that $Y\in L^\infty(H^1)\cap L^2(H^2)\cap \mathcal{V}^p(L^2)$ is a solution with initial condition $0\in L^2$ if there exists a two index map $Y^\natural\in \mathcal{V}^{p/3}_2(L^2)$ defined implicitly by the equality \eqref{equazione_r_appendix} in $L^2$, with $Y_0=0$ as an equality in $L^2$.
	\end{definition}
    In what follows, we denote by $Y^{\mathbf{W}}:=\pi(y^0,\mathbf{g},y,(\mathbf{W},\mathbf{g}))$ the unique solution to \eqref{equazione_r_appendix} coupled with $u=\pi(u^0,\mathbf{g})$  with respect to the compatible direction $(\mathbf{W},\mathbf{g})$, in the sense of Definition~\ref{def:sol_additive_appendix}. The solution $u=\pi(u^0,\mathbf{g})$ is intended in the sense of Definition~\ref{def:sol_rough}.
	We work under the following assumptions on the Fréchet derivative of $b$.
	\begin{assumption}\label{assumption:drift_linear_b_prime}
	Let $b:H^2\rightarrow L^2$ be Fréchet differentiable, with Fréchet derivative $b':H^2\rightarrow \mathcal{L}(H^2;L^2)$.
	Assume that for all $u\in L^\infty(H^1)\cap L^2 (H^2)$, for all $f,h\in L^\infty(H^1)\cap L^2 (H^2)$ and for all $\phi\in W^{1,\infty}$ there exists  $m,n\in\mathbb{N}$ such that
	\begin{align*}
	\int_{0}^{t}|\langle [b'(u_r)h_r]\otimes  f_r,\phi\rangle|\dd r&\leq [\|u\|_{L^\infty(H^1)\cap L^2(H^2)}]^m\|h\|_{L^\infty(H^1)}\|f\|_{L^\infty(H^1)}\|\phi\|_{L^\infty}(t-s)\\
	&\quad + [\|u\|_{L^\infty(H^1)\cap L^2(H^2)}]^m\|\nabla h\|_{L^2(L^2)}\|f\|_{L^2(H^1)}\|\phi\|_{W^{1,\infty}}\,,
	\end{align*}
	\begin{align*}
	\int_{0}^{t}|\langle \nabla [b'(u_r)h_r]\otimes \nabla f_r,\phi\rangle|\dd r&\leq [\|u\|_{L^\infty(H^1)\cap L^2(H^2)}]^n\|h\|_{L^\infty(H^1)}\|f\|_{L^\infty(H^1)}\|\phi\|_{L^\infty}(t-s)\\
	&\quad + [\|u\|_{L^\infty(H^1)\cap L^2(H^2)}]^n\|\Delta h\|_{L^2(L^2)}\|f\|_{L^2(H^1)}\|\phi\|_{W^{1,\infty}}\,.
	\end{align*}
	Moreover, it also holds that
	\begin{align*}
	\int_{0}^{t}\langle b'(u_r) f_r,f_r \rangle\dd r \leq -\|\nabla f\|^2_{L^2(L^2)}+\|u\|_{L^\infty(H^1)\cap L^2(H^2)}\|f\|_{L^\infty(L^2)}^2 \,,
	\end{align*}
	\begin{align*}
	\int_{0}^{t} \langle \nabla[b'(u_r) f_r],\nabla f _r\rangle \dd r\leq -\|\Delta  f\|^2_{L^2(L^2)}+\|u\|_{L^\infty(H^1)\cap L^2(H^2)}\|\nabla f\|_{L^\infty(L^2)}^2 \,.
	\end{align*}
\end{assumption}

\begin{proposition}\label{pro:continuity_limit_equation}
	Let $(\mathbf{g},\mathbf{W})$ be random $p$-compatible directions as in Construction \ref{constr:Constr_p_q} with joint lift $\mathbf{Z}$ (where the directions are rough drivers).
	In the setting of this section, equation \eqref{eq:Dphi_Y_not_centered} admits a unique solution $Y^\mathbf{g}\in L^\infty(H^1)\cap L^2(H^2)\cap C([0,T];L^2)$ in the sense of Definition~\ref{def:sol_additive_appendix}. Moreover, the map 
	\begin{align*}
	D\Phi[\mathbf{g}](\cdot):\mathcal{RP}^p(\mathbb{R}^d;E)&\longrightarrow \mathcal{Y}:=C([0,T];L^2)\cap L^2(H^2)\cap L^\infty(H^1)\\
	\mathbf{W}&\longmapsto D\Phi[\mathbf{g}](\mathbf{W})=Y^\mathbf{W}\,,
	\end{align*}
	is locally Lipschitz continuous.
\end{proposition}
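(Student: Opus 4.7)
The strategy is to adapt the arguments used in Proposition~\ref{pro:lip_semilinear} (Appendix~B) and in Theorem~\ref{teo:CLT} to the linear equation \eqref{equazione_r_appendix}. Since the coefficient $b'(u)$ depends on the fixed data $u=\pi_1(u^0,\mathbf{g})$ and the equation is linear in $Y$, everything reduces to obtaining a suitable a priori estimate on the remainder $Y^\natural$; existence, uniqueness and continuity then follow by the usual rough-path machinery.

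\emph{Step 1 (smooth approximation and existence).} I would approximate the joint rough driver $\mathbf{Z}$ by smooth canonical lifts $\mathbf{Z}^n=(\mathbf{g}^n,\mathbf{W}^n)$ converging in the $p$-variation distance (possible since we work with geometric rough drivers), solve the resulting linear parabolic PDE with smooth coefficients by classical semigroup/Galerkin methods to obtain $Y^n$, and pass to the limit once uniform bounds are established.

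\emph{Step 2 (a priori estimate via the product formula).} Apply Proposition~\ref{pro:product} to $Y^n\otimes Y^n$ and to $\partial_x^k Y^n\otimes \partial_x^k Y^n$ for $k=0,1$, and test against $\mathbf{1}\in W^{1,\infty}$. The remainder $\delta(Y^{n,\otimes 2,\natural})_{s,r,t}$ must then be re-expressed as a linear combination of terms of at least $p/3$-variation. The algebraic manipulation is of the same nature as in Proposition~\ref{pro:lip_semilinear}: the terms of $p$-variation coming from $\delta(Y\odot Y)_{s,r}$ and $\delta(u\odot Y)_{s,r}$ multiplied by $g_{r,t}$, $W_{r,t}$ must be added and subtracted so that after compensation only $p/3$-variation increments remain. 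The genuinely new point is the appearance of the additive-type contribution $([Wg]+[gW])_{s,t}u_s+W_{s,t}u_s$: the terms of $p/2$-variation $\delta([Wg]+[gW])_{s,r,t}\,u_s\odot Y_s$ are exactly matched, via Chen's relation for the joint lift $\mathbf{Z}$, against cross-products of the form $W_{r,t}\,g_{s,r}\,u_s\odot Y_s$ and $W_{s,t}u_s\otimes W_{s,t}u_s$ generated by the square of the noise. This is precisely the mechanism used in the proof of Theorem~\ref{teo:CLT} and yields
\[
\|Y^{n,\otimes 2,\natural}_{s,t}\|_{(W^{1,\infty})^*}\lesssim \omega^{3/p}(s,t)\sup_{s\leq r\leq t}\|Y^n_r\|_{L^2}^2+\omega^{1/p}(s,t)\int_s^t\|Y^n_r\|_{H^1}^2\dd r+\omega_{\mathbf{Z},u}^{3/p}(s,t),
\]
and an analogous bound at the level of $\partial_x Y^n$, where $\omega_{\mathbf{Z},u}$ is a control built out of the joint-lift norms and of $\|u\|_{L^\infty(H^1)\cap L^2(H^2)}$.

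\emph{Step 3 (drift estimate and rough Gronwall).} Using Assumption~\ref{assumption:drift_linear_b_prime}, the drift contribution $\int_s^t\langle b'(u_r)Y^n_r,Y^n_r\rangle\dd r$ produces a negative $\|\nabla Y^n\|_{L^2(L^2)}^2$ term plus lower-order terms polynomial in $\|u\|_{L^\infty(H^1)\cap L^2(H^2)}$, and analogously at the $\nabla$-level. Combining with the remainder estimate and applying the rough Gronwall Lemma~\ref{lem:gronwall} yields a uniform bound of $Y^n$ in $L^\infty(H^1)\cap L^2(H^2)\cap\mathcal{V}^p(L^2)$ depending only on $\omega_{\mathbf{Z}}$ and $\|u^0\|_{H^1}$. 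A compactness argument based on Lemma~\ref{lemma:embedding_tornstein} provides a limit $Y^{\mathbf{W}}$ which, by standard identification of linear terms, solves \eqref{equazione_r_appendix} in the sense of Definition~\ref{def:sol_additive_appendix}.

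\emph{Step 4 (uniqueness and local Lipschitz continuity).} Linearity makes uniqueness immediate: the difference of two solutions with the same data solves the homogeneous version of \eqref{equazione_r_appendix} (no $W,\mathbb{W}$ nor cross terms), so the estimates of Step~2--3 with vanishing forcing and zero initial condition force it to be zero. For continuity in $\mathbf{W}$, fix $\mathbf{g}$ and consider $Z:=Y^{\mathbf{W}_1}-Y^{\mathbf{W}_2}$; it satisfies an equation of the same type where the forcing term is now $(W_1-W_2)_{s,t}u_s+([(W_1-W_2)g]+[g(W_1-W_2)])_{s,t}u_s$, controlled by $\rho(\mathbf{Z}_1,\mathbf{Z}_2)$ and hence by $\rho(\mathbf{W}_1,\mathbf{W}_2)$ thanks to the Young continuity of the joint lift built in Construction~\ref{constr:Constr_p_q}. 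Repeating Step~2--3 on $Z$ gives the locally Lipschitz bound.

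\emph{Main obstacle.} The delicate point is the algebraic bookkeeping in Step~2: because $Y^{\natural}$ lives in $\mathcal{V}_2^{p/3}$, every term of $p$- and $p/2$-variation appearing in $\delta Y^{\natural}_{s,r,t}$ must be produced and cancelled exactly. The additive contributions $W_{s,t}u_s$ and the crossed integrals $[Wg]+[gW]$ do not fit the purely multiplicative template of Proposition~\ref{pro:lip_semilinear}, so the compensation has to use Chen's relation for the joint lift $\mathbf{Z}$ precisely as in Theorem~\ref{teo:CLT}. Once the correct cross-compensations are identified, the remainder of the proof is routine rough-paths analysis.
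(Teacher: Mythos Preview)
Your proposal is correct and follows essentially the same approach as the paper: the paper also derives the remainder estimate for $\partial_x^k z^{\otimes 2,\natural}$ (with $z=Y^{\mathbf{W}}-Y^{\mathbf{G}}$) by the same add-and-subtract compensation of $T_1,T_2$ using Chen's relation for the joint lift, invokes Assumption~\ref{assumption:drift_linear_b_prime} for the drift, applies the rough Gronwall Lemma to get local Lipschitz continuity (hence uniqueness by taking $\mathbf{G}=\mathbf{W}$), and then obtains existence by smooth approximation of $\mathbf{W}$ plus the compactness Lemma~\ref{lemma:embedding_tornstein}. The only cosmetic difference is the order of presentation (the paper proves continuity/uniqueness first and existence last, whereas you do the reverse).
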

\begin{proof}
	Assume that there exists $Y^{\mathbf{W}}=\pi(u^0,\mathbf{g},u,(\mathbf{g},\mathbf{W})), Y^{\mathbf{G}}=\pi(u^0,\mathbf{g},u,(\mathbf{g},\mathbf{G}))$. We study the local Lipschitz continuity of the map $\mathbf{W}\mapsto Y^{\mathbf{W}}$ and, at the same time, the uniqueness of the equation. We introduce the notation $z:=Y^{\mathbf{W}}- Y^{\mathbf{G}}$, $[Zg]+[gZ]:=([gW]+[Wg])-([gG]+[Gg])$ and consider the squared equation $\partial_x^k z\otimes \partial_x^k  z$
	\begin{align*}
	\delta (\partial_x^k z\otimes  \partial_x^k z )_{s,t}&=\int_{s}^{t}\partial_x^k[b'(u_r)z_r]\odot \partial_x^kz_r \dd r+\partial_x^k[Z_{s,t}u_s]\odot\partial_x^kz_s+\partial_x^k[g_{s,t}z_s]\odot \partial_x^k z_s\\
	&\quad+\partial_x^k[[[Zg]+[gZ]]_{s,t}u_s]\odot \partial_x^k z_s+\partial_x^k[[gg]_{s,t}z_s]\odot \partial_x^kz_s\\
	&\quad+\partial_x^k[Z_{s,t}u_s+g_{s,t}z_s]\otimes \partial_x^k[Z_{s,t}u_s+g_{s,t}z_s]+\partial_x^kz^{2,\natural}_{s,t}\\
	&=\int_{s}^{t}\partial_x^k[b'(u_r)z_r]\odot \partial_x^kz_r \dd r+I_{s,t}+\mathbb{I}_{s,t}+\tilde{\mathbb{I}}_{s,t}+\partial_x^kz^{2,\natural}_{s,t}\,,
	\end{align*}
	where $I$ correspond to the terms of $p$-variation in the first line, $\mathbb{I}$ are the terms of $p/2$-variation in the second line and $\tilde{\mathbb{I}}$ are the terms of $p/2$-variation in the third line.
	We apply $\delta(\cdot)_{s,r,t}$ to the remainder term $\partial_x^k z^{2,\natural}$ and obtain
	\begin{align*}
	\delta I_{s,r,t}&=-\sum_{n=0}^{k} \binom{k}{n} \partial_x^{k-n} Z_{r,t}\delta (\partial_x^ nu\odot \partial_x^k z)_{s,r}- \sum_{n=0}^{k} \binom{k}{n} \partial_x^{k-n}  g_{r,t}\delta (\partial_x^n z\odot \partial_x^k z)_{s,r}\,,
	\end{align*}
	\begin{align*}
	\delta \mathbb{I}_{s,r,t}&=\partial^k_x[\delta [[Zg]+[gZ]]_{s,r,t}u_s]\odot \partial^k_x z_s-\sum_{n=0}^{k} \binom{k}{n} \partial_x^{k-n} [[Zg]+[gZ]]_{r,t}\delta (\partial_x^n u\odot z)_s\\
	&\quad+\partial^k_x[\delta [gg]_{s,r,t}z_s]\odot  \partial^k_x z_s-\sum_{n=0}^{k} \binom{k}{n} \partial_x^{k-n} [gg]_{r,t}\delta (\partial_x^n z\odot z)_{s,r}\,,
	\end{align*}
	\begin{align*}
	\delta \tilde{\mathbb{I}}_{s,r,t}&=\partial_{x}^{k}[Z_{r,t}u_s+g_{r,t}z_s]\odot \partial_{x}^{k}[Z_{s,r}u_s+g_{s,r}z_s]\\
	&\quad+\partial_{x}^{k}[Z_{r,t}u_s+g_{r,t}z_s]\otimes\partial_x^k[ Z_{r,t}u_s+g_{r,t}z_s]-\partial_{x}^{k}[Z_{r,t}u_r+g_{r,t}z_r]\otimes\partial_x^k[ Z_{r,t}u_r+g_{r,t}z_r]\,.
	\end{align*}
	The following terms of only $p/2$-variation
	\begin{equation}\label{eq:da_compensare_MDP}
	\begin{aligned}
	\partial^k_x[\delta [[Zg]+[gZ]]_{s,r,t}u_s]\odot \partial^k_x z_s+\partial^k_x[\delta [gg]_{s,r,t}z_s]\odot  \partial^k_x z_s\\
	+\partial^k_x[Z_{r,t}u_s+g_{r,t}z_s]\odot\partial^k_x [Z_{s,r}u_s+g_{s,r}z_s]
	\end{aligned}
	\end{equation}
	compensate by adding and subtracting the elements $T_1$ from $\delta (\partial_x^n u\odot \partial^k_x z)_{s,r}$, where
	\begin{align*}
	T_1:=\partial_x^n[g_{s,r} u_s] \odot \partial_x^k z_s +\partial_x^n u_s \odot \partial_x^k[Z_{s,r}u_s+g_{s,r}z_s]
	\end{align*}
	and $T_2$ from $\delta (\partial_x^n z\odot  \partial_x^k z)_{s,r}$, where
	\begin{align*}
	T_2:=\partial_x^n z_s \odot \partial_x^k [Z_{s,r}u_s+g_{s,r}z_s]+ \partial_x^n [Z_{s,r}u_s+g_{s,r}z_s]\odot \partial_x^k z_s\,.
	\end{align*}
	We write explicitly
	\begin{align*}
	&\quad-\sum_{n=0}^{k} \binom{k}{n} \partial_x^{k-n}Z_{r,t}T_1- \sum_{n=0}^{k} \binom{k}{n} \partial_x^{k-n}g_{r,t}T_2\\
	&=-\sum_{n=0}^{k} \binom{k}{n} [\partial_x^{k-n}Z_{r,t}\partial_x^n[g_{s,r} u_s] \odot \partial_x^k z_s -\partial_x^{k-n}Z_{r,t}\partial_x^n u_s \odot \partial_x^k[Z_{s,r}u_s+g_{s,r}z_s]]\\
	&\quad-\sum_{n=0}^{k} \binom{k}{n}[\partial_x^{k-n}g_{r,t}\partial_x^n z_s \odot \partial_x^k [Z_{s,r}u_s+g_{s,r}z_s]- \partial_x^{k-n}g_{r,t}\partial_x^n [Z_{s,r}u_s+g_{s,r}z_s]\odot \partial_x^k z_s]\\
	&=-\partial_x^{k}[Z_{r,t}g_{s,r} u_s] \odot \partial_x^k z_s -\partial_x^{k}[Z_{r,t}u_s ]\odot \partial_x^k[Z_{s,r}u_s+g_{s,r}z_s]\\
	&\quad -\partial_x^{k}[g_{r,t} z_s ]\odot \partial_x^k [Z_{s,r}u_s+g_{s,r}z_s]-\partial_x^{k}[g_{r,t} Z_{s,r}u_s+g_{r,t}g_{s,r}z_s]\odot \partial_x^k z_s]\,,
	\end{align*}
	which erases with the elements in \eqref{eq:da_compensare_MDP}. In conclusion, we can rewrite the remainder as
	\begin{align*}
	\delta(\partial_x^k  z^{2,\natural})_{s,r,t}&=\sum_{n=0}^{k} \binom{k}{n} \partial_x^{k-n} Z_{r,t}[\delta (\partial_x^ nu\odot \partial_x^k z)_{s,r}-T_1]+ \sum_{n=0}^{k} \binom{k}{n} \partial_x^{k-n}  g_{r,t}[\delta (\partial_x^n z\odot \partial_x^k z)_{s,r}-T_2]\\
	&\quad+\sum_{n=0}^{k} \binom{k}{n} \partial_x^{k-n} [[Zg]+[gZ]]_{r,t}\delta (\partial_x^n u\odot z)_s+\sum_{n=0}^{k} \binom{k}{n} \partial_x^{k-n} [gg]_{r,t}\delta (\partial_x^n z\odot z)_{s,r}\\
	&\quad+\partial_{x}^{k}[Z_{r,t}u_s+g_{r,t}z_s]\otimes\partial_x^k[ Z_{r,t}u_s+g_{r,t}z_s]-\partial_{x}^{k}[Z_{r,t}u_r+g_{r,t}z_r]\otimes\partial_x^k[ Z_{r,t}u_r+g_{r,t}z_r]\,.
	\end{align*}
	Each of the above term presents has as its component $Z$ or $z$: this will ensure the continuity of the map. In order to bound $\delta(z^{2,\natural})_{s,r,t}$, we need to know estimates for $\delta (z\odot z)_{s,r}$, $\delta (u\odot z)_{s,r}$: in particular we need to bound the drift. This step follows from Assumption \ref{assumption:drift_linear_b_prime}.
	In order to bound $\delta(\partial_x z^{2,\natural})_{s,r,t}$, we need the estimates of the drifts coming from
	\begin{align*}
	\delta (z\odot z)_{s,r}\, \quad \delta (\partial_x z\odot z)_{s,r}\,,\quad \delta (z\odot \partial_x z)_{s,r}\,,\quad \delta (\partial_x z\odot \partial_x z)_{s,r}\,,
	\end{align*}
	\begin{align*}
	\delta (\partial_x u\odot z)_{s,r}\,,\quad \delta (u\odot \partial_x z)_{s,r}\,,\quad \delta (\partial_x u\odot\partial_x z)_{s,r}\,.
	\end{align*}
	We estimate the drift coming from $\delta (\partial_x z\odot \partial_x z)_{s,r}$, since the others can be bounded analogously. Also this bound follows from  Assumption \ref{assumption:drift_linear_b_prime}. The estimate of the remainder at the first level of the energy becomes
	\begin{align*}
	\|  z^{2,\natural}_{s,t}\|_{(W^{1,\infty})^*}&\lesssim_{u,  u^0,\mathbf{W},\mathbf{G},\mathbf{Z}} \omega_{Z}^{1/p}\left[\int_{s}^{t}[\|\nabla z\|^2_{L^2}+\|\nabla u\|^2_{L^2}]\dd r+\omega^{2/p}\left[\|u\|_{L^\infty([s,t];L^2)}^2+\|z\|_{L^\infty([s,t];L^2)}^2\right]\right]\\
	&\quad+\omega^{1/p}\left[\int_{s}^{t}\|\nabla z\|^2_{L^2}\dd r+\omega^{2/p}\sup_{s\leq r\leq t}\|z_r\|^2_{L^2}\right]\,,
	\end{align*}
	where the controls $\omega$ in the above inequality are evaluated in $s\leq t\in [0,T]$.
	Hence the first level estimate, after the application of the rough Gronwall's Lemma~\ref{lem:gronwall} becomes
	\begin{align*}
	\sup_{0\leq t\leq T}\|z_r\|^2_{L^2}+\int_{0}^{T}\|\nabla z_r\|^2_{L^2}\dd r\lesssim_{u,  u^0,\mathbf{W},\mathbf{G},\mathbf{Z},Y^\mathbf{W},Y^\mathbf{G}} \omega^{1/p}_{\mathbf{Z}}(0,T)+\omega^{2/p}_{[Zg]+[gZ]}(0,T)\,.
	\end{align*}
	We estimate the second level energy. The estimate for the drift has the form
	\begin{align*}
	\| \partial_x  z^{2,\natural}_{s,t}\|_{(W^{1,\infty})^*}&\lesssim \omega_{Z;W^{1,\infty}}^{1/p}\left[\int_{s}^{t}[\|\Delta z\|^2_{L^2}+\|\Delta u\|^2_{L^2}]\dd r+\omega^{2/p}\left[\|u\|_{L^\infty([s,t];H^1)}^2+\|z\|_{L^\infty([s,t];H^1)}^2\right]\right]\\
	&\quad +\omega^{1/p} \left[\int_{s}^{t}\|\Delta z\|^2_{L^2}\dd r+\omega^{2/p}\sup_{s\leq r\leq t}\|z_r\|^2_{H^1}\right]\,,
	\end{align*}
	where the controls $\omega$ in the above inequality are evaluated in $s\leq t\in [0,T]$.
	The second level energy has the form
	\begin{align*}
	\sup_{0\leq t\leq T}\|\nabla z_r\|^2_{L^2}+\int_{0}^{T}\|\Delta z_r\|^2_{L^2}\dd r\lesssim_{u,  u^0,\mathbf{W},\mathbf{G},\mathbf{Z},Y^\mathbf{W},Y^\mathbf{G}} \omega^{1/p}_{\mathbf{Z}}(0,T)+\omega^{2/p}_{[Zg]+[gZ]}(0,T)\,.
	\end{align*}
	Remark that we have also used that in the Definition~\ref{def:sol_rough} of solution for $y$, we have an energy estimate: from this, the dependence on $\mathbf{g},u,\mathbf{u^0}$.
	From the local Lipschitz continuity of the Young's integral, the local Lipschitz continuity of the It\^o-Lyons map $\mathbf{W}\mapsto D\Phi[\mathbf{g}](\mathbf{W})$ follows.
	If there exists a solution $Y^\mathbf{G}$ to \eqref{eq:Dphi_Y_not_centered}, then the equation is unique and its It\^o-Lyons map is locally Lipschitz continuous with respect to the noise $\mathbf{W}$.
	We discuss now the existence of a solution to \eqref{eq:Dphi_Y_not_centered} in the sense of Defintion \ref{def:sol_additive}. Let $\mathbf{W}^n$ be a sequence of smooth approximations of the rough path $\mathbf{W}$ and denote by $Y^n:=\pi(\mathbf{W}^n)$ (recall that the initial condition of this equation is $0$).
	By looking at the estimate of the noise for the Wong-Zakai, we can derive an a priori bound on the sequence $(Y^n)_n$ in $L^\infty(H^1)\cap L^2(H^2)\cap C([0,T];L^2)$. From Lemma~\ref{lemma:embedding_tornstein}, there exists a subsequence converging to $Y$ in $L^\infty(L^2)\cap L^2(H^1)\cap C(H^{-1})$. Since the drift is linear, the identification of the limit in the drift is standard. From the lower semi-continuity of the norm, the limit solution is also an element of $L^\infty(H^1)\cap L^2(H^2)\cap C([0,T];L^2)$.

\end{proof}

\bibliographystyle{plain}

\end{document}